\theoremstyle{plain}
\newtheorem{Thm}{Theorem}[section]
\newtheorem{Lem}[Thm]{Lemma}
\newtheorem{Cor}[Thm]{Corollary}
\newtheorem{Pro}[Thm]{Proposition}
\newtheorem{Prp}[Thm]{Properties}
\newtheorem{Sub}[Thm]{Sublemma}
\theoremstyle{definition}
\newtheorem{Def}[Thm]{Definition}
\newtheorem{Exm}[Thm]{Example}
\newtheorem{Exs}[Thm]{Examples}
\newtheorem{Prb}[Thm]{Problem}
\theoremstyle{remark}
\newtheorem{Rem}[Thm]{Remark}
\newtheorem{Rms}[Thm]{Remarks}
\newtheorem*{Com}{Commentary}
\newcommand{\myEmail}{piotr.niemiec@uj.edu.pl}
\newcommand{\myAddress}[1]{\noindent{}\ITE{\equal{#1}{}}{}{Piotr Niemiec\\{}}
   In\-sty\-tut Ma\-te\-ma\-ty\-ki\\{}Wy\-dzia\l{} Ma\-te\-ma\-ty\-ki i In\-for\-ma\-ty\-ki\\{}
   U\-ni\-wer\-sy\-tet Ja\-giel\-lo\'{n}\-ski\\{}ul. \L{}o\-ja\-sie\-wi\-cza 6\\{}
   30-348 Kra\-k\'{o}w\\{}Poland}
\newcommand{\myData}[1][Piotr Niemiec]{\author[P. Niemiec]{Piotr Niemiec}\address{\myAddress{#1}}
   \email{\myEmail}}
\newcommand{\NNN}{\mathbb{N}}
\newcommand{\QQQ}{\mathbb{Q}}\newcommand{\RRR}{\mathbb{R}}
\newcommand{\ZZZ}{\mathbb{Z}}
\newcommand{\CCc}{\CMcal{C}}
\newcommand{\EEe}{\CMcal{E}}\newcommand{\FFf}{\CMcal{F}}
\newcommand{\GGg}{\CMcal{G}}\newcommand{\HHh}{\CMcal{H}}
\newcommand{\OOo}{\CMcal{O}}
\newcommand{\WWw}{\CMcal{W}}
\newcommand{\SsS}{\EuScript{S}}
\newcommand{\mM}{\mathfrak{m}}
\newcommand{\bbB}{\mathscr{B}}
\newcommand{\ggG}{\mathscr{G}}
\newcommand{\uuU}{\mathscr{U}}
\newcommand{\SECT}[1]{\section{#1}\renewcommand{\theequation}{\arabic{section}-\arabic{equation}}
   \setcounter{equation}{0}}
\newcounter{help}
\newcommand{\ITE}[3]{\ifthenelse{#1}{#2}{#3}}\newcommand{\ITEE}[3]{\ITE{\equal{#1}{#2}}{#3}{}}
\newcommand{\card}{\operatorname{card}}
\newcommand{\id}{\operatorname{id}}
\newcommand{\Iso}{\operatorname{Iso}}
\newcommand{\leqsl}{\leqslant}\newcommand{\geqsl}{\geqslant}
\newcommand{\epsi}{\varepsilon}\newcommand{\varempty}{\varnothing}\newcommand{\dd}{\colon}
\newcommand{\TFCAE}{The following conditions are equivalent:}
\newcommand{\COR}[1]{Corollary~\textup{\ref{cor:#1}}}
\newcommand{\EXM}[1]{Example~\textup{\ref{exm:#1}}}
\newcommand{\LEM}[1]{Lemma~\textup{\ref{lem:#1}}}
\newcommand{\PRO}[1]{Proposition~\textup{\ref{pro:#1}}}
\newcommand{\THM}[1]{Theorem~\textup{\ref{thm:#1}}}
\newenvironment{cor}[1]{\begin{Cor}\label{cor:#1}}{\end{Cor}}
\newenvironment{dfn}[1]{\begin{Def}\label{def:#1}}{\end{Def}}
\newenvironment{exm}[1]{\begin{Exm}\label{exm:#1}}{\end{Exm}}
\newenvironment{lem}[1]{\begin{Lem}\label{lem:#1}}{\end{Lem}}
\newenvironment{prb}[1]{\begin{Prb}\label{prb:#1}}{\end{Prb}}
\newenvironment{pro}[1]{\begin{Pro}\label{pro:#1}}{\end{Pro}}
\newenvironment{rem}[1]{\begin{Rem}\label{rem:#1}}{\end{Rem}}
\newenvironment{thm}[1]{\begin{Thm}\label{thm:#1}}{\end{Thm}}
\newcommand{\ArEe}{\operatorname{AE}}\newcommand{\cIRC}[1]{\textup{\textcircled{$#1$}}} 
\newcommand{\IGH}{\textup{\textsf{IGH}}} 
\newcommand{\bibITEM}[2]{\ITE{\equal{#2}{}}{\bibitem{#1} }{\bibitem[#2]{#1} }}
\newcommand{\BIB}[8]{
   \bibITEM{#1}{#8} #2, \textit{#3}, #4{} \textbf{#5} (#6), #7.}
\newcommand{\myBIB}[7][P. Niemiec]{\ITE{\equal{#7}{*}\or\equal{#7}{**}}{}{#1, \textit{#2}, }
   #3{}\ITE{\equal{#4}{}}{}{ \textbf{#4}} (#5), #6\ITE{\equal{#7}{*}}{}{.}}
\newcommand{\BIb}[6]{
   \bibITEM{#1}{#6} #2, \textit{#3}, #4, #5.}
\newcommand{\BiB}[9]{
   \bibITEM{#1}{#9} #2, \textit{#3}, #4{} \textit{#5}, #6, #7, #8.}
\newcommand{\myBAPP}[4][P. Niemiec]{
   \ITE{\equal{#4}{*}}{}{#1, \textit{#2}, }#3}
\newcommand{\oNlINE}[2]{\ITEE{#1}{.}{\\#2}\ITEE{#1}{}{{} #2}}
\newcommand{\jRN}[2][]{
   \ITEE{#2}{AbhHamburg}{\ITE{\equal{#1}{+}}
      {Abh. Math. Sem. Hamburg}{Abh. Math. Sem. Hamburg}}
   \ITEE{#2}{ActaM}{\ITE{\equal{#1}{+}}
      {Acta Mathematica}{Acta Math.}}
   \ITEE{#2}{ActaMSinES}{\ITE{\equal{#1}{+}}
      {Acta Mathematica Sinica (English Series)}{Acta Math. Sin. (Engl. Ser.)}}
   \ITEE{#2}{AdvM}{\ITE{\equal{#1}{+}}
      {Advances in Mathematics}{Adv. Math.}}
   \ITEE{#2}{ACS}{\ITE{\equal{#1}{+}}
      {Applied Categorical Structures}{Appl. Categ. Structures}}
   \ITEE{#2}{ActaSM}{\ITE{\equal{#1}{+}}
      {Acta Scientiarum Mathematicarum (Szeged)}{Acta Sci. Math. (Szeged)}}
   \ITEE{#2}{AmJM}{\ITE{\equal{#1}{+}}
      {American Journal of Mathematics}{Amer. J. Math.}}
   \ITEE{#2}{AmMMon}{\ITE{\equal{#1}{+}}
      {The American Mathematical Monthly}{Amer. Math. Monthly}}
   \ITEE{#2}{AnnSciEcNormSupT}{\ITE{\equal{#1}{+}}
      {Annales Scientifiques de l'\'{E}cole Normale Sup\'{e}rieure (3)}
      {Ann. Sci. \'{E}c. Norm. Sup\'{e}r. (3)}}
   \ITEE{#2}{AnnM}{\ITE{\equal{#1}{+}}
      {Annals of Mathematics}{Ann. Math.}}
   \ITEE{#2}{AnnProb}{\ITE{\equal{#1}{+}}
      {The Annals of Probability}{Ann. Probab.}}
   \ITEE{#2}{AnnPALog}{\ITE{\equal{#1}{+}}
      {Annals of Pure and Applied Logic}{Ann. Pure Appl. Logic}}
   \ITEE{#2}{APM}{\ITE{\equal{#1}{+}}
      {Annales Polonici Mathematici}{Ann. Polon. Math.}}
   \ITEE{#2}{ArchM}{\ITE{\equal{#1}{+}}
      {Archiv der Mathematik}{Arch. Math.}}
   \ITEE{#2}{AttiAccLincRendNat}{\ITE{\equal{#1}{+}}
      {Atti della Accademia Nazionale dei Lincei. Rendiconti. Classe di Scienze Fisiche, 
      Matematiche e Naturali}{Atti Accad. Naz. Lincei Rend. Cl. Sci. Fis. Mat. Nat.}}
   \ITEE{#2}{BAMS}{\ITE{\equal{#1}{+}}
      {Bulletin of the American Mathematical Society}{Bull. Amer. Math. Soc.}}
   \ITEE{#2}{BAustrMS}{\ITE{\equal{#1}{+}}
      {Bulletin of the Australian Mathematical Society}{Bull. Austral. Math. Soc.}}
   \ITEE{#2}{BLondMS}{\ITE{\equal{#1}{+}}
      {Bulletin of the London Mathematical Sociecy}{Bull. Lond. Math. Soc.}}
   \ITEE{#2}{BAPolSSSM}{\ITE{\equal{#1}{+}}
      {Bulletin de l'Acad\'{e}mie Polonaise des Sciences. S\'{e}rie des Sciences 
      Math\'{e}matiques}{Bull. Acad. Pol. Sci. S\'{e}r. Sci. Math.}}
   \ITEE{#2}{BullSM}{\ITE{\equal{#1}{+}}
      {Bulletin des Sciences Math\'{e}matiques}{Bull. Sci. Math.}}
   \ITEE{#2}{BPAS}{\ITE{\equal{#1}{+}}
      {Bulletin of the Polish Academy of Sciences: Mathematics}{Bull. Pol. Acad. Sci. Math.}}
   \ITEE{#2}{CanadJM}{\ITE{\equal{#1}{+}}
      {Canadian Journal Mathematics}{Canad. J. Math.}}
   \ITEE{#2}{CollectM}{\ITE{\equal{#1}{+}}
      {Collectanea Mathematica}{Collect. Math.}}
   \ITEE{#2}{CMUC}{\ITE{\equal{#1}{+}}
      {Commentationes Mathematicae Universitatis Carolinae}{Comment. Math. Univ. Carolin.}}
   \ITEE{#2}{CRParis}{\ITE{\equal{#1}{+}}
      {C. R. Paris}{C. R. Paris}}
   \ITEE{#2}{CRASParis}{\ITE{\equal{#1}{+}}
      {Comptes Rendus de l'Acad\'{e}mie des Sciences. Paris}{C. R. Acad. Sci. Paris}}
   \ITEE{#2}{CEurJM}{\ITE{\equal{#1}{+}}
      {Central European Journal of Mathematics}{Cent. Eur. J. Math.}}
   \ITEE{#2}{CMHelv}{\ITE{\equal{#1}{+}}
      {Commentarii Mathematici Helvetici}{Comment. Math. Helv.}}
   \ITEE{#2}{CollM}{\ITE{\equal{#1}{+}}
      {Colloquium Mathematicum}{Coll. Math.}}
   \ITEE{#2}{CollMSJBoly}{\ITE{\equal{#1}{+}}
      {Colloquia Math. Soc. Janos Bolyai}{Colloq. Math. Soc. Janos Bolyai}}
   \ITEE{#2}{ComposM}{\ITE{\equal{#1}{+}}
      {Compositio Mathematica}{Compos. Math.}}
   \ITEE{#2}{CzMJ}{\ITE{\equal{#1}{+}}
      {Czechoslovak Mathematical Journal}{Czech. Math. J.}}
   \ITEE{#2}{DissM}{\ITE{\equal{#1}{+}}
      {Dissertationes Mathematicae (Roz\-pra\-wy Ma\-te\-ma\-tycz\-ne)}
      {Dissertationes Math. (Roz\-pra\-wy Mat.)}}
   \ITEE{#2}{DANSSSR}{\ITE{\equal{#1}{+}}
      {Doklady Akademii Nauk SSSR}{Dokl. Akad. Nauk SSSR}}
   \ITEE{#2}{DMJ}{\ITE{\equal{#1}{+}}
      {Duke Mathematical Journal}{Duke Math. J.}}
   \ITEE{#2}{ELA}{\ITE{\equal{#1}{+}}
      {The Electronic Journal of Linear Algebra}{Electron. J. Linear Algebra}}
   \ITEE{#2}{ExtrM}{\ITE{\equal{#1}{+}}
      {Extracta Mathematicae}{Extracta Math.}}
   \ITEE{#2}{FM}{\ITE{\equal{#1}{+}}
      {Fundamenta Mathematicae}{Fund. Math.}}
   \ITEE{#2}{FAA}{\ITE{\equal{#1}{+}}
      {Functional Analysis and its Applications}{Funct. Anal. Appl.}}
   \ITEE{#2}{FunkAnalPril}{\ITE{\equal{#1}{+}}
      {Funktsional'ny\u{\i} Analiz i Ego Prilozheniya}{Funkts. Anal. Prilozh.}}
   \ITEE{#2}{GTopA}{\ITE{\equal{#1}{+}}
      {General Topology and its Applications}{General Topol. Appl.}}
   \ITEE{#2}{HJM}{\ITE{\equal{#1}{+}}
      {Houston Journal of Mathematics}{Houston J. Math.}}
   \ITEE{#2}{IllinoisJM}{\ITE{\equal{#1}{+}}
      {Illinois Journal of Mathematics}{Illinois J. Math.}}
   \ITEE{#2}{IndagMP}{\ITE{\equal{#1}{+}}
      {Indagationes Mathematicae (Proceedings)}{Indagationes Math. Proc.}}
   \ITEE{#2}{IndianaUMJ}{\ITE{\equal{#1}{+}}
      {Indiana University Mathematical Journal}{Indiana Univ. Math. J.}}
   \ITEE{#2}{InHauEtSPM}{\ITE{\equal{#1}{+}}
      {Inst. Hautes \'{E}tudes Sci. Publ. Math.}{Inst. Hautes \'{E}tudes Sci. Publ. Math.}}
   \ITEE{#2}{IEOT}{\ITE{\equal{#1}{+}}
      {Integral Equations and Operator Theory}{Integral Equations Operator Theory}}
   \ITEE{#2}{InterJM}{\ITE{\equal{#1}{+}}
      {International Journal of Mathematics}{Internat. J. Math.}}
   \ITEE{#2}{IsraelJM}{\ITE{\equal{#1}{+}}
      {Israel Journal of Mathematics}{Israel J. Math.}}
   \ITEE{#2}{JAT}{\ITE{\equal{#1}{+}}
      {Journal of Approximation Theory}{J. Approx. Theory}}
   \ITEE{#2}{JAusMSA}{\ITE{\equal{#1}{+}}
      {Journal of the Australian Mathematical Society. Series A}{J. Aust. Math. Soc. Ser. A}}
   \ITEE{#2}{JCA}{\ITE{\equal{#1}{+}}
      {Journal of Convex Analysis}{J. Convex Anal.}}
   \ITEE{#2}{JChinUST}{\ITE{\equal{#1}{+}}
      {J. China Univ. Sci. Tech.}{J. China Univ. Sci. Tech.}}
   \ITEE{#2}{JFA}{\ITE{\equal{#1}{+}}
      {Journal of Functional Analysis}{J. Funct. Anal.}}
   \ITEE{#2}{JKoreanMS}{\ITE{\equal{#1}{+}}
      {Journal of the Korean Mathematical Society}{J. Korean Math. Soc.}}
   \ITEE{#2}{JLieTh}{\ITE{\equal{#1}{+}}
      {Journal of Lie Theory}{J. Lie Theory}}
   \ITEE{#2}{JMAnApp}{\ITE{\equal{#1}{+}}
      {J. Math. Anal. Appl.}{J. Math. Anal. Appl.}}
   \ITEE{#2}{JLondMS}{\ITE{\equal{#1}{+}}
      {Journal of the London Mathematical Society}{J. London Math. Soc.}}
   \ITEE{#2}{JMPuApNS}{\ITE{\equal{#1}{+}}
      {J. Math. Pures Appl., N. S.}{J. Math. Pures Appl., N. S.}}
   \ITEE{#2}{JOT}{\ITE{\equal{#1}{+}}
      {Journal of Operator Theory}{J. Operator Theory}}
   \ITEE{#2}{JReinAngM}{\ITE{\equal{#1}{+}}
      {Journal f\"{u}r die reine und angewandte Mathematik}{J. Reine Angew. Math.}}
   \ITEE{#2}{KodaiMSemRep}{\ITE{\equal{#1}{+}}
      {Kodai Math. Sem. Rep.}{Kodai Math. Sem. Rep.}}
   \ITEE{#2}{LAA}{\ITE{\equal{#1}{+}}
      {Linear Algebra and its Applications}{Linear Algebra Appl.}}
   \ITEE{#2}{LMLA}{\ITE{\equal{#1}{+}}
      {Linear and Multilinear Algebra}{Linear Multilinear Algebra}}
   \ITEE{#2}{LNM}{\ITE{\equal{#1}{+}}
      {Lecture Notes in Mathematics}{Lecture Notes in Math.}}
   \ITEE{#2}{MathJap}{\ITE{\equal{#1}{+}}
      {Math. Japon.}{Math. Japon.}}
   \ITEE{#2}{MLQ}{\ITE{\equal{#1}{+}}
      {Mathematical Logic Quarterly}{Math. Log. Q.}}
   \ITEE{#2}{MNotes}{\ITE{\equal{#1}{+}}
      {Math. Notes}{Math. Notes}}
   \ITEE{#2}{MProcCambPhS}{\ITE{\equal{#1}{+}}
      {Mathematical Proceedings of the Cambridge Philosophical Society}
      {Math. Proc. Cambridge Phil. Soc.}}
   \ITEE{#2}{MMag}{\ITE{\equal{#1}{+}}
      {Mathematics Magazine}{Math. Mag.}}
   \ITEE{#2}{MSb}{\ITE{\equal{#1}{+}}
      {Matematicheski\u{\i} Sbornik}{Mat. Sb.}}
   \ITEE{#2}{MStud}{\ITE{\equal{#1}{+}}
      {Matematychni Studi\"{\i}}{Mat. Stud.}}
   \ITEE{#2}{MScand}{\ITE{\equal{#1}{+}}
      {Mathematica Scandinavica}{Math. Scand.}}
   \ITEE{#2}{MAnn}{\ITE{\equal{#1}{+}}
      {Mathematische Annalen}{Math. Ann.}}
   \ITEE{#2}{MAMS}{\ITE{\equal{#1}{+}}
      {Memoirs of the American Mathematical Society}{Mem. Amer. Math. Soc.}}
   \ITEE{#2}{MichMJ}{\ITE{\equal{#1}{+}}
      {Michigan Mathematical Journal}{Mich. Math. J.}}
   \ITEE{#2}{MonatM}{\ITE{\equal{#1}{+}}
      {Monatshefte f\"{u}r Mathematik}{Mh. Math.}}
   \ITEE{#2}{MZ}{\ITE{\equal{#1}{+}}
      {Math. Z.}{Math. Z.}}
   \ITEE{#2}{MZamet}{\ITE{\equal{#1}{+}}
      {Mat. Zametki}{Mat. Zametki}}
   \ITEE{#2}{NonlinA}{\ITE{\equal{#1}{+}}
      {Nonlinear Analysis: Theory, Methods \& Applications}{Nonlinear Anal.}}
   \ITEE{#2}{NAMS}{\ITE{\equal{#1}{+}}
      {Notices of the American Mathematical Society}{Notices Amer. Math. Soc.}}
   \ITEE{#2}{OpusM}{\ITE{\equal{#1}{+}}
      {Opuscula Mathematica}{Opuscula Math.}}
   \ITEE{#2}{PacJM}{\ITE{\equal{#1}{+}}
      {Pacific Journal of Mathematics}{Pacific J. Math.}}
   \ITEE{#2}{PeriodMHung}{\ITE{\equal{#1}{+}}
      {Periodica Mathematica Hungarica}{Period. Math. Hungarica}}
   \ITEE{#2}{PAMS}{\ITE{\equal{#1}{+}}
      {Proceedings of the American Mathematical Society}{Proc. Amer. Math. Soc.}}
   \ITEE{#2}{ProcCambPhS}{\ITE{\equal{#1}{+}}
      {Proceedings of the Cambridge Philosophical Society}{Proc. Cambridge Phil. Soc.}}
   \ITEE{#2}{ProcImpAcadTokyo}{\ITE{\equal{#1}{+}}
      {Proc. Imp. Acad. Tokyo}{Proc. Imp. Acad. Tokyo}}
   \ITEE{#2}{ProcKonink}{\ITE{\equal{#1}{+}}
      {Proceedings of the Koninklijke Nederlandse Akademie van Wetenschappen}
      {Nederl. Akad. Wetensch. Proc. Ser. A}}
   \ITEE{#2}{PLondMS}{\ITE{\equal{#1}{+}}
      {Proceedings of the London Mathematical Society}{Proc. London Math. Soc.}}
   \ITEE{#2}{PNAS}{\ITE{\equal{#1}{+}}
      {Proceedings of the National Academy of Sciences of the United States of America}
      {Proc. Natl. Acad. Sci. USA}}
   \ITEE{#2}{PublRIMSKyoto}{\ITE{\equal{#1}{+}}
      {Publ. Res. Inst. Math. Sci. Kyoto Univ.}{Publ. Res. Inst. Math. Sci.}}
   \ITEE{#2}{PublSUAA}{\ITE{\equal{#1}{+}}
      {Publ. Sci. Univ. Alger. S\'{e}r. A}{Publ. Sci. Univ. Alger. S\'{e}r. A}}
   \ITEE{#2}{PWN}{\ITE{\equal{#1}{+}}
      {PWN -- Polish Scientific Publishers, Warszawa}
      {PWN -- Polish Scientific Publishers, Warszawa}}
   \ITEE{#2}{RCMP}{\ITE{\equal{#1}{+}}
      {Rendiconti del Circolo Matematico di Palermo}{Rend. Circ. Mat. Palermo}}
   \ITEE{#2}{RussMS}{\ITE{\equal{#1}{+}}
      {Russian Mathematical Surveys}{Russian Math. Surveys}}
   \ITEE{#2}{SbM}{\ITE{\equal{#1}{+}}
      {Sbornik: Mathematics}{Sb. Math.}}
   \ITEE{#2}{SciRepTokyoA}{\ITE{\equal{#1}{+}}
      {Science Reports of Tokyo Kyoiku Daigaku, Section A}
      {Sci. Rep. Tokyo Kyoiku Daigaku Sect. A}}
   \ITEE{#2}{SeminProbStras}{\ITE{\equal{#1}{+}}
      {S\'{e}minaire de probabilit\'{e}s de Strasbourg}{S\'{e}min. Prob. Strasbourg}}
   \ITEE{#2}{SIAMJMAA}{\ITE{\equal{#1}{+}}
      {SIAM Journal on Matrix Analysis and Applications}{SIAM J. Matrix Anal. Appl.}}
   \ITEE{#2}{SibirMZ}{\ITE{\equal{#1}{+}}
      {Sibirski\v{\i} Mat. \v{Z}hurnal}{Sibirsk. Mat. \v{Z}.}}
   \ITEE{#2}{SM}{\ITE{\equal{#1}{+}}
      {Studia Mathematica}{Studia Math.}}
   \ITEE{#2}{TAMS}{\ITE{\equal{#1}{+}}
      {Transactions of the American Mathematical Society}{Trans. Amer. Math. Soc.}}
   \ITEE{#2}{TohokuMJ}{\ITE{\equal{#1}{+}}
      {T\^{o}hoku Mathematical Journal}{T\^{o}hoku Math. J.}}
   \ITEE{#2}{TomskUnivRev}{\ITE{\equal{#1}{+}}
      {Tomsk Universitet Review}{Tomsk. Univ. Rev.}}
   \ITEE{#2}{TopA}{\ITE{\equal{#1}{+}}
      {Topology and its Applications}{Topology Appl.}}
   \ITEE{#2}{TMNA}{\ITE{\equal{#1}{+}}
      {Topological Methods in Nonlinear Analysis}{Topol. Methods Nonlinear Anal.}}
   \ITEE{#2}{TsukubaJM}{\ITE{\equal{#1}{+}}
      {Tsukuba Journal of Mathematics}{Tsukuba J. Math.}}
   \ITEE{#2}{UspekhiMN}{\ITE{\equal{#1}{+}}
      {Uspekhi Matem. Nauk}{Uspekhi Mat. Nauk}}
   \ITEE{#2}{AnnSciEcNormSup}{\ITE{\equal{#1}{+}}
      {Annales Scientifiques de l'\'{E}cole Normale Sup\'{e}rieure}
      {Ann. Sci. \'{E}c. Norm. Sup\'{e}r.}}
   }
\newcommand{\paplist}[3][]{
   \ITEE{#3}{HAbels,AManoussos,GNoskov2011}{
      \BIB{#2}{H. Abels, A. Manoussos, G. Noskov}
         {Proper actions and proper invariant metrics}
         {\jRN{JLondMS} (2)}{83}{2011}{619--636}{#1}}
   \ITEE{#3}{NIAkhiezer,IMGlazman1993}{
      \BIb{#2}{N.I. Akhiezer and I.M. Glazman}
         {Theory of Linear Operators in Hilbert Space}
         {Dover Publications, Inc., New York}{1993}{#1}}
   \ITEE{#3}{ASAmitsur,JLevitzki1950}{
      \BIB{#2}{A.S. Amitsur and J. Levitzki}
         {Minimal identities for algebras}
         {\jRN{PAMS}}{1}{1950}{449--463}{#1}}
   \ITEE{#3}{RDAnderson1966}{
      \BIB{#2}{R.D. Anderson}
         {Hilbert space is homeomorphic to the countable infinite product of lines}
         {\jRN{BAMS}}{72}{1966}{515--519}{#1}}
   \ITEE{#3}{RDAnderson1967}{
      \BIB{#2}{R.D. Anderson}
         {On topological infinite deficiency}
         {\jRN{MichMJ}}{14}{1967}{365--383}{#1}}
   \ITEE{#3}{RDAnderson,JMcCharen1970}{
      \BIB{#2}{R.D. Anderson and J. McCharen}
         {On extending homeomorphisms to Fr\'{e}chet manifolds}
         {\jRN{PAMS}}{25}{1970}{283--289}{#1}}
   \ITEE{#3}{RDAnderson,DWCurtis,JVanMill1982}{
      \BIB{#2}{R.D. Anderson, D.W. Curtis, J. van Mill}
         {A fake topological Hilbert space}
         {\jRN{TAMS}}{272}{1982}{311--321}{#1}}
   \ITEE{#3}{RJArchbold1995}{
      \BIB{#2}{R.J. Archbold}
         {On residually finite\hyp{}dimensional $\CCc^*$-algebras}
         {\jRN{PAMS}}{123}{1995}{2935--2937}{#1}}
   \ITEE{#3}{RArens,JEells1956}{
      \BIB{#2}{R. Arens and J. Eells}
         {On embedding uniform and topological spaces}
         {\jRN{PacJM}}{6}{1956}{397--403}{#1}}
   \ITEE{#3}{AVArhangelskii2002}{
      \BIB{#2}{A.V. Arhangel'skii}
         {The Hewitt\hyp{}Nachbin completion in topological algebra. Some effects of homogeneity}
         {\jRN{ACS}}{10}{2002}{267--278}{#1}}
   \ITEE{#3}{AVArhangelskii,MGTkachenko2008}{
      \BIb{#2}{A.V. Arhangel'skii and M.G. Tkachenko}
         {Topological Groups and Related Structures}
         {Atlantis Press, Paris; World Scientific, Hackensack, NJ}{2008}{#1}}
   \ITEE{#3}{NAronszajn,PPanitchpakdi1956}{
      \BIB{#2}{N. Aronszajn and P. Panitchpakdi}
         {Extension of uniformly continuous transformations and hyperconvex metric spaces}
         {\jRN{PacJM}}{6}{1956}{405--439}{#1}}
   \ITEE{#3}{KJBabenko1948}{
      \BIB{#2}{K.J. Babenko}
         {On conjugate functions}
         {\jRN{DANSSSR}}{62}{1948}{157--160}{#1}}
   \ITEE{#3}{TBanakh1995}{
      \BIB{#2}{T.O. Banakh}
         {Topology of spaces of probability measures, I}
         {\jRN{MStud}}{5}{1995}{65--87 (Russian)}{#1}}
   \ITEE{#3}{TBanakh1995a}{
      \BIB{#2}{T.O. Banakh}
         {Topology of spaces of probability measures, II}
         {\jRN{MStud}}{5}{1995}{88--106 (Russian)}{#1}}
   \ITEE{#3}{TBanakh1998}{
      \BIB{#2}{T. Banakh}
         {Characterization of spaces admitting a homotopy dense embedding into a Hilbert manifold}
         {\jRN{TopA}}{86}{1998}{123--131}{#1}}
   \ITEE{#3}{TBanakh,CzBessaga2000}{
      \BIB{#2}{T. Banakh and Cz. Bessaga}
         {On linear operators extending [pseudo]metrics}
         {\jRN{BPAS}}{48}{2000}{35--49}{#1}}
   \ITEE{#3}{TBanakh,TNRadul1997}{
      \BIB{#2}{T.O. Banakh and T.N. Radul}
         {Topology of spaces of probability measures}
         {\jRN{SbM}}{188}{1997}{973--995}{#1}}
   \ITEE{#3}{TBanakh,TRadul,MZarichnyi1996}{
      \BIb{#2}{T. Banakh, T. Radul, M. Zarichnyi}
         {Absorbing sets in infinite\hyp{}dimensional manifolds}
         {VNTL Publishers, Lviv}{1996}{#1}}
   \ITEE{#3}{TBanakh,IZarichnyy2008}{
      \BIB{#2}{T. Banakh and I. Zarichnyy}
         {Topological groups and convex sets homeomorphic to non\hyp{}separable Hilbert spaces}
         {\jRN{CEurJM}}{6}{2008}{77--86}{#1}}
   \ITEE{#3}{HBecker,ASKechris1996}{
      \BIb{#2}{H. Becker and A.S. Kechris}{The Descriptive Set Theory of Polish Group Actions 
         \textup{(London Math. Soc. Lecture Note Series, vol. 232)}}
         {University Press, Cambridge}{1996}{#1}}
   \ITEE{#3}{GBeer1993}{
      \BIb{#2}{G. Beer}
         {Topologies on Closed and Closed Convex Sets \textup{(Mathematics and Its Applications)}}
         {Kluwer Academic Publishers, Dordrecht}{1993}{#1}}
   \ITEE{#3}{NEBenamara,NNikolski1999}{
      \BIB{#2}{N.E. Benamara and N. Nikolski}
         {Resolvent tests for similarity to a normal operator}
         {\jRN{PLondMS}}{78}{1999}{585--626}{#1}}
   \ITEE{#3}{YBenyamini,JLindenstrauss2000}{
      \BIb{#2}{Y. Benyamini and J. Lindenstrauss}
         {Geometric nonlinear functional analysis I}
         {AMS Colloquium Publications 48}{2000}{#1}}
   \ITEE{#3}{SKBerberian1974}{
      \BIb{#2}{S.K. Berberian}
         {Lectures in Functional Analysis and Operator Theory}
         {Graduate Texts in Mathematics 15, Springer\hyp{}Verlag, New York}{1974}{#1}}
   \ITEE{#3}{SNBernstein1954}{
      \BIb{#2}{S.N. Bernstein}
         {Collected Works II}
         {Akad. Nauk SSSR, Moscow}{1954 (Russian)}{#1}}
   \ITEE{#3}{CzBessaga,APelczynski1972}{
      \BIB{#2}{Cz. Bessaga and A. Pe\l{}czy\'{n}ski}
         {On spaces of measurable functions}
         {\jRN{SM}}{44}{1972}{597--615}{#1}}
   \ITEE{#3}{CzBessaga,APelczynski1975}{
      \BIb{#2}{Cz. Bessaga and A. Pe\l{}czy\'{n}ski}
         {Selected topics in infinite\hyp{}dimensional topology}
         {\jRN{PWN}}{1975}{#1}}
   \ITEE{#3}{MBestvina,JMogilski1986}{
      \BIB{#2}{M. Bestvina and J. Mogilski}
         {Characterizing certain incomplete infinite\hyp{}dimensional absolute retracts}
         {\jRN{MichMJ}}{33}{1986}{291--313}{#1}}
   \ITEE{#3}{MBestvina,PBowers,JMogilsky,JWalsh1986}{
      \BIB{#2}{M. Bestvina, P. Bowers, J. Mogilsky, J. Walsh}
         {Characterization of Hilbert space manifolds revisited}
         {\jRN{TopA}}{24}{1986}{53--69}{#1}}
   \ITEE{#3}{RBhatia1997}{
      \BIb{#2}{R. Bhatia}
         {Matrix Analysis}
         {Springer, New York}{1997}{#1}}
   \ITEE{#3}{GBirkhoff1936}{
      \BIB{#2}{G. Birkhoff}
         {A note on topological groups}
         {\jRN{ComposM}}{3}{1936}{427--430}{#1}}
   \ITEE{#3}{MSBirman,MZSolomjak1987}{
      \BIb{#2}{M.S. Birman and M.Z. Solomjak}
         {Spectral Theory of Self\hyp{}Adjoint Operators in Hilbert Space}
         {D. Reidel Publishing Co., Dordrecht}{1987}{#1}}
   \ITEE{#3}{EBishop1961}{
      \BIB{#2}{E. Bishop}
         {A generalization of the Stone\hyp{}Weierstrass theorem}
         {\jRN{PacJM}}{11}{1961}{777--783}{#1}}
   \ITEE{#3}{BBlackadar2006}{\BIb{#2}{B. Blackadar}{Operator Algebras. 
         Theory of $\CCc^*$\hyp{}algebras and von Neumann algebras \textup{(Encyclopaedia 
         of Mathematical Sciences, vol. 122: Operator Algebras and Non\hyp{}Commutative Geometry 
         III)}}{Springer\hyp{}Verlag, Berlin\hyp{}Heidelberg}{2006}{#1}}
   \ITEE{#3}{JBlass,WHolsztynski1972}{
      \BIB{#2}{J. Blass and W. Holszty\'{n}ski}
         {Cubical polyhedra and homotopy III}
         {\jRN{AttiAccLincRendNat}}{53}{1972}{275--279}{#1}}
   \ITEE{#3}{FFBonsall,NJDuncan1973}{
      \BIb{#2}{F.F. Bonsall and N.J. Duncan}
         {Complete Normed Algebras}
         {Springer Verlag, Berlin}{1973}{#1}}
   \ITEE{#3}{NBourbaki2002}{
      \BIb{#2}{N. Bourbaki}
         {Lie Groups and Lie Algebras, Chapters 4--6}
         {Springer, New York}{2002}{#1}}
   \ITEE{#3}{ABouziad1996}{
      \BIB{#2}{A. Bouziad}
         {Every \v{C}ech-analytic Baire semitopological group is a topological group}
         {\jRN{PAMS}}{124}{1996}{953--959}{#1}}
   \ITEE{#3}{PLBowers1989}{
      \BIB{#2}{P.L. Bowers}
         {Limitation topologies on function spaces}
         {\jRN{TAMS}}{314}{1989}{421--431}{#1}}
   \ITEE{#3}{JBraconnier1948}{
      \BIB{#2}{J. Braconnier}
         {Sur les groupes topologiques localement compacts}
         {\jRN{JMPuApNS}}{27}{1948}{1--85}{#1}}
   \ITEE{#3}{NBrand1982}{
      \BIB{#2}{N. Brand}
         {Another note on the continuity of the inverse}
         {\jRN{ArchM}}{39}{1982}{241--245}{#1}}
   \ITEE{#3}{ABrown1953}{
      \BIB{#2}{A. Brown}
         {On a class of operators}
         {\jRN{PAMS}}{4}{1953}{723--728}{#1}}
   \ITEE{#3}{ABrown,CKFong,DWHadwin1978}{
      \BIB{#2}{A. Brown, C.-K. Fong, D.W. Hadwin}
         {Parts of operators on Hilbert space}
         {\jRN{IllinoisJM}}{22}{1978}{306--314}{#1}}
   \ITEE{#3}{RBrown,SMorris1977}{
      \BIB{#2}{R. Brown and S. Morris}
         {Embeddings in contractible or compact objects}
         {\jRN{CollM}}{38}{1977}{213--222}{#1}}
   \ITEE{#3}{AMBruckner,JBBruckner,BSThomson1997}{
      \BIb{#2}{A.M. Bruckner, J.B. Bruckner, B.S. Thomson}
         {Real Analysis}
         {Prentice\hyp{}Hall, New Jersey}{1997}{#1}}
   \ITEE{#3}{PJCameron,AMVershik2006}{
      \BIB{#2}{P.J. Cameron and A.M. Vershik}
         {Some isometry groups of Urysohn space}
         {\jRN{AnnPALog}}{143}{2006}{70--78}{#1}}
   \ITEE{#3}{CCastaing1966}{
      \BIB{#2}{C. Castaing}{Quelques 
         probl\`{e}mes de mesurabilit\'{e} li\'{e}es \`{a} la th\'{e}orie de la commande}
         {\jRN{CRParis}}{262}{1966}{409--411}{#1}}
   \ITEE{#3}{JAVanCasteren1980}{
      \BIB{#2}{J.A. van Casteren}
         {A problem of Sz.\hyp{}Nagy}
         {\jRN{ActaSM}}{42}{1980}{189--194}{#1}}
   \ITEE{#3}{JAVanCasteren1983}{
      \BIB{#2}{J.A. van Casteren}
         {Operators similar to unitary or selfadjoint ones}
         {\jRN{PacJM}}{104}{1983}{241--255}{#1}}
   \ITEE{#3}{XCatepillan,MPtak,WSzymanski1994}{
      \BIB{#2}{X. Catepill\'{a}n, M. Ptak, W. Szyma\'{n}ski}{Multiple 
         canonical decompositions of families of operators and a model of quasinormal families}
         {\jRN{PAMS}}{121}{1994}{1165--1172}{#1}}
   \ITEE{#3}{RCauty1994}{
      \BIB{#2}{R. Cauty}
         {Un espace m\'{e}trique lin\'{e}aire qui n'est pas un r\'{e}tracte absolu}
         {\jRN{FM}}{146}{1994}{85--99, (French)}{#1}}
   \ITEE{#3}{TAChapman1971}{
      \BIB{#2}{T.A. Chapman}
         {Deficiency in infinite\hyp{}dimensional manifolds}
         {\jRN{GTopA}}{1}{1971}{263--272}{#1}}
   \ITEE{#3}{TAChapman1976}{
      \BIb{#2}{T.A. Chapman}
         {Lectures on Hilbert cube manifolds}
         {C.B.M.S. Regional Conference Series in Math. No 28, Amer. Math. Soc.}{1976}{#1}}
   \ITEE{#3}{WMChing1974}{
      \BIB{#2}{W.-M. Ching}
         {Topologies on the quasi-spectrum of a $\CCc^*$\hyp{}algebra}
         {\jRN{PAMS}}{46}{1974}{273--276}{#1}}
   \ITEE{#3}{RBChuaqui1977}{
      \BIB{#2}{R.B. Chuaqui}
         {Measures invariant under a group of transformations}
         {\jRN{PacJM}}{68}{1977}{313--329}{#1}}
   \ITEE{#3}{JBConway1985}{
      \BIb{#2}{J.B. Conway}
         {A Course in Functional Analysis}
         {Springer\hyp{}Verlag, New York}{1985}{#1}}
   \ITEE{#3}{JBConway2000}{
      \BIb{#2}{J.B. Conway}
         {A Course in Operator Theory}
         {(Graduate Studies in Mathematics, vol. 21) Amer. Math. Soc., Providence}{2000}{#1}}
   \ITEE{#3}{GCorach,AMaestripieri,MMbekhta2009}{
      \BIB{#2}{G. Corach, A. Maestripieri, M. Mbekhta}
         {Metric and homogeneous structure of closed range operators}
         {\jRN{JOT}}{61}{2009}{171--190}{#1}}
   \ITEE{#3}{MJCowen,RGDouglas1978}{
      \BIB{#2}{M.J. Cowen and R.G. Douglas}
         {Complex geometry and operator theory}
         {\jRN{ActaM}}{141}{1978}{187--261}{#1}}
   \ITEE{#3}{DWCurtis1985}{
      \BIB{#2}{D.W. Curtis}
         {Boundary sets in the Hilbert cube}
         {\jRN{TopA}}{20}{1985}{201--221}{#1}}
   \ITEE{#3}{DVanDantzig,BLVanDerWaerden1928}{
      \BIB{#2}{D. van Dantzig and B.L. van der Waerden}
         {\"{U}ber metrisch homogene R\"{a}ume}
         {\jRN{AbhHamburg}}{6}{1928}{367--376}{#1}}
   \ITEE{#3}{MMDay1958}{
      \BIb{#2}{M.M. Day}
         {Normed Linear Spaces}
         {Springer Verlag, Berlin}{1958}{#1}}
   \ITEE{#3}{CDellacherie1967}{
      \BIB{#2}{C. Dellacherie}
         {Un compl\'{e}ment au th\'{e}or\`{e}me de Weierstrass\hyp{}Stone}
         {\jRN{SeminProbStras}}{1}{1967}{52--53}{#1}}
   \ITEE{#3}{JJDijkstra1987}{
      \BIB{#2}{J.J. Dijkstra}
         {Strong negligibility of $\sigma$\hyp{}compacta does not characterize Hilbert space}
         {\jRN{PacJM}}{127}{1987}{19--30}{#1}}
   \ITEE{#3}{JJDijkstra1990}{
      \BIB{#2}{J.J. Dijkstra}
         {Characterizing Hilbert space topology in terms of strong negligibility}
         {\jRN{ComposM}}{75}{1990}{299--306}{#1}}
   \ITEE{#3}{JDixmier,CFoias1972}{
      \BiB{#2}{J. Dixmier and C. Foias}
         {Sur le spectre ponctuel d'un op\'{e}rateur \textup{(French)}}{in:}
         {Hilbert space operators and operator algebras (Proc. Internat. Conf., Tihany, 1970)}
         {\jRN{CollMSJBoly}, No. 5, North-Holland, Amsterdam}{1972}{127--133}{#1}}
   \ITEE{#3}{TDobrowolski,WMarciszewski2002}{
      \BIB{#2}{T. Dobrowolski and W. Marciszewski}
         {Failure of the Factor Theorem for Borel pre\hyp{}Hilbert spaces}
         {\jRN{FM}}{175}{2002}{53--68}{#1}}
   \ITEE{#3}{TDobrowolski,JMogilski1990}{
      \BiB{#2}{T. Dobrowolski and J. Mogilski}{Problems on Topological Classification 
         of Incomplete Metric Spaces}{Chapter 25 in:}{Open Problems in Topology}{J. van Mill 
         and G.M. Reed (eds.), North\hyp{}Holland Amsterdam}{1990}{411--429}{#1}}
   \ITEE{#3}{TDobrowolski,HTorunczyk1981}{
      \BIB{#2}{T. Dobrowolski and H. Toru\'{n}czyk}
         {Separable complete ANR's admitting a group structure are Hilbert manifolds}
         {\jRN{TopA}}{12}{1981}{229--235}{#1}}
   \ITEE{#3}{RGDouglas1966}{
      \BIB{#2}{R.G. Douglas}
         {On majorization, factorization and range inclusion of operators in Hilbert space}
         {\jRN{PAMS}}{17}{1966}{413--416}{#1}}
   \ITEE{#3}{CHDowker1947}{
      \BIB{#2}{C.H. Dowker}
         {Mapping theorems for non\hyp{}compact spaces}
         {\jRN{AmJM}}{69}{1947}{200--242}{#1}}
   \ITEE{#3}{CHDowker1952}{
      \BIB{#2}{C.H. Dowker}
         {Topology of metric complexes}
         {\jRN{AmJM}}{74}{1952}{555--577}{#1}}
   \ITEE{#3}{JDugundji1951}{
      \BIB{#2}{J. Dugundji}
         {An extension of Tietze's theorem}
         {\jRN{PacJM}}{1}{1951}{353--367}{#1}}
   \ITEE{#3}{JDugundji1958}{
      \BIB{#2}{J. Dugundji}
         {Absolute neighborhood retracts and local connectedness for arbitrary metric spaces}
         {\jRN{ComposM}}{13}{1958}{229--246}{#1}}
   \ITEE{#3}{JDugundji1965}{
      \BIB{#2}{J. Dugundji}
         {Locally equiconnected spaces and absolute neighborhood retracts}
         {\jRN{FM}}{57}{1965}{187--193}{#1}}
   \ITEE{#3}{NDunford,JTSchwartz1958}{
      \BIb{#2}{N. Dunford and J.T. Schwartz}
         {Linear Operators, part I}
         {Interscience Publishers, New York}{1958}{#1}}
   \ITEE{#3}{NDunford,JTSchwartz1963}{
      \BIb{#2}{N. Dunford and J.T. Schwartz}
         {Linear Operators, part II}
         {Interscience Publishers, New York}{1963}{#1}}
   \ITEE{#3}{NDunford,JTSchwartz1971}{
      \BIb{#2}{N. Dunford and J.T. Schwartz}
         {Linear Operators, part III}
         {Wiley\hyp{}Interscience, New York}{1971}{#1}}
   \ITEE{#3}{MLEaton,MDPerlman1977}{
      \BIB{#2}{M.L. Eaton and M.D. Perlman}
         {Reflection groups, generalized Schur functions and the geometry of majorization}
         {\jRN{AnnProb}}{5}{1977}{829--860}{#1}}
   \ITEE{#3}{JEells,NHKuiper1969}{
      \BIB{#2}{J. Eells and N.H. Kuiper}
         {Homotopy negligible subsets in infinite\hyp{}dimensional manifolds}
         {\jRN{ComposM}}{21}{1969}{151--161}{#1}}
   \ITEE{#3}{EGEffros1965}{
      \BIB{#2}{E.G. Effros}
         {The Borel space of von Neumann algebras on a separable Hilbert space}
         {\jRN{PacJM}}{15}{1965}{1153--1164}{#1}}
   \ITEE{#3}{EGEffros1966}{
      \BIB{#2}{E.G. Effros}
         {Global structure in von Neumann algebras}
         {\jRN{TAMS}}{121}{1966}{434--454}{#1}}
   \ITEE{#3}{REllis1957}{
      \BIB{#2}{R. Ellis}
         {A note on the continuity of the inverse}
         {\jRN{PAMS}}{8}{1957}{372--373}{#1}}
   \ITEE{#3}{REngelking1977}{
      \BIb{#2}{R. Engelking}
         {General Topology}
         {\jRN{PWN}}{1977}{#1}}
   \ITEE{#3}{REngelking1978}{
      \BIb{#2}{R. Engelking}
         {Dimension Theory}
         {\jRN{PWN}}{1978}{#1}}
   \ITEE{#3}{REngelking1989}{
      \BIb{#2}{R. Engelking}{General Topology. 
         Revised and completed edition \textup{(Sigma series in pure mathematics, vol. 6)}}
         {Heldermann Verlag, Berlin}{1989}{#1}}
   \ITEE{#3}{PErdos,RDMauldin1976}{
      \BIB{#2}{P. Erd\"{o}s and R.D. Mauldin}
         {The nonexistence of certain invariant measures}
         {\jRN{PAMS}}{59}{1976}{321--322}{#1}}
   \ITEE{#3}{JErnest1976}{
      \BIB{#2}{J. Ernest}
         {Charting the operator terrain}
         {\jRN{MAMS}}{171}{1976}{207 pp}{#1}}
   \ITEE{#3}{REspinola,MAKhamsi2001}{
      \BiB{#2}{R. Espinola and M.A. Khamsi}{Introduction 
         to hyperconvex spaces}{Chapter XIII in:}{Handbook of Metric Fixed Point Theory}
         {W.A. Kirk and B. Sims (editors), Kluwer Academic Publishers}{2001}{391--435}{#1}}
   \ITEE{#3}{JMFell1960a}{
      \BIB{#2}{J.M. Fell}
         {$\CCc^*$-algebras with smooth dual}
         {\jRN{IllinoisJM}}{4}{1960}{221--230}{#1}}
   \ITEE{#3}{JMFell1960b}{
      \BIB{#2}{J.M. Fell}
         {The dual spaces of $\CCc^*$-algebras}
         {\jRN{TAMS}}{94}{1960}{365--403}{#1}}
   \ITEE{#3}{LAFialkow1975}{
      \BIB{#2}{L.A. Fialkow}
         {The similarity orbit of a normal operator}
         {\jRN{TAMS}}{210}{1975}{129--137}{#1}}
   \ITEE{#3}{PAFillmore,JPWilliams1971}{
      \BIB{#2}{P.A. Fillmore and J.P. Williams}
         {On operator ranges}
         {\jRN{AdvM}}{7}{1971}{254--281}{#1}}
   \ITEE{#3}{RHFox1943}{
      \BIB{#2}{R.H. Fox}
         {On fiber spaces, II}
         {\jRN{BAMS}}{49}{1943}{733--735}{#1}}
   \ITEE{#3}{RFraisse1954}{
      \BIB{#2}{R. Fra\"{\i}ss\'{e}}
         {Sur quelques classifications des syst\`{e}mes de relations}
         {\jRN{PublSUAA}}{1}{1954}{35--182}{#1}}
   \ITEE{#3}{NAFriedman1970}{
      \BIb{#2}{N.A. Friedman}
         {Introduction to ergodic theory}
         {Van Nostrand Reinhold Company}{1970}{#1}}
   \ITEE{#3}{MFujii,MKajiwara,YKato,FKubo1976}{
      \BIB{#2}{M. Fujii, M. Kajiwara, Y. Kato, F. Kubo}
         {Decompositions of operators in Hilbert spaces}
         {\jRN{MathJap}}{21}{1976}{117--120}{#1}}
   \ITEE{#3}{SGao,ASKechris2003}{
      \BIB{#2}{S. Gao and A.S. Kechris}
         {On the classification of Polish metric spaces up to isometry}
         {\jRN{MAMS}}{161}{2003}{viii+78}{#1}}
   \ITEE{#3}{MIGarrido,FMontalvo1991}{
      \BIB{#2}{M.I. Garrido and F. Montalvo}
         {On some generalizations of the Kakutani\hyp{}Stone and Stone\hyp{}Weierstrass theorems}
         {\jRN{ExtrM}}{6}{1991}{156--159}{#1}}
   \ITEE{#3}{LGe,JShen2002}{
      \BIB{#2}{L. Ge and J. Shen}
         {Generator problem for certain property T factors}
         {\jRN{PNAS}}{99}{2002}{565--567}{#1}}
   \ITEE{#3}{IMGelfand,MANaimark1943}{
      \BIB{#2}{I.M. Gelfand and M.A. Naimark}
         {On the embedding of normed rings into the ring of operators in Hilbert space}
         {\jRN{MSb}}{12}{1943}{197--213}{#1}}
   \ITEE{#3}{RGellar,LPage1974}{
      \BIB{#2}{R. Gellar and L. Page}
         {Limits of unitarily equivalent normal operators}
         {\jRN{DMJ}}{41}{1974}{319--322}{#1}}
   \ITEE{#3}{FGesztesy,MMalamud,MMitrea,SNaboko2009}{
      \BIB{#2}{F. Gesztesy, M. Malamud, M. Mitrea, S. Naboko}{Generalized 
         polar decompositions for closed operators in Hilbert spaces and some applications}
         {\jRN{IEOT}}{64}{2009}{83--113}{#1}}
   \ITEE{#3}{LGillman,MJerison1960}{
      \BIb{#2}{L. Gillman and M. Jerison}
         {Rings of continuous functions}
         {New York}{1960}{#1}}
   \ITEE{#3}{JGlimm1960}{
      \BIB{#2}{J. Glimm}
         {A Stone\hyp{}Weierstrass theorem for $\CCc^*$\hyp{}algebras}
         {\jRN{AnnM}}{72}{1960}{216--244}{#1}}
   \ITEE{#3}{JGlimm1961}{
      \BIB{#2}{J. Glimm}
         {Type I $\CCc^*$-algebras}
         {\jRN{AnnM}}{73}{1961}{572--612}{#1}}
   \ITEE{#3}{GGodefroy,NJKalton2003}{
      \BIB{#2}{G. Godefroy and N.J. Kalton}
         {Lipschitz\hyp{}free Banach spaces}
         {\jRN{SM}}{159}{2003}{121--141}{#1}}
   \ITEE{#3}{ICGohberg,MGKrein1967}{
      \BIB{#2}{I.C. Gohberg and M.G. Krein}
         {On a description of contraction operators similar to unitary ones}
         {\jRN{FunkAnalPril}}{1}{1967}{38--60}{#1}}
   \ITEE{#3}{KRGoodearl,PMenal1990}{
      \BIB{#2}{K.R. Goodearl and P. Menal}
         {Free and residually finite\hyp{}dimensional $\CCc^*$-algebras}
         {\jRN{JFA}}{90}{1990}{391--410}{#1}}
   \ITEE{#3}{ELGriffinJr1953}{
      \BIB{#2}{E.L. Griffin Jr.}
         {Some contributions to the theory of rings of operators}
         {\jRN{TAMS}}{75}{1953}{471--504}{#1}}
   \ITEE{#3}{ELGriffinJr1955}{
      \BIB{#2}{E.L. Griffin Jr.}
         {Some contributions to the theory of rings of operators II}
         {\jRN{TAMS}}{79}{1955}{389--400}{#1}}
   \ITEE{#3}{MGromov1981}{
      \BIB{#2}{M. Gromov}
         {Groups of polynomial growth and expanding maps}
         {\jRN{InHauEtSPM}}{53}{1981}{53--73}{#1}}
   \ITEE{#3}{MGromov1999}{
      \BIb{#2}{M. Gromov}
         {Metric Structures for Riemannian and Non\hyp{}Riemannian Spaces}
         {Progress in Math. \textbf{152}, Birkh\"{a}user}{1999}{#1}}
   \ITEE{#3}{JDeGroot1956}{
      \BIB{#2}{J. de Groot}
         {Non\hyp{}archimedean metrics in topology}
         {\jRN{PAMS}}{7}{1956}{948--953}{#1}}
   \ITEE{#3}{LCGrove,CTBenson1985}{
      \BIb{#2}{L.C. Grove and C.T. Benson}
         {Finite Reflection Group}
         {2nd ed., Springer\hyp{}Verlag}{1985}{#1}}
   \ITEE{#3}{JBGuerrero,ARodriguez-Palacios2002}{
      \BIB{#2}{J.B. Guerrero and A. Rodr\'{\i}guez\hyp{}Palacios}
         {Transitivity of the Norm on Banach Spaces}
         {\jRN{ExtrM}}{17}{2002}{1--58}{#1}}
   \ITEE{#3}{VIGurarii1966}{
      \BIB{#2}{V.I. Gurari\v{\i}}{Spaces of universal placement, isotropic spaces and a problem 
         of Mazur on rotations of Banach spaces \textup{(Russian)}}
         {\jRN{SibirMZ}}{7}{1966}{1002--1013}{#1}}
   \ITEE{#3}{DWHadwin1974}{
      \BIB{#2}{D.W. Hadwin}
         {Closures of unitary equivalence classes}
         {\jRN{NAMS}}{21}{1974}{\#74T-B55}{#1}}
   \ITEE{#3}{DWHadwin1976}{
      \BIB{#2}{D.W. Hadwin}
         {An operator\hyp{}valued spectrum}
         {\jRN{NAMS}}{23}{1976}{A-163}{#1}}
   \ITEE{#3}{DWHadwin1977}{
      \BIB{#2}{D.W. Hadwin}
         {An operator\hyp{}valued spectrum}
         {\jRN{IndianaUMJ}}{26}{1977}{329--340}{#1}}
   \ITEE{#3}{DWHadwin1981}{
      \BIB{#2}{D.W. Hadwin}
         {Nonseparable approximate equivalence}
         {\jRN{TAMS}}{266}{1981}{203--231}{#1}}
   \ITEE{#3}{HHahn1932}{
      \BIb{#2}{H. Hahn}
         {Reelle Funktionen I}
         {Leipzig}{1932}{#1}}
   \ITEE{#3}{PRHalmos1950}{
      \BIb{#2}{P.R. Halmos}
         {Measure theory}
         {Van Nostrand, New York}{1950}{#1}}
   \ITEE{#3}{PRHalmos1951}{
      \BIb{#2}{P.R. Halmos}
         {Introduction to Hilbert Space and the Theory of Spectral Multiplicity}
         {Chelsea Publishing Company, New York}{1951}{#1}}
   \ITEE{#3}{PRHalmos1956}{
      \BIb{#2}{P.R. Halmos}
         {Lectures on Ergodic Theory}
         {Publ. Math. Soc. Japan, Tokyo}{1956}{#1}}
   \ITEE{#3}{PRHalmos1974}{
      \BIb{#2}{P.R. Halmos}
         {Measure theory}
         {Springer\hyp{}Verlag, New York}{1974}{#1}}
   \ITEE{#3}{PRHalmos1982}{
      \BIb{#2}{P.R. Halmos}
         {A Hilbert Space Problem Book}
         {Springer\hyp{}Verlag New York Inc.}{1982}{#1}}
  \ITEE{#3}{PRHalmos,JEMcLaughlin1963}{
      \BIB{#2}{P.R. Halmos and J.E. McLaughlin}
         {Partial isometries}
         {\jRN{PacJM}}{13}{1963}{585--596}{#1}}
   \ITEE{#3}{RWHansell1972}{
      \BIB{#2}{R.W. Hansell}
         {On the nonseparable theory of Borel and Souslin sets}
         {\jRN{BAMS}}{78}{1972}{236--241}{#1}}
   \ITEE{#3}{SHartman,JMycielski1957}{
      \BIB{#2}{S. Hartman and J. Mycielski}
         {On the imbedding of topological groups into connected topological groups}
         {\jRN{CollM}}{5}{1957}{167--169}{#1}}
   \ITEE{#3}{FHausdorff1930}{
      \BIB{#2}{F. Hausdorff}
         {Erweiterung einer Hom\"{o}omorphie}
         {\jRN{FM}}{16}{1930}{353--360}{#1}}
   \ITEE{#3}{FHausdorff1934}{
      \BIB{#2}{F. Hausdorff}
         {\"{U}ber innere Abbildungen}
         {\jRN{FM}}{23}{1934}{279--291}{#1}}
   \ITEE{#3}{FHausdorff1938}{
      \BIB{#2}{F. Hausdorff}
         {Erweiterung einer stetigen Abbildung}
         {\jRN{FM}}{30}{1938}{40--47}{#1}}
   \ITEE{#3}{DWHenderson1971}{
      \BIB{#2}{D.W. Henderson}
         {Corrections and extensions of two papers about infinite\hyp{}dimensional manifolds}
         {\jRN{GTopA}}{1}{1971}{321--327}{#1}}
   \ITEE{#3}{DWHenderson1975}{
      \BIB{#2}{D.W. Henderson}
         {$Z$\hyp{}sets in ANR's}
         {\jRN{TAMS}}{213}{1975}{205--216}{#1}}
   \ITEE{#3}{DWHenderson,RMSchori1970}{
      \BIB{#2}{D.W. Henderson and R.M. Schori}
         {Topological classification of infinite\hyp{}dimensional manifolds by homotopy type}
         {\jRN{BAMS}}{76}{1970}{121--124}{#1}}
   \ITEE{#3}{DWHenderson,JEWest1970}{
      \BIB{#2}{D.W. Henderson and J.E. West}
         {Triangulated infinite\hyp{}dimensional manifolds}
         {\jRN{BAMS}}{76}{1970}{655--660}{#1}}
   \ITEE{#3}{DAHerrero1976}{
      \BIB{#2}{D.A. Herrero}
         {Closure of similarity orbits of Hilbert space operators, II: normal operators}
         {\jRN{JLondMS}}{13}{1976}{299--316}{#1}}
   \ITEE{#3}{EHewitt,KARoss1979}{
      \BIb{#2}{E. Hewitt and K.A. Ross}{Abstract Harmonic 
         Analysis I \textup{(A Series of Comprehensive Studies in Mathematics, Vol. 115)}}
         {Springer\hyp{}Verlag, New York}{1979}{#1}}
   \ITEE{#3}{EHewitt,KARoss1997}{
      \BIb{#2}{E. Hewitt and K.A. Ross}{Abstract Harmonic 
         Analysis II \textup{(A Series of Comprehensive Studies in Mathematics, Vol. 152}}
         {Springer\hyp{}Verlag, Berlin}{1997}{#1}}
   \ITEE{#3}{BHoffmann1979}{
      \BIB{#2}{B. Hoffmann}
         {A compact contractible topological group is trivial}
         {\jRN{ArchM}}{32}{1979}{585--587}{#1}}
   \ITEE{#3}{DHofmann2002}{
      \BIB{#2}{D. Hofmann}
         {On a generalization of the Stone\hyp{}Weierstrass theorem}
         {\jRN{ACS}}{10}{2002}{569--592}{#1}}
   \ITEE{#3}{GHognas,AMukherjea1995}{
      \BIb{#2}{G. H\"ogn\"as and A. Mukherjea}{Probability 
         Measures on Semigroups. Convolution Products, Random Walks, and Random Matrices}
         {Plenum Press, New York}{1995}{#1}}
   \ITEE{#3}{MRHolmes1992}{
      \BIB{#2}{M.R. Holmes}{The universal 
         separable metric space of Urysohn and isometric embeddings thereof in Banach spaces}
         {\jRN{FM}}{140}{1992}{199--223}{#1}}
   \ITEE{#3}{MRHolmes2008}{
      \BIB{#2}{M.R. Holmes}
         {The Urysohn space embeds in Banach spaces in just one way}
         {\jRN{TopA}}{155}{2008}{1479--1482}{#1}}
   \ITEE{#3}{RRHolmes,TYTam1999}{
      \BIB{#2}{R.R. Holmes and T.Y. Tam}
         {Distance to the convex hull of an orbit under the action of a compact group}
         {\jRN{JAusMSA}}{66}{1999}{331--357}{#1}}
   \ITEE{#3}{RHorn,RMathias1990}{
      \BIB{#2}{R. Horn and R. Mathias}
         {Cauchy\hyp{}Schwartz inequalities associated with positive semidefinite matrices}
         {\jRN{LAA}}{142}{1990}{63--82}{#1}}
   \ITEE{#3}{GEHuhunaisvili1955}{
      \BIB{#2}{G.E. Huhunai\v{s}vili}
         {On a property of Urysohn's universal metric space}
         {\jRN{DANSSSR}}{101}{1955}{607--610 (Russian)}{#1}}
   \ITEE{#3}{JEHumphreys1990}{
      \BIb{#2}{J.E. Humphreys}
         {Reflection Groups and Coxeter Groups}
         {Cambridge University Press}{1990}{#1}}
   \ITEE{#3}{JRIsbell1964}{
      \BIB{#2}{J.R. Isbell}
         {Six theorems about injective metric spaces}
         {\jRN{CMHelv}}{39}{1964}{65--76}{#1}}
   \ITEE{#3}{SIzumino,YKato1985}{
      \BIB{#2}{S. Izumino and Y. Kato}
         {The closure of invertible operators on Hilbert space}
         {\jRN{ActaSM}}{49}{1985}{321--327}{#1}}
   \ITEE{#3}{CJiang2004}{
      \BIB{#2}{C. Jiang}
         {Similarity classification of Cowen\hyp{}Douglas operators}
         {\jRN{CanadJM}}{56}{2004}{742--775}{#1}}
   \ITEE{#3}{WBJohnson,JLindenstrauss2001}{
      \BiB{#2}{W.B. Johnson and J. Lindenstrauss}{Basic Concepts in the Geometry of Banach Spaces}
         {Chapter 1 in:}{Handbook of the Geometry of Banach Spaces, Vol. 1}{W.B. Johnson 
         and J. Lindenstrauss (editors), Elsevier Science B.V., Amsterdam}{2001}{1--84}{#1}}
   \ITEE{#3}{IBJung,JStochel2008}{
      \BIB{#2}{I.B. Jung and J. Stochel}
         {Subnormal operators whose adjoints have rich point spectrum}
         {\jRN{JFA}}{255}{2008}{1797--1816}{#1}}
   \ITEE{#3}{RVKadison,JRRingrose1983}{
      \BIb{#2}{R.V. Kadison and J.R. Ringrose}
         {Fundamentals of the Theory of Operator Algebras. Volume I: Elementary Theory}
         {Academic Press, Inc., New York\hyp{}London}{1983}{#1}}
   \ITEE{#3}{RVKadison,JRRingrose1986}{
      \BIb{#2}{R.V. Kadison and J.R. Ringrose}
         {Fundamentals of the Theory of Operator Algebras. Volume II: Advanced Theory}
         {Academic Press, Inc., Orlando\hyp{}London}{1986}{#1}}
   \ITEE{#3}{SKakutani1936}{
      \BIB{#2}{S. Kakutani}
         {\"{U}ber die Metrisation der topologischen Gruppen}
         {\jRN{ProcImpAcadTokyo}}{12}{1936}{82--84}{#1}}
   \ITEE{#3}{SKakutani1938}{
      \BIB{#2}{S. Kakutani}
         {Two fixed\hyp{}point theorems concerning bicompact convex sets}
         {\jRN{ProcImpAcadTokyo}}{14}{1938}{242--245}{#1}}
   \ITEE{#3}{SKakutani1941}{
      \BIB{#2}{S. Kakutani}
         {Concrete representation of abstract L\hyp{}spaces}
         {\jRN{AnnM}}{42}{1941}{523--537}{#1}}
   \ITEE{#3}{SKakutani1941a}{
      \BIB{#2}{S. Kakutani}
         {Concrete representation of abstract M\hyp{}spaces}
         {\jRN{AnnM}}{42}{1941}{994--1024}{#1}}
   \ITEE{#3}{NKalton2007}{
      \BIB{#2}{N. Kalton}
         {Extending Lipschitz maps into $\CCc(K)$\hyp{}spaces}
         {\jRN{IsraelJM}}{162}{2007}{275--315}{#1}}
   \ITEE{#3}{RKane2001}{
      \BIb{#2}{R. Kane}
         {Reflection Groups and Invariant Theory}
         {Canadian Mathematical Society, Springer}{2001}{#1}}
   \ITEE{#3}{VKannan,SRRaju1980}{
      \BIB{#2}{V. Kannan and S.R. Raju}
         {The nonexistence of invariant universal measures on semigroups}
         {\jRN{PAMS}}{78}{1980}{482--484}{#1}}
   \ITEE{#3}{IKaplansky1951}{
      \BIB{#2}{I. Kaplansky}
         {A theorem on rings of operators}
         {\jRN{PacJM}}{1}{1951}{227--232}{#1}}
   \ITEE{#3}{MKatetov1988}{
      \BiB{#2}{M. Kat\v{e}tov}{On universal metric spaces}{in: Frolik (ed.),}{General Topology 
         and its Relations to Modern Analysis and Algebra VI. Proceedings of the Sixth Prague 
         Topological Symposium 1986}{Heldermann Verlag Berlin}{1988}{323--330}{#1}}
   \ITEE{#3}{YKatznelson1960}{
      \BIB{#2}{Y. Katznelson}{Sur les alg\'{e}bres 
         dont les \'{e}l\'{e}ments non n\'{e}gatifs admettent des racines carr\'{e}es}
         {\jRN{AnnSciEcNormSupT}}{77}{1960}{167--174}{#1}}
   \ITEE{#3}{RKaufman1981}{
      \BIB{#2}{R. Kaufman}
         {Lipschitz spaces and Suslin sets}
         {\jRN{JFA}}{42}{1981}{271--273}{#1}}
   \ITEE{#3}{RKaufman1984}{
      \BIB{#2}{R. Kaufman}
         {Representation of Suslin sets by operators}
         {\jRN{IEOT}}{7}{1984}{808--814}{#1}}
   \ITEE{#3}{OHKeller1931}{
      \BIB{#2}{O.H. Keller}
         {Die Homoiomorphie der kompakten konvexen Mengen in Hilbertschen Raum}
         {\jRN{MAnn}}{105}{1931}{748--758}{#1}}
   \ITEE{#3}{MAKhamsi,WAKirk,CMartinez2000}{
      \BIB{#2}{M.A. Khamsi, W.A. Kirk, C. Martinez}
         {Fixed point and selection theorems in hyperconvex spaces}
         {\jRN{PAMS}}{128}{2000}{3275--3283}{#1}}
   \ITEE{#3}{ABKhararazishvili1998}{
      \BIb{#2}{A.B. Khararazishvili}
         {Transformation groups and invariant measures. Set\hyp{}theoretic aspects}
         {World Scientific Publishing Co., Inc., River Edge, NJ}{1998}{#1}}
   \ITEE{#3}{YKijima1987}{
      \BIB{#2}{Y. Kijima}
         {Fixed points of nonexpansive self\hyp{}maps of a compact metric space}
         {\jRN{JMAnApp}}{123}{1987}{114--116}{#1}}
  \ITEE{#3}{JSKim,ChRKim,SGLee1980}{
      \BIB{#2}{J.S. Kim, Ch.R. Kim, S.G. Lee}
         {Reducing operator valued spectra of a Hilbert space operator}
         {\jRN{JKoreanMS}}{17}{1980}{123--129}{#1}}
   \ITEE{#3}{JKindler1995}{
      \BIB{#2}{J. Kindler}
         {Minimax theorems with applications to convex metric spaces}
         {\jRN{CollM}}{68}{1995}{179--186}{#1}}
   \ITEE{#3}{WAKirk1998}{
      \BIB{#2}{W.A. Kirk}
         {Hyperconvexity of $\RRR$\hyp{}trees}
         {\jRN{FM}}{156}{1998}{67--72}{#1}}
   \ITEE{#3}{VLKleeJr1952}{
      \BIB{#2}{V.L. Klee Jr.}
         {Invariant metrics in groups (solution of a problem of Banach)}
         {\jRN{PAMS}}{3}{1952}{484--487}{#1}}
   \ITEE{#3}{JLKoszul1965}{
      \BIb{#2}{J.L. Koszul}
         {Lectures on groups of transformations}
         {Tata Institute of Fundamental Research, Bombay}{1965}{#1}}
   \ITEE{#3}{HJKowalsky1957}{
      \BIB{#2}{H.J. Kowalsky}
         {Einbettung metrischer R\"{a}ume}
         {\jRN{ArchM}}{8}{1957}{336--339}{#1}}
   \ITEE{#3}{WKubis,MRubin2010}{
      \BIB{#2}{W. Kubi\'{s} and M. Rubin}
         {Extension and reconstruction theorems for the Urysohn universal metric space}
         {\jRN{CzMJ}}{60}{2010}{1--29}{#1}}
   \ITEE{#3}{KKuratowski1966}{
      \BIb{#2}{K. Kuratowski}
         {Topology. \textup{Vol. I}}
         {\jRN{PWN}}{1966}{#1}}
   \ITEE{#3}{KKuratowski,BKnaster1927}{
      \BIB{#2}{K. Kuratowski and B. Knaster}
         {A connected and connected im kleinen point set which contains no perfect subset}
         {\jRN{BAMS}}{33}{1927}{106--109}{#1}}
   \ITEE{#3}{KKuratowski,AMostowski1976}{
      \BIb{#2}{K. Kuratowski and A. Mostowski}
         {Set Theory with an Introduction to Descriptive Set Theory}
         {\jRN{PWN}}{1976}{#1}}
   \ITEE{#3}{GLewicki1992}{
      \BIB{#2}{G. Lewicki}
         {Bernstein's ``lethargy'' theorem in metrizable topological linear spaces}
         {\jRN{MonatM}}{113}{1992}{213--226}{#1}}
   \ITEE{#3}{ASLewis1996}{
      \BIB{#2}{A.S. Lewis}
         {Group invariance and convex matrix analysis}
         {\jRN{SIAMJMAA}}{17}{1996}{927--949}{#1}}
   \ITEE{#3}{C-KLi,N-KTsing1991}{
      \BIB{#2}{C.-K. Li and N.-K. Tsing}
         {$G$\hyp{}invariant norms and $G(c)$\hyp{}radii}
         {\jRN{LAA}}{150}{1991}{179--194}{#1}}
   \ITEE{#3}{AJLazar,JLindenstrauss1971}{
      \BIB{#2}{A.J. Lazar and J. Lindenstrauss}
         {Banach spaces whose duals are $L_1$ spaces and their representing matrices}
         {\jRN{ActaM}}{126}{1971}{165--193}{#1}}
   \ITEE{#3}{SGLee1980}{
      \BIB{#2}{S.G. Lee}
         {Remarks on reducing operator valued spectrum}
         {\jRN{JKoreanMS}}{16}{1980}{131--136}{#1}}
   \ITEE{#3}{EHLieb,MLoss1997}{
      \BIb{#2}{E.H. Lieb and M. Loss}
         {Analysis \textup{(Graduate Studies in Mathematics, vol. 14)}}
         {Amer. Math. Soc., Providence, RI}{1997}{#1}}
   \ITEE{#3}{HLin2001}{
      \BIB{#2}{H. Lin}
         {Residually finite\hyp{}dimensional and AF\hyp{}embeddable $\CCc^*$\hyp{}algebras}
         {\jRN{PAMS}}{129}{2001}{1689--1696}{#1}}
   \ITEE{#3}{ALindenbaum1926}{
      \BIB{#2}{A. Lindenbaum}
         {Contributions \`{a} l'\'{e}tude de l'espace m\'{e}trique I}
         {\jRN{FM}}{8}{1926}{209--222}{#1}}
   \ITEE{#3}{DLindenstrauss,LTzafriri1971}{
      \BIB{#2}{D. Lindenstrauss and L. Tzafriri}
         {On the complemented subspaces problem}
         {\jRN{IsraelJM}}{9}{1971}{263--269}{#1}}
   \ITEE{#3}{RILoebl1986}{
      \BIB{#2}{R.I. Loebl}
         {A note on containment of operators}
         {\jRN{BAustrMS}}{33}{1986}{279--291}{#1}}
   \ITEE{#3}{LHLoomis1945}{
      \BIB{#2}{L.H. Loomis}
         {Abstract congruence and the uniqueness of Haar measure}
         {\jRN{AnnM}}{46}{1945}{348--355}{#1}}
   \ITEE{#3}{LHLoomis1949}{
      \BIB{#2}{L.H. Loomis}
         {Haar measure in uniform structures}
         {\jRN{DMJ}}{16}{1949}{193--208}{#1}}
   \ITEE{#3}{ERLorch1939}{
      \BIB{#2}{E.R. Lorch}
         {Bicontinuous linear transformation in certain vector spaces}
         {\jRN{BAMS}}{45}{1939}{564--569}{#1}}
   \ITEE{#3}{KLowner1934}{
      \BIB{#2}{K. L\"{o}wner}
         {\"{U}ber monotone Matrixfunctionen}
         {\jRN{MZ}}{38}{1934}{177--216}{#1}}
   \ITEE{#3}{FLuft1968}{
      \BIB{#2}{F. Luft}{The two-sided 
         closed ideals of the algebra of bounded linear operators of a Hilbert space}
         {\jRN{CzMJ}}{18}{1968}{595--605}{#1}}
   \ITEE{#3}{ATLundell,SWeingram1969}{
      \BIb{#2}{A.T. Lundell and S. Weingram}
         {The topology of CW\hyp{}complexes}
         {Litton Educ. Publ.}{1969}{#1}}
   \ITEE{#3}{WLusky1976}{
      \BIB{#2}{W. Lusky}
         {The Gurarij spaces are unique}
         {\jRN{ArchM}}{27}{1976}{627--635}{#1}}
   \ITEE{#3}{WLusky1977}{
      \BIB{#2}{W. Lusky}
         {On separable Lindenstrauss spaces}
         {\jRN{JFA}}{26}{1977}{103--120}{#1}}
   \ITEE{#3}{DMaharam1942}{
      \BIB{#2}{D. Maharam}
         {On homogeneous measure algebras}
         {\jRN{PNAS}}{28}{1942}{108--111}{#1}}
   \ITEE{#3}{MMalicki,SSolecki2009}{
      \BIB{#2}{M. Malicki and S. Solecki}
         {Isometry groups of separable metric spaces}
         {\jRN{MProcCambPhS}}{146}{2009}{67--81}{#1}}
   \ITEE{#3}{PMankiewicz1972}{
      \BIB{#2}{P. Mankiewicz}
         {On extension of isometries in normed linear spaces}
         {\jRN{BAPolSSSM}}{20}{1972}{367--371}{#1}}
   \ITEE{#3}{AManoussos,PStrantzalos2003}{
      \BIB{#2}{A. Manoussos and P. Strantzalos}
         {On the group of isometries on a locally compact metric space}
         {\jRN{JLieTh}}{13}{2003}{7--12}{#1}}
   \ITEE{#3}{JMartinezMaurica,MTPellon1987}{
      \BIB{#2}{J. Martinez\hyp{}Maurica and M.T. Pell\'{o}n}
         {Non\hyp{}archimedean Chebyshev centers}
         {\jRN{IndagMP}}{90}{1987}{417--421}{#1}}
   \ITEE{#3}{KMaurin1980}{
      \BIb{#2}{K. Maurin}
         {Analysis, Part II}
         {D. Reidel, Dordrecht\hyp{}Boston\hyp{}London}{1980}{#1}}
   \ITEE{#3}{EMayer-Wolf1981}{
      \BIB{#2}{E. Mayer-Wolf}
         {Isometries between Banach spaces of Lipschitz functions}
         {\jRN{IsraelJM}}{38}{1981}{58--74}{#1}}
   \ITEE{#3}{SMazur,SUlam1932}{
      \BIB{#2}{S. Mazur and S. Ulam}
         {Sur les transformationes isom\'{e}triques d'espaces vectoriels norm\'{e}s}
         {\jRN{CRASParis}}{194}{1932}{946--948}{#1}}
   \ITEE{#3}{SMazurkiewicz1920}{
      \BIB{#2}{S. Mazurkiewicz}
         {Sur les lignes de Jordan}
         {\jRN{FM}}{1}{1920}{166--209}{#1}}
   \ITEE{#3}{SMazurkiewicz,WSierpinski1920}{
      \BIB{#2}{S. Mazurkiewicz and W. Sierpi\'{n}ski}
         {Contributions a la topologie des ensembles denombrables}
         {\jRN{FM}}{1}{1920}{17--27}{#1}}
   \ITEE{#3}{MMbekhta1992}{
      \BIB{#2}{M. Mbekhta}
         {Sur la structure des composantes connexes semi\hyp{}Fredholm de $B(H)$}
         {\jRN{PAMS}}{116}{1992}{521--524}{#1}}
   \ITEE{#3}{JEMcCarthy1996}{
      \BIB{#2}{J.E. McCarthy}
         {Boundary values and Cowen\hyp{}Douglas curvature}
         {\jRN{JFA}}{137}{1996}{1--18}{#1}}
   \ITEE{#3}{JMelleray2007}{
      \BIB{#2}{J. Melleray}
         {Computing the complexity of the relation of isometry between separable Banach spaces}
         {\jRN{MLQ}}{53}{2007}{128--131}{#1}}
   \ITEE{#3}{JMelleray2007a}{
      \BIB{#2}{J. Melleray}
         {On the geometry of Urysohn's universal metric space}
         {\jRN{TopA}}{154}{2007}{384--403}{#1}}
   \ITEE{#3}{JMelleray2008}{
      \BIB{#2}{J. Melleray}
         {Some geometric and dynamical properties of the Urysohn space}
         {\jRN{TopA}}{155}{2008}{1531--1560}{#1}}
   \ITEE{#3}{JMelleray2008a}{
      \BIB{#2}{J. Melleray}
         {Compact metrizable groups are isometry groups of compact metric spaces}
         {\jRN{PAMS}}{136}{2008}{1451--1455}{#1}}
   \ITEE{#3}{JMelleray,FVPetrov,AMVershik2008}{
      \BIB{#2}{J. Melleray, F.V. Petrov, A.M. Vershik}
         {Linearly rigid metric spaces and the embedding problem}
         {\jRN{FM}}{199}{2008}{177--194}{#1}}
   \ITEE{#3}{EMichael1953}{
      \BIB{#2}{E. Michael}
         {Some extension theorems for continuous functions}
         {\jRN{PacJM}}{3}{1953}{789--806}{#1}}
   \ITEE{#3}{EMichael1954}{
      \BIB{#2}{E. Michael}
         {Local properties of topological spaces}
         {\jRN{DMJ}}{21}{1954}{163--171}{#1}}
   \ITEE{#3}{EMichael1956}{
      \BIB{#2}{E. Michael}
         {Selected selection theorems}
         {\jRN{AmMMon}}{58}{1956}{233--238}{#1}}
   \ITEE{#3}{EMichael1956a}{
      \BIB{#2}{E. Michael}
         {Continuous selections. I}
         {\jRN{AnnM}}{63}{1956}{361--382}{#1}}
   \ITEE{#3}{EMichael1956b}{
      \BIB{#2}{E. Michael}
         {Continuous selections. II}
         {\jRN{AnnM}}{64}{1956}{562--580}{#1}}
   \ITEE{#3}{EMichael1959}{
      \BIB{#2}{E. Michael}
         {A theorem on semi\hyp{}continuous set\hyp{}valued functions}
         {\jRN{DMJ}}{26}{1959}{647--652}{#1}}
   \ITEE{#3}{EMichael1964}{
      \BIB{#2}{E. Michael}
         {A short proof of the Arens-Eells embedding theorem}
         {\jRN{PAMS}}{15}{1964}{415--416}{#1}}
   \ITEE{#3}{JVanMill1986}{
      \BIB{#2}{J. van Mill}
         {Another counterexample in ANR theory}
         {\jRN{PAMS}}{97}{1986}{136--138}{#1}}
   \ITEE{#3}{JVanMill2001}{
      \BIb{#2}{J. van Mill}
         {The Infinite\hyp{}Dimensional Topology of Function Spaces 
         \textup{(North\hyp{}Holland Mathematical Library, vol. 64)}}
         {Elsevier, Amsterdam}{2001}{#1}}
   \ITEE{#3}{KMine2006}{
      \BIB{#2}{K. Mine}
         {Universal spaces of non\hyp{}separable absolute Borel classes}
         {\jRN{TsukubaJM}}{30}{2006}{137--148}{#1}}
   \ITEE{#3}{WMlak1991}{
      \BIb{#2}{W. Mlak}{Hilbert Spaces and Operator Theory}
         {PWN --- Polish Scientific Publishers and Kluwer Academic Publishers, 
         Warszawa\hyp{}Dordrecht}{1991}{#1}}
   \ITEE{#3}{JMogilski1979}{
      \BIB{#2}{J. Mogilski}
         {$CE$\hyp{}decomposition of $\ell_2$\hyp{}manifolds}
         {\jRN{BAPolSSSM}}{27}{1979}{309--314}{#1}}
   \ITEE{#3}{RLMoore1916}{
      \BIB{#2}{R.L. Moore}
         {On the foundations of plane analysis situs}
         {\jRN{TAMS}}{17}{1916}{131--164}{#1}}
   \ITEE{#3}{KMorita1955}{
      \BIB{#2}{K. Morita}
         {A condition for the metrizability of topological spaces and for $n$\hyp{}dimensionality}
         {\jRN{SciRepTokyoA}}{5}{1955}{33--36}{#1}}
   \ITEE{#3}{AMukherjea,NATserpes1976}{
      \BIb{#2}{A. Mukherjea and N.A. Tserpes}
         {Measures on topological semigroups}
         {Springer Lecture Notes in Math. Vol. 547, Berlin}{1976}{#1}}
   \ITEE{#3}{JMycielski1974}{
      \BIB{#2}{J. Mycielski}
         {Remarks on invariant measures in metric spaces}
         {\jRN{CollM}}{32}{1974}{105--112}{#1}}
   \ITEE{#3}{SNNaboko1984}{
      \BIB{#2}{S.N. Naboko}
         {Conditions for similarity to unitary and selfadjoint operators}
         {\jRN{FunkAnalPril}}{18}{1984}{16--27}{#1}}
   \ITEE{#3}{LNachbin1965}{
      \BIb{#2}{L. Nachbin}{The Haar Integral}
         {D. Van Nostrand Company, Inc., 
         Princeton\hyp{}New Jersey\hyp{}Toronto\hyp{}New York\hyp{}London}{1965}{#1}}
   \ITEE{#3}{TDNarang,SKGarg1991}{
      \BIB{#2}{T.D. Narang and S.K. Garg}
         {On the uniqueness of best approximation in non\hyp{}archimedian spaces}
         {\jRN{PeriodMHung}}{22}{1991}{121--124}{#1}}
   \ITEE{#3}{JVonNeumann1930}{
      \BIB{#2}{J. von Neumann}
         {Zur Algebra der Funktionaloperationen und Theorie der normalen Operatoren}
         {\jRN{MAnn}}{102}{1930}{370--427}{#1}}
   \ITEE{#3}{JVonNeumann1934}{
      \BIB{#2}{J. von Neumann}
         {Zum Haarschen Mass in topologischen Gruppen}
         {\jRN{ComposM}}{1}{1934}{106--114}{#1}}
   \ITEE{#3}{JVonNeumann1937}{
      \BiB{#2}{J. von Neumann}{Some matrix\hyp{}inequalities 
         and metrization of matrix\hyp{}space}{\jRN{TomskUnivRev}{} \textbf{1} (1937), 
         286--300; in }{Collected Works}{Pergamon, New York}{1962}{Vol. 4, 205--219}{#1}}
   \ITEE{#3}{JVonNeumann1949}{
      \BIB{#2}{J. von Neumann}
         {On Rings of Operators. Reduction Theory}
         {\jRN{AnnM}}{50}{1949}{401--485}{#1}}
   \ITEE{#3}{ONielson1973}{
      \BIB{#2}{O. Nielson}
         {Borel sets of von Neumann algebras}
         {\jRN{AmJM}}{95}{1973}{145--164}{#1}}
   \ITEE{#3}{pn2002}{\bibITEM{#2}{#1} \mypaplist{pn1}{}}
   \ITEE{#3}{pn2006a}{\bibITEM{#2}{#1} \mypaplist{pn2}{}}
   \ITEE{#3}{pn2006b}{\bibITEM{#2}{#1} \mypaplist{pn3}{}}
   \ITEE{#3}{pn2007}{\bibITEM{#2}{#1} \mypaplist{pn4}{}}
   \ITEE{#3}{pn2008a}{\bibITEM{#2}{#1} \mypaplist{pn5}{}}
   \ITEE{#3}{pn2008b}{\bibITEM{#2}{#1} \mypaplist{pn6}{}}
   \ITEE{#3}{pn2009a}{\bibITEM{#2}{#1} \mypaplist{pn7}{}}
   \ITEE{#3}{pn2009b}{\bibITEM{#2}{#1} \mypaplist{pn8}{}}
   \ITEE{#3}{pn2009c}{\bibITEM{#2}{#1} \mypaplist{pn9}{}}
   \ITEE{#3}{pn2010a}{\bibITEM{#2}{#1} \mypaplist{pn10}{}}
   \ITEE{#3}{pn2010b}{\bibITEM{#2}{#1} \mypaplist{pn11}{}}
   \ITEE{#3}{pn2011a}{\bibITEM{#2}{#1} \mypaplist{pn12}{}}
   \ITEE{#3}{pn2011b}{\bibITEM{#2}{#1} \mypaplist{pn13}{}}
   \ITEE{#3}{pn2011c}{\bibITEM{#2}{#1} \mypaplist{pn14}{}}
   \ITEE{#3}{pn2011d}{\bibITEM{#2}{#1} \mypaplist{pn15}{}}
   \ITEE{#3}{pn2011e}{\bibITEM{#2}{#1} \mypaplist{pn16}{}}
   \ITEE{#3}{pn2011f}{\bibITEM{#2}{#1} \mypaplist{pn17}{}}
   \ITEE{#3}{pn2012a-}{\bibITEM{#2}{#1} \mypaplist[*]{pn18}{}}
   \ITEE{#3}{pn2012a}{\bibITEM{#2}{#1} \mypaplist{pn18}{}}
   \ITEE{#3}{pn2012b}{\bibITEM{#2}{#1} \mypaplist{pn19}{}}
   \ITEE{#3}{pn2012c}{\bibITEM{#2}{#1} \mypaplist{pn20}{}}
   \ITEE{#3}{pn2012d}{\bibITEM{#2}{#1} \mypaplist{pn21}{}}
   \ITEE{#3}{pn2012e}{\bibITEM{#2}{#1} \mypaplist{pn22}{}}
   \ITEE{#3}{pn2012f}{\bibITEM{#2}{#1} \mypaplist{pn23}{}}
   \ITEE{#3}{pnXXXXb}{
      \bibITEM{#2}{#1} \mypaplist{pnX2}{}}
   \ITEE{#3}{pnXXXXc}{
      \bibITEM{#2}{#1} \mypaplist{pnX3}{}}
   \ITEE{#3}{pnXXXXd}{
      \bibITEM{#2}{#1} \mypaplist{pnX17}{}}
   \ITEE{#3}{MNiezgoda1998}{
      \BIB{#2}{M. Niezgoda}
         {Group majorization and Schur type inequalities}
         {\jRN{LAA}}{268}{1998}{9--30}{#1}}
   \ITEE{#3}{MNiezgoda1998a}{
      \BIB{#2}{M. Niezgoda}{An analytical 
         characterization of effective and of irreducible groups inducing cone orderings}
         {\jRN{LAA}}{269}{1998}{105--114}{#1}}
   \ITEE{#3}{MNiezgoda,TYTam2001}{
      \BIB{#2}{M. Niezgoda and T.Y. Tam}
         {On norm property of $G(c)$\hyp{}radii and Eaton triples}
         {\jRN{LAA}}{336}{2001}{119--130}{#1}}
   \ITEE{#3}{LNNikolskaya1974}{
      \BIB{#2}{L.N. Nikol'skaya}{Structure of the point spectrum 
         of a linear operator \textup{(Russian)}}{\jRN{MZamet}}{15}{1974}{149--158; 
         English translation in \jRN{MNotes} \textbf{15} (1974), 83--87}{#1}}
   \ITEE{#3}{APazy1983}{
      \BIb{#2}{A. Pazy}{Semigroups of Linear Operators and Applications 
         to Partial Differential Equations \textup{(Applied Mathematical Sciences, vol. 44)}}
         {Springer\hyp{}Verlag, New York}{1983}{#1}}
   \ITEE{#3}{CPearcy,NSalinas1974}{
      \BIB{#2}{C. Pearcy and N. Salinas}
         {Finite-dimensional representations of separable $\CCc^*$-algebras}
         {\jRN{NAMS}}{21}{1974}{A-376}{#1}}
   \ITEE{#3}{APelc1982}{
      \BIB{#2}{A. Pelc}
         {Semiregular invariant measures on abelian groups}
         {\jRN{PAMS}}{86}{1982}{423--426}{#1}}
   \ITEE{#3}{RPenrose1955}{
      \BIB{#2}{R. Penrose}
         {A generalized inverse for matrices}
         {\jRN{ProcCambPhS}}{51}{1955}{406--413}{#1}}
   \ITEE{#3}{VPestov2006}{
      \BIb{#2}{V. Pestov}{Dynamics of infinite\hyp{}dimensional 
         groups. The Ramsey\hyp{}Dvoretzky\hyp{}Milman phenomenon}
         {University Lecture Series \textbf{40}, AMS, Providence, RI}{2006}{#1}}
   \ITEE{#3}{VPestov2007}{
      \BiB{#2}{V. Pestov}{Forty\hyp{}plus annotated 
         questions about large topological groups}{in:}{Open Problems in Topology II}
         {Elliot Pearl (editor), Elsevier B.V., Amsterdam}{2007}{439--450}{#1}}
   \ITEE{#3}{PVPetersen1993}{
      \BiB{#2}{P.V. Petersen}{Gromov\hyp{}Hausdorff convergence 
         of metric spaces}{in book:}{Differential Geometry: Riemannian Geometry 
         (Los Angeles, CA, 1990)}{Amer. Math. Soc., Providence, RI}{1993}{489--504}{#1}}
   \ITEE{#3}{HPfister1985}{
      \BIB{#2}{H. Pfister}
         {Continuity of the inverse}
         {\jRN{PAMS}}{95}{1985}{312--314}{#1}}
   \ITEE{#3}{LPontrjagin1946}{
      \BIb{#2}{L. Pontrjagin}
         {Topological Groups}
         {Princeton University Press, Princeton}{1946}{#1}}
   \ITEE{#3}{DRamachandran,MMisiurewicz1982}{
      \BIB{#2}{D. Ramachandran and M. Misiurewicz}
         {Hopf's theorem on invariant measures for a group of transformations}
         {\jRN{SM}}{74}{1982}{183--189}{#1}}
   \ITEE{#3}{WRoelcke,SDierolf1981}{
      \BIb{#2}{W. Roelcke and S. Dierolf}
         {Uniform Structures on Topological Groups and Their Quotients}
         {McGraw Hill, New York}{1981}{#1}}
   \ITEE{#3}{JMRosenblatt1974}{
      \BIB{#2}{J.M. Rosenblatt}
         {Equivalent invariant measures}
         {\jRN{IsraelJM}}{17}{1974}{261--270}{#1}}
   \ITEE{#3}{SRosset1976}{
      \BIB{#2}{S. Rosset}
         {A new proof of the Amitsur-Levitski identity}
         {\jRN{IsraelJM}}{23}{1976}{187--188}{#1}}
   \ITEE{#3}{HLRoyden1963}{
      \BIb{#2}{H.L. Royden}
         {Real Analysis}
         {The Macmillan Co., New York}{1963}{#1}}
   \ITEE{#3}{WRudin1962}{
      \BIb{#2}{W. Rudin}{Fourier Analysis on Groups 
         \textup{(Interscience Tracts in Pure and Applied Mathematics, Number 12)}}
         {Interscience Publishers, New York}{1962}{#1}}
   \ITEE{#3}{WRudin1991}{
      \BIb{#2}{W. Rudin}
         {Functional Analysis}
         {McGraw\hyp{}Hill Science}{1991}{#1}}
   \ITEE{#3}{TSaito1972}{
      \BiB{#2}{T. Sait\^{o}}{Generations of von Neumann algebras}
         {Lecture Notes in Math. vol. 247}{\textup{(}Lecture on Operator Algebras\textup{)}}
         {Springer, Berlin\hyp{}Heidelberg\hyp{}New York}{1972}{435--531}{#1}}
   \ITEE{#3}{KSakai,MYaguchi2003}{
      \BIB{#2}{K. Sakai and M. Yaguchi}{Characterizing 
         manifolds modeled on certain dense subspaces of non\hyp{}separable Hilbert spaces}
         {\jRN{TsukubaJM}}{27}{2003}{143--159}{#1}}
   \ITEE{#3}{SSakai1971}{
      \BIb{#2}{S. Sakai}
         {$\CCc^*$\hyp{}Algebras and $\WWw^*$\hyp{}Algebras}
         {Springer\hyp{}Verlag, Berlin\hyp{}Heidelberg\hyp{}New York}{1971}{#1}}
   \ITEE{#3}{RSchori1971}{
      \BIB{#2}{R. Schori}
         {Topological stability for infinite\hyp{}dimensional manifolds}
         {\jRN{ComposM}}{23}{1971}{87--100}{#1}}
   \ITEE{#3}{JTSchwartz1967}{
      \BIb{#2}{J.T. Schwartz}
         {$\WWw^*$\hyp{}algebras}
         {Gordon and Breach, Science Publishers Inc., New York\hyp{}London\hyp{}Paris}{1967}{#1}}
   \ITEE{#3}{ZSemadeni1971}{
      \BIb{#2}{Z. Semadeni}
         {Banach Spaces of Continuous Functions (Vol. I)}
         {\jRN{PWN}}{1971}{#1}}
   \ITEE{#3}{JPSerre1951}{
      \BIB{#2}{J.-P. Serre}
         {Homologie singuli\`{e}re des espaces fibr\'{e}s}
         {\jRN{AnnM}}{54}{1951}{425--505}{#1}}
   \ITEE{#3}{DSherman2007}{
      \BIB{#2}{D. Sherman}
         {On the dimension theory of von Neumann algebras}
         {\jRN{MScand}}{101}{2007}{123--147}{#1}}
   \ITEE{#3}{DSherman2007a}{
      \BIB{#2}{D. Sherman}
         {Unitary orbits of normal operators in von Neumann algebras}
         {\jRN{JReinAngM}}{605}{2007}{95--132}{#1}}
   \ITEE{#3}{SAShkarin1999}{
      \BIB{#2}{S.A. Shkarin}
         {Universal Abelian topological groups}
         {\jRN{SbM}}{190}{1999}{1059--1076}{#1}}
   \ITEE{#3}{WSierpinski1920}{
      \BIB{#2}{W. Sierpi\'{n}ski}
         {Sur une propri\'{e}t\'{e} topologique des ensembles d\'{e}nombrables denses en soi}
         {\jRN{FM}}{1}{1920}{11--16}{#1}}
   \ITEE{#3}{WSierpinski1928}{
      \BIB{#2}{W. Sierpi\'{n}ski}
         {Sur les projections des ensembles compl\'{e}mentaires aux ensembles \textup{(A)}}
         {\jRN{FM}}{11}{1928}{117--122}{#1}}
   \ITEE{#3}{MSlocinski1980}{
      \BIB{#2}{M. S\l{}oci\'{n}ski}
         {On the Wold\hyp{}type decomposition of a pair of commuting isometries}
         {\jRN{APM}}{37}{1980}{255--262}{#1}}
   \ITEE{#3}{OGSmolyanov,SAShkarin1999}{
      \BiB{#2}{O.G. Smolyanov and S.A. Shkarin}{On the structure of spectra of linear operators 
         in Hilbert space}{in:}{Selected questions in mathematics, mechanics, and their 
         applications}{Moscow State University, Moscow}{1999}{289-–302}{#1}}
   \ITEE{#3}{OGSmolyanov,SAShkarin2001}{
      \BIB{#2}{O.G. Smolyanov and S.A. Shkarin}{On the structure of the spectra 
         of linear operators in Banach spaces \textup{(Russian)}}{\jRN{MSb}}{192}
         {2001}{99--114}{#1} English translation: \jRN{SbM} \textbf{192} (2001), 577--591.}
   \ITEE{#3}{RCSteinlage1975}{
      \BIB{#2}{R.C. Steinlage}
         {On Haar measure in locally compact $T_2$ spaces}
         {\jRN{AmJM}}{97}{1975}{291--307}{#1}}
   \ITEE{#3}{JStochel,FHSzafraniec1989}{
      \BIB{#2}{J. Stochel and F.H. Szafraniec}
         {On normal extensions of unbounded operators. III. Spectral properties}
         {\jRN{PublRIMSKyoto}}{25}{1989}{105--139}{#1}}
   \ITEE{#3}{JStochel,FHSzafraniec1989a}{
      \BIB{#2}{J. Stochel and F.H. Szafraniec}
         {The normal part of an unbounded operator}
         {\jRN{ProcKonink}}{92}{1989}{495--503}{#1}}
   \ITEE{#3}{AHStone1962}{
      \BIB{#2}{A.H. Stone}
         {Absolute $\FFf_{\sigma}$\hyp{}spaces}
         {\jRN{PAMS}}{13}{1962}{495--499}{#1}}
   \ITEE{#3}{AHStone1962a}{
      \BIB{#2}{A.H. Stone}
         {Non\hyp{}separable Borel sets}
         {\jRN{DissM}}{28}{1962}{41 pages}{#1}}
   \ITEE{#3}{AHStone1972}{
      \BIB{#2}{A.H. Stone}
         {Non\hyp{}separable Borel sets II}
         {\jRN{GTopA}}{2}{1972}{249--270}{#1}}
   \ITEE{#3}{MHStone1937}{
      \BIB{#2}{M.H. Stone}
         {Application of the theory of Boolean rings to general topology}
         {\jRN{TAMS}}{41}{1937}{375--481}{#1}}
   \ITEE{#3}{MHStone1948}{
      \BIB{#2}{M.H. Stone}
         {The generalized Weierstrass approximation theorem}
         {\jRN{MMag}}{21}{1948}{167--184}{#1}}
   \ITEE{#3}{RAStruble1974}{
      \BIB{#2}{R.A. Struble}
         {Metrics in locally compact groups}
         {\jRN{ComposM}}{28}{1974}{217--222}{#1}}
   \ITEE{#3}{RGSwan1963}{
      \BIB{#2}{R.G. Swan}
         {An application of graph theory to algebra}
         {\jRN{PAMS}}{14}{1963}{367--373}{#1}}
   \ITEE{#3}{RGSwan1969}{
      \BIB{#2}{R.G. Swan}
         {Correction to ``An application of graph theory to algebra''}
         {\jRN{PAMS}}{21}{1969}{379--380}{#1}}
   \ITEE{#3}{BSz-Nagy1947}{
      \BIB{#2}{B. Sz.\hyp{}Nagy}
         {On uniformly bounded linear transformations in Hilbert space}
         {\jRN{ActaSM}}{11}{1947}{152--157}{#1}}
   \ITEE{#3}{WSzymanski1974}{
      \BIB{#2}{W. Szyma\'{n}ski}
         {Decompositions of operator-valued functions in Hilbert spaces}
         {\jRN{SM}}{50}{1974}{265--280}{#1}}
   \ITEE{#3}{WTakahashi1970}{
      \BIB{#2}{W. Takahashi}
         {A convexity in metric space and nonexpansive mappings, I}
         {\jRN{KodaiMSemRep}}{22}{1970}{142--149}{#1}}
   \ITEE{#3}{MTakesaki2002}{
      \BIb{#2}{M. Takesaki}{Theory 
         of Operator Algebras I \textup{(Encyclopaedia of Mathematical Sciences, Volume 124)}}
         {Springer\hyp{}Verlag, Berlin\hyp{}Heidelberg\hyp{}New York}{2002}{#1}}
   \ITEE{#3}{MTakesaki2003}{
      \BIb{#2}{M. Takesaki}{Theory 
         of Operator Algebras II \textup{(Encyclopaedia of Mathematical Sciences, Volume 125)}}
         {Springer\hyp{}Verlag, Berlin\hyp{}Heidelberg\hyp{}New York}{2003}{#1}}
   \ITEE{#3}{MTakesaki2003a}{
      \BIb{#2}{M. Takesaki}{Theory 
         of Operator Algebras III \textup{(Encyclopaedia of Mathematical Sciences, Volume 127)}}
         {Springer\hyp{}Verlag, Berlin\hyp{}Heidelberg\hyp{}New York}{2003}{#1}}
   \ITEE{#3}{TYTam1999}{
      \BIB{#2}{T.Y. Tam}
         {An extension of a result of Lewis}
         {\jRN{ELA}}{5}{1999}{1--10}{#1}}
   \ITEE{#3}{TYTam2000}{
      \BIB{#2}{T.Y. Tam}
         {Group majorization, Eaton triples and numerical range}
         {\jRN{LMLA}}{47}{2000}{11--28}{#1}}
   \ITEE{#3}{TYTam2002}{
      \BIB{#2}{T.Y. Tam}
         {Generalized Schur\hyp{}concave functions and Eaton triples}
         {\jRN{LMLA}}{50}{2002}{113--120}{#1}}
   \ITEE{#3}{TYTam,WCHill2001}{
      \BIB{#2}{T.Y. Tam and W.C. Hill}
         {On $G$\hyp{}invariant norms}
         {\jRN{LAA}}{331}{2001}{101--112}{#1}}
   \ITEE{#3}{AFTiman,IAVestfrid1983}{
      \BIB{#2}{A.F. Timan and I.A. Vestfrid}
         {Any separable ultrametric space can be isometrically imbedded in $\ell_2$}
         {\jRN{FAA}}{17}{1983}{70--71}{#1}}
   \ITEE{#3}{VTimofte2005}{
      \BIB{#2}{V. Timofte}
         {Stone\hyp{}Weierstrass theorems revisited}
         {\jRN{JAT}}{136}{2005}{45--59}{#1}}
   \ITEE{#3}{JTomiyama1958}{
      \BIB{#2}{J. Tomiyama}
         {Generalized dimension function for $\WWw^*$\hyp{}algebras of infinite type}
         {\jRN{TohokuMJ} (2)}{10}{1958}{121--129}{#1}}
   \ITEE{#3}{HTorunczyk1970}{
      \BIB{#2}{H. Toru\'{n}czyk}
         {Remarks on Anderson's paper ``On topological infinite deficiency''}
         {\jRN{FM}}{66}{1970}{393--401}{#1}}
   \ITEE{#3}{HTorunczyk1970a}{
      \BIb{#2}{H. Toru\'{n}czyk}
         {$G$\hyp{}$K$\hyp{}absorbing and skeletonized sets in metric spaces}
         {Ph.D. thesis, Inst. Math. Polish Acad. Sci., Warszawa}{1970}{#1}}
   \ITEE{#3}{HTorunczyk1972}{
      \BIB{#2}{H. Toru\'{n}czyk}
         {A short proof of Hausdorff's theorem on extending metrics}
         {\jRN{FM}}{77}{1972}{191--193}{#1}}
   \ITEE{#3}{HTorunczyk1974}{
      \BIB{#2}{H. Toru\'{n}czyk}
         {Absolute retracts as factors of normed linear spaces}
         {\jRN{FM}}{86}{1974}{53--67}{#1}}
   \ITEE{#3}{HTorunczyk1975}{
      \BIB{#2}{H. Toru\'{n}czyk}
         {On Cartesian factors and the topological classification of linear metric spaces}
         {\jRN{FM}}{88}{1975}{71--86}{#1}}
   \ITEE{#3}{HTorunczyk1978}{
      \BIB{#2}{H. Toru\'{n}czyk}{Concerning 
         locally homotopy negligible sets and characterization of $\ell_2$\hyp{}manifolds}
         {\jRN{FM}}{101}{1978}{93--110}{#1}}
   \ITEE{#3}{HTorunczyk1980}{
      \BiB{#2}{H. Toru\'{n}czyk}{Characterization of infinite\hyp{}dimensional manifolds}{in:}
         {Proceedings of the International Conference on Geometric Topology (Warsaw, 1978)}
         {\jRN{PWN}}{1980}{431--437}{#1}}
   \ITEE{#3}{HTorunczyk1981}{
      \BIB{#2}{H. Toru\'{n}czyk}
         {Characterizing Hilbert space topology}
         {\jRN{FM}}{111}{1981}{247--262}{#1}}
   \ITEE{#3}{HTorunczyk1985}{
      \BIB{#2}{H. Toru\'{n}czyk}
         {A correction of two papers concerning Hilbert manifolds}
         {\jRN{FM}}{125}{1985}{89--93}{#1}}
   \ITEE{#3}{KTsuda1985}{
      \BIB{#2}{K. Tsuda}
         {A note on closed embeddings of finite dimensional metric spaces}
         {\jRN{BLondMS}}{17}{1985}{273--278}{#1}}
   \ITEE{#3}{PSUrysohn1925}{
      \BIB{#2}{P.S. Urysohn}
         {Sur un espace m\'{e}trique universel}
         {\jRN{CRASParis}}{180}{1925}{803--806}{#1}}
   \ITEE{#3}{PSUrysohn1927}{
      \BIB{#2}{P.S. Urysohn}
         {Sur un espace m\'{e}trique universel}
         {\jRN{BullSM}}{51}{1927}{43--64, 74--96}{#1}}
   \ITEE{#3}{VVUspenskij1986}{
      \BIB{#2}{V.V. Uspenskij}
         {A universal topological group with a countable basis}
         {\jRN{FAA}}{20}{1986}{86--87}{#1}}
   \ITEE{#3}{VVUspenskij1990}{
      \BIB{#2}{V.V. Uspenskij}
         {On the group of isometries of the Urysohn universal metric space}
         {\jRN{CMUC}}{31}{1990}{181--182}{#1}}
   \ITEE{#3}{VVUspenskij2004}{
      \BIB{#2}{V.V. Uspenskij}
         {The Urysohn universal metric space is homeomorphic to a Hilbert space}
         {\jRN{TopA}}{139}{2004}{145--149}{#1}}
   \ITEE{#3}{VVUspenskij2008}{
      \BIB{#2}{V.V. Uspenskij}
         {On subgroups of minimal topological groups}
         {\jRN{TopA}}{155}{2008}{1580--1606}{#1}}
   \ITEE{#3}{VSVaradarajan1963}{
      \BIB{#2}{V.S. Varadarajan}
         {Groups of automorphisms of Borel spaces}
         {\jRN{TAMS}}{109}{1963}{191--220}{#1}}
   \ITEE{#3}{AMVershik1998}{
      \BIB{#2}{A.M. Vershik}{The universal Urysohn space, 
         Gromov's metric triples, and random metrics on the series of natural numbers}
         {\jRN{UspekhiMN}}{53}{1998}{57--64}{#1} English translation: \jRN{RussMS}{} \textbf{53} 
         (1998), 921--928. Correction: \jRN{UspekhiMN}{} \textbf{56} (2001), p. 207. English 
         translation: \jRN{RussMS}{} \textbf{56} (2001), p. 1015.}
   \ITEE{#3}{AMVershik2002}{
      \BIb{#2}{A.M. Vershik}{Random metric spaces and the universal Urysohn space}
         {Fundamental Mathematics Today. 10th anniversary of the Independent Moscow University. 
         MCCME Publ.}{2002}{#1}}
   \ITEE{#3}{NWeaver1999}{
      \BIb{#2}{N. Weaver}
         {Lipschitz Algebras}
         {World Scientific}{1999}{#1}}
   \ITEE{#3}{JWeidmann1980}{
      \BIb{#2}{J. Weidmann}
         {Linear Operators in Hilbert Spaces}
         {(Graduate Texts in Mathematics, vol. 68) Springer\hyp{}Verlag New York Inc.}{1980}{#1}}
   \ITEE{#3}{JEWest1969}{
      \BIB{#2}{J.E. West}
         {Approximating homotopies by isotopies in Fr\'{e}chet manifolds}
         {\jRN{BAMS}}{75}{1969}{1254--1257}{#1}}
   \ITEE{#3}{JEWest1969a}{
      \BIB{#2}{J.E. West}
         {Fixed\hyp{}point sets of transformation groups on infinite\hyp{}product spaces}
         {\jRN{PAMS}}{21}{1969}{575--582}{#1}}
   \ITEE{#3}{JEWest1970}{
      \BIB{#2}{J.E. West}
         {The ambient homeomorphy of infinite\hyp{}dimensional Hilbert spaces}
         {\jRN{PacJM}}{34}{1970}{257--267}{#1}}
   \ITEE{#3}{JHCWhitehead1949}{
      \BIB{#2}{J.H.C. Whitehead}
         {Combinatorial homotopy I}
         {\jRN{BAMS}}{55}{1949}{213--245}{#1}}
   \ITEE{#3}{GTWhyburn1942}{
      \BIb{#2}{G. T. Whyburn}
         {Analytic Topology}
         {Amer. Math. Soc. Colloquium Publications (vol. XXVIII), New York}{1942}{#1}}
   \ITEE{#3}{RWilliamson,LJanos1987}{
      \BIB{#2}{R. Williamson and L. Janos}
         {Constructing metrics with the Heine\hyp{}Borel property}
         {\jRN{PAMS}}{100}{1987}{567--573}{#1}}
   \ITEE{#3}{WWogen1969}{
      \BIB{#2}{W. Wogen}
         {On generators for von Neumann algebras}
         {\jRN{BAMS}}{75}{1969}{95--99}{#1}}
   \ITEE{#3}{RYTWong1967}{
      \BIB{#2}{R.Y.T. Wong}
         {On homeomorphisms of certain infinite dimensional spaces}
         {\jRN{TAMS}}{128}{1967}{148--154}{#1}}
   \ITEE{#3}{LYang,JZhang1987}{
      \BIB{#2}{L. Yang and J. Zhang}
         {Average distance constants of some compact convex space}
         {\jRN{JChinUST}}{17}{1987}{17--23}{#1}}
   \ITEE{#3}{PZakrzewski1993}{
      \BIB{#2}{P. Zakrzewski}
         {The existence of invariant $\sigma$\hyp{}finite measures for a group of transformations}
         {\jRN{IsraelJM}}{83}{1993}{275--287}{#1}}
   \ITEE{#3}{PZakrzewski2002}{
      \BIb{#2}{P. Zakrzewski}
         {Measures on Algebraic\hyp{}Topological Structures, Handbook of Measure Thoery}
         {E. Pap, ed., Elsevier, Amsterdam}{2002, 1091--1130}{#1}}
   \ITEE{#3}{WZelazko1960}{
      \BIB{#2}{W. \.{Z}elazko}
         {A theorem on $B_0$ division algebras}
         {\jRN{BPAS}}{8}{1960}{373--375}{#1}}
   \ITEE{#3}{KZhu2000}{
      \BIB{#2}{K. Zhu}
         {Operators in Cowen\hyp{}Douglas classes}
         {\jRN{IllinoisJM}}{44}{2000}{767--783}{#1}}
   \ITEE{#3}{JDieudonne1939}{
      \BIB{#2}{J. Dieudonn\'{e}}
         {Sur les espaces uniformes complets}
         {\jRN{AnnSciEcNormSup}}{56}{1939}{277--291}{#1}}
   }
\newcommand{\mypaplist}[3][]{
   \ITEE{#2}{pn1}{
      \myBIB{Separate and joint similarity to families of normal operators}
         {\jRN{SM}}{149}{2002}{39--62}{#3}}
   \ITEE{#2}{pn2}{
      \myBIB{Locally arcwise connected metrizable spaces with the fixed point property are 
         complete\hyp{}metrizable}{\jRN{TopA}}{153}{2006}{1639--1642}{#3}}
   \ITEE{#2}{pn3}{
      \myBIB{Invariant measures for equicontinuous semigroups of continuous transformations 
         of a compact Hausdorff space}{\jRN{TopA}}{153}{2006}{3373--3382}{#3}}
   \ITEE{#2}{pn4}{
      \myBIB{Approximation of the Hausdorff distance by the distance of continuous surjections}
         {\jRN{TopA}}{154}{2007}{655--664}{#3}}
   \ITEE{#2}{pn5}{
      \myBIB{Generalized Haar integral}
         {\jRN{TopA}}{155}{2008}{1323--1328}{#3}}
   \ITEE{#2}{pn6}{
      \myBIB{Integration and Lipschitz functions}
         {\jRN{RCMP}}{57}{2008}{391--399}{#3}}
   \ITEE{#2}{pn7}{
      \myBIB{Canonical Banach function spaces generated by Urysohn universal spaces. Measures 
         as Lipschitz maps}{\jRN{SM}}{192}{2009}{97--110}{#3}}
   \ITEE{#2}{pn8}{
      \myBIB{Urysohn universal spaces as metric groups of exponent $2$}
         {\jRN{FM}}{204}{2009}{1--6}{#3}}
   \ITEE{#2}{pn9}{
      \myBIB{Central subsets of Urysohn universal spaces}
         {\jRN{CMUC}}{50}{2009}{445--461}{#3}}
   \ITEE{#2}{pn10}{
      \myBIB{Ultra\hyp{}$\mM$\hyp{}separability}
         {\jRN{TopA}}{157}{2010}{669--673}{#3}}
   \ITEE{#2}{pn11}{
      \myBIB{Functor of extension of $\Lambda$\hyp{}isometric maps between central subsets 
         of the unbounded Urysohn universal space}{\jRN{CMUC}}{51}{2010}{541--549}{#3}}
   \ITEE{#2}{pn12}{
      \myBIB[P. Niemiec and T.Y. Tam]{A representation of $G$\hyp{}invariant norms for Eaton 
         triple}{\jRN{JCA}}{18}{2011}{59--65}{#3}}
   \ITEE{#2}{pn13}{
      \myBIB{Topological structure of Urysohn universal spaces}
         {\jRN{TopA}}{158}{2011}{352--359}{#3}}
   \ITEE{#2}{pn14}{
      \myBIB{A note on invariant measures}
         {\jRN{OpusM}}{31}{2011}{425--431}{#3}}
   \ITEE{#2}{pn15}{
      \myBIB{Strengthened Stone\hyp{}Weierstrass type theorem}
         {\jRN{OpusM}}{31}{2011}{645--650}{#3}}
   \ITEE{#2}{pn16}{
      \myBIB{Generalized absolute values and polar decompositions of a bounded operator}
         {\jRN{IEOT}}{71}{2011}{151--160}{#3}}
   \ITEE{#2}{pn17}{
      \myBIB{Functor of extension of contractions on Urysohn universal spaces}
         {\jRN{ACS}}{19}{2011}{959--967}{#3}}
   \ITEE{#2}{pn18}{
      \myBIB{A note on ANR's}{\jRN{TopA}}{159}{2012}{\ITEE{#1}{}{315--321; erratum: p.~2232}
         \ITEE{#1}{*}{315--321}\ITEE{#1}{**}{p.~2232}}{#3}}
   \ITEE{#2}{pn19}{
      \myBIB{Unitary equivalence and decompositions of finite systems of closed densely defined 
         operators in Hilbert spaces}{\jRN{DissM}}{482}{2012}{106~pp}{#3}}
   \ITEE{#2}{pn20}{
      \myBIB{Problem with almost everywhere equality}
         {\jRN{APM}}{104}{2012}{105--108}{#3}}
   \ITEE{#2}{pn21}{
      \myBIB{Borel parts of the spectrum of an operator and of the operator algebra 
         of a separable Hilbert space}{\jRN{SM}}{208}{2012}{77--85}{#3}}
   \ITEE{#2}{pn22}{
      \myBIB{Normed topological pseudovector groups}
         {\jRN{ACS}}{20}{2012}{303--322}{#3}}
   \ITEE{#2}{pn23}{
      \myBIB{Normal systems over ANR's, rigid embeddings and nonseparable absorbing sets}
         {\jRN{ActaMSinES}}{28}{2012}{1531--1552}{#3}}
   \ITEE{#2}{pnX2}{
      \myBAPP{Functor of continuation in Hilbert cube and Hilbert space}
         {submitted to \jRN{FM}\oNlINE{#1}{(\texttt{http://arxiv.org/abs/1107.1386})}}{#3}}
   \ITEE{#2}{pnX3}{
      \myBAPP{Norm closures of orbits of bounded operators}{accepted for publication 
         in \jRN{JOT}\oNlINE{#1}{(\texttt{http://arxiv.org/abs/1107.1505})}}{#3}}
   \ITEE{#2}{pnX6}{
      \myBAPP{Extending maps in Hilbert manifolds}
         {accepted for publication in \jRN{BPAS}}{#3}}
   \ITEE{#2}{pnX7}{
      \myBAPP{Spaces of measurable functions}
         {submitted to \jRN{CEurJM}\oNlINE{#1}{(\texttt{http://arxiv.org/abs/1107.1495})}}{#3}}
   \ITEE{#2}{pnX10}{
      \myBAPP{Central points and measures and dense subsets of compact metric spaces}
         {accepted for publication in \jRN{TMNA}}{#3}}
   \ITEE{#2}{pnX12}{
      \myBAPP{Ultrametrics, extending of Lipschitz maps and nonexpansive selections}
         {accepted for publication in \jRN{HJM}}{#3}}
   \ITEE{#2}{pnX15}{
      \myBAPP{Universal valued Abelian groups}
         {submitted to \jRN{AdvM}\oNlINE{#1}{(\texttt{http://arxiv.org/abs/1103.1623})}}{#3}}
   \ITEE{#2}{pnX17}{
      \myBAPP{Isometry groups of proper metric spaces}
         {submitted to \jRN{TAMS}\oNlINE{#1}{(\texttt{http://arxiv.org/abs/1201.5675})}}{#3}}
   \ITEE{#2}{pnX18}{
      \myBAPP{Isometry groups among topological groups}
         {submitted to \jRN{ComposM}\oNlINE{#1}{(\texttt{http://arxiv.org/abs/1202.3368})}}{#3}}
   \ITEE{#2}{pnX19}{
      \myBAPP{$\CCc^*$\hyp{}algebras of pure type $I_n$}
         {submitted to \jRN{IsraelJM}\oNlINE{#1}{(\texttt{http://arxiv.org/abs/1203.0857})}}{#3}}
   }
\begin{document}

\title{Isometry groups among topological groups}
\myData
\begin{abstract}
It is shown that a topological group $G$ is topologically isomorphic to the isometry group
of a (complete) metric space iff $G$ coincides with its $\GGg_{\delta}$-closure in the Ra\v{\i}kov
completion of $G$ (resp. if $G$ is Ra\v{\i}kov-complete). It is also shown that for every Polish
(resp. compact Polish; locally compact Polish) group $G$ there is a complete (resp. proper) metric
$d$ on $X$ inducing the topology of $X$ such that $G$ is isomorphic to $\Iso(X,d)$ where $X =
\ell_2$ (resp. $X = [0,1]^{\omega}$; $X = [0,1]^{\omega} \setminus \{\textup{point}\}$). It is
demonstrated that there are a separable Banach space $E$ and a nonzero vector $e \in E$ such that
$G$ is isomorphic to the group of all (linear) isometries of $E$ which leave the point $e$ fixed.
Similar results are proved for arbitrary Ra\v{\i}kov-complete topological groups.
\end{abstract}
\subjclass[2010]{Primary 22A05, 54H11; Secondary 57N20.}
\keywords{Polish group; isometry group; Hibert cube; Hilbert space; Hilbert cube manifold; 
   Ra\v{\i}kov-complete group; isometry group of a Banach space.}
\maketitle

\SECT{Introduction}

In \cite{g-k} Gao and Kechris proved that every Polish group is isomorphic to the (full) isometry
group of some separable complete metric space. Later Melleray \cite{mel} and Malicki and Solecki
\cite{m-s} improved this result in the context of compact and, respectively, locally compact Polish
groups by showing that every such group is isomorphic to the isometry group of a compact and,
respectively, a proper metric space. (A metric space is \textit{proper} iff each closed ball in this
space is compact). All their proofs were complicated and based on the techniques of the so-called
\textit{Kat\v{e}tov} maps. Very recently \cite{pn0} we involved quite a new method to characterize
groups of homeomorphisms of a locally compact Polish space which coincide with the isometry groups
of the space with respect to some proper metrics. As a consequence, we showed that every (separable)
Lie group is isomorphic to the isometry group of another Lie group equipped with some proper metric
and that every finite-dimensional [locally] compact Polish group is isomorphic to the isometry group
of a finite-dimensional [proper locally] compact metric space. One of the aims of this paper is
to give Gao's-Kechris', Melleray's and Malicki's-Solecki's results more ``explicit'' and unified
form:

\begin{thm}{iso}
Let $G$ be a Polish group.
\begin{enumerate}[\upshape(a)]
\item There is a complete compatible metric $d$ on $\ell_2$ such that $G$ is isomorphic to
   $\Iso(\ell_2,d)$.
\item If $G$ is compact, there is a compatible metric $d$ on the Hilbert cube $Q$ such that $G$ is
   isomorphic to $\Iso(Q,d)$.
\item If $G$ is locally compact, there is a proper compatible metric $d$ on $Q \setminus
   \{\textup{point}\}$ such that $G$ is isomorphic to $\Iso(Q \setminus \{\textup{point}\},d)$.
\end{enumerate}
\end{thm}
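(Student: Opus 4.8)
The plan is to split the problem into a ``group-theoretic'' part, already supplied by the literature, and a purely ``geometric-topological'' part, which carries the new content. First I would invoke the realizations of Gao--Kechris \cite{g-k}, Melleray \cite{mel} and Malicki--Solecki \cite{m-s}: for a Polish group $G$ there is a complete separable metric space $(Y,\rho)$ with $G \cong \Iso(Y,\rho)$; if $G$ is compact $Y$ may be taken compact, and if $G$ is locally compact $Y$ may be taken proper. Replacing $\rho$ by $\rho/(1+\rho)$ where convenient (a strictly increasing reparametrization, hence isometry-group preserving) I may assume $\diam(Y)$ is finite. After this reduction the three assertions amount to transporting such a $(Y,\rho)$ onto the three model spaces $\ell_2$, $Q$ and $Q\setminus\{\textup{point}\}$ without enlarging the isometry group.

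The second step is to convexify $Y$ into an absolute retract while keeping its isometry group. Here I would pass to the injective (hyperconvex) hull $A \supseteq Y$: the hull is hyperconvex, hence an absolute retract by Aronszajn--Panitchpakdi, and every isometry of $Y$ extends uniquely to $A$. One then verifies, using uniqueness of the hull together with a metric characterization of the image of $Y$ inside $A$, that $\Iso(A) \cong \Iso(Y) \cong G$; one must also check that $A$ inherits the relevant completeness (the hull of a compact space is compact, and in the other cases one arranges $A$ to be Polish, resp.\ proper, or passes to an isometry-preserving completion that is still an AR). Thus I obtain a nondegenerate absolute retract $A$ with $\Iso(A)\cong G$, compact in case (b) and locally compact $\sigma$-compact in case (c).

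The third step uses infinite-dimensional topology to absorb $A$ into the model. For (a), since $A$ is a nondegenerate Polish AR, $A\times\ell_2$ is a contractible $\ell_2$-manifold by Toru\'nczyk's characterization, hence homeomorphic to $\ell_2$; for (b), $A\times Q \cong Q$ because $A\times Q$ is a compact AR with the disjoint-cells property; for (c), I would first record that $Q\setminus\{\textup{point}\}\cong Q\times[0,\infty)$ (the cone on $Q$ is again $Q$, and deleting the cone point yields $Q\times[0,\infty)$) and then identify $A\times Q\times[0,\infty)$ with $Q\times[0,\infty)$ via Chapman's classification of $Q$-manifolds, the open collar $[0,\infty)$ washing out the proper-homotopy data so that contractibility suffices. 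To keep the isometry group equal to $G$ I would equip the model factor $N\in\{\ell_2,Q,Q\setminus\{\textup{point}\}\}$ with a \emph{rigid} compatible (complete, resp.\ proper) metric $e$ and put on $X=A\times N$ a combined metric, e.g.\ $d((a,n),(a',n')) = \max\{d_A(a,a'),\,\lambda\,e(n,n')\}$ with the scales and diameters of the two factors chosen incomparable, so designed that the slices $A\times\{n\}$ are singled out by a metric invariant and are therefore permuted by every isometry. Then each isometry of $(X,d)$ splits as (an isometry of $A$)$\times$(an isometry of $N$), the second factor is trivial by rigidity of $e$, and $\Iso(X,d)\cong\Iso(A)\cong G$ while $X\cong N$.

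The main obstacle is precisely this rigidity/splitting argument: guaranteeing that the combined metric introduces \emph{no} new isometries---neither ones exchanging the two factors nor nontrivial self-isometries of the filler surviving inside the product. Making the slices $A\times\{n\}$ metrically recognizable (for instance as the fibers of a ``nearest rigid landmark'' map, or as the maximal flats of bounded diameter) while simultaneously ensuring that $e$ has trivial isometry group \emph{even relative to} this extra structure is the technical heart, and it must be carried out by hand for each of the three models. A secondary, more routine difficulty is verifying the topological identifications---especially $A\times Q\times[0,\infty)\cong Q\setminus\{\textup{point}\}$ in case (c)---where Chapman's $Q$-manifold theory and the end structure have to be applied with care.
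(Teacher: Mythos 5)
Your plan breaks at the second step, and the break is not a technicality. Passing to the injective (hyperconvex) hull $A$ of $Y$ does give a canonical injection $\Iso(Y,\rho)\hookrightarrow\Iso(A)$, but this map is in general \emph{not} surjective: the hull can have strictly more isometries than $Y$, and there is no ``metric characterization of the image of $Y$ inside $A$'' to appeal to. Concretely, take $Y=\{0,1,3\}\subset\RRR$ with the induced metric; its isometry group is trivial (the three points have pairwise distinct distance multisets), while its injective hull is the segment $[0,3]$, whose isometry group is $\ZZZ/2$. The flip of $[0,3]$ carries $Y$ to $\{0,2,3\}$, so the image of $Y$ in its hull is not even determined by the hull's metric, let alone invariant. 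Thus the assertion $\Iso(A)\cong\Iso(Y)\cong G$ is false as stated, and everything downstream (which transports $\Iso(A)$, not $\Iso(Y)$, onto the model spaces) inherits this error.

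The third step has a second genuine gap, which you flag as ``the technical heart'' but never resolve, and it does not resolve itself. With a combined metric of the form $d((a,n),(a',n'))=\max\{d_A(a,a'),\lambda e(n,n')\}$ you face a dichotomy: if the distances in $(N,\lambda e)$ dominate $\diam(A)$, then \emph{every} slice-dependent family $(h_n)_{n\in N}\subset\Iso(A)$ defines an isometry of the product, so $\Iso(X,d)$ contains $\Iso(A)^N$; if instead $N$ has arbitrarily small distances, then for a connected group such as $G=\RRR/\ZZZ$ slowly varying families $(h_n)$ still give isometries that are not of product form. So ``incomparable scales'' plus rigidity of $e$ does not pin down the group, and no recipe is offered that would. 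Moreover, the existence of a \emph{rigid} complete compatible metric on $\ell_2$ (resp.\ proper on $Q\setminus\{\textup{point}\}$) is precisely the case $G=\{e\}$ of the theorem you are proving, so it cannot simply be ``recorded''. For contrast, the paper avoids products and hulls entirely: point (a) is obtained by attaching an interval to $Y$, embedding into the Arens--Eells space of $(X,\sqrt{d})$, invoking Mayer--Wolf's rigidity theorem \cite{m-w} and Mankiewicz's extension theorem \cite{man} to identify the isometries of a union $W$ of two balls, and then identifying $W$ with $\ell_2$ topologically; points (b) and (c) are obtained by constructing proper free actions of $G$ on spaces of nonexpansive, resp.\ Kat\v{e}tov, maps (homeomorphic to $Q$, resp.\ $Q\setminus\{\textup{point}\}$, by Keller's theorem and a $Q$-manifold argument) and applying the action-to-metric theorem of \cite{pn0}, which is exactly the device that manufactures a metric whose full isometry group is the given acting group.
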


We shall also prove the following

\begin{thm}{Banach}
For every Polish group $G$ there exist a separable real Banach space $E$ and a nonzero vector
$e \in E$ such that $G$ is isomorphic to the group of all linear isometries of $E$ (endowed with
the pointwise convergence topology) which leave the point $e$ fixed.
\end{thm}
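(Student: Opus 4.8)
The plan is to reduce the statement to the metric case already obtained in \THM{iso}(a) and then transport the isometry group of a metric space into the linear isometry group of its Arens--Eells (Lipschitz-free) space, where the distinguished vector $e$ will encode a canonical base point. First I would apply \THM{iso}(a) to obtain a complete compatible metric $d$ on $\ell_2$ with $\Iso(\ell_2,d)\cong G$, and then replace $d$ by $\varrho=\phi\circ d$ for a fixed strictly increasing strictly concave $\phi\dd[0,\infty)\to[0,\infty)$ with $\phi(0)=0$ (e.g. $\phi(t)=\tfrac12\,t/(1+t)$). Since $\phi$ is injective this preserves both the topology and the isometry group, yields a bounded metric ($\diam<1/2$), keeps completeness, and makes the space \emph{concave} (strictly subadditive: $d(x,z)<d(x,y)+d(y,z)$ for distinct points). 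Writing $(X,d)$ for the resulting complete separable concave space with $\Iso(X,d)\cong G$, I adjoin two new points $x_0,*\notin X$ with $d(x,*)=1$, $d(x,x_0)=2/3$ for all $x\in X$ and $d(x_0,*)=1$; a direct check shows this is a metric, still concave, and that $x_0$ and $*$ are distinguished by their constant distance profiles, so every isometry of the enlarged space $Y$ restricts to an isometry of $X$ and fixes both $x_0$ and $*$. Hence $\Iso(Y,d)\cong\Iso(X,d)\cong G$ as topological groups, with every isometry fixing $x_0$ and $*$.

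Next I would form the Arens--Eells space $E=\ArEe(Y)$ based at $*$. This is a separable real Banach space in which $Y$ embeds isometrically via $y\mapsto\delta_y$, with $\delta_*=0$ and $\|\delta_p-\delta_q\|=d(p,q)$; in particular $\|\delta_p\|=d(p,*)=1$ for every $p\neq *$, so each atom is an elementary molecule $m_{p,*}$. By functoriality of the free space each $g\in\Iso(Y,d)$ extends uniquely to a surjective linear isometry $L_g$ of $E$ with $L_g\delta_y=\delta_{g(y)}$, and $g\mapsto L_g$ is an injective homomorphism which is a topological isomorphism of $G$ onto its image once the linear isometries carry the pointwise (strong operator) topology, because the $\delta_y$ span a dense subspace and $L_g\to L_{g'}$ pointwise iff $g\to g'$ pointwise on $Y$. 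I then set $e:=\delta_{x_0}$. Since $\|e\|=d(x_0,*)=1$ we have $e\neq0$, and since every $g$ fixes $x_0$ we get $L_ge=\delta_{g(x_0)}=\delta_{x_0}=e$. Thus $G\cong\{L_g:g\in\Iso(Y,d)\}$ is a subgroup of the group of linear isometries of $E$ fixing $e$.

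The remaining and decisive step is the reverse inclusion: every surjective linear isometry $L$ of $E$ with $Le=e$ must equal some $L_g$. This is the main obstacle, because free spaces typically have very large linear isometry groups (e.g. $\ArEe([0,1])\cong L^1$), so fixing a single vector is far from enough in general; rigidity has to be built into $Y$. Here the concavity of $Y$ is exactly what is needed: by the structure theory of extreme points of the unit ball of a Lipschitz-free space, for a concave metric the preserved extreme points of the unit ball of $E$ are precisely the elementary molecules $m_{p,q}=(\delta_p-\delta_q)/d(p,q)$, $p\neq q$. Consequently any surjective linear isometry $L$ permutes the set $\{\pm m_{p,q}\}$, inducing a sign and a bijection on unordered pairs of $Y$. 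I would then use the linear relations $d(p,q)\,m_{p,q}=\delta_p-\delta_q$ together with the two anchors — the fixed molecule $Le=\delta_{x_0}=m_{x_0,*}$ and the base point $0=\delta_*$ — to propagate through the (connected) "molecule graph": these pin the orientation and base, force $L$ to permute the atoms $\{\delta_p\}$ among themselves, and show that the induced self-map $g$ of $Y$ satisfies $\|\delta_{g(p)}-\delta_{g(q)}\|=\|\delta_p-\delta_q\|$, i.e. $g\in\Iso(Y,d)$ with $g(x_0)=x_0$ and $g(*)=*$. Then $L$ and $L_g$ agree on the dense span of the atoms, so $L=L_g$. The hard part is precisely this sign/orientation rigidity analysis, where the concavity (guaranteeing canonical extreme points) and the two distinguished fixed points (killing the sign and translation ambiguities) do the essential work; granting it, the group of linear isometries of $E$ fixing $e$ equals $\{L_g\}\cong G$, which proves the theorem.
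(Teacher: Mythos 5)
Your route is essentially the paper's own: concavify the metric (the paper composes with $\sqrt{\cdot}$, which is itself a strictly concave rescaling, so your $\phi$ plays an identical role), rigidify the space by adjoining points that every isometry must fix (the paper glues in an interval $[0,1]$; your two points $x_0$ and $*$ serve the same purpose and kill the sign ambiguity by the same mechanism), pass to the Arens--Eells space with an anchor vector $e$ ($\chi_1-\chi_0$ in the paper, $\delta_{x_0}=\chi_{x_0}-\chi_*$ in yours), note that $u\mapsto\ArEe(u)$ is a topological embedding (this is \LEM{AE}), and finally invoke rigidity of surjective linear isometries of free spaces over concave metrics. Two steps, however, need repair.

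First, you start from \THM{iso}(a), but in the paper that statement is itself deduced from point (b) of \PRO{non}, i.e.\ from the theorem you are proving, so as written your argument is circular within the paper's logical order. The fix is immediate: all you need is a separable complete metric space whose isometry group is $G$, which is the Gao--Kechris theorem \THM{g-k}, proved independently in Section~2 (or \COR{weight}). Second, the step you explicitly ``grant'' --- that a surjective linear isometry permuting the molecules $\pm m_{p,q}$ must be induced by a bijection of $Y$ together with a global sign --- is precisely the substance of Mayer--Wolf's theorem, stated in the paper as \THM{AE} (Weaver, Theorem~2.7.2): for a bounded complete $(X,d)$, every surjective linear isometry of the free space over $(X,\sqrt{d})$ equals $\pm\ArEe(u)$ for some $u\in\Iso(X,d)$. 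Your preserved-extreme-point reduction is sound (concavity makes all elementary molecules preserved extreme points, and any surjective linear isometry permutes these), but the passage from a permutation of unordered pairs to an actual point map of $Y$ is the real content of that theorem, and your proposal leaves it unproven; as a self-contained argument this is a genuine gap. It closes at once by citing Mayer--Wolf, after observing that the boundedness of your $Y$ rules out nontrivial dilation ratios $|r|\neq1$. With these two repairs your proof is complete and coincides with the paper's.
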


Our methods can be adapted to general settings and give a characterization of topological groups
which are isomorphic to isometry groups of complete as well as incomplete metric spaces. To this
end, we recall that a topological group $G$ is \textit{Ra\v{\i}kov-complete} (or \textit{upper
complete}) iff it is complete with respect to the upper uniformity (cf. \cite[\S3.6]{a-t}
or \cite{r-d}; see also the remarks on page~1581 in \cite{usp}). In other words, $G$ is upper
complete if every net $\{x_{\sigma}\}_{\sigma\in\Sigma} \subset G$ satisfying the following
condition (C) is convergent in $G$.
\begin{itemize}
\item[(C)] For every neighbourhood $U$ of the neutral element of $G$ there is $\sigma_0 \in \Sigma$
   such that both $x_{\sigma} x_{\sigma'}^{-1}$ and $x_{\sigma}^{-1} x_{\sigma'}$ belong to $U$ for
   any $\sigma, \sigma' \geqsl \sigma_0$.
\end{itemize}
Equivalently, the net $\{x_{\sigma}\}_{\sigma\in\Sigma}$ satisfies (C) if both the nets
$\{x_{\sigma}\}_{\sigma\in\Sigma}$ and $\{x_{\sigma}^{-1}\}_{\sigma\in\Sigma}$ are fundamental with
respect to the left uniformity of $G$. We call Ra\v{\i}kov-complete groups briefly
\textit{complete}. With this terminology we follow Uspenskij \cite{usp}. The class of all complete
topological groups coincides with the class of all absolutely closed topological groups
(a topological group is \textit{absolutely closed} if it is closed in every topological group
containing it as a topological subgroup). It is well-known that for every topological group $G$
there exists a unique (up to topological isomorphism) complete topological group containing $G$
as a dense subgroup (see e.g. \cite[\S3.6]{a-t} or \cite{r-d}). This complete group is called
the \textit{Ra\v{\i}kov completion} of $G$ and we shall denote it by $\overline{G}$.\par
Less classical are topological groups which we call $\GGg_{\delta}$-complete. To define them, let us
agree with the following general convention. Whenever $\tau$ is a topology on a set $X$,
$\tau_{\delta}$ stands for the topology on $X$ whose base is formed by all $\GGg_{\delta}$-sets
(with respect to $\tau$) in $X$. (In particular, $\GGg_{\delta}$-sets in $(X,\tau)$ are open
in $(X,\tau_{\delta})$.) Subsets of $X$ which are closed or dense in the topology $\tau_{\delta}$
are called $\GGg_{\delta}$-closed and $\GGg_{\delta}$-dense, respectively (see e.g.
\cite[p.~268]{a-t} or \cite{arh}).\par
It may be easily verified that if $(G,\tau)$ is a topological group, so is $(G,\tau_{\delta})$.
We now introduce

\begin{dfn}{G-complete}
A topological group $G$ is \textit{$\GGg_{\delta}$-complete} if $(G,\tau_{\delta})$ is a complete
topological group (where $\tau$ is the topology of $G$).
\end{dfn}

Equivalently, a topological group $G$ is $\GGg_{\delta}$-complete iff $G$ is $\GGg_{\delta}$-closed
in $\overline{G}$. The class of all $\GGg_{\delta}$-complete groups is huge (see \PRO{exs} below)
and contains all complete as well as metrizable topological groups (more detailed discussion on this
class is included in Section~4). However, there are topological groups which are not
$\GGg_{\delta}$-complete (see \EXM{non-G-complete} below).\par
$\GGg_{\delta}$-complete groups turn out to characterize isometry groups of metric spaces,
as shown by

\begin{thm}{G-complete}
Let $G$ be a topological group.
\begin{enumerate}[\upshape(A)]
\item \TFCAE
   \begin{enumerate}[\upshape({A}1)]
   \item there exists a metric space $(X,d)$ such that $G$ is isomorphic to $\Iso(X,d)$,
   \item $G$ is $\GGg_{\delta}$-complete.
   \end{enumerate}
   Moreover, if $G$ is $\GGg_{\delta}$-complete, the space $X$ witnessing \textup{(A1)} may be
   chosen so that $w(X) = w(G)$.
\item \TFCAE
   \begin{enumerate}[\upshape(B1)]
   \item there exists a complete metric space $(X,d)$ such that $G$ is isomorphic to $\Iso(X,d)$,
   \item $G$ is complete.
   \end{enumerate}
   Moreover, if $G$ is complete, the space $X$ witnessing \textup{(B1)} may be chosen so that
   $w(X) = w(G)$.
\end{enumerate}
\end{thm}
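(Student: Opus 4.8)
The plan is to establish both biconditionals by proving the four implications, treating (A) and (B) in parallel since (B) is the ``complete'' refinement of (A); observe also that a complete group is $\GGg_{\delta}$-complete, so the realization promised by (B1) is a special case of that promised by (A1), now with $X$ complete. The two implications toward completeness, (A1)$\Rightarrow$(A2) and (B1)$\Rightarrow$(B2), are soft and I would dispose of them first; the two realization implications, (A2)$\Rightarrow$(A1) and (B2)$\Rightarrow$(B1), carry the real content and require an explicit construction of $X$ out of $G$.

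For the soft directions, let $(X,d)$ be given with $G\cong\Iso(X,d)$ and let $\overline{X}$ be the metric completion. Extending each isometry of $X$ to $\overline{X}$ yields a topological-group embedding $\iota\colon\Iso(X,d)\hookrightarrow\Iso(\overline{X},d)$ with image $\{F\in\Iso(\overline{X},d):F(X)=X\}$; it is a homeomorphism onto its image because isometries are $1$-Lipschitz and $X$ is dense in $\overline{X}$. First I would check that $\Iso(\overline{X},d)$ is complete: a net satisfying condition (C) is, evaluated at each point, both left- and right-Cauchy, hence pointwise convergent by completeness of $\overline{X}$ to a pair of mutually inverse isometries, which is the required limit. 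This already gives (B1)$\Rightarrow$(B2), since when $X$ is complete $\iota$ is onto. For (A1)$\Rightarrow$(A2) it then suffices, in view of the stated equivalence ``$\GGg_{\delta}$-complete $\Leftrightarrow$ $\GGg_{\delta}$-closed in $\overline{G}$'' and of the identification of $\overline{G}$ with the closure of $\iota(\Iso(X,d))$ in the complete group $\Iso(\overline{X},d)$, to verify that $\iota(\Iso(X,d))$ is $\GGg_{\delta}$-closed there. This is the one genuinely new observation of the soft part: if $F(X)\ne X$, then some coordinate escapes $X$, say $F(x_{0})=z\notin X$ for some $x_{0}\in X$ (the case $F^{-1}(x_{0})\notin X$ being symmetric via continuity of inversion), and since singletons are $\GGg_{\delta}$ in the metric space $\overline{X}$, the set $\{H:H(x_{0})=z\}$ is a $\GGg_{\delta}$-set, hence a $\tau_{\delta}$-neighbourhood of $F$ disjoint from the image. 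Thus the complement of the image is $\tau_{\delta}$-open and the image is $\GGg_{\delta}$-closed.

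The realization direction is the crux. Given a complete $G$, I would first fix a family $\{\rho_{i}\}_{i\in I}$ of continuous left-invariant pseudometrics with $|I|=w(G)$ that jointly generate the topology and separate the points of $G$, and assemble from them a single metric space $X_{G}$ on which $G$ acts by left translations as isometries, the scales of the various $\rho_{i}$ being placed in mutually disjoint distance bands so that no isometry can mix them and so that the pointwise-convergence topology on the translation image (tested at the neutral element) recovers the original topology of $G$. The essential difficulty, exactly the point at which the arguments of Gao--Kechris, Melleray and Malicki--Solecki were delicate, is to \emph{decorate} $X_{G}$ with rigid gadgets (points placed at pairwise distinct, carefully chosen distances, or rigid trees attached along orbits) so as to destroy every isometry not arising from a left translation, thereby forcing $\Iso(X_{G},d)=G$, while simultaneously keeping $w(X_{G})=w(G)$ and arranging that the metric be complete precisely because $G$ is. I expect this balance between rigidity and economy to be the main obstacle; the completeness bookkeeping then follows from the same pointwise-limit principle used in the soft direction. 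This yields (B2)$\Rightarrow$(B1) together with its ``moreover'' clause.

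Finally, I would reduce (A2)$\Rightarrow$(A1) to the complete case. Since a $\GGg_{\delta}$-complete $G$ is $\GGg_{\delta}$-closed in its complete Ra\v{\i}kov completion $\overline{G}$, apply the construction to $\overline{G}$ to obtain a complete space $Y=X_{\overline{G}}$ with $\Iso(Y,d)\cong\overline{G}$, and then carve out a $G$-invariant subspace $X\subseteq Y$ (built from the $G$-translates rather than all $\overline{G}$-translates) whose completion recovers $Y$ only along the $G$-directions. By the soft direction applied to $X$, its self-isometries are exactly the members of $\overline{G}$ preserving $X$, and the hypothesis that $G$ is $\GGg_{\delta}$-closed in $\overline{G}$ is precisely what must be used to guarantee that passing to this incomplete subspace creates no new isometries, so that the stabilizing subgroup is $G$ itself and nothing larger. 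Verifying this last implication of the $\GGg_{\delta}$-closedness hypothesis, and doing so without inflating the weight beyond $w(Y)=w(\overline{G})=w(G)$, is where I would expect the second real difficulty to lie; granting it closes the cycle of implications and establishes both ``moreover'' clauses.
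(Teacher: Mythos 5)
Your two soft implications are correct, and your observation that $\{F\in\Iso(\overline{X},d)\dd\ F(X)=X\}$ is $\GGg_{\delta}$-closed in the complete group $\Iso(\overline{X},d)$ (because sets of the form $\{H\dd\ H(x_0)=z\}$ are $\GGg_{\delta}$ in the pointwise topology) is a clean alternative to the paper's verification of condition (CC') in point (f) of \PRO{exs}. The genuine gap is that the two realization implications, which carry all the content of the theorem, are only announced, not proved: both the ``rigid gadgets'' needed for (B2)$\Rightarrow$(B1) and the ``carving'' needed for (A2)$\Rightarrow$(A1) are named as expected difficulties and then granted. In the paper the first of these is \THM{closed} (every closed subgroup of $\Iso(Y,\varrho)$, $\varrho$ bounded, \emph{is} the isometry group of a decorated space $\widehat{Y}_{\beta}$), built from \LEM{0}--\LEM{2} and the rigid-metric \PRO{id}; and it is essential that it concerns \emph{closed} subgroups: metric gadgets can only enforce closed conditions --- an isometry of the decorated space is forced to equal $\widehat{u}_{\beta}$ for some $u$ whose values $u^{\cIRC{n}}(z)$ lie in the \emph{closure} of the orbit $\{f^{\cIRC{n}}(z)\dd\ f\in H\}$ --- and this is exactly where completeness of $G$ (closedness of its translation copy) is consumed in point (B).

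More seriously, your route for (A2)$\Rightarrow$(A1) would fail as described. Pinning the stabilizer of a union of $G$-translates down to exactly $G$ requires a point $y_0\in Y$ whose stabilizer in $\overline{G}\cong\Iso(Y,d)$ is trivial, or at least contained in $G$. But no realization of $\overline{G}$ can have a free point unless singletons of $\overline{G}$ are $\GGg_{\delta}$: if $y_0$ were free, then $\{u\}=\{v\in\Iso(Y,d)\dd\ v(y_0)=u(y_0)\}$ would be a $\GGg_{\delta}$-set. This fails, e.g., when $\overline{G}$ is an uncountable product of nontrivial complete groups, and such groups do admit proper dense $\GGg_{\delta}$-closed subgroups $G$ (the finitely supported elements of $\prod_{s\in S}\ZZZ_2$, say), so these are legitimate instances of (A2). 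Hence every point of $Y$ carries a ``large'' stabilizer $N_y$, and the stabilizer of your carved set involves the sets $G N_y$, which may strictly contain $G$; $\GGg_{\delta}$-closedness alone does not finish the argument. The missing idea --- and the way the paper actually spends the hypothesis --- is to build into the space, for \emph{every countable sequence} $(U_n)_n$ of basic neighbourhoods of $e$, a point whose stabilizer is contained in $\bigcap_n U_n$ (the points $\pi_s(e)$ attached to infinite sequences $s$, glued by \LEM{aux}); condition (CC) in point (IV) of \PRO{G-complete} is precisely what converts convergence of isometries at those points into convergence in $G$. This is not a formal consequence of having \emph{some} $Y$ with $\Iso(Y,d)\cong\overline{G}$: one must open up the construction. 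Note also that the paper's architecture is the reverse of yours: it first realizes $G$ as a \emph{closed} subgroup of some $\Iso(X,d)$ (this is where $\GGg_{\delta}$-completeness enters) and then applies \THM{closed}; the carving statement you are after is \PRO{completion}, which the paper deduces \emph{from} \THM{G-complete} rather than using it to prove it.
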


(By $w(X)$ we denote the topological weight of a topological space $X$.)\par
One concludes from \THM{G-complete} that the isometry group of an arbitrary metric space is (always)
Dieudonn\'{e} complete (see \COR{dieudonne} below). This solves an open problem, posed e.g.
by Arhangel'skii and Tkachenko (see Open Problem~3.5.4 on page~181 in \cite{a-t}).\par
A generalization of Theorems~\ref{thm:iso} and \ref{thm:Banach} has the following form:

\begin{pro}{non}
Let $G$ be a complete topological group of topological weight not greater than $\beta \geqsl
\aleph_0$.
\begin{enumerate}[\upshape(a)]
\item There is a complete compatible metric $\varrho$ on $\HHh_{\beta}$ such that $G$ is isomorphic
   to $\Iso(\HHh_{\beta},\varrho)$, where $\HHh_{\beta}$ is a real Hilbert space of (Hilbert space)
   dimension equal to $\beta$.
\item There are an infinite-dimensional real Banach space $E$ whose topological weight is equal
   to $\beta$ and a nonzero vector $e \in E$ such that $G$ is isomorphic to the group of all linear
   isometries of $E$ which leave the point $e$ fixed.
\end{enumerate}
\end{pro}

As an immediate consequence of \THM{G-complete} and \PRO{non} we obtain

\begin{cor}{beta}
Let $\HHh$ be a Hilbert space of Hilbert space dimension $\beta \geqsl \aleph_0$ and let
$$\ggG = \{\Iso(\HHh,\varrho)\dd\ \varrho \textup{ is a complete compatible metric on } \HHh\}.$$
Then, up to isomorphism, $\ggG$ consists precisely of all complete topological groups of topological
weight not exceeding $\beta$.
\end{cor}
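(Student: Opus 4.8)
The statement asserts an equality of two classes, up to isomorphism: the family $\ggG$ of isometry groups $\Iso(\HHh,\varrho)$ coming from complete compatible metrics $\varrho$ on $\HHh$, and the class of all complete topological groups of weight $\leqsl\beta$. The plan is to verify the two inclusions separately. The inclusion of the latter class into $\ggG$ falls out directly from \PRO{non}, whereas the reverse inclusion requires \THM{G-complete} together with a weight estimate for isometry groups.

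For the inclusion $\ggG\subseteq\{\textup{complete groups of weight}\leqsl\beta\}$, I would fix a complete compatible metric $\varrho$ on $\HHh$ and set $G=\Iso(\HHh,\varrho)$. Since $\varrho$ is compatible, $(\HHh,\varrho)$ is a \emph{complete} metric space, so $G$ satisfies condition (B1) of \THM{G-complete}; the implication (B1)$\Rightarrow$(B2) then yields that $G$ is complete. It remains to bound $w(G)$. As $\varrho$ is compatible, the underlying space carries the topology of $\HHh$, so $w((\HHh,\varrho))=w(\HHh)=\beta$, and since the space is metric its density coincides with its weight; fix a dense $D\subseteq\HHh$ with $|D|=\beta$. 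Each isometry is $1$-Lipschitz, hence continuous and determined by its restriction to $D$, and a routine $\epsi/3$ estimate shows that on $G$ (equipped, as throughout, with the topology of pointwise convergence) this topology is already induced by evaluation on $D$. Consequently $f\mapsto(f(p))_{p\in D}$ is a topological embedding of $G$ into the product $\HHh^{D}$, whence $w(G)\leqsl w(\HHh^{D})\leqsl|D|\cdot w(\HHh)=\beta$.

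For the reverse inclusion, let $G$ be any complete topological group with $w(G)\leqsl\beta$. By \PRO{non}(a) there is a complete compatible metric $\varrho$ on the real Hilbert space $\HHh_{\beta}$ of Hilbert dimension $\beta$ with $G$ isomorphic to $\Iso(\HHh_{\beta},\varrho)$. Since $\HHh_{\beta}$ is isometrically isomorphic to $\HHh$ (both being real Hilbert spaces of Hilbert dimension $\beta$), transporting $\varrho$ along this isomorphism exhibits $G$ as a member of $\ggG$. Combining the two inclusions gives the asserted equality.

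The only step that is not purely formal is the weight estimate $w(\Iso(\HHh,\varrho))\leqsl\beta$, and this is where I expect the (modest) work to lie: one must confirm that the pointwise-convergence topology on the isometry group is genuinely captured by evaluation on a single dense set of size $\beta$, so that the evaluation map is a topological embedding into $\HHh^{D}$ and not merely a continuous bijection onto its image. Everything else is a direct appeal to \THM{G-complete} and \PRO{non}.
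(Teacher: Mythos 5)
Your proof is correct and follows essentially the same route as the paper, whose entire proof consists of invoking \PRO{non} together with point (f) of \PRO{exs}. The only difference is cosmetic: the completeness of $\Iso(\HHh,\varrho)$ and the weight bound $w(\Iso(\HHh,\varrho)) \leqsl w(\HHh) = \beta$ that you re-derive by hand (via \THM{G-complete} and the embedding $g \mapsto g\bigr|_D \in \HHh^D$) are precisely the content of \PRO{exs}(f), so both steps could simply be cited.
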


The paper is organized as follows. In Section~2 we give a new proof of the Gao-Kechris theorem
mentioned above. We consider our proof more transparent, more elementary and less complicated.
The techniques of this part are adapted in Section~3 where we demonstrate that every closed subgroup
of the isometry group of a [complete] metric space $(X,d)$ is actually (isomorphic to) the isometry
group of a certain [complete] metric space, closely `related' to $(X,d)$. This theorem is applied
in the next part, where we establish basic properties of the class of all $\GGg_{\delta}$-complete
groups and prove \THM{G-complete}. Fifth part contains proofs of \THM{Banach}, \PRO{non}, point (a)
of \THM{iso} and \COR{beta}. In Section~6 we study topological groups isomorphic to isometry groups
of completely metrizable metric spaces. The last section is devoted to the proofs of points (b) and
(c) of \THM{iso}.

\subsection*{Notation and terminology} In this paper $\NNN = \{0,1,2,\ldots\}$ (and it is equipped
with the discrete topology). All isomorphisms between topological groups are topological, all
topological groups are Hausdorff and all isometries between metric spaces are, by definition,
bijective. All normed vector spaces are assumed to be real. The topological weight of a topological
space $X$ is denoted by $w(X)$ and it is understood as an infinite cardinal number. Isometry groups
(and all their subsets) of metric as well as normed vector spaces are endowed with the pointwise
convergence topology, which makes them topological groups. A \textit{Polish} space (resp. group)
is a completely metrizable separable topological space (resp. group). A metric on a topological
space is \textit{compatible} iff it induces the topology of the space. It is \textit{proper} if all
closed balls with respect to this metric are compact (in the topology induced by this metric).
Whenever $(X,d)$ is a metric space, $a \in X$ and $r > 0$, $B_X(a,r)$ and $\bar{B}_X(a,r)$ stand
for, respectively, the open and the closed $d$-balls with center at $a$ and of radius $r$.
The Hilbert cube, that is, the countable infinite Cartesian power of $[0,1]$, is denoted by $Q$ and
$\ell_2$ stands for the separable Hilbert space. A \textit{map} means a continuous function.

\SECT{The Gao-Kechris theorem revisited}

This part is devoted to the proof of the Gao-Kechris theorem \cite{g-k} mentioned
in the introductory part and stated below. Another proof may be found in \cite{mel}.

\begin{thm}{g-k}
Every Polish group is isomorphic to the isometry group of a certain separable complete metric space.
\end{thm}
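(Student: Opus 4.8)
The plan is to exhibit $G$ as a subgroup of $\Iso(X,d_X)$ for an explicitly constructed separable complete space and then to prove that this subgroup is everything. \textbf{Step 1 (the action).} Recall that a metrizable group admits a bounded compatible left-invariant metric; fix such a $d$ on $G$ with $\diam(G,d)\leqsl 1$. Every left translation $L_g\dd x\mapsto gx$ is then a $d$-isometry, so $\lambda\dd G\to\Iso(G,d)$, $\lambda(g)=L_g$, is an injective homomorphism, and a routine computation (testing pointwise convergence at the neutral element and using joint continuity of multiplication and inversion) shows that $\lambda$ is a homeomorphism onto its image for the pointwise-convergence topology. Two defects must now be repaired: $(G,d)$ need not be complete, and $\Iso(G,d)$ is in general far larger than $\lambda(G)$, since the homogeneous space $(G,d)$ admits many ``rotations about $e$''.

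\textbf{Step 2 (equivariant rigidification).} I would enlarge $G$ to a space $X$ that still carries a diagonal $G$-action but is no longer homogeneous. Fix a countable dense set $\{a_n\}_{n\in\NNN}\subset G$ and a rapidly increasing sequence of well-separated scales $(c_n)$. Form $X$ from countably many labelled layers $G\times\{n\}$, giving layer $n$ a rescaled intra-layer metric and placing the layers at mutually recognizable heights governed by the $c_n$, with cross distances $\rho\bigl((x,m),(y,n)\bigr)$ assembled from $c_m,c_n$ and the diagonally invariant quantities $d(x\,a_m,\,y\,a_n)$. Because these quantities are unchanged under $(x,y)\mapsto(gx,gy)$, the diagonal action $g\cdot(x,n)=(gx,n)$ is isometric and extends $\lambda$; because $\{a_n\}$ is dense, the family of cross distances separates points and destroys the residual homogeneity. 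Separability is clear.

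\textbf{Step 3 (completeness).} The scales are chosen so that the distances from a point $(x,n)$ to the various layers control both $d(x,\cdot)$ and $d(x^{-1},\cdot)$ on a dense set, i.e.\ encode the two-sided (Ra\v{\i}kov) uniformity of $G$ along each layer. Consequently every $d_X$-Cauchy sequence lying in a layer is two-sided Cauchy in $G$, and since a Polish group is Ra\v{\i}kov-complete it converges within $G$. Passing to the metric completion thus adjoins only ``ideal'' points lying off the layers; one arranges the construction so that these points are individually recognizable and admit no nontrivial symmetry, whence $X$ may be taken separable and complete without enlarging the isometry group.

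\textbf{Step 4 (rigidity, the crux).} It remains to prove $\Iso(X,d_X)=\lambda(G)$. Given $\phi\in\Iso(X,d_X)$, the recognizability of the heights forces $\phi$ to preserve each layer and to fix the ideal points, so $\phi$ restricts to $d$-isometries $\phi_n$ of $G$; the cross-distance identities $d\bigl(\phi_m(x)\,a_m,\phi_n(y)\,a_n\bigr)=d(x\,a_m,y\,a_n)$ then couple the $\phi_n$ across all $m,n$. Letting $a_m,a_n$ range over the dense set $\{a_n\}$, these identities should force each $\phi_n$ to be one and the same left translation $L_g$. I expect this rigidity lemma to be the principal obstacle, since it is exactly here that the superfluous ``rotations'' of the homogeneous metric must be eliminated; the equivariant labels $\{a_n\}$ are introduced precisely so that a distance-preserving bijection respecting the labelled pattern can only be a translation. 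Once rigidity is established, combining it with Step 1 shows that $\lambda$ is an isomorphism of $G$ onto $\Iso(X,d_X)$, and $X$ is the desired separable complete metric space.
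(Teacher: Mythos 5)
Your Step 1 is exactly the paper's opening move, and your overall plan (augment the homogeneous space until only the left translations survive) is the right skeleton; but the construction of Steps 2--4 is defective, and the rigidity statement of Step 4 is not merely the "principal obstacle" --- it is false for the space you build. Every cross distance you allow is a function of $c_m$, $c_n$ and $d(xa_m,ya_n)$, and by left invariance $d(xa_m,ya_n)=d(a_m,x^{-1}ya_n)$; such quantities cannot distinguish left translations from other isometries of $(G,d)$ that intertwine suitably with the right translations $x\mapsto xa_n$. Concretely, let $G=\TTT$ be the circle group with an invariant metric $d$ (any left invariant metric on an abelian group is bi-invariant, and inversion is then an isometry), and set $\phi(x,n)=(x^{-1}a_n^{-2},n)$. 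Then $\phi$ preserves each layer; it preserves intra-layer distances since $d(x^{-1}a_n^{-2},y^{-1}a_n^{-2})=d(x^{-1},y^{-1})=d(x,y)$; and it preserves all cross distances, because $\phi_m(x)a_m=(a_mx)^{-1}$, so $d(\phi_m(x)a_m,\phi_n(y)a_n)=d(a_mx,a_ny)=d(xa_m,ya_n)$. Hence $\phi\in\Iso(X,d_X)\setminus\lambda(G)$ no matter how the scales $c_n$ are chosen; more generally, for bi-invariant $d$ \emph{every} $h\in\Iso(G,d)$ yields such an exotic isometry $(x,n)\mapsto(h(xa_n)a_n^{-1},n)$, so in the compact case --- precisely where the unwanted "rotations about $e$" live --- your labels eliminate nothing. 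Step 3 fails for the same underlying reason: your cross distances are left-invariant quantities, so they converge along any left-Cauchy sequence; whenever the left invariant metric $d$ is incomplete (e.g. $G=S_\infty$), a left-Cauchy non-convergent sequence sits inside a single layer and is $d_X$-Cauchy but divergent. Nothing in the construction encodes $d(x^{-1},\cdot)$, and passing to the metric completion afterwards can only add points and isometries, never remove them.

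The paper resolves both difficulties by different means. Completeness comes for free: one completes $(H,d_0)$ at the outset, the left translations extend to isometries of the completion $(X,d)$, and the image of $H$ in $\Iso(X,d)$ is closed simply because a Polish subgroup of a Polish group is closed; no attempt is made to encode the two-sided uniformity metrically. The rigidification is then performed by \PRO{closed}, whose marked data is strictly richer than right-translation labels: the auxiliary space $\widehat{X}$ contains all finite Cartesian powers $X^n$; distances of the form $1+p(x,x_j)$ between the $j$-labelled copy of $X$ and $X^n$ force every component-preserving isometry to act \emph{diagonally}, i.e. as $\widehat{u}$ for a single $u\in\Iso(X,p)$ (\LEM{1}); and metrically distinguished copies of the closures $D_n$ of the diagonal orbits $\{u^{\cIRC{n}}(z_n)\dd\ u\in G\}$ of tuples $z_n$ taken from a dense sequence force $u^{\cIRC{n}}(z_n)\in D_n$ for every $n$, whence $u$ lies in the closure of $G$ in $\Iso(X,p)$, which is $G$ (\LEM{2}). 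Marking closures of diagonal $G$-orbits of tuples --- rather than graphs of right translations --- is the idea your proposal is missing, and it is exactly what makes the prescribed closed subgroup $G$, and not some larger group, recoverable from the metric.
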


For the purpose of this and the next section, let us agree with the following conventions. For every
nonempty collection $\{X_s\}_{s \in S}$ of topological spaces, $\bigsqcup_{s \in S} X_s$ denotes
the topological disjoint union of these spaces. In particular, whenever the notation
$\bigsqcup_{s \in S} X_s$ appears, the sets $X_s\ (s \in S)$ are assumed to be pairwise disjoint
(the same rule for the symbol `$\sqcup$'). Further, for a function $f\dd X \to X$, an integer number
$n \geqsl 1$ and an arbitrary point $w$, $f^{\cIRC{n}}\dd X^n \to X^n$ is the $n$-th Cartesian power
of $f$ (that is, $f^{\cIRC{n}}(x_1,\ldots,x_n) = (f(x_1),\ldots,f(x_n))$) and $f \times w\dd X
\times \{w\} \to X \times \{w\}$ sends $(x,w)$ to $(f(x),w)$ for any $x \in X$. Similarly, if $d$ is
a metric on $X$, $d^{\cIRC{n}}$ denotes the maximum metric on $X^n$ induced by $d$, i.e.
$$d^{\cIRC{n}}((x_1,\ldots,x_n),(y_1,\ldots,y_n)) = \max_{j=1,\ldots,n} d(x_j,y_j);$$
and $d \times w$ is the metric on $X \times \{w\}$ such that $(d \times w)((x,w), (y,w)) = d(x,y)$.
Finally, for a topological space $V$ and a map $v\dd V \to V$ we put $\widehat{V} = (V \times \NNN)
\sqcup (\bigsqcup_{n=2}^{\infty} V^n) \sqcup \NNN$ and define $\widehat{v}\dd \widehat{V} \to
\widehat{V}$ by the rules: $\widehat{v}(x,m) = (v(x),m)$, $\widehat{v}\bigr|_{V^n} = v^{\cIRC{n}}$
and $\widehat{v}(m) = m$ for any $x \in V$, $m \in \NNN$ and $n \in \NNN \setminus \{0\}$. To avoid
repetitions, for a metric space $(X,d)$ and arbitrary sets $A, B \subset \NNN$ and $C \subset \NNN
\setminus \{0,1\}$, let us say a metric $\varrho$ on $(X \times A) \sqcup (\bigsqcup_{j \in C} X^j)
\sqcup B (\subset \widehat{X})$ \textit{respects} $d$ iff the following three conditions are
satisfied:
\begin{enumerate}[({A}X1)]
\item $\varrho$ coincides with $d \times m$ on $X \times \{m\}$ for each $m \in A$,
\item $\varrho$ coincides with $d^{\cIRC{n}}$ on $X^n$ for each $n \in C$,
\item $\varrho(x,y) \geqsl 1$ whenever $x$ and $y$ belong to distinct members of the collection
   $\{X \times \{m\}\dd\ m \in A\} \cup \{X^n\dd\ n \in C\} \cup \{\{k\}\dd\ k \in B\}$.
\end{enumerate}
Observe that (AX1)--(AX3) imply that
\begin{enumerate}[({A}X1)]\addtocounter{enumi}{3}
\item if $\varrho$ respects $d$, then: $\bullet\ \varrho$ is compatible; $\bullet\ \varrho$ is
   complete if so is $d$.
\end{enumerate}

The main result of this section is the following

\begin{pro}{closed}
Let $(X,d)$ be a separable bounded complete metric space and let $G$ be a closed subgroup
of $\Iso(X,d)$. There exists a metric $\varrho$ on $\widehat{X}$ such that $\varrho$ respects $d$
and the function
$$G \ni u \mapsto \widehat{u} \in \Iso(\widehat{X},\varrho)$$
is a well defined isomorphism of topological groups.
\end{pro}

The proof of \PRO{closed} will be preceded by a few auxiliary results. The first of them is a kind
of folklore and we leave its (simple) proof to the reader.

\begin{lem}{0}
Let $\{(X_s,d_s)\}_{s \in S}$ be a nonempty family of metric spaces such that for $A =
\bigcap_{s \in S} X_s$ we have:
\begin{itemize}
\item $X_s \cap X_{s'} = A$ and $d_s\bigr|_{A \times A} = d_{s'}\bigr|_{A \times A}$ for any two
   distinct indices $s$ and $s'$ of $S$,
\item $A$ is nonempty and closed in $(X_s,d_s)$ for each $s \in S$.
\end{itemize}
Let $X = \bigcup_{s \in S} X_s$ and let $d\dd X \times X \to [0,\infty)$ be given by the rules:
\begin{itemize}
\item $d$ coincides with $d_s$ on $X_s \times X_s$ for every $s \in S$,
\item $d(x,y) = \inf \{d_s(x,a) + d_{s'}(a,y)\dd\ a \in A\}$ whenever $x \in X_s \setminus X_{s'}$
   and $y \in X_{s'} \setminus X_s$ for distinct indices $s$ and $s'$.
\end{itemize}
Then $d$ is a well defined metric on $X$ with the following property. Whenever $f_s \in
\Iso(X_s,d_s)\ (s \in S)$ are maps such that $f_s\bigr|_A = f_{s'}\bigr|_A$ and $f_s(A) = A$ for any
$s, s' \in S$, then their union $f := \bigcup_{s \in S} f_s$ (that is, $f = f_s$ on $X_s$) is a well
defined function such that $f \in \Iso(X,d)$.
\end{lem}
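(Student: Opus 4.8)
The plan is to think of $A$ as a common \emph{core} shared by all the $X_s$, with each $X_s\setminus A$ a \emph{petal} attached to it. The first hypothesis forces every point outside $A$ to lie in exactly one petal: if $x\in X_s\cap X_{s'}$ with $s\neq s'$ then $x\in A$. Points of $A$, on the other hand, lie in every $X_s$, and the metrics $d_s$ restrict to one and the same metric on $A$. It is convenient to write, for $x\in X$ and $a\in A$, $e(x,a)$ for the number $d_s(x,a)$ computed in any petal $X_s$ containing $x$; this is unambiguous precisely because of the agreement of the $d_s$ on $A$.

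First I would verify that $d$ is well defined. The two defining clauses are mutually exclusive and jointly exhaustive: a pair $(x,y)$ either shares a petal, in which case ``$d=d_s$ on $X_s\times X_s$'' applies and is consistent since all the $d_s$ agree on $A$; or it shares no petal, in which case both points lie outside $A$ in distinct petals and the infimum clause applies. That clause is visibly symmetric, is taken over the nonempty set $A$, and returns a finite nonnegative value. For the implication $d(x,y)=0\Rightarrow x=y$ the only nontrivial case is the cross-petal one, and there I would invoke the closedness of $A$ in $X_s$: for $x\in X_s\setminus A$ this yields $\dist(x,A)>0$, whence $d(x,y)\geq\inf_{a\in A}d_s(x,a)>0$.

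The real work is the triangle inequality, which I would organize around two elementary facts: $d(x,a)=e(x,a)$ for every $a\in A$, and $d(x,y)\leq e(x,a)+e(y,a)$ for all $x,y\in X$ and $a\in A$ (the latter from the within-petal triangle inequality when $x,y$ share a petal, and directly from the infimum otherwise). When the middle point $y$ lies in $A$, the inequality $d(x,z)\leq d(x,y)+d(y,z)$ is then immediate from these two facts. When $y\notin A$, I would select points of $A$ realizing the relevant infima up to $\varepsilon$ and collapse the resulting four-term chain: the two terms lying in the petal of $y$ combine, by that petal's triangle inequality, into a single term joining two points of $A$; this term then merges with an adjacent term through the common metric on $A$, leaving a route through a single point of $A$ that dominates $d(x,z)$. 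Letting $\varepsilon\to0$ finishes it. The essential point — and the only genuine obstacle — is exactly this ``one intermediate point of $A$ suffices'' phenomenon, which rests on $A$ carrying a single consistent metric across all petals.

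For the assertion about isometries, set $f=\bigcup_{s\in S}f_s$. It is a well-defined self-map of $X$ since the $f_s$ agree on their only overlaps, namely $A$. As each $f_s$ is a bijection of $X_s$ with $f_s(A)=A$, it carries $X_s\setminus A$ onto itself; assembling these over $s$ exhibits $f$ as a bijection of $X$ whose inverse is $\bigcup_{s\in S}f_s^{-1}$ (which satisfies the same hypotheses). Finally $f$ preserves $d$: for $x,y$ in a common petal this is just the isometry property of the relevant $f_s$, while for $x,y$ in distinct petals $X_s,X_{s'}$ I would reparametrize the defining infimum by $a=f_s(a')=f_{s'}(a')$ — legitimate because $f_s$ and $f_{s'}$ coincide on $A$ and map $A$ bijectively onto $A$ — and then use $d_s(f_s(x),f_s(a'))=d_s(x,a')$ together with its analogue in $X_{s'}$ to recover $\inf_{a'\in A}\bigl[d_s(x,a')+d_{s'}(a',y)\bigr]=d(x,y)$.
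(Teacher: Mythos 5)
Your proof is correct. There is nothing in the paper to compare it against: the author explicitly declares this lemma ``a kind of folklore'' and leaves its proof to the reader, so your write-up simply supplies the omitted argument, namely the standard amalgamation of metric spaces along a common closed subspace. All the points that genuinely need checking are present: the two defining clauses are consistent and exhaustive (using that the $d_s$ agree on $A$); positivity of $d$ for cross-petal pairs follows from the closedness of $A$ in each $X_s$ (the only place that hypothesis is used); the triangle inequality reduces, after choosing $\varepsilon$-optimal points $a,b \in A$, to collapsing the chain $e(x,a)+e(y,a')+e(y,b)+e(z,b)$ through the single common metric on $A$ into a route through one point of $A$; and the isometry claim follows by reparametrizing the defining infimum via $a \mapsto f_s(a)$, which is legitimate precisely because the $f_s$ agree on $A$ and map $A$ onto itself.
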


The above result will be the main tool for constructing the metric $\varrho$ appearing
in \PRO{closed}.\par
In the next two results, $(X,p)$ is a complete nonempty metric space with
\begin{equation}\label{eqn:p}
p < 1.
\end{equation}

\begin{lem}{1}
Let $J \subset \NNN \setminus \{0\}$ be a finite set such that $n = \card(J) > 1$. There is a metric
$\lambda$ on $F := [X \times (J \cup \{0\})] \sqcup X^n$ which has the following properties:
\begin{enumerate}[\upshape(a)]
\item $\lambda$ respects $p$ and $\lambda \leqsl 5$,
\item for every $u \in \Iso(X,p)$, $\widehat{u}\bigr|_F \in \Iso(F,\lambda)$,
\item if $g \in \Iso(F,\lambda)$ is such that $g(X \times \{j\}) = X \times \{j\}$ for each $j \in J
   \cup \{0\}$, then $g = \widehat{u}\bigr|_F$ for some $u \in \Iso(X,p)$.
\end{enumerate}
\end{lem}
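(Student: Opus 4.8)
The plan is to build $\lambda$ by the standard ``gluing'' recipe: prescribe it to be the intrinsic metric on each of the $n+2$ pieces of $F$ (this is exactly what \textup{(AX1)}, \textup{(AX2)} demand), and then to choose the cross-distances between distinct pieces to have the uniform shape $2 + \varphi$, where $0 \leqsl \varphi < 1$ and $\varphi$ is $1$-Lipschitz in each of its two arguments with respect to the relevant intrinsic metrics. Writing $J = \{j_1 < \dots < j_n\}$ and $\xi = (x_1,\dots,x_n) \in X^n$, I would set the cross-distances as follows: between $X^n$ and the copy $X \times \{j_k\}$ (whose purpose is to ``read off'' the $k$-th coordinate),
\[ \lambda(\xi,(y,j_k)) := 2 + p(x_k,y); \]
between two distinct copies $X \times \{j\}$ and $X \times \{j'\}$ with $j,j' \in J \cup \{0\}$,
\[ \lambda((x,j),(y,j')) := 2 + p(x,y); \]
and between $X^n$ and the distinguished copy $X \times \{0\}$ simply the constant $\lambda(\xi,(y,0)) := 5/2$. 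Since $p < 1$, every cross-distance lies in $[2,3)$ while every intrinsic distance lies in $[0,1)$, so condition \textup{(AX3)} ($\lambda \geqsl 1$ across pieces) and the bound $\lambda \leqsl 5$ of (a) are immediate.

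The first genuine task is to verify that $\lambda$ is a metric, and this is the step I expect to carry the technical weight. Symmetry and positivity are clear, so only the triangle inequality $\lambda(a,c) \leqsl \lambda(a,b) + \lambda(b,c)$ needs work, and I would settle it by cases on how $a,b,c$ are distributed among the pieces. If all three lie in a single piece it is just the intrinsic inequality. If they lie in three distinct pieces, each side is in $[2,3)$, so the right-hand side is $\geqsl 4 > 3$ and the inequality is automatic. The only delicate case is two points $a,b$ in one piece $P$ and the third $c$ in another: here $\lambda(a,b) < 1$ while $\lambda(a,c) = 2 + \varphi(a,c)$ and $\lambda(b,c) = 2 + \varphi(b,c)$, and the $1$-Lipschitz dependence of $\varphi$ on its $P$-argument gives $|\varphi(a,c) - \varphi(b,c)| \leqsl \lambda(a,b)$, which delivers the two nontrivial inequalities; the third holds since $\lambda(a,b) < 1 < \lambda(a,c) + \lambda(b,c)$. (Each of my $\varphi$'s — a single coordinate of $p^{\cIRC{n}}$, the value $p(x,y)$, or a constant — is visibly $1$-Lipschitz, a constant being $0$-Lipschitz.) This construction simultaneously shows that $\lambda$ respects $p$, finishing (a).

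For (b) I would simply note that each $u \in \Iso(X,p)$ preserves $p$, hence $p^{\cIRC{n}}$ and every cross-distance above, as $p(u(x_k),u(y)) = p(x_k,y)$ and $p(u(x),u(y)) = p(x,y)$, while the constant distance to $X \times \{0\}$ is trivially preserved; since $\widehat{u}$ carries each piece bijectively onto itself, $\widehat{u}\bigr|_F \in \Iso(F,\lambda)$.

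The real content of (c) is the converse rigidity. Given $g \in \Iso(F,\lambda)$ fixing each $X \times \{j\}$ setwise, it must also fix the remaining piece, $g(X^n) = X^n$, and each restriction $g\bigr|_{X \times \{j\}}$ is an isometry of $(X,p)$, say $g(x,j) = (u_j(x),j)$ with $u_j \in \Iso(X,p)$. The copy-to-copy distances force all these maps to agree: from $2 + p(x,y) = 2 + p(u_j(x),u_{j'}(y))$ with $x = y$ one reads off $u_j(x) = u_{j'}(x)$ for every $x$, so there is a single $u$ with $u_j = u$ for all $j \in J \cup \{0\}$. Finally the $X^n$-to-$X\times\{j_k\}$ distances recover the action on the product: writing $g(\xi) = (x_1',\dots,x_n')$, the identity $p(x_k,y) = p(x_k', u(y))$ for all $y$ (choose $y = u^{-1}(x_k')$, using that isometries here are bijective) gives $x_k' = u(x_k)$, whence $g\bigr|_{X^n} = u^{\cIRC{n}}$. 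Thus $g = \widehat{u}\bigr|_F$, as required. The distinguished copy $X \times \{0\}$, attached to $X^n$ only by a constant, plays no part in this rigidity argument beyond being carried along, which is precisely why a constant cross-distance suffices there.
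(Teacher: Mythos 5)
Your proof is correct, but it takes a route that differs from the paper's in two substantive ways, so it is worth recording the comparison. The paper never writes the metric down explicitly: it produces $\lambda$ by two applications of the gluing lemma (\LEM{0}), first amalgamating $X \times \{0\}$ with the diagonal $A = \{(x_1,\ldots,x_n) \in X^n\dd\ x_1 = \ldots = x_n\}$ via the prescription $\lambda_0'((x,0),(a,\ldots,a)) = 1 + p(x,a)$, and then attaching each copy $X \times \{j\}$, $j \in J$, to $X^n$ via $\lambda_j((x,j),(x_1,\ldots,x_n)) = 1 + p(x,x_j)$; the distances between distinct copies of $X$ are then whatever infima \LEM{0} induces, and no triangle inequality ever has to be checked by hand --- your Lipschitz case analysis (which is carried out correctly) is the price you pay for writing explicit formulas instead. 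More importantly, the rigidity mechanism in part (c) is genuinely different. In the paper the copies $X \times \{j\}$, $j \in J$, pin down coordinates through the equivalence $\lambda((x,j),(x_1,\ldots,x_n)) = 1 \iff x_j = x$, which by itself only yields $g\bigr|_{X^n}(x_1,\ldots,x_n) = (u_1(x_1),\ldots,u_n(x_n))$ with a priori different isometries $u_j$; it is then the distinguished copy $X \times \{0\}$ that forces $u_1 = \cdots = u_n$, through the diagonal-detection property $\lambda((x,0),(x_1,\ldots,x_n)) = 1 \iff x_1 = \cdots = x_n = x$. In your construction $X \times \{0\}$ is, as you yourself note, inert; the identification of all the $u_j$'s (including $u_0$) is achieved instead by coupling the copies of $X$ to one another directly via the cross-distances $2 + p(x,y)$, and the diagonal plays no role at all. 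Both devices work. Yours gives a shorter, self-contained proof of (c) that does not need \LEM{0}; the paper's version keeps the construction entirely inside the gluing formalism, which is the workhorse reused in \LEM{2}, \PRO{closed} and \THM{closed}, so its variant of the present lemma plugs into that machinery with no ad hoc metric verification.
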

\begin{proof}
With no loss of generality, we may assume that $J = \{1,\ldots,n\}$. Let $A = \{(x_1,\ldots,x_n) \in
X^n\dd\ x_1 = \ldots = x_n\}$ and $\lambda_0'$ be the metric on $X_0 := (X \times \{0\}) \sqcup A$
such that $\lambda_0'$ coincides with $p \times 0$ on $X \times \{0\}$, with $p^{\cIRC{n}}$ on $A$,
and $\lambda_0'((x,0),(a,\ldots,a)) = 1 + p(x,a)$ for $x \in X$ and $(a,\ldots,a) \in A$. Now apply
\LEM{0} for $\{(X_0,\lambda_0'),(X^n,p^{\cIRC{n}})\}$ to obtain a metric $\lambda_0$ on $(X \times
\{0\}) \sqcup X^n$ which extends both $\lambda_0'$ and $p^{\cIRC{n}}$. Observe that $\lambda_0$
respects $p$, $\lambda_0 \leqsl 3$ (by \eqref{eqn:p}),
$\widehat{u}\bigr|_{(X \times \{0\}) \sqcup X^n} \in \Iso((X \times \{0\}) \sqcup X^n,\lambda_0)$
for each $u \in \Iso(X,p)$ and for arbitrary $x,x_1,\ldots,x_n \in X$:
\begin{equation}\label{eqn:0}
\lambda_0((x,0),(x_1,\ldots,x_n)) = 1 \iff x_1 = \ldots = x_n = x.
\end{equation}
Further, for $j \in J$ let $\lambda_j$ be the metric on $(X \times \{j\}) \sqcup X^n$ such that
$\lambda_j$ coincides with $p \times j$ on $X \times \{j\}$, with $p^{\cIRC{n}}$ on $X^n$ and
$\lambda_j((x,j),(x_1,\ldots,x_n)) = 1 + p(x,x_j)$ for any $x,x_1,\ldots,x_n \in X$ ($\lambda_j$ is
indeed a metric thanks to \eqref{eqn:p}). Similarly as before, notice that $\lambda_j$ respects
$p$, $\lambda_j \leqsl 2$ and for any $x,x_1,\ldots,x_n \in X$:
\begin{equation}\label{eqn:1}
\lambda_j((x,j),(x_1,\ldots,x_n)) = 1 \iff x_j = x
\end{equation}
Now again apply \LEM{0} for the family $\{(X_j,\lambda_j)\dd\ j \in J \cup \{0\}\}$ to obtain
a metric $\lambda$ on $F$ which extends each of $\lambda_j\ (j \in J \cup \{0\})$. It follows from
the construction and \LEM{0} that points (a) and (b) are satisfied. We turn to be. Let $g$ be
as there. Let $u\dd X \to X$ be such that $u \times 0 = g\bigr|_{X \times \{0\}}$ and, similarly,
for $j \in J$ let $u_j\dd X \to X$ be such that $u_j \times j = g\bigr|_{X \times \{j\}}$. Finally,
put $f = g\bigr|_{X^n}\dd X^n \to X^n$. Since $\lambda$ respects $p$, $u \in \Iso(X,p)$. So,
we only need to check that $u_1 = \ldots u_n = u$ and $f = u^{\cIRC{n}}$. Let $\pi_j\dd X^n \to X$
be the projection onto the $j$-th coordinate $(j=1,\ldots,n$). For any $x = (x_1,\ldots,x_n) \in
X^n$ and $j \in J$ we have, by \eqref{eqn:1}:
$$1 = \lambda((\pi_j(x),j),x) = \lambda(g(\pi_j(x),j),g(x)) =
\lambda(((u_j \circ \pi_j)(x),j),f(x))$$
and therefore, again by \eqref{eqn:1}, $u_j \circ \pi_j = \pi_j \circ f$. Consequently,
$f(x_1,\ldots,x_n) = (u_1(x_1),\ldots,u_n(x_n))$. Finally, for any $z \in X$ we have,
by \eqref{eqn:0}:
$$1 = \lambda((z,0),(z,\ldots,z)) = \lambda(g(z,0),g(z,\ldots,z)) =
\lambda((u(z),0),(u_1(z),\ldots,u_n(z)))$$
and hence, again by \eqref{eqn:0}, $u_1(z) = \ldots = u_n(z) = u(z)$.
\end{proof}

\begin{lem}{2}
Let $G$ be a subgroup of $\Iso(X,p)$ and let $z \in X^n$ and $J \subset \NNN \setminus \{0\}$ be
such that $\card(J) = n > 1$. Let $D$ denote the closure (in $X^n$) of the set
$\{u^{\cIRC{n}}(z)\dd\ u \in G\}$. There exists a metric $\mu$ on $F := [X \times (J \cup \{0\})]
\sqcup X^n \sqcup \{n-1\}$ which has the following properties:
\begin{enumerate}[\upshape(a)]
\item $\mu$ respects $p$ and $\mu \leqsl 11$,
\item $\widehat{u}\bigr|_F \in \Iso(F,\mu)$ for every $u \in G$,
\item for any $g \in \Iso(F,\mu)$ there is $u \in \Iso(X,p)$ such that $g = \widehat{u}\bigr|_F$ and
   $u^{\cIRC{n}}(z) \in D$.
\end{enumerate}
\end{lem}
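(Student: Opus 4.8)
The plan is to obtain $\mu$ by adjoining to the metric $\lambda$ of \LEM{1} the single extra point $w := n-1$, and to let the distances from $w$ carry all the extra information needed. Recall that $\lambda$ lives on $F' := [X \times (J \cup \{0\})] \sqcup X^n$, respects $p$, satisfies $\lambda \leqsl 5$, and---via \LEM{1}(c)---pins down only those isometries that fix each slice $X \times \{j\}$. The slice-permuting isometries that \LEM{1} cannot exclude (for instance the transposition of $X \times \{1\}$ and $X \times \{2\}$ together with the swap of the first two coordinates of $X^n$) are precisely what the point $w$ has to destroy, in addition to encoding the orbit closure $D$.

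First I would record the relevant gauge. As $G$ acts on $(X^n,p^{\cIRC{n}})$ through the isometries $u^{\cIRC{n}}$ and $D$ is the closure of the $G$-orbit of $z$, each $u^{\cIRC{n}}$ preserves $D$; hence $\delta(x) := \min(\dist(x,D),1)$, with $\dist$ taken in $p^{\cIRC{n}}$, is a $G$-invariant, $1$-Lipschitz function $X^n \to [0,1]$ with $\delta^{-1}(0)=D$, and $\delta(z)=0$ since $z \in D$ (as $\id \in G$). Then I set $\mu\bigr|_{F' \times F'} = \lambda$ and prescribe the radial distances $r(q):=\mu(w,q) \in [10,11]$ by $r \equiv a_j$ on $X \times \{j\}$, the $a_j$ ($j \in J \cup \{0\}$) pairwise distinct, and $r(x)=c_*+\tfrac12\delta(x)$ on $X^n$, with $c_*$ and the interval $[c_*,c_*+\tfrac12]$ chosen disjoint from $\{a_j\}$. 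Since all these values sit in an interval of length $\leqsl 1$ while distinct slices are at $\lambda$-distance $\geqsl 1$ by (AX3), and since $\delta$ is $1$-Lipschitz within $X^n$, the function $r$ is $1$-Lipschitz on $(F',\lambda)$; combined with $r \geqsl 10 > 5 \geqsl \lambda$ this is exactly what makes $\mu$ a metric. This yields (a) ($\mu$ restricts to $\lambda$ on $F'$, $w$ is at distance $>1$ from every slice, and $\mu \leqsl 11$), and (b) follows because $\widehat{u}\bigr|_F$ fixes $w$, is isometric on $F'$ by \LEM{1}(b), and preserves $r$ on $X^n$ by the $G$-invariance of $\delta$.

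The heart of the argument is (c). Given $g \in \Iso(F,\mu)$, I would first show $g(w)=w$: when $\card(X) \geqsl 2$ every point of $F'$ has a neighbour in its own slice at distance $\leqsl 1$, so $w$ is the unique point all of whose distances to other points exceed $5$, a property preserved by $g$ (the degenerate case $\card(X)=1$ makes $F$ finite and $G$ trivial and is settled by taking all distances distinct). Once $g$ fixes $w$ it preserves every level set of $r$; as the $a_j$ are distinct and separated from the range of $r$ on $X^n$, this forces $g(X \times \{j\}) = X \times \{j\}$ for all $j$ and $g(X^n)=X^n$. Now \LEM{1}(c) provides $u \in \Iso(X,p)$ with $g\bigr|_{F'} = \widehat{u}\bigr|_{F'}$, and $g(w)=w$ upgrades this to $g = \widehat{u}\bigr|_F$. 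Finally $r = r \circ g$ on $X^n$ reads $\delta(x)=\delta(u^{\cIRC{n}}(x))$; at $x=z$ this gives $\delta(u^{\cIRC{n}}(z))=\delta(z)=0$, i.e. $u^{\cIRC{n}}(z) \in D$, as required.

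The one place demanding care is the choice of the radial distances: they must be big enough to isolate $w$ and to dominate $\lambda$, yet squeezed into an interval short enough---measured against the guaranteed inter-slice gap of $1$---to remain $1$-Lipschitz, and at the same time the finitely many slice-values $a_j$ must stay separated from the continuum of values taken by $c_*+\tfrac12\delta$ on $X^n$. Balancing these three competing demands is the only genuinely subtle point; after that every claim reduces to a routine check of the triangle inequality and of the identities above.
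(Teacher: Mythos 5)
Your proposal is correct and follows essentially the same route as the paper: adjoin the single point $n-1$, give it pairwise distinct constant distances to the slices $X \times \{j\}$ and a distance on $X^n$ that encodes $\dist(\cdot,D)$ (the paper gets this radial function implicitly by gluing via \LEM{0} with value $c_{n+1}$ on $D$, you build it explicitly as $c_*+\tfrac12\min(\dist(\cdot,D),1)$ and check the Lipschitz/triangle conditions by hand), then pin down $n-1$ by a metric uniqueness property, deduce slice preservation from the level sets, invoke \LEM{1}(c), and read off $u^{\cIRC{n}}(z)\in D$ from the radial distance at $z$. The remaining differences (choice of constants, your uniqueness criterion ``all distances exceed $5$'' versus the paper's ``realizes both values $c_0$ and $c_1$'', and your separate treatment of $\card(X)=1$, which is in fact unnecessary since your main argument already covers it) are cosmetic.
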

\begin{proof}
Without loss of generality, we may assume that $J = \{1,\ldots,n\}$. Let $\lambda$ be as in \LEM{1}
(so, $\lambda$ is a metric on $F \setminus \{n-1\}$). Let $c_0,\ldots,c_{n+1}$ be such that
\begin{equation}\label{eqn:2}
5 < c_0 < c_1 \ldots < c_{n+1} < 6.
\end{equation}
Put $A = [X \times (J \cup \{0\})] \sqcup D$ and denote by $\mu_0$ the metric on $A \sqcup \{n-1\}$
such that $\mu_0$ coincides with $\lambda$ on $A$, $\mu_0((x,j),n-1) = c_j$ for $x \in X$ and
$j=0,\ldots,n$, and $\mu(y,n-1) = c_{n+1}$ for $y \in D$ ($\mu_0$ is a metric thanks to point (a)
of \LEM{1}, (AX3) and \eqref{eqn:2}). Now apply \LEM{0} for the family
$\{(A \cup \{n-1\},\mu_0),(F \setminus \{n-1\},\lambda)\}$ to obtain a metric $\mu$ which extends
both $\mu_0$ and $\lambda$. We infer from \eqref{eqn:2} and point (a) of \LEM{1} that point (a) is
satisfied. Further, since $u^{\cIRC{n}}(D) = D$ for each $u \in G$ and thanks to point (b)
of \LEM{1}, condition (b) is fulfilled as well (see \LEM{0}). We turn to (c).\par
Let $g \in \Iso(F,\mu)$. Since $n-1$ is a unique point $q \in F$ such that $\mu(q,x) = c_0$ and
$\mu(q,y) = c_1$ for some $x, y \in F$ (since $\lambda \leqsl 5 < c_0 < c_1$), we conclude that
$g(n-1) = n-1$. Further, observe that for each $x \in X^n$, $\mu(x,n-1) \geqsl c_{n+1}$, because
of (AX3) and \eqref{eqn:2}. Consequently, $X \times \{j\} = \{x \in F\dd\ \mu(x,n-1) = c_j\}$ for
$j = 0,1,\ldots,n$. Thus, we see that $g(X \times \{j\}) = X \times \{j\}$ for such $j$'s. Since
$g\bigr|_{F \setminus \{n-1\}} \in \Iso(F \setminus \{n-1\},\lambda)$ and $g(n-1) = n-1$, point (c)
of \LEM{1} implies that there is $u \in \Iso(X,p)$ such that $g = \widehat{u}\bigr|_F$. Finally,
$g(z) = u^{\cIRC{n}}(z) \in X^n$ and for $y \in X^n$, $\mu(y,n-1) = c_{n+1}$ iff $y \in D$
(by \eqref{eqn:2} and (AX3)) which gives $u^{\cIRC{n}}(z) \in D$ and finishes the proof.
\end{proof}

\begin{proof}[Proof of \PRO{closed}]
Let $r \geqsl 1$ be such that $d < r$. Put $p = \frac1r d < 1$ and notice that $\Iso(X,p) =
\Iso(X,d)$. Let $X_0 = \{x_n\dd\ n \geqsl 1\}$ be a dense subset of $X$. Let $J_1,J_2,\ldots$ be
pairwise disjoint sets such that $\bigcup_{n=1}^{\infty} J_n = \NNN \setminus \{0\}$ and $\card(J_n)
= n+1$. For each $n \geqsl 2$ put $z_n = (x_1,\ldots,x_n) \in X^n$, $F_n = [X \times (J_{n-1} \cup
\{0\})] \sqcup X^n \sqcup \{n-1\}$ and let $D_n$ be the closure (in $X^n$)
of $\{u^{\cIRC{n}}(z_n)\dd\ u \in G\}$. Further, let $\mu_n$ be a metric on $F_n$ obtained from
\LEM{2} (applied for $z_n$ and $J_{n-1}$). Now apply \LEM{0} for the collection $\{(F_n,\mu_n)\dd\
n \geqsl 2\}$ to get a metric $\lambda_0$ on $\widehat{X} \setminus \{0\}$ which extends each
of $\mu_n\ (n \geqsl 2)$. In particular, $\lambda_0$ respects $p$ and $\lambda_0 \leqsl 22$.
Finally, we extend the metric $\lambda_0$ to a metric $\lambda$ on $\widehat{X}$ in such a way that
for $k \geqsl 0$, $\lambda(x,0) = c_{k,1}$ for $x \in X \times \{k\}$, $\lambda(x,0) = c_{k,2}$ for
$x \in X^{k+2}$ and $\lambda(k+1,0) = c_{k,3}$ where the numbers
\begin{equation}\label{eqn:3}
c_{0,1},c_{0,2},c_{0,3},c_{1,1},c_{1,2},c_{1,3},\ldots \textup{ are all different}
\end{equation}
and belong to $(22,23)$ ($\lambda$ is a metric thanks to (AX3)). It follows from \LEM{0} and point
(b) of \LEM{2} that $\widehat{u} \in \Iso(\widehat{X},\lambda)$ for any $u \in G$. It is clear that
the function $G \ni u \mapsto \widehat{u} \in \Iso(\widehat{X},\lambda)$ is a group homomorphism and
a topological embedding. We shall now show that it is also surjective.\par
Let $g \in \Iso(\widehat{X},\lambda)$. Since $0$ is a unique point $q \in \widehat{X}$ such that
$\lambda(q,x) = c_{0,1}$ and $\lambda(q,y) = c_{0,2}$ for some $x, y \in \widehat{X}$, we see that
$g(0) = 0$. Consequently, $g(X \times \{k\}) = X \times \{k\}$, $g(X^{k+2}) = X^{k+2}$ and $g(k+1) =
k+1$ for each $k \geqsl 0$, by \eqref{eqn:3}. So, taking into account that $g\bigr|_{F_n} \in
\Iso(F_n,\mu)$, point (c) of \LEM{2} yields that there is $u \in \Iso(X,p)$ such that $g =
\widehat{u}$ and $u(z_n) \in D_n$. The latter condition implies that there are elements
$u_1,u_2,\ldots$ of $G$ which converge pointwise to $u$ on $X_0$. We now infer from the density
of $X_0$ in $X$ that $u = \lim_{n\to\infty} u_n$ and in fact $u \in G$, by the closedness
of $G$.\par
To end the proof, it suffices to put $\varrho = r \lambda$.
\end{proof}

\begin{proof}[Proof of \THM{g-k}]
Let $(H,\cdot)$ be a Polish group. First we involve a standard argument, used e.g. by Melleray
\cite{mel} in his proof of this theorem: take a left invariant metric $d_0 \leqsl 1$ on $H$ and
denote by $(X,d)$ the completion of $(H,d_0)$. Then, of course, $X$ is separable and for every
$h \in H$ there is a unique $u_h \in \Iso(X,d)$ such that $u_h(x) = hx$ for $x \in H$. Observe that
the function $H \ni h \mapsto u_h \in \Iso(X,d)$ is a group homomorphism as well as a topological
embedding. Therefore its image $G$ is isomorphic to $H$. Since $G$ is a Polish subgroup of a Polish
group, $G$ is closed in $\Iso(X,d)$. Now it suffices to apply \PRO{closed} and to use (AX4)
to deduce the completeness of the metric obtained by that result.
\end{proof}

\SECT{Closed subgroups of isometry groups}

In this section we generalize the ideas of the previous part to the context of all isometry groups.
Our aim is to show that a closed subgroup of the isometry group of a metric space is isomorphic
to the isometry group of another metric space. We have decided to discuss the separable case
separately, because in that case the proofs are more transparent and easier. Actually all tools were
prepared in the previous section, except the following one:

\begin{lem}{id}
Let $X$ be a set with $\card(X) \neq 2$ and $I \subset (0,\infty)$ be a nondegenerate interval.
There is a metric $d\dd X \times X \to I \cup \{0\}$ such that the identity map of $X$ is a unique
member of $\Iso(X,d)$.
\end{lem}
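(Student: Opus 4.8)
The plan is to separate the two demands in the statement — that $d$ be a metric valued in $I\cup\{0\}$, and that $\Iso(X,d)=\{\id\}$ — and to make the first one essentially free, so that only a combinatorial problem remains. First I would fix a closed subinterval $[\alpha,\beta]\subseteq I$ with $0<\alpha<\beta\leqsl2\alpha$; this is possible because $I$ is nondegenerate (take $\alpha$ in the interior of $I$ and then $\beta\in I$ with $\alpha<\beta\leqsl2\alpha$). The role of the inequality $\beta\leqsl2\alpha$ is that the triangle inequality becomes automatic: for any symmetric $d\dd X\times X\to\{0\}\cup[\alpha,\beta]$ that vanishes exactly on the diagonal and any three points, $d(x,z)\leqsl\beta\leqsl2\alpha\leqsl d(x,y)+d(y,z)$. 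Hence every such $d$ is a metric with values in $I\cup\{0\}$, and it remains only to assign a value in $[\alpha,\beta]$ to each unordered pair so that the sole surjective isometry is $\id$.

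Next I would reduce the isometry requirement to a rigidity condition on a colouring. Thinking of $d$ as a colouring of the edges of the complete ``graph'' on $X$, a bijection $f$ lies in $\Iso(X,d)$ exactly when it preserves the colour of every pair; so $\Iso(X,d)=\{\id\}$ is precisely the statement that this edge-coloured graph has trivial automorphism group. A convenient sufficient condition is \emph{distinct profiles}: if $f\in\Iso(X,d)$ then for each $x$ the function $t\mapsto\card\{y\dd\ d(x,y)=t\}$ is carried to the corresponding function at $f(x)$, so if the map sending each point to this ``distance profile'' is injective, every isometry fixes every point. Thus it suffices to produce a symmetric colouring in which all points have pairwise distinct profiles, or, more generally, one whose automorphism group is trivial.

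The construction then splits by $\card(X)$. The cases $\card(X)\in\{0,1\}$ are trivial, and $\card(X)=2$ is genuinely impossible — any metric on two points is invariant under the transposition — which is exactly why that cardinality is excluded. For $3\leqsl\card(X)\leqsl\continuum$ there are at most $\continuum$ unordered pairs while $[\alpha,\beta]$ has exactly $\continuum$ points, so I can choose an injection assigning a distinct value to each pair (all distances distinct). Then the profiles are automatically distinct: if $x\neq x'$ had equal profiles, pick a third point $y$ (available since $\card(X)\geqsl3$); the value $d(x,y)$ would then also be realized at $x'$, i.e. $d(x,y)=d(x',z)$ for some $z\neq x'$, and distinctness of all distances forces $\{x,y\}=\{x',z\}$, which contradicts $x\neq x'$ and $y\neq x'$. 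Hence rigidity in this range.

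The hard part will be the remaining case $\card(X)>\continuum$, where distinct distances are impossible because the palette $[\alpha,\beta]$ has only $\continuum$ elements, and a point's identity must instead be encoded combinatorially across its whole adjacency pattern. Here I would abandon distinct profiles and use only two of the colours: pick a \emph{rigid graph} $\Gamma$ on the vertex set $X$ (one with trivial automorphism group), colour the edges of $\Gamma$ by $\beta$ and the non-edges by $\alpha$. By the first paragraph this is a metric into $I\cup\{0\}$, and a colour-preserving bijection is exactly an automorphism of $\Gamma$, so $\Iso(X,d)=\{\id\}$. The existence of a rigid graph on any infinite vertex set is classical (rigid relational structures exist in every infinite cardinality, and alternatively one can build such a graph directly by transfinite recursion along a well-ordering of $X$); verifying that this existence statement covers all cardinals $>\continuum$ is the one genuinely non-elementary ingredient of the argument, and is where I expect the main difficulty to lie.
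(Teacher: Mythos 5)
Your proposal is correct, but it takes a different route from the paper's, and the two differ most exactly where the real work lies. The shared part is the opening reduction: both you and the paper observe that once all nonzero values lie in an interval $[\alpha,\beta]$ with $\beta \leqsl 2\alpha$, the triangle inequality is automatic, so the problem becomes a purely combinatorial one about rigid edge-colourings of the complete graph on $X$. From there you split by cardinality: for $3 \leqsl \card(X) \leqsl \continuum$ you inject the unordered pairs into $[\alpha,\beta]$ so that all distances are distinct, which forces every isometry to preserve each pair and hence (using a third point) to fix each point; this counting argument is genuinely different from, and more elementary than, anything in the paper, and as a small bonus it covers $\card(X) \in \{3,4,5\}$, which the paper's deduction of \LEM{id} from \PRO{id} silently skips (that proposition assumes more than five points — necessarily so, since with only two nonzero values no rigid colouring exists on $3$, $4$ or $5$ points). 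For $\card(X) > \continuum$, however, you invoke the classical theorem that rigid graphs exist on every infinite vertex set (Vop\v{e}nka--Pultr--Hedrl\'{\i}n and its descendants). That theorem is true, so there is no mathematical error in relying on it; but note that it is precisely the content of the paper's \PRO{id}: the paper does not cite it, it proves it, by transfinite induction on $\card(X)$ --- an explicit rigid graph on each finite set with $n \geqsl 6$ points, the metric $d(n,m) = \min(|n-m|,2)$ on $\NNN$, and then three separate constructions for a successor cardinal and for the two kinds of limit cardinals. Moreover the paper's statement is stronger than what your sub-continuum trick can give: it uses only the three values $\{0,a,b\}$ in every cardinality, whereas your injection consumes continuum many values (harmless for \LEM{id}, since an interval of values is available, but it is also the reason your method cannot be pushed past $\continuum$). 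So your approach buys a short, self-contained proof up to the continuum; the paper's buys uniformity and self-containedness in all cardinalities. If your write-up is to stand on its own, the transfinite recursion you gesture at in the last sentence must actually be carried out --- and that recursion is, in essence, the paper's proof.
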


We shall prove a stronger version of \LEM{id} at the end of the section. Now we generalize
the concepts involved in Section~3. Let $\beta$ be an infinite cardinal number and let $D_{\beta}$
denote a fixed discrete topological space of cardinality $\beta$. For a metrizable space $X$ and
a function $f\dd X \to X$ let $X^0$ be a one-point space and $f^{\cIRC{\textup{o}}}\dd X^0 \to X^0$
denote the identity map. Further, we put $T(X) = \bigsqcup_{n\in\NNN} X^n$ (recall that $0 \in
\NNN$). Finally, denote by $\widehat{X}_{\beta}$ and $\widehat{f}_{\beta}$ (resp.) the product $T(X)
\times D_{\beta}$ and the function of $\widehat{X}_{\beta}$ into itself such that
$\widehat{f}_{\beta} = f^{\cIRC{n}} \times \xi$ on $X^n \times \{\xi\}$ for any $n \in \NNN$ and
$\xi \in D_{\beta}$. (Notice that $w(\widehat{X}_{\beta}) = \beta$ provided $\beta \geqsl w(X)$.)
For any $J \subset \NNN$ and a collection $\{A_n\}_{n \in J}$ of subsets of $D_{\beta}$, we say
a metric $\varrho$ on $\bigsqcup_{n \in J} (X^n \times A_n) \subset \widehat{X}_{\beta}$
\textit{respects} a compatible metric $d$ on $X$ iff the following two conditions are fulfilled:
\begin{enumerate}[\upshape(PR1)]
\item $\varrho$ coincides with $d^{\cIRC{n}} \times \xi$ on $X^n \times \{\xi\}$ for any $n \in J
   \setminus \{0\}$ and $\xi \in A_n$,
\item $\varrho(x,y) \geqsl 1$ whenever $x$ and $y$ belong to distinct members of the collection
   $\{X^n \times \{\xi\}\dd\ n \in J,\ \xi \in A_n\}$.
\end{enumerate}
As before, we see that (AX4) is satisfied.\par
A counterpart of \PRO{closed} in the general case is the following

\begin{thm}{closed}
Let $\beta$ be an infinite cardinal number and $(X,d)$ be a nonempty bounded metric space such that
$w(X) \leqsl \beta$. For any closed subgroup $G$ of $\Iso(X,d)$ there exists a metric $\varrho$
on $\widehat{X}_{\beta}$ such that $\varrho$ respects $d$ and the function
\begin{equation}\label{eqn:isomorph}
G \ni u \mapsto \widehat{u}_{\beta} \in \Iso(\widehat{X}_{\beta},\varrho)
\end{equation}
is a well defined isomorphism of topological groups.
\end{thm}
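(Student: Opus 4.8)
The plan is to follow the blueprint of the proof of \PRO{closed}, enriching $X$ into $\widehat{X}_\beta$ and rigidifying the construction so that the only isometries of the resulting space are the maps $\widehat{u}_\beta$ with $u \in G$. The two genuinely new points, forced by dropping separability and by allowing an arbitrary $\beta$, are the replacement of the single dense sequence by a dense set of size $\leqsl\beta$ and the replacement of the ``pairwise distinct distances'' trick by the rigid metric furnished by \LEM{id}. First I would rescale, putting $p=\tfrac1r d<1$ for a suitable $r\geqsl1$, so that $\Iso(X,p)=\Iso(X,d)$, and fix a dense $X_0\subset X$ with $\card(X_0)\leqsl w(X)\leqsl\beta$. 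The collection of all finite tuples with entries in $X_0$ has cardinality $\leqsl\beta$, and for each such tuple $z\in X_0^n$ I record the orbit closure $D_z=\overline{\{u^{\cIRC{n}}(z)\dd\ u\in G\}}$ (closure in $X^n$). These tuples are the test data that will pin $G$ down inside $\Iso(X,p)$.

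Next I would attach, to each test tuple $z$ of length $n$, a copy of the bundle produced by \LEM{2}: namely $n+1$ level-one blocks $X^1\times\{\xi\}$ (playing the role of $X\times(J\cup\{0\})$), one level-$n$ block $X^n\times\{\eta\}$ carrying the power metric together with the encoding of $D_z$, and one level-zero block $X^0\times\{\zeta\}$ (a single point) serving as its marker. Since each bundle consumes only finitely many indices while there are at most $\beta$ tuples, an injection into $D_\beta$ lays out all bundles disjointly; repeating test tuples if necessary, I would moreover arrange the assignment to exhaust every block of every level, so that no ``inert'' block survives to produce a spurious isometry. The local metrics come from \LEM{2} (whose proof, like that of \LEM{1}, uses only closedness of the relevant diagonals and not completeness of $X$, hence applies verbatim to the present, possibly incomplete, $X$). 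I would then equip the marker set $M=X^0\times D_\beta$, which has cardinality $\beta\neq2$, with a rigid metric $\delta$ via \LEM{id}, its values lying in an interval disjoint from all bundle distances and high enough that, after gluing, every foreign marker is pushed outside it. This rigidity is the device that forces each individual marker to be fixed \emph{without} appealing to distinct distances, and it is exactly here that the argument survives the passage to cardinals above $\mathfrak{c}$. Finally, gluing the rigid spine $(M,\delta)$ and all the bundle metrics along their common parts by \LEM{0} (which permits an arbitrary index set) produces a metric on $\widehat{X}_\beta$ respecting $p$; rescaling by $r$ yields $\varrho$, which respects $d$, and (AX4) secures compatibility.

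That the map \eqref{eqn:isomorph} is a well-defined group homomorphism into $\Iso(\widehat{X}_\beta,\varrho)$ and a topological embedding for the pointwise topology is immediate from the definition of $\widehat{u}_\beta$ together with \LEM{0} and point (b) of \LEM{2}, and is checked exactly as in \PRO{closed} (note $\widehat{u}_\beta$ fixes every marker, so it acts as the identity on the spine). The substance is surjectivity. Given $g\in\Iso(\widehat{X}_\beta,\varrho)$, I would first show $g$ carries $M$ onto $M$: the markers are metrically recognizable as the points that see a whole rigid $\delta$-family at the distinguished distances, whereas a bundle point sees its own marker too close and every foreign marker too far. Then $g|_M\in\Iso(M,\delta)$, so by rigidity $g$ fixes every marker. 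Marker by marker, point (c) of \LEM{2} now applies to the bundle attached to each fixed marker and yields a single $u\in\Iso(X,p)$ with $g=\widehat{u}_\beta$ and $u^{\cIRC{n}}(z)\in D_z$ for every test tuple $z$. As in \PRO{closed}, the memberships $u^{\cIRC{n}}(z)\in D_z$ produce elements of $G$ converging to $u$ pointwise on $X_0$; density of $X_0$ and closedness of $G$ in $\Iso(X,d)$ then force $u\in G$.

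The main obstacle I anticipate is precisely the global rigidification in the first half of the surjectivity step: one must guarantee that an arbitrary isometry respects the coarse skeleton (the levels $n$ and the indices $\xi$) \emph{before} the block-local rigidity of \LEM{1} and \LEM{2} can be invoked, and this must be achieved for possibly more than $\mathfrak{c}$ many blocks. The separable argument pins blocks by distinct distances in a bounded interval, which is unavailable once $\beta>\mathfrak{c}$; the fix is to carry all the distinguishing in the rigid metric $\delta$ on $M$ supplied by \LEM{id}, to attach bundles to their markers using only the finitely many distances of \LEM{2} (freely reused across bundles), and to verify that the gluing through $M$ keeps every foreign bundle outside each marker's admissible distance window. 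Confirming that the index layout leaves no inert block — so that \emph{every} isometry is of the form $\widehat{u}_\beta$ — is the remaining delicate bookkeeping.
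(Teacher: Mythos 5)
Your overall architecture is close to the paper's (bundles supplied by \LEM{2} indexed by tuples from a dense set, a rigid metric from \LEM{id} to pin down the combinatorial skeleton, and the final approximation argument forcing $u$ into the closed group $G$), and your marker-recognition and block-preservation steps would indeed go through. But there is a genuine gap at the point you summarize as ``yields a single $u$'': since you lay the bundles out \emph{pairwise disjointly} and glue them to the spine only through their markers, every distance between points of different bundles factors through the (pointwise fixed) markers and the spine. Consequently, if for each test tuple $z$ you pick an arbitrary $u_z \in G$ and define $g$ to act as $\widehat{(u_z)}_{\beta}$ on the bundle of $z$ (identity on the spine), then $g$ preserves all within-bundle distances by point (b) of \LEM{2}, fixes all markers, and hence preserves all cross-bundle distances as well: $g$ is an isometry of your glued space. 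Taking $u_z \neq \id_X$ on one bundle and $u_{z'} = \id_X$ on all others produces an isometry that is not of the form $\widehat{u}_{\beta}$ for any single $u$, so the homomorphism is not surjective and the construction fails. Note also that even bundle-by-bundle nothing forces $u_z \in G$: point (c) of \LEM{2} only gives $u_z^{\cIRC{n}}(z) \in D_z$ for that one tuple, and membership in $\overline{G}\cap\Iso(X,d) = G$ requires a \emph{single} $u$ satisfying the orbit-closure constraint for \emph{all} tuples simultaneously.

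The paper's proof of \THM{closed} avoids exactly this by making the bundles overlap: every $F_{\xi}$ contains the same level-one block $X \times \{\theta\}$, and the gluing by \LEM{0} is performed along this common block, so that $F_{\xi} \cap F_{\eta} = X \times \{\theta\}$ for $\xi \neq \eta$. An isometry preserving the skeleton then restricts on the shared block to $u \times \theta$ for one fixed $u$, which forces all the per-bundle isometries from \LEM{2} to coincide. The rigid metric of \LEM{id} is used there for a different purpose than in your proposal: it sits on a separate discrete set $S_0$, and each block $A$ of the skeleton is given a fingerprint $v(A) \in \{11,12\}^{S_0}$ realized as the distance function from $A$ to $S_0$; rigidity fixes $S_0$ pointwise and injectivity of $v$ then forces $g(A)=A$ for every block. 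So the repair of your argument is structural rather than cosmetic: keep your rigid spine for block identification if you like, but you must glue all bundles along a common copy of $X$ (not through disjoint markers) to obtain the coherence of $u$ across bundles.
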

\begin{proof}
In what follows, we shall (naturally) identify $X^0 \times D_{\beta}$ with $D_{\beta}$. It follows
from the proof of \PRO{closed} that we may assume $d < 1$. Let $Z$ be a dense set in $X$ such that
$\card(Z) \leqsl \beta$. Fix arbitrary $\theta \in D_{\beta}$ and write $D_{\beta} \setminus
\{\theta\}$ in the form $\bigcup_{n=0}^{\infty} S_n$ where $\card(S_n) = \beta$ for any $n$ and
\begin{equation}\label{eqn:S}
S_n \cap S_m = \varempty \qquad (n \neq m).
\end{equation}
The set $S_0$ and the point $\theta$ will be employed in the last part of the proof. For simplicity,
put $S_* = \bigcup_{n=1}^{\infty} S_n$ and $X_* := \widehat{X}_{\beta} \setminus (S_0  \cup
\{\theta\}) = [\bigsqcup_{n\geqsl1} (X^n \times D_{\beta})] \sqcup S_*$. It follows from
\eqref{eqn:S} that for any $\xi \in D_{\beta} \setminus \{\theta\}$ there is a unique number $n(\xi)
\in \NNN$ such that $\xi \in S_{n(\xi)}$. Further, for every $n \geqsl 1$ there are a surjection
$\kappa_n\dd S_n \to Z^n$ and a bijection $\tau_n\dd S_n \to D_{\beta}$. Take a collection
$\{J_{\xi}\dd\ \xi \in S_*\}$ such that for any $\xi, \eta \in S_*$:
\begin{enumerate}[(S1)]
\item $\card(J_{\xi}) = n(\xi)$,
\item $J_{\xi} \cap J_{\eta} = \varempty$ whenever $\xi \neq \eta$,
\item $\bigcup_{\zeta \in S_*} J_{\zeta} = D_{\beta} \setminus \{\theta\}$.
\end{enumerate}
We deduce from (S1) and \LEM{2} that for each $\xi \in S_*$ there exists a metric $\mu_{\xi}$
on $F_{\xi} := [X \times (J_{\xi} \cup \{\theta\})] \sqcup (X^{n(\xi)} \times
\{\tau_{n(\xi)}(\xi)\}) \sqcup \{\xi\}$ which has the following properties:
\begin{enumerate}[(D1)]
\item $\mu_{\xi}$ respects $d$ and $\mu_{\xi} \leqsl 11$,
\item $\widehat{u}_{\beta}\bigr|_{F_{\xi}} \in \Iso(F_{\xi},\mu_{\xi})$ for every $u \in G$,
\item for any $g \in \Iso(F_{\xi},\mu_{\xi})$ there is $u \in \Iso(X,d)$ such that $g =
   \widehat{u}_{\beta}\bigr|_{F_{\xi}}$ and $u^{\cIRC{n}}(\kappa_n(\xi))$ belongs to the closure
   $B_{\xi}$ of $\{f^{\cIRC{n}}(\kappa_n(\xi))\dd\ f \in G\}$ in $X^n$.
\end{enumerate}
Observe that (S2)--(S3), (D1) and the bijectivity of $\tau_n$'s imply that
\begin{equation}\label{eqn:inter}
F_{\xi} \cap F_{\eta} = X \times \{\theta\}
\end{equation}
for distinct $\xi, \eta \in S_*$, and $X \times \{\theta\}$ is closed in $(F_{\xi},\mu_{\xi})$ (cf.
(PR2)). Moreover, it follows from (D1) that we may apply \LEM{0} for the family
$\{(F_{\xi},\mu_{\xi})\dd\ \xi \in S_*\}$. Let $\mu$ be the metric on $\bigcup_{\xi \in S_*} F_{\xi}
= X_*$ obtained by that lemma. Then
\begin{equation}\label{eqn:mu}
\widehat{u}_{\beta}\bigr|_{X_*} \in \Iso(X_*,\mu) \qquad (u \in G)
\end{equation}
(see (D2) and the last claim in \LEM{0}) and
\begin{equation}\label{eqn:mu'}
\mu \textup{ respects } d \textup{ and } \mu \leqsl 22
\end{equation}
(by (D1)). What is more,
\begin{itemize}
\item[($\star$)] if $g \in \Iso(X_*,\mu)$ is such that $g(A) = A$ for any $A \in \SsS := \{X^n
   \times \{\xi\}\dd\ n \geqsl 1,\ \xi \in D_{\beta}\} \cup \{\{\xi\}\dd\ \xi \in S_*\}$, then $g =
   \widehat{u}_{\beta}\bigr|_{X_*}$ for some $u \in G$.
\end{itemize}
Let us briefly show ($\star$). If $g$ is as there, then $g(F_{\xi}) = F_{\xi}$ for any $\xi \neq
\theta$. So, we infer from (D3) that $g = \widehat{(u_{\xi})}_{\beta}$ on $F_{\xi}$ for some
$u_{\xi} \in \Iso(X,d)$ with
\begin{equation}\label{eqn:kappa}
(u_{\xi})^{\cIRC{n}}(\kappa_n(\xi)) \in B_{\xi}
\end{equation}
where $n = n(\xi)$. We conclude from \eqref{eqn:inter} that $u := u_{\xi}$ is independent
of the choice of $\xi \neq \theta$. Consequently, $g = \widehat{u}_{\beta}\bigr|_{X_*}$. To end
the proof of ($\star$), it remains to check that $u \in G$. Since $\kappa_n$'s are surjective,
\eqref{eqn:kappa} yields that $(u(z_1),\ldots,u(z_n))$ belongs to the closure (in $X^n$)
of $\{(f(z_1),\ldots,f(z_n))\dd\ f \in G\}$ for any $n \geqsl 1$ and $z_1,\ldots,z_n \in Z$. But
this, combined with the fact that the function $\Iso(X,d) \ni f \mapsto f\bigr|_Z \in X^Z$ is
an embedding (when $X^Z$ is equipped with the pointwise convergence topology), yields that $u$
belongs to the closure of $G$ in $\Iso(X,d)$. But $G$ is a closed subgroup and we are done.\par
By \LEM{id}, there is a metric
\begin{equation}\label{eqn:lambda}
\lambda\dd S_0 \times S_0 \to \{0\} \cup [1,2]
\end{equation}
for which $\Iso(S_0,\lambda_0) = \{\id_{S_0}\}$ ($\id_{S_0}$ is the identity map on $S_0$). Let
$\SsS$ be as in ($\star$). Since $\card(\SsS) = \beta = \card(S_0)$, there is a one-to-one function
$v\dd \SsS \to \{11,12\}^{S_0}$. We define a metric $\varrho$ on $X_* \sqcup S_0 =
\widehat{X}_{\beta} \setminus \{\theta\}$ by the rules:
\begin{itemize}
\item $\varrho = \mu$ on $X_* \times X_*$,
\item $\varrho = \lambda$ on $S_0 \times S_0$,
\item $\varrho(\xi,\eta) = \varrho(\eta,\xi) = [v(A)](\eta)$ for $\xi \in X_*$ and $\eta \in S_0$,
   where $A \in \SsS$ is such that $\xi \in A$ (such $A$ is unique).
\end{itemize}
That $\varrho$ is indeed a metric it follows from \eqref{eqn:lambda}, \eqref{eqn:mu'}, axiom (PR2)
for $\mu$ and the fact that for any $\eta \in S_0$, $\varrho(\cdot,\eta)$ is constant on each member
of $\SsS$. Finally, we extend the metric $\varrho$ to $\widehat{X}_{\beta}$ by putting
$\varrho(\xi,\theta) = 22$ for $\xi \in X_*$ and $\varrho(\xi,\theta) = 23$ for $\xi \in S_0$.
Direct calculations show that $\varrho$ is indeed a metric on $\widehat{X}_{\beta}$ and that
$\varrho$ respects $d$. It remains to check that \eqref{eqn:isomorph} is a well defined surjection
(cf. the proof of \PRO{closed}). We infer from \eqref{eqn:mu} and the fact that
$\varrho(\cdot,\eta)$ is constant on each member of $\SsS$ for any $\eta \in S_0 \cup \{\theta\}$
that the function \eqref{eqn:isomorph} is well defined. Now let $g \in
\Iso(\widehat{X}_{\beta},\varrho)$. Since $\theta$ is a unique point $\omega \in
\widehat{X}_{\beta}$ such that $\card(\{\xi \in \widehat{X}_{\beta}\dd\ \varrho(\xi,\omega) = 23\})
> 1$, we obtain $g(\theta) = \theta$. Consequently, $g(X_*) = X_*$ and $g(S_0) = S_0$. The latter
yields that
\begin{equation}\label{eqn:aux50}
g\bigr|_{S_0} = \id_{S_0}
\end{equation}
(because $\varrho$ extends $\lambda$). Now if $\xi, \eta \in X_*$ are arbitrary, the injectivity
of $v$ and the definition of $\varrho$ imply that $\varrho(\xi,\cdot) = \varrho(\eta,\cdot)$
on $S_0$ iff $\xi$ and $\eta$ belong to a common member of $\SsS$. But this, combined with
\eqref{eqn:aux50}, allows us to conclude that $g(A) = A$ for any $A \in \SsS$. Now an application
of ($\star$) (recall that $\varrho$ extends $\mu$) provides us the existence of $u \in G$ for which
$g = \widehat{u}_{\beta}$ on $X_*$. Since $g(\xi) = \xi$ for $\xi \notin X_*$, we see that $g =
\widehat{u}_{\beta}$, which finishes the proof.
\end{proof}

\begin{rem}{better}
Under the notation and the assumptions of \THM{closed}, if $M \geqsl 1$ is such that $d \leqsl M$
and $\epsi > 0$ is arbitrary, the metric $\varrho$ appearing in the assertion of that theorem may be
chosen so that $\varrho \leqsl M + \epsi$. Indeed, the above proof provides the existence
of a bounded metric $\varrho$, say $\varrho \leqsl C$ where $M < C < \infty$. Now it suffices
to replace $\varrho$ by $\omega \circ \varrho$, where $\omega\dd [0,C] \to [0,M + \epsi]$ is affine
on $[0,M]$ and $[M,C]$, and $\omega(0) = 0$, $\omega(M) = M$ and $\omega(C) = M + \epsi$.
\end{rem}

Let $(X,d)$ be a nonempty metric space and $(Y,\varrho)$ denote the completion of $(X,d)$. Since
every isometry of $(X,d)$ extends to a unique isometry of $(Y,\varrho)$, thus the topological group
$\Iso(X,d)$ may naturally be identified with the subgroup $\{u \in \Iso(Y,\varrho)\dd\ u(X) = X\}$
of $\Iso(Y,\varrho)$. If we follow this idea, \THM{closed} may be strengthened as follows:

\begin{pro}{closed2}
Let $(X,d)$ be a nonempty bounded metric space and $(Y,\varrho)$ denote its completion. Let $\beta$
be an infinite cardinal not less than $w(X)$. Further, let $G$ be a closed subgroup of $\Iso(X,d)
\subset \Iso(Y,\varrho)$ and $\bar{G}$ denote its closure in $\Iso(Y,\varrho)$. There are a complete
metric $\lambda$ on $\widehat{Y}_{\beta}$ which respects $\varrho$ and a dense set $X_{\beta}
\subset \widehat{Y}_{\beta}$ such that $(\widehat{Y}_{\beta} \setminus X_{\beta},\lambda)$ is
isometric to $(Y \setminus X,\varrho)$ and the function
\begin{equation}\label{eqn:cl}
\bar{G} \ni u \mapsto \widehat{u}_{\beta} \in \Iso(\widehat{Y}_{\beta},\lambda)
\end{equation}
is a well defined isomorphisms of topological groups which transforms $G$ onto the group of all
$u \in \Iso(\widehat{Y}_{\beta},\lambda)$ with $u(X_{\beta}) = X_{\beta}$.
\end{pro}
\begin{proof}
Fix $\theta \in D_{\beta}$ and put $X_{\beta} = \widehat{Y}_{\beta} \setminus [(Y \setminus X)
\times \{\theta\}]$. By \THM{closed}, there is a metric $\lambda$ on $\widehat{Y}_{\beta}$ which
respects $d$ and for which \eqref{eqn:cl} is a well defined isomorphism. Note that then $\lambda$
is complete (see (AX4)), $X_{\beta}$ is dense in $\widehat{Y}_{\beta}$ and $(\widehat{Y}_{\beta}
\setminus X_{\beta},\lambda)$ is isometric to $(Y \setminus X,\varrho)$ (since $\lambda$ respects
$\varrho$). Finally, if $u \in \bar{G}$, then $\widehat{u}_{\beta}(X_{\beta}) = X_{\beta}$ iff
$u(X) = X$ (which follows from the formulas for $\widehat{u}_{\beta}$ and $X_{\beta}$).
Equivalently, $\widehat{u}_{\beta}(X_{\beta}) = X_{\beta}$ iff $u \in \Iso(X,d) \cap \bar{G} = G$,
by the closedness of $G$ in $\Iso(X,d)$. This shows the last claim of the theorem and finishes
the proof.
\end{proof}

The above result shall be applied in Section~6 devoted to isometry groups of completely metrizable
metric spaces.\par
To complete the proof of \THM{closed}, we need to show \LEM{id}. But the latter result immediately
follows from the following much stronger

\begin{pro}{id}
Let $a$ and $b$ be two reals such that
\begin{equation}\label{eqn:ab}
0 < a < b \leqsl 2a.
\end{equation}
For every set $X$ having more than $5$ points there is a metric $d\dd X \times X \to \{0,a,b\}$ such
that
\begin{equation}\label{eqn:triv}
\Iso(X,d) = \{\id_X\}.
\end{equation}
\end{pro}
\begin{proof}
First of all, observe that any function $d\dd X \times X \to \{0,a,b\}$ which is symmetric and
vanishes precisely on the diagonal of $X$ is automatically a complete metric, which follows from
\eqref{eqn:ab}. So, we only need to take care of \eqref{eqn:triv}. For the same reason, we may (and
do) assume, with no loss of generality, that $a = 1$ and $b = 2$. We shall make use of transfinite
induction with respect to $\beta = \card(X) > 5$. Everywhere below in this proof, for $x \in X$,
by $S(x)$ we denote the set of all $y \in X$ with $d(x,y) = 1$. Since we have to define a metric
taking values in $\{0,1,2\}$, it is readily seen that it suffices to describe the sets $S(x)\ (x \in
X)$.\par
First assume $\beta = n \geqsl 6$ is finite. We may assume that $X = \{1,\ldots,n\}$. Our metric $d$
is defined by the following rules: $S(1) = \{2\}$, $S(2) = \{1,3,4,5\}$, $S(3) = \{2,4\}$, $S(4) =
\{2,3,5\}$, $S(5) = \{2,4,6\}$, $S(n) = \{n-1\}$ and $S(j) = \{j-1,j+1\}$ if $5 < j < n$. Take
$g \in \Iso(X,d)$ and observe that:
\begin{itemize}
\item $g(2) = 2$, since $2$ is the only point $j \in X$ such that $\card(S(j)) = 4$;
\item $g(1) = 1$, because $1$ is the unique point $j \in X$ for which $S(j) = \{2\}$;
\item $g(3) = 3$, since $3$ is the only point $j \in X$ such that $\card(S(j)) = 2$ and $2 \in
   S(j)$;
\item $g(4) = 4$, because $4$ is the unique point $j \in X$ for which $2\neq j \in S(3)$;
\item $g(5) = 5$, since $5$ is the only point $j \in X$ such that $j \in S(4) \setminus \{2,3\}$.
\end{itemize}
Now it is easy to check, using induction, that $g(j) = j$ for $j = 6,\ldots,n$.\par
When $\beta = \aleph_0$, we may assume $X = \NNN$. Define a metric $d\dd \NNN \times \NNN \to
\{0,1,2\}$ by $d(n,m) = \min(|m-n|,2)$. It is left to the reader that $\Iso(\NNN,d) =
\{\id_{\NNN}\}$ (use induction to show that $g(n) = n$ for any $n \in \NNN$ and $g \in
\Iso(\NNN,d)$). Below we assume that $\beta > \aleph_0$ is such that for every infinite $\alpha <
\beta$ the proposition holds for an arbitrary set $X$ of cardinality $\alpha$. For simplicity, for
any uncountable cardinal $\gamma$ we denote by $I_{\gamma}$ the set of all cardinals $\alpha$ for
which $\aleph_0 \leqsl \alpha < \gamma$. To get the assertion, we consider three cases.\par
First assume $\beta$ is not limit, i.e. $\beta$ is the immediate successor of an infinite cardinal
$\alpha$. We may assume that $X$ is the union of three pairwise disjoint sets $X'$, $X' \times Y$
and $\{a\}$ where $\card(X') = \alpha$ and $\card(Y) = \beta$. It follows from the transfinite
induction hypothesis that there is a metric $d'\dd X' \times X' \to \{0,1,2\}$ such that
$\Iso(X',d') = \{\id_{X'}\}$. Since $\beta \leqsl 2^{\alpha}$, there is a one-to-one function
$\mu\dd X' \times Y \to \{1,2\}^{X'}$ such that
\begin{equation}\label{eqn:sph}
[\mu(x,y)](x) = 1 \qquad (x \in X',\ y \in Y)
\end{equation}
(such a function $\mu$ may easily be constructed by transfinite induction with respect to an initial
well order on $X' \times Y$). We now define a metric $d$ on $X$ (with values in $\{0,1,2\}$)
by the rules:
\begin{enumerate}[(d1)]
\item $d = d'$ on $X' \times X'$,
\item $d((x,y),(x',y')) = 1$ if $(x,y)$ and $(x',y')$ are distinct elements of $X' \times Y$,
\item $d((x,y),x') = [\mu(x,y)](x')$ if $x, x' \in X'$ and $y \in Y$, and
\item $d(x,a) = 1$ and $d((x,y),a) = 2$ for any $x \in X'$ and $y \in Y$.
\end{enumerate}
Observe that $S(x) \supset \{x\} \times Y$ and $S(x,y) \supset (X' \times Y) \setminus \{(x,y)\}$
for any $x \in X'$ and $y \in Y$ (thanks to \eqref{eqn:sph} and (d2)--(d3)); and
\begin{equation}\label{eqn:X'}
S(a) = X'
\end{equation}
(by (d4)). We infer from these facts that $a$ is a unique point $x \in X$ such that $\card(S(x)) =
\alpha$. Consequently, if $g \in \Iso(X,d)$, then $g(a) = a$ and $g(X') = X'$ (because
of \eqref{eqn:X'}). So, $g\bigr|_{X'} \in \Iso(X',d')$ and therefore $g(x) = x$ for any $x \in X'$.
Finally, if $x, x' \in X'$ and $y \in Y$ are arbitrary, then $g(x,y) \notin X'$ and $d(g(x,y),x') =
d((x,y),x')$, which yields that $\mu(g(x,y)) = \mu(x,y)$. So, $g(x,y) = (x,y)$ (since $\mu$ is
one-to-one) and we are done.\par
Now we assume that $\beta$ is limit and $\card(I_{\beta}) < \beta$. For simplicity, put $I =
I_{\beta}$. Let $\{X_{\alpha}\}_{\alpha \in I}$ be a family of pairwise disjoint sets such that
\begin{equation}\label{eqn:alph}
X_{\alpha} \cap I = \varempty \quad \textup{and} \quad \card(X_{\alpha}) = \alpha < \beta \qquad
(\alpha \in I).
\end{equation}
Note that the set $X_* = \bigsqcup_{\alpha \in I} X_{\alpha}$ is of cardinality $\beta$ and
therefore we may assume $X = \{\omega\} \sqcup I \sqcup X_*$ (recall that this notation means that
$\omega \notin I \cup X_*$). It follows from the transfinite induction hypothesis that there are
metrics $d_I\dd I \times I \to \{0,1,2\}$ and $d_{\alpha}\dd X_{\alpha} \times X_{\alpha} \to
\{0,1,2\}\ (\alpha \in I)$ for which the groups $\Iso(I,d_I)$ and $\Iso(X_{\alpha},d_{\alpha})$ are
trivial. We define a metric $d$ on $X$ as follows:
\begin{enumerate}[\upshape(d1')]
\item $d = d_I$ on $I \times I$ and $d = d_{\alpha}$ on $X_{\alpha} \times X_{\alpha}$ for any
   $\alpha \in I$,
\item $d(x,y) = 1$ if $x$ and $y$ belong to different members of the collection
   $\{X_{\alpha}\}_{\alpha \in I}$,
\item $d(\alpha,x) = 2$ for $x \in X_{\alpha}$ and $d(\alpha,x) = 1$ for $x \in X_* \setminus
   X_{\alpha}\ (\alpha \in I)$,
\item $d(\alpha,\omega) = 1$ and $d(x,\omega) = 2$ for any $\alpha \in I$ and $x \in X_*$.
\end{enumerate}
Observe that for any $\alpha \in I$ and $x \in X_{\alpha}$, $S(\alpha) \supset X_* \setminus
X_{\alpha}$ (cf. (d3')) and $S(x) \supset X_* \setminus X_{\alpha}$ as well (cf. (d2')). At the same
time, $S(\omega) = I$ (by (d4')) and hence $\omega$ is a unique point $x \in X$ such that
$\card(S(x)) < \beta$ (cf. \eqref{eqn:alph}). Consequently, if $g \in \Iso(X,d)$, then $g(\omega) =
\omega$, $g(I) = I$ and $g(X_*) = X_*$. Then $g\bigr|_I \in \Iso(I,d_I)$ (cf. (d1')) and hence
$g(\alpha) = \alpha$ for each $\alpha \in I$. Now use (d3') to conclude that $g(X_{\alpha}) =
X_{\alpha}$ for any $\alpha \in I$. So, according to (d1'), $g\bigr|_{X_{\alpha}} \in
\Iso(X_{\alpha},d_{\alpha})$ for every $\alpha \in I$ and consequently $g(x) = x$ for all $x \in
\bigcup_{\alpha \in I} X_{\alpha} = X_*$, and we are done.\par
Finally, assume $\beta$ is limit and $\card(I_{\beta}) = \beta$. Then we may assume $X = I_{\beta}$.
Since
\begin{equation}\label{eqn:aux58}
\card(I_{\alpha}) \leqsl \alpha < \beta
\end{equation}
whenever $\alpha \in X$, for every $\alpha \in X$ there is a cardinal $\gamma(\alpha) \in X$ such
that
\begin{equation}\label{eqn:aux56}
\card(\{\xi\dd\ \alpha < \xi \leqsl \gamma(\alpha)\}) = \alpha.
\end{equation}
Now define a metric $d\dd X \times X \to \{0,1,2\}$ by the following rule: if $\aleph_0 \leqsl
\alpha_1 < \alpha_2 < \beta$, then $d(\alpha_1,\alpha_2) = 1$ iff $\alpha_2 \leqsl
\gamma(\alpha_1)$. It is easy to check that then $\card(S(\alpha)) = \alpha$ for any $\alpha \in X$
(thanks to \eqref{eqn:aux58} and \eqref{eqn:aux56}) and hence the identity map is a unique isometry
on $(X,d)$.
\end{proof}

\SECT{Models for $\GGg_{\delta}$-complete groups}

We begin this section with a useful characterization of $\GGg_{\delta}$-complete groups.

\begin{pro}{G-complete}
For a topological group $G$ all conditions stated below are equivalent.
\begin{enumerate}[\upshape(I)]
\item $G$ is $\GGg_{\delta}$-complete.
\item $G$ is isomorphic to a $\GGg_{\delta}$-closed subgroup of a complete topological group.
\item $G$ is $\GGg_{\delta}$-closed in every topological group which contains $G$ as a topological
   subgroup.
\item Every net $\{x_{\sigma}\}_{\sigma\in\Sigma}$ of elements of $G$ satisfying the following
   condition \textup{(CC)} is convergent in $G$.
   \begin{itemize}
   \item[(CC)] For every sequence $U_1,U_2,\ldots$ of neighbourhoods of the neutral element of $G$
      there exist points $y, z \in G$ and a sequence $\sigma_1,\sigma_2,\ldots\in\Sigma$ such that
      both $x_{\sigma}^{-1} y$ and $x_{\sigma} z^{-1}$ belong to $U_n$ whenever $n \geqsl 1$ and
      $\sigma \geqsl \sigma_n$.
   \end{itemize}
\item Every net $\{x_{\sigma}\}_{\sigma\in\Sigma}$ of elements of $G$ satisfying the following
   condition \textup{(CC')} is convergent in $G$.
   \begin{itemize}
   \item[(CC')] For every continuous left invariant pseudometric $d$ on $G$ there are points $y, z
      \in G$ such that $\lim_{\sigma\in\Sigma} d(x_{\sigma},y) = \lim_{\sigma\in\Sigma}
      d(x_{\sigma}^{-1},z^{-1}) = 0$.
   \end{itemize}
\end{enumerate}
\end{pro}
\begin{proof}
Everywhere below $\tau$ is the topology of $G$ and $e$ is its neutral element.\par
First assume $G$ is a $\GGg_{\delta}$-closed subgroup of a complete group $H$.
We want to show that $G$ is $\GGg_{\delta}$-complete. Let $\{x_{\sigma}\}_{\sigma\in\Sigma} \subset
G$ be a net which satisfies condition (C) with respect to the topology $\tau_{\delta}$. It then
satisfies this condition with respect to $\tau$ as well. Since $H$ is complete, there is $y \in H$
such that $\lim_{\sigma\in\Sigma} x_{\sigma} = y$. It suffices to check that $y$ belongs
to the $\GGg_{\delta}$-closure of $G$ in $H$. Take a $\GGg_{\delta}$-subset of $H$ containing $y$
and write $A y^{-1}$ in the form $A y^{-1} = \bigcap_{n=1}^{\infty} U_n$ where each $U_n$ is an open
in $H$ and contains $e$. Let $V_1,V_2,\ldots$ be a sequence of open (in $H$) neighbourhoods of $e$
such that the closure (in $H$) of $V_n$ is contained in $V_{n-1} \cap U_n$ for each $n$. where $V_0
= H$. Then $F := \bigcap_{n=1}^{\infty} V_n$ is a closed $\GGg_{\delta}$-subset of $H$ and $e \in F
\subset A y^{-1}$. It follows from our assumption about the net that there is $\sigma_0 \in \Sigma$
such that $x_{\sigma_0} x_{\sigma}^{-1} \in F$ for any $\sigma \geqsl \sigma_0$. We now infer from
the closedness of $F$ in $H$ that $x_{\sigma_0} y^{-1} \in F$ as well and consequently $x_{\sigma_0}
\in A$, which shows that $y$ belongs to the $\GGg_{\delta}$-closure of $G$. This proves that (II) is
followed by (I). Conversely, if $G$ is a $\GGg_{\delta}$-complete subgroup of a topological group
$K$ and $\OOo$ denotes the topology of $K$, then $\tau_{\delta}$ coincides with the topology
(on $G$) of a subspace inherited from $(K,\OOo_{\delta})$. It now follows from the completeness
of $(G,\tau_{\delta})$ that $G$ is closed in $(K,\OOo_{\delta})$ or, equivalently, that $G$ is
$\GGg_{\delta}$-closed in $K$, which proves that (III) is implied by (I). Since (II) obviously
follows from (III), in this way we have shown that conditions (I), (II) and (III) are equivalent.
We shall now show that (II) is equivalent to (IV) and then that (IV) is equivalent to (V).\par
If (II) is fulfilled, then $G$ is $\GGg_{\delta}$-closed in $\overline{G}$. Let
$\{x_{\sigma}\}_{\sigma\in\Sigma}$ be a net of elements of $G$ which satisfies condition (CC). Then
it fulfills condition (C) as well and hence there is $w \in \overline{G}$ such that
$\lim_{\sigma\in\Sigma} x_{\sigma} = w$. It suffices to check that $w \in G$ or, equivalently, that
$w$ belongs to the $\GGg_{\delta}$-closure of $G$. To this end, take any $\GGg_{\delta}$-subset $A$
of $\overline{G}$ which contains $w$. Write $A w^{-1} = \bigcap_{n=1}^{\infty} V_n$ where $V_1,V_2,
\ldots$ are open neighbourhoods of $e$. For each $n \geqsl 1$ take a neighbourhood $U_n$ of $e$ with
$U_n = U_n^{-1}$ and $U_n \cdot U_n \subset V_n$. Now let $y$, $z$ and $\sigma_1,\sigma_2,\ldots$ be
as in (CC), applied for the sequence $$U_1 \cap G,U_2 \cap G,\ldots$$ Fix for a moment $n \geqsl 1$.
Choose $\sigma \geqsl \sigma_n$ such that $x_{\sigma} w^{-1} \in U_n$. Then $z w^{-1} = (x_{\sigma}
z^{-1})^{-1} (x_{\sigma} w^{-1}) \subset U_n^{-1} \cdot U_n \subset V_n$. So, $z w^{-1} \in
\bigcap_{n=1}^{\infty} V_n = A w^{-1}$, which implies that $w \in A$. Consequently, $A \cap G \neq
\varempty$ and we are done.\par
The converse implication goes similarly: when (IV) is satisfied, we show that $G$ is
$\GGg_{\delta}$-closed in $\overline{G}$. Let $w \in \overline{G}$ belong
to the $\GGg_{\delta}$-closure of $G$. Then, of course, $w$ is in the closure of $G$ and thus there
is a net $\{x_{\sigma}\}_{\sigma\in\Sigma} \subset G$ which converges to $w$. To prove that $w \in
G$, it is enough to verify that (CC) is fulfilled. To this end, fix a sequence $U_1,U_2,\ldots$
of neighbourhoods of $e$ and choose its open symmetric neighbourhoods $V_1,V_2,\ldots$ such that
$V_n \cdot V_n \subset U_n\ (n \geqsl 1)$. We conclude from the fact that $w$ is
in the $\GGg_{\delta}$-closure of $G$ that there is $y \in G$ such that $y \in
\bigcap_{n=1}^{\infty} (V_n w \cap w V_n)$. Fix $n \geqsl 1$. There is $\sigma_n \in \Sigma$ such
that both $x_{\sigma} w^{-1}$ and $w^{-1} x_{\sigma}$ belong to $V_n$ for $\sigma \geqsl \sigma_n$.
Then, for such $\sigma$'s, $x_{\sigma} y^{-1} = (x_{\sigma} w^{-1}) (y w^{-1})^{-1} \subset V_n
\cdot V_n^{-1} \subset U_n$ and $x_{\sigma}^{-1} y = (w^{-1} x_{\sigma})^{-1} (w^{-1} y) \subset
V_n^{-1} \cdot V_n \subset U_n$ as well. This shows that (CC) is satisfied for $z = y$, and we are
done.\par
Point (V) is easily implied by (IV) (for a fixed continuous left invariant pseudometric $d$ and
a net satisfying (CC') apply (CC) for $U_n = \{x \in G\dd\ d(x,e) < 2^{-n}\}$). The converse
implication follows from the well-known fact that for an arbitrary sequence $U_1,U_2,\ldots$
of neighbourhoods of $e$ there exists a left invariant pseudometric $d$ on $G$ such that $\{x \in
G\dd\ d(x,e) < 2^{-n}\} \subset U_n$ for every $n \geqsl 1$ (see e.g. the proof
of the Kakutani-Birkhoff theorem on the metrizability of topological groups presented
in \cite[Theorem~6.3]{ber}; or use Markov's theorem \cite[Theorem~3.3.9]{a-t} to deduce this
property).
\end{proof}

\begin{rem}{weak}
The proof of \PRO{G-complete} shows that points (IV) and (V) of that result may be weakened
by assuming that every net satisfying condition (CC) or (CC') with $z = y$ is convergent. However,
to prove \THM{G-complete}, we need (IV) in its present form.
\end{rem}

Now we can give many examples of $\GGg_{\delta}$-complete groups. We inform that by the Cartesian
product of a family $\{G_s\}_{s \in S}$ of topological groups we mean the `full' Cartesian product
$\prod_{s \in S} G_s$ of them and by the direct product of this family we mean the topological
subgroup $\bigoplus_{s \in S} G_s$ of $\prod_{s \in S} G_s$ consisting of all its finitely supported
elements.

\begin{pro}{exs}
Each of the following topological groups is $\GGg_{\delta}$-complete.
\begin{enumerate}[\upshape(a)]
\item A $\GGg_{\delta}$-closed subgroup of a $\GGg_{\delta}$-complete group. A complete group.
\item The Cartesian as well as the direct product of arbitrary collection
   of $\GGg_{\delta}$-complete groups.
\item A topological group which is the countable union of its subgroups each of which is
   $\GGg_{\delta}$-complete.
\item A topological group which is $\sigma$-compact as a topological space. In particular, all
   countable topological groups are $\GGg_{\delta}$-complete.
\item A topological group in which singletons are $\GGg_{\delta}$. In particular, metrizable groups
   are $\GGg_{\delta}$-complete.
\item $G = \Iso(X,d)$ for an arbitrary metric space $(X,d)$. Moreover, $w(G) \leqsl w(X)$, and $G$
   is complete provided $(X,d)$ is complete.
\end{enumerate}
\end{pro}
\begin{proof}
In each point we involve \PRO{G-complete}.\par
To prove point (a), use the equivalence between conditions (I) and (III) in \PRO{G-complete}.
We turn to (b). Let $\{G_s\}_{s \in S}$ be a nonempty collection of $\GGg_{\delta}$-complete groups
and let $G = \prod_{s \in S} G_s$. Let $x_{\sigma} = (x^{(s)}_{\sigma})_{s \in S} \in G\
(\sigma\in\Sigma)$ be a net satisfying condition (CC). It remains to check that for any $t \in S$,
the net $\{x^{(t)}_{\sigma}\}_{\sigma\in\Sigma} \subset G_t$ satisfies condition (CC) as well
(because then it will be convergent), which is immediate: if $V_1,V_2,\ldots$ is a sequence
of neighbourhoods of the neutral element of $G_t$, apply (CC) for the sequence $U_1,U_2,\ldots$ with
$U_j := \{(x^{(s)})_{s \in S} \in G\dd\ x^{(t)} \in V_j\}\ (j\geqsl1)$ to obtain two points $y, z
\in G$ and then use their $t$-coordinates. Now to prove that also $H := \bigoplus_{s \in S} G_s$ is
$\GGg_{\delta}$-complete, it suffices to check that it is $\GGg_{\delta}$-closed in $G$ (by (a)).
But if $y = (y_s)_{s \in S} \in G \setminus H$, there is a countable (infinite) set $S' \subset S$
such that $y_s \neq e_s$ for any $s \in S'$, where $e_s$ is the neutral element of $G_s$. Then
the set $A := \{(z_s)_{s \in S} \in G|\quad \forall s \in S'\dd\ z_s \neq e_s\}$ is
a $\GGg_{\delta}$-subset of $G$ which contains $y$ and is disjoint from $H$, which finishes
the proof of (b).\par
Since the proofs of points (c) and (d) are similar, we shall show only (c). Let $G =
\bigcup_{n=1}^{\infty} G_n$ where $G_n$ is $\GGg_{\delta}$-complete for any $n$. Let $y \in
\overline{G} \setminus G$. Then $y \notin G_n$ and $G_n$ is $\GGg_{\delta}$-closed
in $\overline{G}$. Consequently, there are $\GGg_{\delta}$-subsets $A_1,A_2,\ldots$
of $\overline{G}$ containing $y$ such that $A_n \cap G_n = \varempty$. Then $A :=
\bigcap_{n=1}^{\infty} A_n$ is also a $\GGg_{\delta}$-subset of $\overline{G}$ containing $y$, and
$A \cap G = \varempty$. This shows that $y$ is not in the $\GGg_{\delta}$-closure of $G$ and we are
done.\par
Further, if all singletons are $\GGg_{\delta}$ in $G$, then $\tau_{\delta}$ is discrete and hence
$G$ is $\GGg_{\delta}$-complete. This proves (e).\par
Finally, we turn to (f). The second and the third claims of (f) are well-known, but for the sake
of completeness we shall prove them too. Let $(X,d)$ be a metric space and $G = \Iso(X,d)$. Let
$\{u_{\sigma}\}_{\sigma\in\Sigma} \subset G$ be a net satisfying condition (CC'). Fix $x \in X$ and
put $\varrho\dd G \times G \ni (u,v) \mapsto d(u(x),v(x)) \in [0,\infty)$. Observe that $\varrho$ is
a left invariant continuous pseudometric on $G$. It follows from (CC') that there are $f, g \in G$
such that $\lim_{\sigma\in\Sigma} d(u_{\sigma}(x),f(x)) = \lim_{\sigma\in\Sigma}
d(u_{\sigma}^{-1}(x),g(x)) = 0$. We conclude that both the nets
$\{u_{\sigma}(x)\}_{\sigma\in\Sigma}$ and $\{u_{\sigma}^{-1}(x)\}_{\sigma\in\Sigma}$ converge
in $X$. So, we may define $u, v\dd X \to X$ by $u(x) = \lim_{\sigma\in\Sigma} u_{\sigma}(x)$ and
$v(x) = \lim_{\sigma\in\Sigma} u_{\sigma}^{-1}(x)\ (x \in X)$. It is readily seen that both $u$ and
$v$ are isometric. What is more, a standard argument proves that $u \circ v = v \circ u = \id_X$ and
hence $u \in G$ and $\lim_{\sigma\in\Sigma} u_{\sigma} = u$. So, $G$ is $\GGg_{\delta}$-complete.
Furthermore, if $D$ is a dense subset of $X$ such that $\card(D) = w(X)$, then the function $G \ni g
\mapsto g\bigr|_D \in X^D$ is a topological embedding (when $X^D$ is equipped with the pointwise
convergence topology) and therefore $w(G) \leqsl w(X^D) \leqsl w(X)$. Finally, if $(X,d)$ is
in addition complete and $\{u_{\sigma}\}_{\sigma\in\Sigma} \subset G$ is a net satisfying (C),
similar argument to that above shows that then for any $x \in X$ the nets
$\{u_{\sigma}(x)\}_{\sigma\in\Sigma}$ and $\{u_{\sigma}^{-1}(x)\}_{\sigma\in\Sigma}$ are fundamental
in $(X,d)$ and hence converge. It now follows from the previous part of the proof that
$\{u_{\sigma}\}_{\sigma\in\Sigma}$ is convergent in $G$, which finishes the proof.
\end{proof}

For the purpose of the next result, recall that a topological space $X$ is \textit{Dieudonn\'{e}
complete} iff there is a complete uniformity on $X$ inducing the topology of $X$ (see e.g.
\cite[Chapter~8]{en2}). Accordingly, a topological group is \textit{Dieudonn\'{e} complete} iff
it is Dieudonn\'{e} complete as a topological space (\cite{a-t}).

\begin{cor}{dieudonne}
For every metric space $(X,d)$ the topological group $\Iso(X,d)$ is Dieudonn\'{e} complete.
\end{cor}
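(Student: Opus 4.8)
The plan is to obtain the corollary from the theory of $\GGg_{\delta}$-complete groups rather than by constructing a complete uniformity directly. First I would invoke \PRO{exs}(f), which asserts that for an arbitrary metric space $(X,d)$ the group $G := \Iso(X,d)$ is $\GGg_{\delta}$-complete. By the characterization recorded immediately after \DEF{G-complete} (equivalently, by the equivalence of conditions (I), (II) and (III) in \PRO{G-complete}), this says precisely that $G$ is $\GGg_{\delta}$-closed in its Ra\v{\i}kov completion $\overline{G}$. So the whole content of the corollary is reduced to the purely group-theoretic implication: a $\GGg_{\delta}$-complete topological group is Dieudonn\'e complete.

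The decisive step is to identify the Dieudonn\'e completion of the underlying space of a topological group with its $\GGg_{\delta}$-closure in the Ra\v{\i}kov completion. For an arbitrary topological group $H$ the Dieudonn\'e completion $\mu H$ of the space $H$ is again a topological group and coincides with the $\GGg_{\delta}$-closure of $H$ in $\overline{H}$ (a theorem of Arhangel'skii; see \cite{arh} and \cite[\S3.6]{a-t}). Granting this, a topological group $H$ is Dieudonn\'e complete iff $H = \mu H$, i.e. iff $H$ is $\GGg_{\delta}$-closed in $\overline{H}$, i.e. iff $H$ is $\GGg_{\delta}$-complete. Applying this to $G = \Iso(X,d)$, which is $\GGg_{\delta}$-complete by the previous paragraph, shows that $G$ equals its own Dieudonn\'e completion and is therefore Dieudonn\'e complete, as required.

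The main obstacle is exactly the identification $\mu H = (\GGg_{\delta}\text{-closure of }H\text{ in }\overline{H})$, which I would cite rather than reprove. If one insisted on a self-contained argument, the natural route is to show that every net which is Cauchy for the finest (universal) uniformity of the space $G$ satisfies condition (CC) of \PRO{G-complete}(IV) and hence converges in $G$: since the finest uniformity refines the two-sided uniformity, such a net satisfies (C) and so converges to some $w \in \overline{G}$, and the remaining task is to force $w$ into $G$. The difficulty is that Cauchyness for one continuous left-invariant pseudometric only yields a limit in a pseudometric completion, not the genuine limit points $y,z \in G$ demanded by (CC); one must use the whole family of continuous (including \emph{non}-invariant) pseudometrics generating the finest uniformity to place $w$ in the $\GGg_{\delta}$-closure of $G$ — which is precisely the information encoded by the completion theorem above. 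This is why the clean path is to route through \PRO{exs}(f). Finally, \EXM{non-G-complete} shows the hypothesis is indispensable for general topological groups, while \PRO{exs}(f) guarantees that it holds automatically for every isometry group, thereby settling Open Problem~3.5.4 of \cite{a-t}.
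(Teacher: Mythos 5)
Your first two steps coincide with the paper's own proof: \PRO{exs}(f) gives that $G=\Iso(X,d)$ is $\GGg_{\delta}$-complete, and \PRO{G-complete} translates this into $G$ being $\GGg_{\delta}$-closed in its Ra\v{\i}kov completion $\overline{G}$. The gap lies in your ``decisive step''. The assertion that for an \emph{arbitrary} topological group $H$ the Dieudonn\'{e} completion $\mu H$ is again a topological group and coincides with the $\GGg_{\delta}$-closure of $H$ in $\overline{H}$ is not a theorem you may cite: in \cite{arh} and in Chapter~6 of \cite{a-t} (not \S3.6, which concerns only the Ra\v{\i}kov completion) this identification is proved only for \emph{Moscow} groups, and whether it holds for every topological group --- equivalently, whether every topological group is a PT-group in the terminology of \cite{a-t}, i.e.\ whether $\mu H$ always carries a compatible group structure --- is exactly the question that motivates that theory and is left open there. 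So, as written, your argument rests on a statement that is unavailable in the generality you invoke it; your closing paragraph correctly senses that all the difficulty is hidden in this ``completion theorem'', but does not resolve it.

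Fortunately the corollary needs only one implication, namely ``$\GGg_{\delta}$-closed in $\overline{G}$ $\implies$ Dieudonn\'{e} complete'', and this follows from two elementary facts --- which is precisely how the paper concludes: (i) $\overline{G}$ is Dieudonn\'{e} complete, since its two-sided uniformity is a complete compatible uniformity and hence its finest compatible uniformity is complete as well; (ii) every $\GGg_{\delta}$-closed subspace of a Dieudonn\'{e} complete space is Dieudonn\'{e} complete (see \cite{die}, Problem~8.5.13(f) in \cite{en2}, or the proof of Proposition~6.5.2 in \cite{a-t}). Applying (ii) to $G$ sitting $\GGg_{\delta}$-closed inside $\overline{G}$ finishes the proof without ever mentioning $\mu G$; once $G$ is known to be Dieudonn\'{e} complete, the equality $\mu G = G$ is a triviality rather than an input. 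Replacing your appeal to the identification $\mu H = \GGg_{\delta}\textup{-closure}$ by facts (i) and (ii) turns your proposal into exactly the paper's proof.
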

\begin{proof}
By \PRO{exs}, $\Iso(X,d)$ is $\GGg_{\delta}$-complete and hence, thanks to \PRO{G-complete}, it is
$\GGg_{\delta}$-closed in $\overline{G}$. Consequently, $\Iso(X,d)$ is Dieudonn\'{e} complete (since
$\overline{G}$ is such and $\GGg_{\delta}$-closed subsets of Dieudonn\'{e} complete spaces are
Dieudonn\'{e} complete as well---see \cite{die} or Problem~8.5.13(f) on page~465 in \cite{en2};
cf. also the proof of Proposition~6.5.2 on page~366 in \cite{a-t}).
\end{proof}

The above result gives a full answer to the question of when the isometry group of a metric space is
Dieudonn\'{e} complete, posed by Arhangel'skii and Tkachenko in \cite{a-t} (see Open Problem~3.5.4
on page~181 there).

\begin{exm}{non-G-complete}
As we announced in the introductory part, not every topological group is absolutely
$\GGg_{\delta}$-closed. Let us briefly justify our claim. Let $S$ be an uncountable set and for each
$s \in S$ let $G_s$ be a nontrivial complete group with the neutral element $e_s$. Then $G :=
\prod_{s \in S} G_s$ is a complete group as well and
$$G_0 = \{(x_s)_{s \in S} \in G\dd\ \card(\{s \in S\dd\ x_s \neq e_s\}) \leqsl \aleph_0\}$$
is a proper subgroup of $G$ which is $\GGg_{\delta}$-dense in $G$ (and thus $G_0$ is not
$\GGg_{\delta}$-closed in $G$). Indeed, if $y =(y_s)_{s \in S} \in G$ and $A$ is
a $\GGg_{\delta}$-subset of $G$ containing $y$, then there is a countable set $S_0 \subset S$ such
that $\{(x_s)_{s \in S} \in G|\quad \forall s \in S_0\dd\ x_s = y_s\} \subset A$; then $z \in G_0
\cap A$ where $z_s = y_s$ for $s \in S_0$ and $z_s = e_s$ otherwise.
\end{exm}

We are almost ready for proving \THM{G-complete}. For the purpose of its proof and the nearest
result, let us introduce auxiliary notations and terminology. Whenever $d$ and $d'$ are two bounded
pseudometrics on a common nonempty set $X$, we put
$$\|d - d'\|_{\infty} := \sup_{x,y \in X} |d(x,y) - d'(x,y)|.$$
Further, the relation $R := \{(x,y) \in X \times X\dd\ d(x,y) = 0\}$ is an equivalence on $X$. Let
$\pi\dd X \to X/R$ be the canonical projection. The function $D\dd (X/R) \times (X/R) \ni
(\pi(x),\pi(y)) \mapsto d(x,y) \in [0,\infty)$ is a well defined metric on $X/R$. We call a triple
$(Y,\varrho;\xi)$ a \textit{metric space associated with} $(X,d)$ if $(Y,\varrho)$ is a metric space
isometric to $(X/R,D)$ and $\xi$ is a function of $X$ onto $Y$ such that there is an isometry $g\dd
(X/R,D) \to (Y,\varrho)$ for which $\xi = g \circ \pi$. Observe that then $\varrho(\xi(x),\xi(y)) =
d(x,y)$ for any $x, y \in X$.\par
With use of the following result we shall take care of condition $w(X) = w(G)$ in point (A)
of \THM{G-complete}.

\begin{lem}{aux}
Let $G$ be a topological group and $\{\varrho_s\}_{s \in S}$ be a collection of bounded continuous
left invariant metrics on $G$. For each $s \in S$, let $(X_s,d_s;\pi_s)$ be a metric space
associated with $(G,\varrho_s)$ chosen so that the sets $X_s$'s are pairwise disjoint. There exists
a metric $d$ on $X := \bigcup_{s \in S} X_s$ with the following properties:
\begin{enumerate}[\upshape(D1)]
\item $\frac12 \sqrt[3]{d_s(x,y)} \leqsl d(x,y) \leqsl \sqrt[3]{d_s(x,y)}$ for any $x, y \in X_s$
   and $s \in S$,
\item $d(\pi_s(a),\pi_t(a)) \leqsl \sqrt[3]{\|\varrho_s - \varrho_t\|_{\infty}}$ for any $a \in G$
   and $s, t \in S$,
\item each of the sets $X_s\ (s \in S)$ is closed in $(X,d)$,
\item $d(\pi_s(ag),\pi_t(ah)) = d(\pi_s(g),\pi_t(h))$ for any $a,g,h \in G$ and $s,t \in S$.
\end{enumerate}
\end{lem}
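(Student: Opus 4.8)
The plan is to realise $X$ as a disjoint union of copies of $G$ glued along ``corresponding points'' and to take for $d$ the largest (pseudo)metric lying below a suitable cube-root base cost. Concretely, I identify each $X_s$ with $(G,\varrho_s)$ through $\pi_s$ (so that $d_s(\pi_s(g),\pi_s(h)) = \varrho_s(g,h)$, each $\varrho_s$ being left invariant and bounded), and abbreviate $M_{s,t} := \|\varrho_s - \varrho_t\|_{\infty}$. On $X = \bigcup_s X_s$ I declare a symmetric base cost $\rho_0$ by putting $\rho_0(\pi_s(g),\pi_s(h)) = \sqrt[3]{\varrho_s(g,h)}$ inside each single piece and $\rho_0(\pi_s(a),\pi_t(a)) = \sqrt[3]{M_{s,t}}$ on each pair of corresponding points of two distinct pieces, all other values being filled in by chains. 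I then set
$$d(\xi,\eta) = \inf \Bigl\{ \textstyle\sum_{i} \rho_0(p_i,p_{i+1}) \Bigr\},$$
the infimum running over finite chains $\xi = p_0,\dots,p_n = \eta$ whose consecutive members lie in a common piece or form a corresponding pair. Being a chain infimum of a symmetric nonnegative cost, $d$ is automatically a pseudometric, so the triangle inequality comes for free; and since each $\varrho_s$ is bounded, $d$ is bounded (and, after a routine rescaling as in \REM{better}, may be kept small).

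Several assertions then drop out at once. A single within-piece edge gives $d \leqsl \sqrt[3]{d_s}$ on $X_s$, the upper half of (D1); a single corresponding-point edge gives (D2). For (D4) I observe that left translation $L_a\colon \pi_s(g) \mapsto \pi_s(ag)$ is a bijection of $X$ preserving $\rho_0$ on every admissible edge --- inside a piece by left invariance of $\varrho_s$, and on a corresponding pair because it carries $\{\pi_s(b),\pi_t(b)\}$ to $\{\pi_s(ab),\pi_t(ab)\}$ with the same value $\sqrt[3]{M_{s,t}}$; hence $L_a$ preserves the chain infimum and $d(\pi_s(ag),\pi_t(ah)) = d(\pi_s(g),\pi_t(h))$. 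For the separation of pieces, hence (D3), I use that $\sqrt[3]{\cdot}$ is concave, increasing and subadditive: a chain joining a point of $X_s$ to a point of $X_t$ ($s\neq t$) runs through pieces whose successive $M$-gaps sum to at least $M_{s,t}$, so by subadditivity its corresponding-point edges alone cost at least $\sqrt[3]{M_{s,t}}$. Thus $d(\cdot,\pi_t(h)) \geqsl \sqrt[3]{M_{s,t}} > 0$ on $X_s$ (the metrics being pairwise distinct), each $X_s$ is closed, and --- together with the lower bound below --- $d$ is a genuine metric.

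The heart of the matter, and the step I expect to be the main obstacle, is the lower half of (D1): no detour through foreign pieces may shorten a within-$X_s$ distance below $\tfrac12\sqrt[3]{\varrho_s(a_0,a_n)}$. The plan is to fix a chain from $\pi_s(a_0)$ to $\pi_s(a_n)$, to record the group elements $a_0 = v_0,\dots,v_{r+1} = a_n$ successively reached by its within-piece moves (in pieces $s = u_0,u_1,\dots,u_r$), and to write $P = \sum_k \sqrt[3]{\ell_k}$ with $\ell_k = \varrho_{u_k}(v_k,v_{k+1})$ for the within-cost and $Q$ for the cost of the corresponding-point edges; the target is $\varrho_s(a_0,a_n) \leqsl 8(P+Q)^3$. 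Subadditivity of $\sqrt[3]{\cdot}$ shows at once that every visited piece obeys $M_{s,u_k} \leqsl Q^3$. If $Q \geqsl \tfrac12\sqrt[3]{\varrho_s(a_0,a_n)}$ the inequality is immediate; in the complementary regime $Q^3 < \tfrac18\varrho_s(a_0,a_n)$ all the $\varrho_{u_k}$ lie within $Q^3$ of $\varrho_s$ in sup-norm, and the remaining task is to deduce $P \geqsl \tfrac12\sqrt[3]{\varrho_s(a_0,a_n)}$ by measuring the whole displacement $v_0 \to v_{r+1}$ against the single reference metric $\varrho_s$ and invoking the superadditivity $\sum_k\sqrt[3]{\ell_k} \geqsl \sqrt[3]{\sum_k\ell_k}$.

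The delicate point, which I expect to absorb most of the effort, is the bookkeeping of the discrepancies $M_{s,u_k}$: one must show that their total contribution is dominated by a fixed multiple of $Q^3$ rather than growing with the number of pieces. This rests on the fact that sup-norm-close left invariant metrics must shorten the same displacements by comparable amounts --- a consistency forced by the triangle inequality for $\|\cdot\|_{\infty}$ together with subadditivity --- so that the naive ``one $M_{s,u_k}$ per move'' loss is illusory and the cube-root within-costs cannot simultaneously be small. Balancing the two regimes is what produces the constant $\tfrac12$.
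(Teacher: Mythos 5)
Your construction differs from the paper's, and most of it is sound. The paper does not restrict the edges: it puts the cost $v(x,y)=\|\varrho_{\kappa(x)}-\varrho_{\kappa(y)}\|_{\infty}+\inf_{g}\bigl[d_{\kappa(x)}(x,\pi_{\kappa(x)}(g))+d_{\kappa(y)}(\pi_{\kappa(y)}(g),y)\bigr]$ on \emph{all} pairs, proves the three-step inequality $v(x_0,x_3)<8\varepsilon$, passes to $f=\sqrt[3]{v}$ precisely so that $8$ becomes $2$, and then quotes the chain metrization lemma (Lemma~6.2 of \cite{ber}) to obtain a metric with $\tfrac12 f\leqsl d\leqsl f$. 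In your restricted-edge chain infimum, the upper half of (D1), (D2), (D4) and the pseudometric property are indeed immediate, and your cube-root-subadditivity argument for (D3) is correct (both you and the paper need the $\varrho_s$ to be pairwise distinct for $d$ to be a metric at all, so flagging that is right). What is missing is exactly the lower half of (D1), and the route you propose for it cannot be completed.

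The gap is your Case-3 bookkeeping. You first write $\varrho_s(a_0,a_n)\leqsl\sum_k\varrho_s(v_k,v_{k+1})$ and then compare each move against the single reference metric $\varrho_s$, so you need $\sum_k\bigl[\varrho_s(v_k,v_{k+1})-\ell_k\bigr]\leqsl CQ^3$ for some fixed $C$. No ``consistency'' principle can give this, because the inequality is simply false. Take $G=(\ZZZ/2\ZZZ)\times\ZZZ$ with invariant metrics determined by the subadditive norms $N_s(a,n)=\min(aM+|n|,1)$ and $N_u(a,n)=\min(a\eta+|n|,1)$, where $\eta\ll M\leqsl 1$; then $\|N_s-N_u\|_{\infty}=M-\eta$, while the order-two displacement $(1,0)$ costs $\eta$ in $\varrho_u$ but $M$ in $\varrho_s$. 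A chain that jumps into two nearby (pairwise distinct) perturbations of $\varrho_u$ and performs this displacement $r$ times, alternating between them, has $P=r\sqrt[3]{\eta}$ and $Q\approx 2\sqrt[3]{M}$ both as small as you like, yet the sum of per-move gains is about $rM$, unbounded relative to $Q^3$. Your lemma is not contradicted, because the $r$ displacements cancel in the group and $\varrho_s(a_0,a_n)$ stays small --- but that cancellation is exactly the information your first triangle-inequality step throws away. So per-move comparison against $\varrho_s$ is a dead end, not merely ``delicate.''

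The repair is a change of bookkeeping: telescope through \emph{consecutive} pieces and never pass through $\varrho_s$ until the chain ends. Writing $p_i$ for the group element and $c_i$ for the piece reached after the $i$-th edge (so $c_0=c_N=s$, $p_0=a_0$, $p_N=a_n$), a within-piece edge increases $\varrho_{c_i}(a_0,\,\cdot\,)$ by at most $\varrho_{c_i}(p_i,p_{i+1})$ (triangle inequality), and a jump changes the comparison metric at the price of at most $M_{c_i,c_{i+1}}$ (definition of the sup-norm); in both cases the increment is the cube of the edge cost. Induction along the chain gives $\varrho_s(a_0,a_n)\leqsl\sum(\textup{within costs})^3+\sum(\textup{jump costs})^3\leqsl P^3+Q^3\leqsl(P+Q)^3$, using that a sum of cubes is at most the cube of the sum. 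Hence $d\geqsl\sqrt[3]{d_s}$ on each $X_s$ --- constant $1$, no case split, and in particular (D1). With this single substitution your argument becomes a complete proof, arguably more elementary than the paper's route through the metrization lemma, and with a better constant.
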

\begin{proof}
To simplify arguments, for each $x \in X$ denote by $\kappa(x)$ the unique index $s \in S$ such that
$x \in X_s$. Define a function $v\dd X \times X \to [0,\infty)$ as follows:
$$v(x,y) = \|\varrho_{\kappa(x)} - \varrho_{\kappa(y)}\|_{\infty} + \inf
\{d_{\kappa(x)}(x,\pi_{\kappa(x)}(g)) + d_{\kappa(y)}(\pi_{\kappa(y)}(g),y)\dd\ g \in G\}.$$
Observe that:
\begin{itemize}
\item $v(x,x) = 0$ and $v(y,x) = v(x,y)$ for any $x, y \in X$,
\item $v(x,y) = d_s(x,y)$ for any $x, y \in X_s$ and $s \in S$,
\item $v(\pi_s(g),\pi_t(g)) = \|\varrho_s - \varrho_t\|_{\infty}$ whenever $s, t \in S$ and $g \in
   G$,
\item $v(\pi_s(g),\pi_t(h)) \geqsl \|\varrho_s - \varrho_t\|_{\infty}$ for all $s, t \in S$ and $g,
   h \in G$,
\item $v(\pi_s(ag),\pi_t(ah)) = v(\pi_s(g),\pi_t(h))$ for any $a, g, h \in G$ and $s, t \in S$.
\end{itemize}
Let us now show that for any $x_0, x_1, x_2, x_3 \in X$ and each $\epsi > 0$:
\begin{equation}\label{eqn:e0}
\max_{j=1,2,3} v(x_{j-1},x_j) < \epsi \implies v(x_0,x_3) < 8 \epsi.
\end{equation}
Assume $v(x_{j-1},x_j) < \epsi\ (j=1,2,3)$. This means that there are $a_1,a_2,a_3 \in G$ for which
\begin{multline}\label{eqn:55}
\|\varrho_{\kappa(x_{j-1})} - \varrho_{\kappa(x_j)}\|_{\infty}
+ d_{\kappa(x_{j-1})}(x_{j-1},\pi_{\kappa(x_{j-1})}(a_j))\\
+ d_{\kappa(x_j)}(\pi_{\kappa(x_j)}(a_j),x_j) < \epsi.
\end{multline}
In particular, $\|\varrho_{\kappa(x_{j-1})} - \varrho_{\kappa(x_j)}\|_{\infty} < \epsi$ for
$j=1,2,3$ and thus
\begin{equation}\label{eqn:aux51}
\|\varrho_{\kappa(x_0)} - \varrho_{\kappa(x_3)}\|_{\infty} < 3 \epsi.
\end{equation}
For simplicity, for $j \in \{0,1,2,3\}$ put $s_j = \kappa(x_j)$ and take $b_j \in G$ such that
$\pi_{s_j}(b_j) = x_j$. Recall that $d_s(\pi_s(g),\pi_s(h)) = \varrho_s(g,h)$ for any $s \in S$ and
$g, h \in G$. Therefore we have
\begin{multline*}
\varrho_{s_j}(b_{j-1},b_j) \leqsl \varrho_{s_j}(b_{j-1},a_j) + \varrho_{s_j}(a_j,b_j) \leqsl\\
\leqsl \|\varrho_{s_{j-1}} - \varrho_{s_j}\|_{\infty} + \varrho_{s_{j-1}}(b_{j-1},a_j)
+ \varrho_{s_j}(a_j,b_j) < \epsi
\end{multline*}
(by \eqref{eqn:55}) and consequently
$$\varrho_{s_2}(b_0,b_2) \leqsl \varrho_{s_2}(b_0,b_1) + \varrho_{s_2}(b_1,b_2)
\leqsl \|\varrho_{s_2} - \varrho_{s_1}\|_{\infty} + \varrho_{s_1}(b_0,b_1) + \varrho_{s_2}(b_1,b_2)
< 3 \epsi.$$
Similarly,
\begin{multline}\label{eqn:52}
\varrho_{s_3}(b_0,b_3) \leqsl \varrho_{s_3}(b_0,b_2) + \varrho_{s_3}(b_2,b_3) \leqsl\\
\leqsl \|\varrho_{s_3} - \varrho_{s_2}\|_{\infty} + \varrho_{s_2}(b_0,b_2) + \varrho_{s_3}(b_2,b_3)
< 5 \epsi.
\end{multline}
Finally, by \eqref{eqn:aux51} and \eqref{eqn:52} we obtain:
\begin{multline*}
v(x_0,x_3) \leqsl \|\varrho_{s_0} - \varrho_{s_3}\|_{\infty}
+ d_{s_0}(\pi_{s_0}(b_0),\pi_{s_0}(b_0)) + d_{s_3}(\pi_{s_3}(b_0),\pi_{s_3}(b_3)) <\\
< 3 \epsi + v_{s_3}(b_0,b_3) < 8 \epsi
\end{multline*}
and the proof of \eqref{eqn:e0} is complete. Now let $f\dd X \times X \to [0,\infty)$ be given
by $f(x,y) = \sqrt[3]{v(x,y)}$. Below we collect all properties established for $v$ and translated
to the case of the function $f$:
\begin{enumerate}[\upshape(F1)]
\item $f(x,x) = 0$ and $f(x,y) = f(y,x) > 0$ for any distinct points $x$ and $y$ of $X$,
\item if $\epsi > 0$ and $\{f(x,y),f(y,z),f(z,w)\} \subset [0,\epsi]$ for some $x,y,z,w \in X$, then
   $f(x,w) \leqsl 2\epsi$ (thanks to \eqref{eqn:e0}),
\item $f(x,y) = \sqrt[3]{d_s(x,y)}$ and $f(\pi_s(g),\pi_t(g)) =
   \sqrt[3]{\|\varrho_s - \varrho_t\|_{\infty}}$ whenever $x, y \in X_s$, $g \in G$ and $s, t
   \in S$,
\item $f(\pi_s(g),\pi_t(h)) \geqsl \sqrt[3]{\|\varrho_s - \varrho_t\|_{\infty}}$ for all $g, h \in
   G$ and $s, t \in S$,
\item $f(\pi_s(ag),\pi_t(ah)) = f(\pi_s(g),\pi_t(h))$ for  any $a, g, h \in G$ and $s, t \in S$.
\end{enumerate}
Finally, we define $d\dd X \times X \to [0,\infty)$ as follows:
$$d(x,y) = \inf \bigl\{\sum_{j=1}^n f(z_{j-1},z_j)\dd\ n \geqsl 1,\ z_0,\ldots,z_n \in X,\ z_0 = x,\
z_n = y\bigr\}.$$
Lemma~6.2 of \cite{ber} asserts that for any function $f\dd X \times X \to [0,\infty)$ satisfying
conditions (F1)--(F2) the function $d$ defined above is a metric on $X$ such that
\begin{equation}\label{eqn:f-d}
\frac12 f(x,y) \leqsl d(x,y) \leqsl f(x,y) \qquad (x,y \in X).
\end{equation}
It follows from (F5) and the formula of $d$ that (D4) is fulfilled, while (D1) and (D2) may easily
be deduced from (F3) and \eqref{eqn:f-d}. Finally, (D3) is a consequence of (F4) and
\eqref{eqn:f-d}, and we are done.
\end{proof}

\begin{proof}[Proof of \THM{G-complete}]
Implications `(A1)$\implies$(A2)' and `(B1)$\implies$(B2)' follow from \PRO{exs}. It remains to show
the converse implications.\par
First assume that $G$ is complete (in this case the proof is much shorter). By a well-known result
(see e.g. \cite[Theorem~2.1]{usp}), there is a bounded metric space $(Y,\varrho)$ such that $w(G) =
w(Y)$ and $G$ is isomorphic to a subgroup $H$ of $\Iso(Y,\varrho)$. Since $\Iso(Y,\varrho)$ is
naturally isomorphic to a subgroup of $\Iso(\bar{Y},\bar{\varrho})$, where $(\bar{Y},\bar{\varrho})$
is the completion of $(Y,\varrho)$, we may assume that $(Y,\varrho)$ is a complete metric space.
Since $G$ is complete, $H$ is a closed subgroup of $(Y,\varrho)$. Now \THM{closed} implies that $H$
is isomorphic to $\Iso(X,d)$ where $X = \widehat{Y}_{\beta}$ with $\beta = w(Y)$, and $d$ is
a metric which respects $\varrho$. Notice that $d$ is complete (by (AX4)), $w(X) = w(G)$ (because
$\beta = w(Y) = w(G)$) and $G$ is isomorphic to $\Iso(X,d)$ (being isomorphic to $H$). This proves
the remainder of point (B).\par
We now turn to (A). Assume $G$ is $\GGg_{\delta}$-complete. Thanks to \THM{closed}, it suffices
to show that $G$ is isomorphic to a closed subgroup of $\Iso(X,d)$ for a metric space $(X,d)$
of topological weight equal to $w(G)$ (see the previous part of the proof). We shall do this
employing \LEM{aux} and improving a classical argument, presented e.g. in 1st proof
of \cite[Theorem~2.1]{usp}.\par
Let $\bbB$ be a base of open neighbourhoods of the neutral element $e$ of $G$ such that $\card(\bbB)
\leqsl w(G)$. Let $S$ be the set of all finite and all infinite sequences of members of $\bbB$. For
any $U \in \bbB$ there exists a continuous left invariant metric $\lambda_U$ on $G$ bounded by $1$
such that
\begin{equation}\label{eqn:ball}
\{x \in G\dd\ \lambda_U(x,e) < 1\} \subset U.
\end{equation}
We leave it as a simple exercise that the family $\{\lambda_U\}_{U \in \bbB}$ determines
the topology of $G$. Now for any $s = (U_j)_{j=1}^N \in S$ (where $N$ is finite or $N = \infty$) let
\begin{equation}\label{eqn:rho}
\varrho_s := \sum_{j=1}^N \frac{1}{2^j} \lambda_{U_j}.
\end{equation}
Notice that $\varrho_s$ is a continuous left invariant metric on $G$ bounded by $1$. What is more,
\begin{itemize}
\item[(T)] the family $\{\varrho_s\}_{s \in S}$ determines the topology of $G$
\end{itemize}
(since $\varrho_s = \lambda_U$ for $s = (U,U,\ldots) \in S$). Let $(X_s,d_s;\pi_s)\ (s \in S)$
as well as $(X,d)$ be as in \LEM{aux}. For each $s \in S$ let $\tilde{\varrho}_s\dd G \times G \to
[0,\infty)$ be given by $\tilde{\varrho}_s(g,h) = d(\pi_s(g),\pi_s(h))$. It is clear that
$\tilde{\varrho}_s$ is a pseudometric on $G$. What is more, it is left invariant (thanks to (D4))
and
\begin{equation}\label{eqn:til}
\frac12 \sqrt[3]{\varrho_s} \leqsl \tilde{\varrho}_s \leqsl \sqrt[3]{\varrho_s} \qquad (s \in S)
\end{equation}
(by (D1)). Consequently, each of the pseudometrics $\tilde{\varrho}_s$'s is continuous and
\begin{itemize}
\item[(T')] the family $\{\tilde{\varrho}_s\}_{s \in S}$ determines the topology of $G$
\end{itemize}
(see (T)). We infer from the continuity of $\tilde{\varrho}_s$ that $\pi_s$, as a function of $G$
into $(X,d)$, is continuous as well. We claim that $w(X) \leqsl w(G)$. To see this, let $S_f$
consists of all finite sequences of members of $\bbB$, and let $D$ be a dense subset of $G$ such
that $\card(D) \leqsl w(G)$. Observe that $Z := \bigcup_{s \in S_f} \pi_s(D)$ has cardinality not
exceeding $w(G)$. We will now show that $Z$ is dense in $X$. First of all, note that $\pi_s(D)$ is
dense in $X_s$, since $\pi_s$ is continuous. In particular, the closure of $Z$ contains all points
of $\bigcup_{s \in S_f} X_s$. Fix $s \notin S_f$ and $a \in G$. Then $s$ is of the form $s =
(U_j)_{n=1}^{\infty} \in S$. Put $s_n := (U_j)_{j=1}^n \in S_f$ and observe that, by (D2) and
\eqref{eqn:rho},
$$d(\pi_{s_n}(a),\pi_s(a)) \leqsl \sqrt[3]{\|\varrho_{s_n} - \varrho_s\|_{\infty}} \to 0
\quad (n\to\infty).$$
So, since $\pi_s(G) = X_s$, the above argument shows that $Z$ is indeed dense in $X$.\par
It remains to check that $G$ is isomorphic to a closed subgroup of $\Iso(X,d)$. For $g \in G$ let
$u_g\dd X \to X$ be such that $u_g(\pi_s(x)) = \pi_s(gx)$ for any $s \in S$ and $x \in G$. Then
$\Phi\dd G \ni g \mapsto u_g \in \Iso(X,d)$ is a well defined (by (D4)) group homomorphism as well
as a topological embedding (thanks to (T')). So, it follows from point (f) of \PRO{exs} that $w(X)
\geqsl w(G)$ and hence in fact $w(X) = w(G)$. We shall check that $\Phi(G)$ is closed, which will
finish the proof. Assume $\{x_{\sigma}\}_{\sigma\in\Sigma}$ is a net in $G$ such that the net
$\{u_{x_{\sigma}}\}_{\sigma\in\Sigma}$ converges in $\Iso(X,d)$ to some $u \in \Iso(X,d)$. It is
enough to prove that the net $\{x_{\sigma}\}_{\sigma\in\Sigma}$ is convergent in $G$. Since $G$ is
$\GGg_{\delta}$-complete, actually it suffices to verify condition (CC) (see \PRO{G-complete}).
To this end, let $V_1,V_2,\ldots$ be a sequence of neighbourhoods of $e$. For any $n \geqsl 1$
choose $U_n \in \bbB$ for which $U_n \subset V_n$. Now for $s := (U_j)_{j=1}^{\infty} \in S$ we have
\begin{eqnarray*}
\lim_{\sigma\in\Sigma} d(\pi_s(x_{\sigma}),u(\pi_s(e))) = \lim_{\sigma\in\Sigma}
d(u_{x_{\sigma}}(\pi_s(e)),u(\pi_s(e))) = 0,\\
\lim_{\sigma\in\Sigma} d(\pi_s(x_{\sigma}^{-1}),u^{-1}(\pi_s(e))) = \lim_{\sigma\in\Sigma}
d(u_{x_{\sigma}}^{-1}(\pi_s(e)),u^{-1}(\pi_s(e))) = 0.
\end{eqnarray*}
We infer from (D3) and the above convergences that there are $y, z \in G$ such that
$\lim_{\sigma\in\Sigma} d(\pi_s(x_{\sigma}),\pi_s(y)) = \lim_{\sigma\in\Sigma}
d(\pi_s(x_{\sigma}^{-1}),\pi_s(z^{-1})) = 0$. But $d(\pi_s(a),\pi_s(b)) = \tilde{\varrho}_s(a,b)$
for any $a, b \in G$ and thus, thanks to \eqref{eqn:til},
$$\lim_{\sigma\in\Sigma} \varrho_s(x_{\sigma},y) = \lim_{\sigma\in\Sigma}
\varrho_s(x_{\sigma}^{-1},z^{-1}) = 0.$$
For each $n \geqsl 1$ let $\sigma_n \in \Sigma$ be such that both the numbers
$\varrho_s(x_{\sigma},y) = \varrho_s(x_{\sigma}^{-1}y,e)$ and $\varrho_s(x_{\sigma}^{-1},z^{-1}) =
\varrho_s(x_{\sigma}z^{-1},e)$ are less than $2^{-n}$ for any $\sigma \geqsl \sigma_n$. We deduce
from the formula of $s$, \eqref{eqn:rho} and \eqref{eqn:ball} that
$\{x_{\sigma}^{-1}y,x_{\sigma}z^{-1}\} \subset U_n (\subset V_n)$ for all $\sigma \geqsl \sigma_n$
and we are done.
\end{proof}

The above proof provides the existence of a metric space (namely, $\widehat{X}_{\beta}$) whose
isometry group is isomorphic to a given $\GGg_{\delta}$-complete group $G$. This metric space is
highly disconnected (since it contains a clopen discrete set whose cardinality is equal
to the topological weight of the whole space). In the next section we shall improve \THM{G-complete}
by showing that $G$ is isomorphic to the isometry group of a contractible open set in a normed
vector space of the same topological weight as $G$.

\begin{cor}{weight}
\begin{enumerate}[\upshape(A)]
\item Let $G$ be a topological group and $\beta$ be an infinite cardinal number. There exists
   a [complete] metric space $(X,d)$ such that $w(X) = \beta$ and $\Iso(X,d)$ is isomorphic to $G$
   iff $G$ is $\GGg_{\delta}$-complete [resp. complete] and $\beta \geqsl w(G)$.
\item A topological group is isomorphic to the isometry group of some separable metric space iff
   it is second countable.
\end{enumerate}
\end{cor}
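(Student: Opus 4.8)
The plan is to derive both statements from \THM{G-complete} together with the weight‑inflation mechanism of \THM{closed}: \THM{G-complete} produces a model of the optimal weight $w(G)$, while \THM{closed} lets me raise the weight to any prescribed $\beta \geqsl w(G)$ without disturbing the isometry group.

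For part (A), the necessity is immediate. If $(X,d)$ is a metric space with $w(X)=\beta$ and $\Iso(X,d)\cong G$, then \PRO{exs}(f) shows that $G$ is $\GGg_{\delta}$-complete (and complete when $(X,d)$ is complete), and that $w(G)\leqsl w(X)=\beta$. For the sufficiency, assume $G$ is $\GGg_{\delta}$-complete (resp. complete) with $\beta\geqsl w(G)$. First I invoke \THM{G-complete} to obtain a metric space (resp. complete metric space) $(Y,e)$ with $w(Y)=w(G)$ and $\Iso(Y,e)\cong G$. Replacing $e$ by the bounded metric $e'=e/(1+e)$ alters neither the topology, the weight, the isometry group (the transformation $t\mapsto t/(1+t)$ is a strictly increasing bijection of $[0,\infty)$ onto $[0,1)$), nor the completeness, so I may assume $(Y,e')$ is a nonempty bounded space; note $w(Y)=w(G)\leqsl\beta$. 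Now I apply \THM{closed} to the full---hence closed---subgroup $\Iso(Y,e')$ of itself: it yields a metric $\varrho$ on $\widehat{Y}_{\beta}$, of topological weight exactly $\beta$, with $\Iso(\widehat{Y}_{\beta},\varrho)\cong\Iso(Y,e')\cong G$. Since $\varrho$ respects $e'$, axiom (AX4) guarantees that $\varrho$ is complete whenever $e'$ is, i.e. precisely in the complete case. Thus $(X,d)=(\widehat{Y}_{\beta},\varrho)$ is the required space.

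For part (B), a separable metric space is exactly one of weight $\aleph_0$, so the ``only if'' direction follows from \PRO{exs}(f): $w(\Iso(X,d))\leqsl w(X)=\aleph_0$, whence $\Iso(X,d)$ is second countable. Conversely, a second countable (Hausdorff) topological group is first countable, hence metrizable by the Birkhoff--Kakutani theorem, and therefore $\GGg_{\delta}$-complete by \PRO{exs}(e), with $w(G)=\aleph_0$. Applying part (A) with $\beta=\aleph_0$ then yields a metric space of weight $\aleph_0$---that is, a separable one---whose isometry group is isomorphic to $G$.

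I expect no serious obstacle, since the heavy lifting is already carried by \THM{G-complete} and \THM{closed}. The only points requiring care are the routine verification that the bounded remetrization $e/(1+e)$ preserves both completeness and the isometry group, and the key observation that \THM{closed}, applied to $\Iso(Y,e')$ regarded as a closed subgroup of itself, is exactly the device that inflates the weight from $w(G)$ to the prescribed $\beta$ while keeping the isometry group unchanged.
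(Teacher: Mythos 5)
Your proposal is correct and follows exactly the route the paper intends: its one-line proof cites precisely Theorems \ref{thm:G-complete} and \ref{thm:closed}, axiom (AX4), and points (f) and (e) of Proposition \ref{pro:exs}, which are the ingredients you assemble (with the bounded remetrization $e/(1+e)$ and the application of Theorem \ref{thm:closed} to the full isometry group supplying the weight inflation to $\beta$). No discrepancies worth noting.
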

\begin{proof}
Both points (A) and (B) follow from Theorems~\ref{thm:G-complete} and \ref{thm:closed}, (AX4) and,
respectively, points (f) and (e) of \PRO{exs}.
\end{proof}

We call a metrizable space $X$ \textit{zero-dimensional} iff $X$ has a base consisting of clopen
(that is, simultaneously open and closed) sets; and $X$ is \textit{strongly zero-dimensional}
if the covering dimension of $X$ equals $0$.

\begin{cor}{0-dim}
Let $G$ be an infinite metrizable topological group and $\beta = w(G)$. There exists a compatible
metric $\varrho$ on $Y := \widehat{G}_{\beta}$ such that $G$ is isomorphic to $\Iso(Y,\varrho)$.
In particular:
\begin{enumerate}[\upshape(A)]
\item If $G$ is discrete, there is a complete compatible (possibly non-left invariant) metric $d$
   on $G$ such that $G$ is isomorphic to $\Iso(G,d)$.
\item If $G$ is countable and non-discrete, there is a compatible metric $d$ on $F := \QQQ \sqcup
   \ZZZ$ such that $G$ is isomorphic to $\Iso(F,d)$.
\item If $G$ is totally disconnected (zero-dimensional; strongly zero-dimensional), there is
   a metric space $(X,d)$ such that $X$ is totally disconnected (resp. zero\hyp{}dimensional;
   strongly zero-dimensional) as well, $w(X) = w(G)$ and $\Iso(X,d)$ is isomorphic to $G$.
\end{enumerate}
\end{cor}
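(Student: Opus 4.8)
The plan is to realize $G$ directly as an isometry group of $\widehat{G}_{\beta}$ by feeding $G$ \emph{itself} (rather than an auxiliary completion) into \THM{closed}. Since $G$ is metrizable, the Kakutani--Birkhoff theorem supplies a compatible left invariant metric, which I may take bounded by $1$; call it $d_0$. Left translations $L_g\dd x \mapsto gx$ are $d_0$-isometries, and $g \mapsto L_g$ is a group monomorphism which is a topological embedding: its continuity is left continuity of multiplication, and $L_{g_{\sigma}} \to L_g$ pointwise forces $g_{\sigma} = L_{g_{\sigma}}(e) \to g$, so it is an embedding. Hence $\Lambda := \{L_g\dd\ g \in G\}$ is a subgroup of $\Iso(G,d_0)$ topologically isomorphic to $G$.

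The key step is to verify that $\Lambda$ is \emph{closed} in $\Iso(G,d_0)$, which is exactly what licenses \THM{closed}. Suppose a net $\{L_{g_{\sigma}}\}$ converges in $\Iso(G,d_0)$ to some $u$. Evaluating at $e$ gives $g_{\sigma} = L_{g_{\sigma}}(e) \to u(e)$, and since $u$ maps $G$ onto $G$ the point $g := u(e)$ lies in $G$. As $d_0$ is compatible, $g_{\sigma} \to g$ in the group topology, so right multiplication by any $x$ (a homeomorphism) yields $g_{\sigma}x \to gx$; comparing with $g_{\sigma}x = L_{g_{\sigma}}(x) \to u(x)$ gives $u(x) = gx$, i.e. $u = L_g \in \Lambda$. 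With $\Lambda$ closed, I would apply \THM{closed} to $(X,d) = (G,d_0)$ with $\beta = w(G) = w(X)$ to obtain a metric $\varrho$ on $\widehat{X}_{\beta} = \widehat{G}_{\beta}$ that respects $d_0$ and for which $\Lambda \ni L_g \mapsto \widehat{(L_g)}_{\beta}$ is an isomorphism onto $\Iso(\widehat{G}_{\beta},\varrho)$. Because $\varrho$ respects $d_0$, it is compatible by (AX4); composing with $G \cong \Lambda$ proves the main assertion.

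The three ``in particular'' clauses then follow by identifying the homeomorphism type of $\widehat{G}_{\beta} = (\bigsqcup_{n} G^n) \times D_{\beta}$ and transporting $\varrho$ across a homeomorphism. For (A): if $G$ is discrete then each $G^n$, hence $T(G)$ and $\widehat{G}_{\beta}$, is discrete, and a cardinality count gives $\card(\widehat{G}_{\beta}) = \card(G) = \beta$, so $\widehat{G}_{\beta}$ is homeomorphic to $G$; transporting $\varrho$ gives a compatible metric $d$ on $G$ with $\Iso(G,d) \cong G$, complete by (AX4) since the $\{0,1\}$-metric on the discrete $G$ is complete. For (B): a metrizable group is either discrete or has no isolated points, so a countable non-discrete $G$ is dense-in-itself and thus $G \cong \QQQ$ by Sierpi\'{n}ski's theorem; then each $G^n$ with $n \geqsl 1$ is again $\cong \QQQ$, the topological sum of countably many copies of $\QQQ$ is $\cong \QQQ$, and multiplying by the countable discrete $D_{\beta}$ yields $\widehat{G}_{\beta} \cong \QQQ \sqcup \ZZZ = F$; transport $\varrho$.

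For (C), I would note that $\widehat{G}_{\beta}$ is assembled from $G$ using only finite Cartesian powers, countable topological sums, and a single product with the discrete space $D_{\beta}$, and that each of these operations preserves total disconnectedness, zero-dimensionality, and (in the metrizable category) strong zero-dimensionality; together with $w(\widehat{G}_{\beta}) = \beta = w(G)$ this gives the claim with $X = \widehat{G}_{\beta}$. The main obstacle is the closedness of $\Lambda$ in $\Iso(G,d_0)$ established in the second paragraph---the crux of the whole reduction---while the only other delicate point is the preservation of strong zero-dimensionality under finite products, for which one invokes the coincidence of dimensions and the product theorem for $\dim = 0$ in metrizable spaces.
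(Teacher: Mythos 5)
Your proposal is correct and follows essentially the same route as the paper: take a bounded left invariant compatible metric on $G$ (complete in the discrete case), observe that the left translations form a closed subgroup of the isometry group, apply \THM{closed} to get $G \cong \Iso(\widehat{G}_{\beta},\varrho)$, and then identify the homeomorphism type of $\widehat{G}_{\beta}$ in cases (A), (B), (C) exactly as the paper does (via Sierpi\'{n}ski's theorem for (B) and preservation of the disconnectedness/dimension properties under sums, finite products and products with discrete spaces for (C)). The only difference is that you spell out the closedness of the group of left translations and the completeness argument in (A), which the paper leaves as an exercise.
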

\begin{proof}
Let $p$ be a bounded left invariant compatible metric on $G$ (if $G$ is discrete, we may
additionally assume that $p$ is complete). It is an easy exercise (and a well-known fact) that all
left translations on $G$ form a closed subgroup of $\Iso(G,p)$. Consequently, by \THM{closed}, $G$
is isomorphic to $\Iso(Y,\varrho)$ for some metric $\varrho$ which respects $p$. Note that if $G$ is
discrete, $Y$ is homeomorphic to $G$, which proves (A). Further, if $G$ is countable and
non-discrete, it is homeomorphic to the space of all rationals (e.g. by Sierpi\'{n}ski's theorem
\cite{sie} that every countable metrizable topological space without isolated points is homeomorphic
to $\QQQ$; cf. point (d) of Exercise~6.2.A on page~370 in \cite{en2}) and hence $Y$ is homeomorphic
to $F$, from which we deduce (B). Finally, (C) simply follows from the following remarks: if $G$ is
totally disconnected (resp. zero-dimensional; strongly zero-dimensional), so is $Y$ (cf.
Theorems~1.3.6, 4.1.25 and 4.1.3 in \cite{en1}).
\end{proof}

Point (A) of the above result may be generalized to the context of so-called non-archimedean
complete topological groups. Recall that a topological group is \textit{non-archimedean} iff it has
a base of neighbourhoods of the neutral element consisting of open subgroups. Non-archimedean Polish
groups play important role e.g. in model theory; see \S1.5 in \cite{b-k}. The equivalence between
points (i) and (ii) of the following result is taken from this book (there it was formulated only
for Polish groups, but the proof works in the general case).

\begin{cor}{nonarch}
Let $G$ be a topological group and $\beta = w(G)$. Let $D$ be a discrete topological space
of cardinality $\beta$ and $S_{\beta}$ be the symmetric group of the set $D$ (that is, $S_{\beta}$
consists of all permutations of $D$). \TFCAE
\begin{enumerate}[\upshape(i)]
\item $G$ is non-archimedean and complete;
\item $G$ is isomorphic to a closed subgroup of $S_{\beta}$;
\item $G$ is isomorphic to $\Iso(D,\varrho)$ for some metric $\varrho \geqsl \delta_D$ on $D$ where
   $\delta_D$ is the discrete metric on $D$.
\end{enumerate}
\end{cor}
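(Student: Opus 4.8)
The plan is as follows. Since the equivalence of \POINT{i} and \POINT{ii} is quoted from \cite{b-k}, it suffices to establish \POINT{ii}$\iff$\POINT{iii}; the bridge between the two formulations is the elementary observation that the symmetric group $S_{\beta}$, equipped with the pointwise convergence topology, coincides as a topological group with $\Iso(D,\delta_D)$, because the $\delta_D$-isometries of $D$ are precisely the bijections of $D$ and the permutation topology is exactly the topology of pointwise convergence.

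First I would dispatch the routine implication \POINT{iii}$\implies$\POINT{ii}. If $\varrho \geqsl \delta_D$ then $\varrho(x,y) \geqsl 1$ for $x \neq y$, so $\varrho$ induces the discrete topology on $D$ and every $\varrho$-isometry is in particular a bijection of $D$; hence $\Iso(D,\varrho)$ is a topological subgroup of $S_{\beta} = \Iso(D,\delta_D)$ (the pointwise convergence topologies agree on the common underlying set). It remains to see that this subgroup is closed: if a net $u_{\sigma} \in \Iso(D,\varrho)$ converges pointwise to some $u \in S_{\beta}$, then for any fixed $x,y \in D$ one has $u_{\sigma}(x) = u(x)$ and $u_{\sigma}(y) = u(y)$ eventually (as $D$ is discrete), whence $\varrho(u(x),u(y)) = \varrho(u_{\sigma}(x),u_{\sigma}(y)) = \varrho(x,y)$; thus $u \in \Iso(D,\varrho)$. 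Consequently $G \cong \Iso(D,\varrho)$ is isomorphic to a closed subgroup of $S_{\beta}$, which is \POINT{ii}.

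The substance lies in \POINT{ii}$\implies$\POINT{iii}, and here I would simply feed the discrete metric into \THM{closed}. Assume $G$ is isomorphic to a closed subgroup $H$ of $S_{\beta} = \Iso(D,\delta_D)$. The space $(D,\delta_D)$ is nonempty, bounded by $1$, and satisfies $w(D) = \beta$, so \THM{closed} yields a metric $\varrho$ on $\widehat{D}_{\beta}$ that respects $\delta_D$ and for which $u \mapsto \widehat{u}_{\beta}$ is an isomorphism of $H$ onto $\Iso(\widehat{D}_{\beta},\varrho)$; in particular $\Iso(\widehat{D}_{\beta},\varrho) \cong G$. Now $\widehat{D}_{\beta} = (\bigsqcup_{n} D^n) \times D_{\beta}$ is a discrete space, and since $D$ is infinite of cardinality $\beta$, cardinal arithmetic gives $\card(\widehat{D}_{\beta}) = \beta$. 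Moreover, because $\varrho$ respects $\delta_D$, inside each block $D^n \times \{\xi\}$ it equals $\delta_D^{\cIRC{n}}$, the discrete metric on $D^n$, while across distinct blocks $\varrho \geqsl 1$ by (PR2); hence $\varrho$ takes the value $0$ only on the diagonal and is $\geqsl 1$ elsewhere, i.e. $\varrho \geqsl \delta_{\widehat{D}_{\beta}}$. Transporting $\varrho$ along any bijection $\widehat{D}_{\beta} \to D$ (both discrete of cardinality $\beta$) produces a metric $\varrho \geqsl \delta_D$ on $D$ with $\Iso(D,\varrho) \cong G$, establishing \POINT{iii}.

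The only genuinely delicate points are bookkeeping ones: checking that $\widehat{D}_{\beta}$ remains discrete of cardinality exactly $\beta$ (so that it may be relabelled as the given $D$), and reading off from the definition of ``respects $\delta_D$'' that the output metric dominates the discrete metric. Both become immediate once the conventions of Section~3 are unwound, so no construction beyond \THM{closed} is required; the main work has already been carried out there.
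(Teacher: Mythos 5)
Your two implications between \POINT{ii} and \POINT{iii} are both correct. The direction \POINT{ii}$\implies$\POINT{iii} is essentially the paper's own argument: feed $(D,\delta_D)$ into \THM{closed}, note that a metric respecting $\delta_D$ is $\geqsl 1$ off the diagonal (by (PR1)--(PR2), since each $\delta_D^{\cIRC{n}}$ takes only the values $0$ and $1$), and relabel the discrete space $\widehat{D}_{\beta}$, which has cardinality $\beta$, as $D$. The direction \POINT{iii}$\implies$\POINT{ii} --- closedness of $\Iso(D,\varrho)$ in $S_{\beta}$ via eventual stabilization of pointwise convergent nets on a discrete space --- is a correct direct argument that the paper never gives: the paper obtains this implication only indirectly, by proving the cycle \POINT{ii}$\implies$\POINT{iii}$\implies$\POINT{i}$\implies$\POINT{ii}.

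That structural difference is exactly where your proposal has a genuine gap. The corollary is stated for an \emph{arbitrary} topological group $G$, so $\beta = w(G)$ may be uncountable, whereas the Becker--Kechris equivalence you quote from \cite{b-k} is formulated only for Polish groups, i.e.\ only for $\beta = \aleph_0$. The paper flags precisely this (``there it was formulated only for Polish groups, but the proof works in the general case'') and for that reason does \emph{not} use the citation as a logical step: it proves everything itself. Concretely, for \POINT{iii}$\implies$\POINT{i} it observes that $\varrho \geqsl \delta_D$ forces Cauchy nets to be eventually constant, so $(D,\varrho)$ is complete and hence $\Iso(D,\varrho)$ is complete by point (f) of \PRO{exs}, while the pointwise stabilizers $U_A$ of finite sets $A \subset D$ are open subgroups (openness again uses $\varrho \geqsl 1$) forming a base at the identity, so $G$ is non-archimedean; for \POINT{i}$\implies$\POINT{ii} it lets $G$ act by left translation on the family $\uuU$ of all cosets $gH$, with $H$ running over a base $\bbB$ of open subgroups of cardinality $\beta$, getting an injective ($\bigcap \bbB = \{e_G\}$) homomorphism into $S_{\beta}$ which is a topological embedding with closed image by completeness. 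Your proposal, as written, proves nothing about condition \POINT{i} when $\beta$ is uncountable. The repair is cheap and fits your own scheme --- your \POINT{ii}$\implies$\POINT{iii} plus the short \POINT{iii}$\implies$\POINT{i} argument above yields \POINT{ii}$\implies$\POINT{i}, and \POINT{i}$\implies$\POINT{ii} can be proved by the coset action just sketched --- but some such argument must appear; the citation alone does not cover the general case.
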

\begin{proof}
First note that $S_{\beta} = \Iso(D,\delta)$. Thus, if (ii) is satisfied, \THM{closed} implies that
there is a metric $\varrho$ on $X = \widehat{D}_{\beta}$ which respects $\delta_D$ such that $G$ is
isomorphic to $\Iso(X,\varrho)$. Since $\varrho$ respects $\delta_D$, we readily have $\varrho
\geqsl \delta_X$. Finally, the notice that $X$ is homeomorphic to $D$ gives (iii).\par
Now assume (iii) is satisfied. Since $\varrho$ is complete, $G$ is complete as well (cf. \PRO{exs}).
Furthermore, the sets $U_A = \{u \in \Iso(D,\varrho)\dd\ u(a) = a \textup{ for any } a \in A\}$
where $A$ runs over all finite subsets of $D$ are open (because $\varrho \geqsl 1$) subgroups
of $\Iso(D,\varrho)$ which form a base of neighbourhoods of the identity map and thus $G$ is
non-archimedean.\par
Finally, assume (i) is fulfilled. Let $\bbB$ be a base of neighbourhoods of the neutral element
$e_G$ of $G$ which consists of open subgroups and has cardinality $\beta$. For any $H \in \bbB$,
the size of the collection $\{gH\dd\ g \in G\}$ does not exceed $\beta$ and hence $\card(\uuU) =
\beta = \card(D)$ for $\uuU = \{gH\dd\ g \in G,\ H \in \bbB\}$. Hence we may and do identify the set
$D$ with $\uuU$. For any $g \in G$ put $\pi_g\dd \uuU \ni U \mapsto gU \in \uuU$. Under the above
identification, we readily have $\pi_g \in S_{\beta}$. What is more, the function $\pi\dd G \ni g
\mapsto \pi_g \in S_{\beta}$ is easily seen to be a group homomorphism (possibly discontinuous).
It suffices to check that $\pi$ is an embedding (because then $\pi(G)$ be closed, thanks
to the completeness of $G$). Since $\bigcap \bbB = \{e_G\}$, $\pi$ is one-to-one. To complete
the proof, observe that for any net $\{g_{\sigma}\}_{\sigma\in\Sigma} \subset G$ one has
\begin{align*}
\lim_{\sigma\in\Sigma} g_{\sigma} = e_G &\iff \forall g \in G\dd\ \lim_{\sigma\in\Sigma} g^{-1}
g_{\sigma} g = e_G\\&\iff \forall g \in G\ \ \forall H \in \bbB\ \ \exists \sigma_0 \in \Sigma\ \
\forall \sigma \geqsl \sigma_0\dd\ \pi_{g_{\sigma}}(gH) = gH\\
&\iff \lim_{\sigma\in\Sigma} \pi_{g_{\sigma}} = \pi_{e_G}
\end{align*}
(recall that $\uuU$, as identified with $D$, has discrete topology; and that for any $x \in G$ and
$H \in \bbB$, $x \in H$ iff $x H = H$).
\end{proof}

With use of \PRO{closed2}, we may easily strengthen \THM{G-complete}:

\begin{pro}{completion}
For any $\GGg_{\delta}$-complete group $G$ there are a complete metric space $(X,d)$ with $w(X) =
w(G)$, a dense set $X' \subset X$ and an isomorphism $\Phi\dd \overline{G} \to \Iso(X,d)$ such that
$\Phi(G) = \{u \in \Iso(X,d)\dd\ u(X') = X'\} (= \Iso(X',d))$.
\end{pro}
\begin{proof}
First use \THM{G-complete} to get the isomorphism between $G$ and $\Iso(Y,\varrho)$ for some metric
space $(Y,\varrho)$ with $w(Y) = w(G)$ and then apply \PRO{closed2} to conclude the whole assertion
(recall that the isometry group of a complete metric space is complete).
\end{proof}

\begin{exm}{isom}
Let $(X,d)$ be an arbitrary metric space and $G$ be a subgroup of $\Iso(X,d)$. It follows from
\THM{G-complete} and \PRO{G-complete} that $G$ is isomorphic to the isometry group of some metric
space iff $G$ is $\GGg_{\delta}$-closed in $\Iso(X,d)$. Let us briefly show that
the $\GGg_{\delta}$-closure of $G$ coincides with the set of all $u \in \Iso(X,d)$ such that for any
separable subspace $A$ of $X$ there is $v \in G$ which agrees with $u$ on $A$. Indeed, there is
a countable set $D \subset A$ which is dense in $A$. Then the set $F(u,A) := \{v \in \Iso(X,d)\dd\
v\bigr|_A = u\bigr|_A\}$ coincides with $\{v \in \Iso(X,d)\dd\ v\bigr|_D = u\bigr|_D\}$. This
implies that $F(u,A)$ is $\GGg_{\delta}$ in $\Iso(X,d)$. Consequently, if $u$ belongs
to the $\GGg_{\delta}$-closure of $G$, then necessarily $G \cap F(u,A) \neq \varempty$. Conversely,
it may be easily shown that for every $\GGg_{\delta}$-set $P$ containing $u$ there is a countable
set $A$ such that $F(u,A) \subset P$ and hence the condition on $u$ under the question is also
sufficient.\par
According to the above remark, \THM{closed} may now be generalized as follows: a subgroup $G$
of $\Iso(X,d)$ is isomorphic to the isometry group of some metric space (of the same topological
weight as $X$) iff $G$ satisfies the following condition. Whenever $u \in \Iso(X,d)$ is such that
for any separable subspace $A$ of $X$ there exists $v \in G$ which agrees with $u$ on $A$, then
$u \in G$.
\end{exm}

We end the section with the concept of $\GGg_{\delta}$-completions. Similarly as in case
of Ra\v{\i}kov completion, any topological group $G$ has a unique $\GGg_{\delta}$-completion, i.e.
$G$ may be embedded in a $\GGg_{\delta}$-complete group as a $\GGg_{\delta}$-dense subgroup
in a unique way, as shown by

\begin{pro}{G-completion}
Let $G$ and $K$ be $\GGg_{\delta}$-complete groups and $H$ be a $\GGg_{\delta}$-dense subgroup
of $G$.
\begin{enumerate}[\upshape(a)]
\item Every continuous homomorphism of $H$ into $K$ extends uniquely to a continuous homomorphism
   of $G$ into $K$.
\item If $f\dd H \to K$ is a group homomorphism as well as a topological embedding and
   $\widetilde{G}$ denotes the $\GGg_{\delta}$-closure of $f(H)$ in $K$, then there is a unique
   (topological) isomorphism $F\dd G \to \widetilde{G}$ which extends $f$.
\end{enumerate}
\end{pro}
\begin{proof}
Let $f\dd H \to K$ be a continuous homomorphism. Since $H$ is dense in $G$, there is a unique
continuous group homomorphism $F\dd G \to \overline{K}$ which extends $f$. It suffices to show that
$F(G) \subset K$. But this follows from the fact that the preimage of a $\GGg_{\delta}$-closed set
under a continuous function is $\GGg_{\delta}$-closed too. This proves (a). If in addition $f$ is
a topological embedding, it follows from the above argument that there is a continuous group
homomorphism $\widetilde{F}\dd \widetilde{G} \to G$ which extends $f^{-1}$. We then readily see that
both $\widetilde{F} \circ F$ and $F \circ \widetilde{F}$ are the identity maps and hence $F$ is
an isomorphism (and $\widetilde{F} = F^{-1}$), which shows (b).
\end{proof}

\begin{dfn}{G-completion}
Let $G$ be a topological group. The \textit{$\GGg_{\delta}$-completion} of $G$ is
a $\GGg_{\delta}$-complete group which contains $G$ as a $\GGg_{\delta}$-dense (topological)
subgroup. It follows from \PRO{G-completion} that the $\GGg_{\delta}$-completion is unique
up to isomorphism fixing the points of $G$. It is also obvious that any group has
the $\GGg_{\delta}$-completion.
\end{dfn}

\SECT{Hilbert spaces as underlying topological spaces}

Our first aim of this section is to prove \THM{Banach} and point (b) of \PRO{non}. To this end,
we recall a classical construction due to Arens and Eells \cite{a-e} (see also
\cite[Chapter~2]{wea}).

\begin{dfn}{AE}
Let $(X,d)$ be a nonempty complete metric space. For every $p \in X$ let $\chi_p\dd X \to \{0,1\}$
be such that $\chi_p(x) = 1$ if $x = p$ and $\chi_p(x) = 0$ otherwise. A \textit{molecule} of $X$ is
any function $m\dd X \to \RRR$ which is supported on a finite set and satisfies $\sum_{p \in X} m(p)
= 0$. Denote by $\ArEe_0(X)$ the real vector space of all molecules of $X$ and for $m \in
\ArEe_0(X)$ put
$$\|m\|_{AE} = \inf \bigl\{\sum_{j=1}^n |a_j| d(p_j,q_j)\dd\ m = \sum_{j=1}^n a_j (\chi_{p_j}
- \chi_{q_j})\bigr\}.$$
Then $\|\cdot\|_{AE}$ is a norm and the completion of $(\ArEe_0(X),\|\cdot\|_{AE})$ is called
the \textit{Arens-Eells} space of $(X,d)$ and denoted by $(\ArEe(X),\|\cdot\|_{AE})$. Moreover,
$w(\ArEe(X)) = w(X)$.
\end{dfn}

It is an easy observation that every isometry $u\dd X \to Y$ between complete metric spaces $X$ and
$Y$ induces a unique linear isometry $\ArEe(u)\dd \ArEe(X) \to \ArEe(Y)$ such that $\ArEe(u)(\chi_p
- \chi_q) = \chi_{u(p)} - \chi_{u(q)}$ for any $p,q \in X$. The following result is surely
well-known, but probably nowhere explicitly stated. Therefore, for reader's convenience, we give its
short proof.

\begin{lem}{AE}
For every complete metric space $(X,d)$, the function
\begin{equation}\label{eqn:AE}
\Psi\dd \Iso(X,d) \ni u \mapsto \ArEe(u) \in \Iso(\ArEe(X),\|\cdot\|_{AE})
\end{equation}
is both a group homomorphism and a topological embedding.
\end{lem}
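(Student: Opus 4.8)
The plan is to establish the four ingredients of the statement separately: that $\Psi$ is a group homomorphism, that it is injective, that it is continuous, and finally that its inverse is continuous on the image $\Psi(\Iso(X,d))$; together these say exactly that $\Psi$ is a homomorphism and a topological embedding, and only the last is delicate. Throughout I would use two elementary consequences of \DEF{AE}: first, $\|\chi_p - \chi_q\|_{AE} = d(p,q)$ for all $p,q \in X$; and second, that for every $1$-Lipschitz $f\dd X \to \RRR$ and every molecule $m$ the pairing $\scalar{f}{m} := \sum_{p} m(p) f(p)$ satisfies $\scalar{f}{m} \leqsl \|m\|_{AE}$ (both are immediate from the defining infimum, since for any representation $m = \sum_j a_j(\chi_{p_j} - \chi_{q_j})$ one has $\scalar{f}{m} = \sum_j a_j(f(p_j) - f(q_j)) \leqsl \sum_j |a_j| d(p_j,q_j)$; note $\scalar{f}{m}$ is independent of adding constants to $f$ because $\sum_p m(p) = 0$). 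I may assume $\card(X) \geqsl 2$, the one-point case being trivial as then $\ArEe(X) = \{0\}$.

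For the homomorphism property I would observe that $\ArEe(u \circ v)$ and $\ArEe(u) \circ \ArEe(v)$ are bounded linear operators agreeing on each generator $\chi_p - \chi_q$, since both send it to $\chi_{u(v(p))} - \chi_{u(v(q))}$; as $\ArEe_0(X)$ is dense in $\ArEe(X)$, the operators coincide, and likewise $\ArEe(\id_X) = \id_{\ArEe(X)}$. Injectivity follows because $\ArEe(u) = \ArEe(v)$ gives $\chi_{u(p)} - \chi_{u(q)} = \chi_{v(p)} - \chi_{v(q)}$ for all $p \neq q$; comparing supports (the left-hand side is supported on the two-point set $\{u(p),u(q)\}$, since $u$ is injective) forces $u(p) = v(p)$, and letting $q$ vary yields $u = v$.

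Continuity of $\Psi$ means: if $u_{\sigma} \to u$ pointwise in $\Iso(X,d)$, then $\ArEe(u_{\sigma}) \to \ArEe(u)$ in the pointwise (strong) topology. Since each $\ArEe(u_{\sigma})$ is a linear isometry (hence of norm $1$), a standard $\tfrac{\epsi}{3}$-argument reduces the claim to convergence on the dense subspace $\ArEe_0(X)$, where it suffices to treat a single generator, and there
$$\|\ArEe(u_{\sigma})(\chi_p - \chi_q) - \ArEe(u)(\chi_p - \chi_q)\|_{AE} \leqsl d(u_{\sigma}(p),u(p)) + d(u_{\sigma}(q),u(q)) \to 0.$$

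The main obstacle is the converse: that $\ArEe(u_{\sigma}) \to \ArEe(u)$ strongly forces $u_{\sigma} \to u$ pointwise. Fix $p$, pick any $q \neq p$, and write $a_{\sigma} = u_{\sigma}(p)$, $a = u(p)$, $b_{\sigma} = u_{\sigma}(q)$, $b = u(q)$, $r = d(p,q) > 0$, so that $\epsi_{\sigma} := \|\chi_{a_{\sigma}} - \chi_{b_{\sigma}} - \chi_a + \chi_b\|_{AE} \to 0$. The decisive observation is that $u_{\sigma}$ and $u$ are isometries, whence $d(a_{\sigma},b_{\sigma}) = r = d(a,b)$; testing against the $1$-Lipschitz function $f_{\sigma}(x) = d(x,\{a,b_{\sigma}\})$, for which $f_{\sigma}(a) = f_{\sigma}(b_{\sigma}) = 0$, $f_{\sigma}(a_{\sigma}) = \min(d(a_{\sigma},a),r)$ and $f_{\sigma}(b) = \min(d(b_{\sigma},b),r)$, gives
$$\epsi_{\sigma} \geqsl \scalar{f_{\sigma}}{\chi_{a_{\sigma}} - \chi_{b_{\sigma}} - \chi_a + \chi_b} = \min(d(a_{\sigma},a),r) + \min(d(b_{\sigma},b),r).$$
Once $\epsi_{\sigma} < r$, each summand is below $r$, the truncations are inactive, and $d(u_{\sigma}(p),u(p)) + d(u_{\sigma}(q),u(q)) \leqsl \epsi_{\sigma} \to 0$; in particular $u_{\sigma}(p) \to u(p)$, which is what was needed. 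I expect this last step—specifically the choice of the $\sigma$-dependent test function that exploits the isometry constraint $d(a_{\sigma},b_{\sigma}) = r$ to decouple the two distances—to be the only genuinely non-formal ingredient of the argument.
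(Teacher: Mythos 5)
Your proposal is correct, and its skeleton is the same as the paper's: both proofs extract pointwise convergence of $u_{\sigma}$ from strong convergence of $\ArEe(u_{\sigma})$ by pairing the molecule difference $(\chi_{u_{\sigma}(x)} - \chi_{u_{\sigma}(y)}) - (\chi_{u(x)} - \chi_{u(y)})$ against a well-chosen $1$-Lipschitz function. The executions of this key step differ, though. The paper invokes the duality $\ArEe(X)^* \cong \Lip(X)$ from \cite{wea} as a black box and pairs with the fixed ($\epsi$-dependent) function $v_{\epsi} = \dist(\cdot,B_{\epsi})$, $B_{\epsi} = \{z\dd\ d(u(x),z) \geqsl \epsi\}$; the isometry hypothesis enters only through $u$ (to ensure $u(y) \in B_{\epsi}$, so that $v_{\epsi}(u(y)) = 0$), and the conclusion $d(u_{\sigma}(x),u(x)) < \epsi$ is obtained qualitatively, from eventual positivity of $v_{\epsi}(u_{\sigma}(x))$. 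You instead prove the pairing inequality $\scalar{f}{m} \leqsl \|m\|_{AE}$ directly from the defining infimum, so no duality theorem is needed, and you choose the $\sigma$-dependent test function $d(\cdot,\{u(p),u_{\sigma}(q)\})$, exploiting isometricity of \emph{both} $u$ and $u_{\sigma}$ (via $d(u_{\sigma}(p),u_{\sigma}(q)) = d(u(p),u(q)) = r$) to decouple the two distances and get the explicit quantitative bound $d(u_{\sigma}(p),u(p)) + d(u_{\sigma}(q),u(q)) \leqsl \epsi_{\sigma}$ once $\epsi_{\sigma} < r$. What each buys: your route is more elementary and self-contained (it also writes out the homomorphism, injectivity and continuity steps the paper dismisses as clear), at the price of a more delicate test function; the paper's route is shorter given the duality theorem, which it cites from \cite{wea} in any case for the much deeper \THM{AE}. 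Both arguments are sound.
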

\begin{proof}
Continuity and homomorphicity of $\Psi$ is clear (note that $\ArEe_0(X)$ is dense in $\ArEe(X)$ and
$\ArEe_0(X)$ is the linear span of the set $\{\chi_p - \chi_q\dd\ p, q \in X\}$). Here we focus only
on showing that $\Psi$ is an embedding. We may and do assume that $\card(X) > 1$. Suppose
$\{u_{\sigma}\}_{\sigma\in\Sigma} \subset \Iso(X,d)$ is a net such that
\begin{equation}\label{eqn:aux300}
\lim_{\sigma\in\Sigma} \Psi(u_{\sigma}) = \Psi(u)
\end{equation}
for some $u \in \Iso(X,d)$. Let $x \in X$ be arbitrary. We only need to verify that
$\lim_{\sigma\in\Sigma} u_{\sigma}(x) = u(x)$. Let $y \in X$ be different from $x$. We infer from
\eqref{eqn:aux300} that
\begin{equation}\label{eqn:aux301}
\chi_{u_{\sigma}(x)} - \chi_{u_{\sigma}(y)} \to \chi_{u(x)} - \chi_{u(y)} \quad (\sigma\in\Sigma).
\end{equation}
For $\epsi \in (0,d(x,y))$ let $B_{\epsi} = \{z \in X\dd\ d(u(x),z) \geqsl \epsi\} (\neq \varempty)$
and let $v_{\epsi}\dd X \to [0,\infty)$ denote the distance function to $B_{\epsi}$, that is,
$$v_{\epsi}(z) = \inf \{d(z,b)\dd\ b \in B_{\epsi}\}.$$
Observe that $v_{\epsi}$ is Lipschitz. Since the dual Banach space to $\ArEe(X)$ is naturally
isomorphic to the Banach space of all Lipschitz real-valued functions on $X$ (see e.g. Chapter~2
of \cite{wea}), we deduce from \eqref{eqn:aux301} that $\lim_{\sigma\in\Sigma}
[v_{\epsi}(u_{\sigma}(x)) - v_{\epsi}(u_{\sigma}(y))] = v_{\epsi}(u(x)) - v_{\epsi}(u(y))$. But
$v_{\epsi}(u(x)) - v_{\epsi}(u(y)) = v_{\epsi}(u(x)) > 0$ (because $u(y) \in B_{\epsi}$,
by the isometricity of $u$). So, there is $\sigma_0 \in \Sigma$ such that $v_{\epsi}(u_{\sigma}(x))
> 0$ for any $\sigma \geqsl \sigma_0$. This means that for $\sigma \geqsl \sigma_0$, $u_{\sigma}(x)
\notin B_{\epsi}$ and consequently $d(u_{\sigma}(x),u(x)) < \epsi$.
\end{proof}

The homomorphism appearing in \eqref{eqn:AE} is \textbf{not} surjective, unless $\card(X) < 3$.
There is however a fascinating result discovered by Mayer-Wolf \cite{m-w} which characterizes
\textit{all} isometries of the space $\ArEe(X)$ under some additional conditions on the metric
of $X$. Below we formulate only a special case of it, enough for our considerations.

\begin{thm}{AE}
Let $d$ be a \textbf{bounded} complete metric on a set $X$. Let $\ArEe(X)$ denote the Arens-Eells
spaces of $(X,\sqrt{d})$. Every linear isometry of $\ArEe(X)$ onto itself is of the form
$\pm \ArEe(u)$ where $u \in \Iso(X,d) (= \Iso(X,\sqrt{d}))$.
\end{thm}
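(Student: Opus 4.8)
The plan is to identify the surjective linear isometries of $\ArEe(X)$ through the geometry of the unit ball, using crucially that the metric on which $\ArEe(X)$ is built is the \emph{snowflake} $\rho:=\sqrt{d}$. First I record the reductions. Since $\sqrt{a+b}<\sqrt{a}+\sqrt{b}$ for $a,b>0$, the function $\rho=\sqrt d$ is a bounded complete metric, $\Iso(X,d)=\Iso(X,\rho)$ (a bijection preserves $d$ iff it preserves $\sqrt d$), and for distinct $p,q,r$ one has the \emph{strict} estimate $\rho(p,r)\leqsl\sqrt{d(p,q)+d(q,r)}<\rho(p,q)+\rho(q,r)$; thus $(X,\rho)$ has \emph{no intermediate points}, i.e. the metric segment $\{r\dd\ \rho(p,r)+\rho(r,q)=\rho(p,q)\}$ equals $\{p,q\}$ for every $p\neq q$. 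I may assume $\card(X)\geqsl 3$, the cases $\card(X)\leqsl 2$ being immediate. Throughout, $\chi_p-\chi_q$ has Arens--Eells norm exactly $\rho(p,q)$, so the \emph{normalized elementary molecules} $m_{p,q}:=(\chi_p-\chi_q)/\rho(p,q)$ are unit vectors.

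Next I analyze extreme points. Let $B$ be the closed unit ball of $\ArEe(X)$. The claim is that $\operatorname{ext}(B)=\{\pm m_{p,q}\dd\ p\neq q\}$ \emph{exactly}. The inclusion $\operatorname{ext}(B)\subset\{\pm m_{p,q}\}$ is the known description of extreme points of Lipschitz-free (Arens--Eells) spaces (Weaver); the reverse inclusion, namely that \emph{every} $m_{p,q}$ is extreme, is precisely where the snowflake hypothesis is used: the no-intermediate-point property guarantees that $m_{p,q}$ cannot be written nontrivially as a midpoint of two elements of $B$, so each molecule is a (preserved) extreme point. A surjective linear isometry $T$ of $\ArEe(X)$ is an affine homeomorphism of $B$ onto itself, hence maps $\operatorname{ext}(B)$ bijectively onto $\operatorname{ext}(B)$. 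Therefore $T$ permutes the full set $\{\pm m_{p,q}\}$ and induces a bijection $\phi$ of the set of unordered pairs of $X$ (the edges of the complete graph $K_X$) via $T(\{\pm m_{p,q}\})=\{\pm m_{\phi\{p,q\}}\}$.

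I then recover a point map from $\phi$. The key invariant preserved by the linear map $T$ is linear dependence: three distinct elementary molecules, on edges $e_1,e_2,e_3$, are linearly dependent iff (the $\chi_v$ being independent in $\RRR^X$) the edges carry a cycle, and with only three distinct edges the sole possibility is a $3$-cycle; so linear dependence of $m_{e_1},m_{e_2},m_{e_3}$ holds iff $e_1,e_2,e_3$ form a triangle on three vertices. Hence $\phi$ preserves triangles. Two edges share a vertex iff they lie in a common triangle (while disjoint edges lie in none), so $\phi$ preserves adjacency as well; a short combinatorial argument then shows $\phi\{p,q\}=\{u(p),u(q)\}$ for a unique bijection $u\dd X\to X$. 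The only exceptional adjacency-preserving edge symmetries (e.g. the octahedral symmetry of $L(K_4)$ when $\card(X)=4$, which swaps disjoint edges) are excluded because they turn a triangle into a star and thus violate the linear-dependence relation that $T$ must respect.

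Finally I pin down the scalars and conclude. Write $T(\chi_p-\chi_q)=\epsilon_{pq}c_{pq}(\chi_{u(p)}-\chi_{u(q)})$ with $\epsilon_{pq}=\pm1$, $c_{pq}>0$. Applying $T$ to the identity $(\chi_p-\chi_q)+(\chi_q-\chi_r)=\chi_p-\chi_r$ and matching coefficients of the independent functions $\chi_{u(p)},\chi_{u(q)},\chi_{u(r)}$ yields $\epsilon_{pq}c_{pq}=\epsilon_{qr}c_{qr}=\epsilon_{pr}c_{pr}$ for every triple; since the $c$'s are positive, the signed scalar $s_{pq}:=\epsilon_{pq}c_{pq}$ is constant along each triangle, and as any two pairs are linked by a chain of triangles it is globally a single constant $s$. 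Thus $\epsilon:=\sgn(s)$ and $c:=|s|$ are independent of $(p,q)$. Norm preservation gives $\rho(p,q)=\|T(\chi_p-\chi_q)\|=c\,\rho(u(p),u(q))$, so $u$ is a similarity of ratio $1/c$; evaluating on the diameter, $\diam(X)=\diam(u(X))=\diam(X)/c$ with $\diam(X)\in(0,\infty)$ by boundedness forces $c=1$. Hence $u\in\Iso(X,\rho)=\Iso(X,d)$ and $T(\chi_p-\chi_q)=\epsilon(\chi_{u(p)}-\chi_{u(q)})=\epsilon\,\ArEe(u)(\chi_p-\chi_q)$; since these molecules span the dense subspace $\ArEe_0(X)$ and both $T,\ArEe(u)$ are continuous, $T=\pm\ArEe(u)$. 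I expect the main obstacle to be the extreme-point identification of the second paragraph, i.e. verifying that the extreme points are \emph{precisely} $\{\pm m_{p,q}\}$ (both inclusions simultaneously), which is exactly the step that fails for a general bounded metric and is rescued here by passing to $\sqrt d$; the combinatorial reconstruction of $u$, especially the small-cardinality exceptions, is the secondary technical point.
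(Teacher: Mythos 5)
Your outline reproduces the right architecture---it is, in fact, the skeleton of the Mayer-Wolf/Weaver theorem that the paper simply cites (Weaver's Theorem~2.7.2 for concave spaces, concavity of $(X,\sqrt{d})$ being his Proposition~2.4.5), plus the paper's only original step, namely that boundedness forces the dilation ratio to be $1$, which is exactly your diameter argument. But your second paragraph, which you yourself flag as the crux, contains a genuine gap. You assert $\operatorname{ext}(B)=\{\pm m_{p,q}\}$ and attribute the inclusion $\operatorname{ext}(B)\subseteq\{\pm m_{p,q}\}$ to Weaver as a ``known description''. No such description is known, and it is not in Weaver's book: what Weaver proves is that every \emph{preserved} extreme point of $B$ (i.e.\ every element of $\ArEe(X)$ that is extreme in the ball of the bidual $\ArEe(X)^{**}$) is an elementary molecule; whether every plain extreme point of $B_{\ArEe(X)}$ is a molecule is posed in that very book as an open problem, and it remained a well-known open question long afterwards. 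Concavity of the base space does not obviously rescue this inclusion, and your argument collapses without it, since everything downstream requires $T$ to permute precisely the set $\{\pm m_{p,q}\}$. Your reverse inclusion is also under-argued: the strict triangle inequality in $(X,\sqrt{d})$ rules out writing $m_{p,q}$ as a midpoint of two \emph{molecules}, but extremality requires excluding midpoints of arbitrary elements of $B$, which needs a real argument (Weaver supplies one via suitable Lipschitz functions).

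The repair is standard and turns your proof into Weaver's: work with \emph{preserved} extreme points throughout. A surjective linear isometry $T$ permutes them, because $T^{**}$ is a surjective isometry of the bidual carrying $\ArEe(X)$ onto itself and extreme points of the bidual ball onto extreme points; Weaver's theorem gives that preserved extreme points are molecules for \emph{any} complete metric space; and for a bounded complete \emph{concave} space every molecule is a preserved extreme point---this is exactly where the passage from $d$ to $\sqrt{d}$ is consumed, and it is proved (not merely asserted) in Weaver and in Mayer-Wolf's paper. Once this substitution is made, your third and fourth paragraphs are correct and essentially complete: the linear-dependence criterion (three distinct elementary molecules are dependent iff their edges form a triangle), the line-graph reconstruction of a vertex bijection $u$ including the exceptional $\card(X)=4$ symmetry that you rightly exclude via the dependence relation, the cocycle argument producing a single scalar $s$, and the boundedness argument forcing $|s|=1$. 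So the proposal is not a different route from the paper; it is an attempted reconstruction of the cited black box, with the one step that constitutes the real content of that black box assumed rather than proved.
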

\begin{proof}
All parts of this proof come from \cite{m-w}. Alternatively, we give references to suitable results
from the book \cite{wea}. By \cite[Proposition~2.4.5]{wea}, the metric space $(X,\sqrt{d})$ is
so-called concave (for the definition, see the note preceding Lemma~2.4.4 in \cite{wea}, page~51
there). Now \cite[Theorem~2.7.2]{wea} implies that if $\Phi$ is a linear isometry of $\ArEe(X)$,
then there is $r \in \RRR \setminus \{0\}$ and a bijection $u\dd X \to X$ such that
$\sqrt{d(u(x),u(y))} = |r| \sqrt{d(x,y)}$ and $\Phi(\chi_x - \chi_y) = \frac1r (\chi_{u(x)}
- \chi_{u(y)})$ for any $x, y \in X$. The former of these relations, combined with the boundedness
of $d$, implies that $|r| = 1$ and $u \in \Iso(X,d)$. So, $\Phi = \pm \ArEe(u)$ and we are done.
\end{proof}

We shall also need quite an intuitive result stated below. Although its proof is not immediate,
we leave it to the reader as an exercise.

\begin{lem}{R2}
Let $X$ be a two-dimensional real vector space, $\|\cdot\|$ be any norm on $X$ and let $a$ and $b$
be two vectors in $X$.
\begin{enumerate}[\upshape(a)]
\item If $\bar{B}_X(0,2) \subset \bar{B}_X(b,2) \cup \bar{B}_X(a,1)$, then $b = 0$.
\item If $\|a\| = \|b\| = 2$ and $\bar{B}_X(b,1) \subset \bar{B}_X(0,2) \cup \bar{B}_X(a,1)$, then
   $a = b$.
\end{enumerate}
\end{lem}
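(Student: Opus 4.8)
The plan is to handle both parts by the same philosophy: inside the set that is assumed to be covered by the \emph{small} ball $\bar{B}_X(a,1)$, I will produce either two points at mutual distance exceeding $2=\diam\bar{B}_X(a,1)$ (for (a)) or a single point lying outside $\bar{B}_X(a,1)$ (for (b)). The whole difficulty is that a general planar norm need be neither smooth nor strictly convex, so the Euclidean reflexes—``triangle equality forces proportionality'' and ``antipodal boundary points behave symmetrically''—are both unavailable; everything will instead be driven by \emph{norming functionals}, which is what makes the argument uniform over all norms.

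For (a) I would argue by contradiction: suppose $b\neq0$ and pick a norm-one functional $g$ on $X$ with $g(b)=0$ (possible since $\dim X=2$ and $b\neq0$). Put $\Phi^{\pm}=\{x\in\bar{B}_X(0,2)\dd g(x)=\pm2\}$, the two extreme faces of the big ball in the $g$-direction; each is a nonempty compact convex set (here $2=\sup_{\bar{B}_X(0,2)}g$). The key step is to show $\Phi^{+}\not\subseteq\bar{B}_X(b,2)$ and symmetrically for $\Phi^{-}$. Indeed, if $\Phi^{+}\subseteq\bar{B}_X(b,2)$, then for $x\in\Phi^{+}$ we have $x-b\in\bar{B}_X(0,2)$ and $g(x-b)=g(x)=2$, so $x-b\in\Phi^{+}$; thus translation by $-b$ maps the compact set $\Phi^{+}$ into itself, which is impossible because a functional $f$ norming $b$ satisfies $f(x-b)=f(x)-\|b\|<f(x)$ and so would take on $\Phi^{+}$ a value below its own minimum. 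Hence there are $\tilde{y}^{\pm}\in\Phi^{\pm}$ with $\|\tilde{y}^{\pm}-b\|>2$; lying in $\bar{B}_X(0,2)$ but outside $\bar{B}_X(b,2)$, both must sit in $\bar{B}_X(a,1)$. But $\|\tilde{y}^{+}-\tilde{y}^{-}\|\geqsl g(\tilde{y}^{+})-g(\tilde{y}^{-})=4>2$, contradicting $\diam\bar{B}_X(a,1)\leqsl2$. Therefore $b=0$.

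For (b) the natural test point is $q=\tfrac32 b$, the outermost point of $\bar{B}_X(b,1)$ along $b$: since $\|q-b\|=1$ and $\|q\|=3>2$, the hypothesis forces $q\in\bar{B}_X(a,1)$. Choosing a norm-one functional $f$ with $f(b)=\|b\|=2$ and running the chain $3=f(q)=f(a)+f(q-a)\leqsl\|a\|+\|q-a\|\leqsl2+1=3$, equality holds throughout, so in particular $f(a)=2$; thus $f$ norms $a$ as well and $f(b-a)=0$. Now assume $a\neq b$ and set $w=(b-a)/\|b-a\|$, a unit vector with $f(w)=0$. The point $b+w$ is a limit of points $b+v$ with $\|v\|\leqsl1$ and $f(v)>0$ (take $v=(1-\tfrac1n)w+\tfrac1n\cdot\tfrac12 b$); each such $b+v$ has $\|b+v\|\geqsl f(b+v)>2$, hence is not covered by $\bar{B}_X(0,2)$ and lies in the closed set $\bar{B}_X(a,1)$, whence $b+w\in\bar{B}_X(a,1)$ as well. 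This is the contradiction, since $\|(b+w)-a\|=\|(b-a)+w\|=(\|b-a\|+1)\|w\|=\|b-a\|+1>1$. Hence $a=b$.

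I expect the main obstacle to be psychological as much as technical: one must resist the temptation to deduce proportionality from a triangle equality or to use perpendicular/antipodal points, all of which are false in a general $2$-dimensional norm. The device that circumvents this is the passage to a single cleverly chosen functional ($g$ with $g(b)=0$ in (a); the forced \emph{common} norming functional $f$ of $a$ and $b$ in (b)), together with the elementary but crucial fact that a nonzero translation cannot carry a compact extreme face into itself. What remains is routine: the convexity and compactness of $\Phi^{\pm}$, and the two estimates $\|v\|\leqsl1$, $f(v)>0$ for the approximating sequence in (b).
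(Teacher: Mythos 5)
Your proof is correct; there is in fact nothing in the paper to compare it against, since the authors explicitly leave this lemma to the reader as an exercise ("Although its proof is not immediate, we leave it to the reader"), so your argument genuinely fills that gap. I checked the two places where it could break. In (a), the translation step is sound: if $\Phi^{+}\subset\bar{B}_X(b,2)$, then $x\mapsto x-b$ maps the compact face $\Phi^{+}$ into itself, and evaluating a norming functional of $b$ at a minimizer (or, equivalently, iterating to get $x-nb\in\bar{B}_X(0,2)$ for all $n$, contradicting boundedness) rules this out; the resulting points $\tilde{y}^{\pm}$ then both land in $\bar{B}_X(a,1)$ yet satisfy $\|\tilde{y}^{+}-\tilde{y}^{-}\|\geqsl g(\tilde{y}^{+})-g(\tilde{y}^{-})=4>2$, which is impossible. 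In (b), the equality chain correctly forces $f(a)=2$ (hence $f(b-a)=0$), and your approximation $v_n=(1-\tfrac1n)w+\tfrac1n\cdot\tfrac12 b$ is exactly the needed device: one cannot work with $b+w$ directly, since $\|b+w\|$ may equal $2$ (e.g.\ for the sup norm with $w$ in the kernel of $f$), whereas each $b+v_n$ has $\|v_n\|\leqsl1$ and $f(b+v_n)>2$, so it lies in $\bar{B}_X(b,1)\setminus\bar{B}_X(0,2)\subset\bar{B}_X(a,1)$, and closedness plus the collinearity computation $\|(b-a)+w\|=\|b-a\|+1>1$ give the contradiction. One remark: your argument uses two-dimensionality only through the existence of a nonzero functional vanishing on $b$, so it actually proves the lemma in every normed space of dimension at least two, and part (b) needs no dimension hypothesis at all; this extra generality is harmless and, if anything, a small bonus over what the paper's application requires.
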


Finally, we are ready to give

\begin{proof}[Proof of \THM{Banach} and point \textup{(}b\textup{)} of \PRO{non}]
For \THM{Banach} is a special case of point (b) of the proposition, we focus only on the proof
of the latter result. It follows from \COR{weight} that there is a complete metric space
$(Y,\varrho)$ such that $w(Y) = \beta$ and $\Iso(Y,\varrho)$ is isomorphic to $G$. Since
$\Iso(Y,\varrho) = \Iso(Y,\frac{\varrho}{2+2\varrho})$ and the metric $\frac{\varrho}{2+2\varrho}$
is complete (and compatible), we may and do assume that $\varrho < \frac12$. We also assume that
$Y \cap [0,1] = \varempty$. Let $X = Y \cup [0,1]$. We define a metric $d$ on $X$ by the rules:
\begin{itemize}
\item $d(s,t) = |s - t|$ for $s, t \in [0,1]$,
\item $d(x,y) = \varrho(x,y)$ for $x, y \in Y$,
\item $d(x,t) = d(t,x) = 1 + t$ for $x \in Y$ and $t \in [0,1]$.
\end{itemize}
We leave it as an exercise that $d$ is indeed a metric, that $d$ is complete and $w(X) = \beta$.
Notice that for any $a \in X$ and $t \in [0,1] (\subset X)$:
\begin{itemize}
\item $a = 1 \iff \exists b,c \in X\dd\ d(a,b) = \frac34\ \wedge\ d(a,c) = 2$,
\item $a = t \iff d(a,1) = 1-t$.
\end{itemize}
The above notices imply that for every $f \in \Iso(X,d)$ we have $f(t) = t$ for $t \in [0,1]$ and
$f\bigr|_Y \in \Iso(Y,\varrho)$. It is also easy to see that each isometry of $(Y,\varrho)$ extends
(uniquely) to an isometry of $(X,d)$. Hence the function $\Iso(X,d) \ni f \mapsto f\bigr|_Y \in
\Iso(Y,\varrho)$ is a (well defined) isomorphism. Now let $(E,\|\cdot\|) =
(\ArEe(X),\|\cdot\|_{AE})$ be the Arens-Eells space of $(X,\sqrt{d})$ and let $e = \chi_1 - \chi_0$.
We see that $w(E) = \beta$ and $E$ is infinite-dimensional, since $X$ is infinite. What is more,
it follows from \THM{AE} that every linear isometry of $E$ which leaves the point $e$ fixed is
of the form $\ArEe(u)$ for some $u \in \Iso(X,\sqrt{d})$. Since $\ArEe(u)(e) = e$ for any $u \in
\Iso(X,d)$ (because $u(0) = 0$ and $u(1) = 1$ for such $u$), the notice that $\Iso(X,d) =
\Iso(X,\sqrt{d})$ and \LEM{AE} finish the proof.
\end{proof}

Our next aim is to show points (a) of both \THM{iso} and \PRO{non}. In the proof we shall involve
the next three results. The first of them is due to Mankiewicz \cite{man}:

\begin{thm}{man}
Whenever $X$ and $Y$ are normed vector spaces, $U$ and $V$ are connected open subsets of,
respectively, $X$ and $Y$, then every isometry of $U$ onto $V$ extends to a unique affine isometry
of $X$ onto $Y$.
\end{thm}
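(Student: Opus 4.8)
The plan is to deduce the theorem from the Mazur--Ulam theorem by establishing a \emph{local} version of midpoint preservation and then globalizing it through the connectedness of $U$. The engine is the purely metric description of midpoints underlying the reflection proof of Mazur--Ulam. For points $x,y$ of a normed space $Z$ set $W_1(x,y)=\{z\in Z\dd\ \|z-x\|=\|z-y\|=\tfrac12\|x-y\|\}$ and, recursively, $W_{k+1}=\{z\in W_k\dd\ \|z-w\|\leqsl\tfrac12\diam W_k\ \text{for every}\ w\in W_k\}$. The ambient reflection $z\mapsto x+y-z$ maps each $W_k$ onto itself and fixes $c=\tfrac12(x+y)$; since $c\in W_k$, this symmetry forces $\diam W_{k+1}\leqsl\tfrac12\diam W_k$, whence $\bigcap_k W_k=\{c\}$. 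As every surjective isometry sends the tower $(W_k)$ for a pair $(x,y)$ onto the tower for the image pair, it must preserve midpoints --- \emph{provided the whole tower lies in the domain}.

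The crucial local step is to guarantee exactly this containment. Suppose $x,y\in U$ satisfy $\bar{B}_X(x,\|x-y\|)\subset U$. Then $W_1(x,y)\subset\bar{B}_X(x,\tfrac12\|x-y\|)\subset U$, and since $W_{k+1}\subset W_k$, the entire tower computed \emph{inside} $X$ already lies in $U$; consequently the tower computed intrinsically in $U$ coincides with it and the ambient reflection symmetry remains available. Because $f\dd U\to V$ and its inverse are both surjective isometries, applying the previous paragraph to $f$ and to $f^{-1}$ shows that $f$ preserves the midpoint of every such sufficiently close pair. Thus $f$ preserves midpoints locally.

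Next I would upgrade local midpoint preservation to local affinity. Fix $a\in U$ and a convex ball $B=B_X(a,r)\subset U$ small enough that every pair $x,y\in B$ meets the containment hypothesis above. On $B$ the map $f$ is midpoint preserving and continuous, hence (passing to dyadic combinations and invoking continuity) affine: $f(x)=f(a)+T_a(x-a)$ on a possibly smaller ball, where $h\mapsto f(a+h)-f(a)$ is additive and isometric near $0$ and so extends to a linear isometry $T_a\dd X\to Y$. On the overlap of two such balls the two affine formulas agree, which forces $T_a=T_{a'}$ for nearby centres; since $U$ is connected, $a\mapsto T_a$ is constant, say $T_a\equiv T$. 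The local affine pieces therefore glue into a single affine map $F(x)=f(a_0)+T(x-a_0)$ with $F|_U=f$. Finally $T$ is injective (being isometric) while $F(U)=V$ is open and nonempty, so $\im T$ contains a translate of an open set and hence equals $Y$; thus $T$ is a surjective linear isometry and $F$ is an affine isometry of $X$ onto $Y$. Uniqueness is immediate, since two affine maps agreeing on the nonempty open set $U$ coincide everywhere.

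I expect the main obstacle to be the local step: making the metric midpoint construction genuinely intrinsic to $U$, i.e. verifying that the tower $(W_k)$ together with its defining reflection symmetry survives restriction to an open set, so that the Mazur--Ulam argument can be run locally for both $f$ and $f^{-1}$. Once midpoints are known to be preserved on small scales, the globalization is routine connectedness-and-patching, and surjectivity of $T$ follows cheaply from the openness of $V$.
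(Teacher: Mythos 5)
The globalization half of your argument is fine: midpoint preservation on small balls does yield local affinity, the linear parts $T_a$ are locally constant and hence constant on the connected set $U$, the affine pieces glue because $f-T$ is then locally constant, and surjectivity of $T$ follows since a linear subspace of $Y$ containing the nonempty open set $V-f(a_0)$ must be all of $Y$. (For the record, the paper itself offers no proof of \THM{man}; it quotes it from Mankiewicz \cite{man}, so your attempt is measured against the literature rather than against an argument in the text.)

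The genuine gap is exactly at the step you yourself flagged as the main obstacle, and as written it is fatal. Your tower pins down the midpoint only on the $U$-side: the hypothesis $\bar{B}_X(x,\|x-y\|)\subset U$ guarantees that the intrinsic tower of $(x,y)$ in $U$ is the ambient one, with intersection $\{c\}$, $c=\tfrac12(x+y)$; and since $f$ is a surjective isometry it carries this tower onto the \emph{intrinsic} tower of $(f(x),f(y))$ in $V$, whose intersection is therefore $\{f(c)\}$. But to conclude $f(c)=\tfrac12\bigl(f(x)+f(y)\bigr)$ you need the intrinsic $V$-tower to converge to the \emph{ambient} midpoint in $Y$, and that requires the same ball hypothesis at $(f(x),f(y))$ in $V$ --- which is precisely what ``applying the previous paragraph to $f^{-1}$'' would need and what nothing supplies: $f(x)$ may lie arbitrarily close to the boundary of $V$ even when $x$ is deep inside $U$, since $f$ only identifies $\bar{B}_Y(f(x),r)\cap V$ with $f(\bar{B}_X(x,r)\cap U)$ and gives no information about $Y\setminus V$. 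When that hypothesis fails, the intrinsic tower genuinely can miss the midpoint: in $\ell_{\infty}^2$ the ambient midpoint set of $(0,0)$ and $(2,0)$ is the segment $\{(1,t)\dd\ |t|\leqsl 1\}$, and for the asymmetric open piece $\{(1,t)\dd\ -\tfrac14<t<\tfrac34\}$ the tower collapses to $(1,\tfrac14)$, not to $(1,0)$. So local midpoint preservation is here a \emph{consequence} of Mankiewicz's theorem, not a lemma available on the way to it. Note finally that the natural repair --- proving that $\dist(\cdot,X\setminus U)$ is an intrinsic invariant of $(U,\|\cdot\|)$, so the ball condition transfers from $U$ to $V$ --- uses completeness of the ambient spaces (one needs Cauchy sequences in $U\cap B_X(x,r)$ to converge in $U$), whereas \THM{man} is stated for arbitrary normed spaces and is applied in this paper to incomplete ones (in \COR{incomplete}, $E$ is a non-closed linear span inside an Arens-Eells space). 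This obstruction is exactly why Mankiewicz's proof must work substantially harder than a localized Mazur--Ulam argument.
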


We recall that a function $\Phi\dd X \to Y$ between real vector spaces $X$ and $Y$ is
\textit{affine} if $\Phi - \Phi(0)$ is linear.\par
The following result is a consequence e.g. of \cite[Theorem~VI.6.2]{b-p} and a famous theorem
of Toru\'{n}czyk \cite{tr1,tr2} which says that every Banach space is homeomorphic to a Hilbert
space.

\begin{thm}{H1}
Every closed convex set in an infinite-dimensional Banach space whose interior is nonempty is
homeomorphic to an infinite-dimensional Hilbert space.
\end{thm}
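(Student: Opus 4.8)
The plan is to obtain the homeomorphism by composing the two classical results quoted immediately above the statement. Write $X$ for the ambient infinite-dimensional Banach space and $C \subset X$ for the closed convex set in question, so that $\intt C \neq \varempty$ and in particular $C \neq \varempty$. The argument splits into a reduction to the whole space followed by an application of Toru\'{n}czyk's theorem.

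First I would appeal to the topological classification of convex bodies in \cite[Theorem~VI.6.2]{b-p}: a closed convex subset of an infinite-dimensional Banach space with nonempty interior is homeomorphic to the ambient space $X$. The only hypotheses to check on our side are that $C$ is nonempty (immediate, since $\intt C \neq \varempty$) and that the situation is genuinely infinite-dimensional; the latter holds because $C \supset \intt C$ contains a nonempty open subset of $X$, which is never relatively compact when $\dim X = \infty$. Whatever particular normal form VI.6.2 is phrased in, it delivers $C$ homeomorphic to $X$.

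Next I would invoke Toru\'{n}czyk's theorem \cite{tr1,tr2}, according to which every Banach space is homeomorphic to a Hilbert space; applied to $X$ it produces a Hilbert space $H$ with $X$ homeomorphic to $H$. Composing with the homeomorphism from the previous step gives that $C$ is homeomorphic to $H$, as required. Finally, $H$ must be infinite-dimensional: since $C$ contains an open subset of the infinite-dimensional space $X$, $C$ is not locally compact, and local compactness is a topological invariant separating finite-dimensional Hilbert spaces (which are locally compact) from infinite-dimensional ones.

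Because both ingredients are cited as known, there is essentially no obstacle left inside this particular argument; the entire mathematical weight sits in \cite[Theorem~VI.6.2]{b-p}, whose proof is the genuinely nontrivial part (it constructs, typically by radial or pushing techniques together with a case analysis according to whether $C$ is bounded or contains a line, an explicit homeomorphism between a convex body and the whole space). The one point demanding a moment's care on our side is the bookkeeping of infinite-dimensionality, so that the target is guaranteed to be an \emph{infinite-dimensional} Hilbert space rather than possibly a finite-dimensional one; the local-compactness invariant settles this cleanly.
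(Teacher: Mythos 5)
Your proposal is correct and takes essentially the same route as the paper, which states Theorem H1 with no proof beyond observing that it "is a consequence e.g.\ of \cite[Theorem~VI.6.2]{b-p} and a famous theorem of Toru\'{n}czyk \cite{tr1,tr2}" --- exactly the two ingredients you compose. Your extra check that the target Hilbert space is infinite-dimensional (via failure of local compactness) is a sensible piece of bookkeeping the paper leaves implicit.
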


Our last tool is the next result which in the separable case was proved by Mogilski \cite{mog}.
The argument presented by him works also in nonseparable case. This theorem in its full generality
may also be briefly concluded from the results of Toru\'{n}czyk \cite{tr1,tr2}. For reader's
convenience, we give a sketch of its proof.

\begin{thm}{H2}
Let $X$ be a metrizable space. If $X$ is the union of its two closed subsets $A$ and $B$ such that
each of $A$, $B$ and $A \cap B$ is homeomorphic to an infinite-dimensional Hilbert space $\HHh$,
then $X$ itself is homeomorphic to $\HHh$.
\end{thm}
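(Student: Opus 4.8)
The plan is to derive the statement from Toru\'{n}czyk's topological characterization of Hilbert spaces \cite{tr1}: a topological space is homeomorphic to $\HHh = \ell_2(\tau)$, where $\tau = w(\HHh)$, precisely when it is a completely metrizable absolute retract of weight $\tau$ that satisfies the discrete approximation property. So I would check these properties for $X$ one by one and then invoke \cite{tr1}.

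The weight is immediate: $A$ is a closed subspace of $X$ and $X = A \cup B$, so $\tau = w(A) \leqsl w(X) \leqsl w(A) + w(B) = \tau$. For complete metrizability, embed $X$ into the completion $\widehat{X}$ of a compatible metric. Since $A$ and $B$ are completely metrizable, each of them is a $\GGg_{\delta}$-set in $\widehat{X}$ (a completely metrizable subspace of a metric space is always $\GGg_{\delta}$); hence $\widehat{X} \setminus A$ and $\widehat{X} \setminus B$ are $\FFf_{\sigma}$-sets, and so is their intersection $\widehat{X} \setminus X = (\widehat{X} \setminus A) \cap (\widehat{X} \setminus B)$. Thus $X$ is a $\GGg_{\delta}$-set in $\widehat{X}$, i.e. $X$ is completely metrizable. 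That $X$ is an absolute retract I would obtain in two steps. First, $A$, $B$ and $A \cap B$ are ANRs (being copies of $\HHh$) and are closed in $X$, so by the classical pasting (sum) theorem for ANRs the union $X = A \cup B$ is an ANR. Second, since an ANR is an AR iff it is contractible, it suffices to observe that the closed inclusion $A \cap B \hookrightarrow B$ is a cofibration and a homotopy equivalence (both spaces being contractible); by the gluing lemma for cofibrations this forces the inclusion $A \hookrightarrow A \cup_{A \cap B} B = X$ to be a homotopy equivalence as well, whence $X \simeq A \simeq \{\textup{pt}\}$ and $X$ is an AR.

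The real work, and the main obstacle, is the discrete approximation property; this is where one genuinely uses that $A$ and $B$ are \emph{Hilbert spaces} and not merely ARs. One cannot reduce to a collaring argument, since $A \cap B$ need not be a $Z$-set in $A$ (for instance $A \cap B$ may be a closed ball, which has nonempty interior). My plan is to transfer the property from the two pieces: given an open cover $\UuU$ of $X$ and a sequence of maps $f_k \colon Q \to X$ of the Hilbert cube, I would first use the ANR structure of $X$ to bring the $f_k$ into general position with respect to the closed cover $\{A,B\}$, then apply the discrete approximation property of $A$ to the portions landing in $A$ and that of $B$ to the portions landing in $B$, and finally reconcile the two resulting families of approximants on a neighbourhood of $C := A \cap B$. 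The hard part is exactly this reconciliation: matching the two separate discrete approximations across $C$ with control of the error and guaranteeing that the global family of images is discrete. I would carry it out by the homotopy-extension technique of Mogilski \cite{mog}, which goes through verbatim in the nonseparable case and relies on $C$ being itself a copy of $\HHh$ (so that it too enjoys the approximation property), letting the two approximations be glued along a $\UuU$-small homotopy supported near $C$.

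Once the discrete approximation property is verified, $X$ is a completely metrizable AR of weight $\tau$ satisfying all of Toru\'{n}czyk's conditions, and \cite{tr1} yields $X \cong \HHh$, which completes the argument.
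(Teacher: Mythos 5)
Your verifications of the first three of Toru\'{n}czyk's conditions are correct: the weight count, the complement argument (the complement of $X$ in the completion $\widehat{X}$ is an intersection of two $\FFf_{\sigma}$-sets, hence $\FFf_{\sigma}$, so $X$ is $\GGg_{\delta}$ and completely metrizable), and the sum theorem plus the cofibration/gluing argument showing $X$ is a contractible ANR, hence an AR. The genuine gap is exactly where you put the ``real work'': the discrete approximation property is never proved, and the plan you sketch for it would not go through as stated. Concretely: (i) the sets $f_k^{-1}(A)$ are arbitrary closed subsets of $Q$, not Hilbert cubes, so the approximation property of $A$ cannot be ``applied to the portions landing in $A$''; (ii) even if you produce one family with images discrete in $A$ and another with images discrete in $B$, the combined family need not be discrete in $X$ --- a point of $C = A \cap B$ may have every neighbourhood meeting an $A$-piece of index $k_1$ and a $B$-piece of index $k_2 \neq k_1$, and a ``small homotopy supported near $C$'' has no mechanism for killing this cross-accumulation along the seam; (iii) deferring precisely this step to ``the homotopy-extension technique of Mogilski, which goes through verbatim'' is an appeal to authority rather than an argument, and \cite{mog} is exactly the source of the separable case of the statement you are trying to prove. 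A further, secondary, inaccuracy: for nonseparable $\HHh$ the characterization you quote (completely metrizable AR plus the Hilbert-cube discrete approximation property) is not the full set of Toru\'{n}czyk's conditions; additional discreteness requirements on large families of maps of finite-dimensional cubes enter, which is part of what \cite{tr1} together with the correction \cite{tr2} handle, and your outline says nothing about them.

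The idea your proposal is missing, and which the paper uses to bypass the seam problem entirely, is to \emph{thicken $X$ so that the seam becomes a $Z$-set}. Put $X' = (A \times [-1,0]) \cup (B \times [0,1]) \subset X \times [-1,1]$; then $A' = A \times [-1,0]$, $B' = B \times [0,1]$ and $C' = C \times \{0\}$ are still copies of $\HHh$, but now $C'$ \emph{is} a $Z$-set in both $A'$ and $B'$. By the homeomorphism extension theorem for $Z$-sets in Hilbert manifolds (\cite{and}, \cite{a-m}, \cite{ch1}; Chapter~V of \cite{b-p}) one may choose homeomorphisms $h_{A'}\dd A' \to \HHh \times (-\infty,0]$ and $h_{B'}\dd B' \to \HHh \times [0,\infty)$ which agree on $C'$ and carry it onto $\HHh \times \{0\}$; their union shows $X' \cong \HHh$. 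In other words, the matching across the seam is done at the level of homeomorphisms, where $Z$-set unknotting gives exact agreement, instead of at the level of discrete approximations, where matching is precisely the unsolved difficulty in your plan. Finally $(x,t) \mapsto (x,0)$ is a proper retraction of $X'$ onto $X \times \{0\} \cong X$, and by Toru\'{n}czyk's results \cite{tr1,tr2} a completely metrizable ANR admitting a proper retraction from an $\HHh$-manifold is itself an $\HHh$-manifold; since you have already shown $X$ is contractible, $X \cong \HHh$. If you want to salvage your outline, this thickening (or an equivalent device forcing the seam to be a $Z$-set) is the ingredient you must add.
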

\begin{proof}
Put $C = A \cap B$.\par
First assume that $C$ is a $Z$-set in both $A$ and $B$ (for the definition of a $Z$-set see
Section~6). Then there exist homeomorphisms $h_A\dd A \to \HHh \times (-\infty,0]$ and $h_B\dd B \to
\HHh \times [0,\infty)$ which coincide on $C$ and $h_A(C) = h_B(C) = \HHh \times \{0\}$ (this
follows from the theorem on extending homeomorphisms between $Z$-sets in Hilbert manifolds, see
\cite{and}, \cite{a-m}, \cite{ch1} or Chapter~V in \cite{b-p}). Now it suffices to define $h\dd X
\to \HHh \times \RRR$ as the union of $h_A$ and $h_B$ to obtain a homeomorphism we searched for.\par
Now we consider a general case. Let $X' = (A \times [-1,0]) \cup (B \times [0,1]) \subset X \times
[-1,1]$. Observe that $A' = A \times [-1,0]$, $B' = B \times [0,1]$ and $C' = C \times \{0\}$ are
homeomorphic to $\HHh$ (by the assumptions of the theorem) and $C'$ is a $Z$-set in both $A'$ and
$B'$. Thus, we infer from the first part of the proof that $X'$ is homeomorphic to $\HHh$. Finally,
note that the function $X' \ni (x,t) \mapsto (x,0) \in X \times \{0\}$ is a proper retraction. So,
Toru\'{n}czyk's result \cite{tr1} implies that $X$ is a manifold modelled on $\HHh$. Since it is
contractible, it is homeomorphic to $\HHh$.
\end{proof}

We are now ready to give

\begin{proof}[Proof of points \textup{(}a\textup{)} of \THM{iso} and \PRO{non}]
Again, observe that point (a) of the theorem under the question is a special case of point (b)
of the proposition. Therefore we focus only on the latter result. Let $E$ and $e$ be as in point (b)
of \PRO{non}. Replacing, if needed, $e$ by $2e/\|e\|$, we may assume that $\|e\| = 2$. Denote
by $\EEe$ the group of all linear isometries which leave $e$ fixed. Let $W = \bar{B}_E(0,2) \cup
\bar{B}_E(e,1)$ be equipped with the metric $p$ induced by the norm of $E$. Notice that if $V \in
\EEe$, then $V(W) = W$ and $V\bigr|_W \in \Iso(W,p)$. Conversely, for each $g \in \Iso(W,p)$, $g =
V\bigr|_W$ for some linear isometry $V \in \EEe$. Let us briefly justify this claim. Let $x = g(0)$
and $y = g(e)$. Then $W \subset \bar{B}_E(x,2) \cup \bar{B}_E(y,1)$ and consequently $\bar{B}_X(0,2)
\subset \bar{B}_X(x,2) \cup \bar{B}_X(y,1)$ where $X$ is a two-dimensional linear subspace of $E$
which contains $x$ and $y$. We infer from point (a) of \LEM{R2} that $x = 0$. So, $\|y\| = 2$ (since
$g$ is an isometry) and thus $\bar{B}_Y(e,1) \subset \bar{B}_Y(0,2) \cup \bar{B}_Y(y,1)$ where $Y$
is a two-dimensional linear subspace of $E$ such that $e, y \in Y$. Now point (b) of \LEM{R2} yields
that $y = e$. We then have $g(B_E(0,2) \cup B_E(e,1)) = B_E(0,2) \cup B_E(e,1)$. So, an application
of \THM{man} gives our assertion: there is a linear (since $g(0) = 0$) isometry $V$ of $E$ which
extends $g$.\par
Having the above fact, we easily see that the function $\EEe \ni V \mapsto V\bigr|_W \in \Iso(W,p)$
is an isomorphism. Consequently, $G$ is isomorphic to $\Iso(W,p)$. So, to finish the proof,
it suffices to show that $W$ is homeomorphic to $\HHh_{\beta}$. But this immediately follows from
Theorems~\ref{thm:H1} and \ref{thm:H2}, since $w(E) = \beta$ and the sets $\bar{B}_E(0,2)$,
$\bar{B}_E(e,1)$ and $\bar{B}_E(0,2) \cap \bar{B}_E(1,2)$ are closed, convex and have nonempty
interiors.
\end{proof}

\begin{proof}[Proof of \COR{beta}]
It suffices to apply \PRO{non} and point (f) of \PRO{exs}.
\end{proof}

The arguments used in the proofs of both points of \PRO{non} may simply be employed to show
the following

\begin{cor}{incomplete}
Let $G$ be a $\GGg_{\delta}$-complete topological group of topological weight not exceeding $\beta
\geqsl \aleph_0$. There are an infinite-dimensional normed vector space $E$ of topological weight
$\beta$, a contractible open set $U \subset E$ and a nonzero vector $e \in E$ such that
the topological groups $G$, $\Iso(U,d)$ and $\Iso(E|e)$ are isomorphic where $d$ is the metric
on $U$ induced by the norm of $E$ and $\Iso(E|e)$ is the group of all linear isometries of $E$ which
leave the point $e$ fixed.
\end{cor}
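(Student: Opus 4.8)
The plan is to mimic the proof of \PRO{non}, but to replace the \emph{complete} metric space produced by \COR{weight} with a possibly incomplete one and, crucially, to replace the (complete) Arens-Eells space by the \emph{incomplete} normed space of molecules supported on the dense part. Since $G$ is $\GGg_{\delta}$-complete with $w(G) \leqsl \beta$, \COR{weight} yields a metric space $(Y,\varrho)$ (no longer complete in general) with $w(Y) = \beta$ and $\Iso(Y,\varrho)$ isomorphic to $G$. As in the proof of \PRO{non}, I would rescale so that $\varrho < \frac12$, arrange $Y \cap [0,1] = \varempty$, and set $X = Y \cup [0,1]$ with the metric $d$ given there (Euclidean on $[0,1]$, equal to $\varrho$ on $Y$, and $d(x,t) = 1+t$ for $x \in Y$, $t \in [0,1]$). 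Then $d$ is bounded, $w(X) = \beta$, every isometry of $X$ fixes $[0,1]$ pointwise, and $\Iso(X,d) \cong \Iso(Y,\varrho) \cong G$. Let $(\hat X, d)$ be the completion of $(X,d)$; then $X$ is dense in $\hat X$, the space $\hat X$ is bounded and complete, and under the canonical identification $\Iso(X,d) = \{u \in \Iso(\hat X,d) : u(X) = X\}$.

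Next I would build the normed space. Let $\ArEe(\hat X)$ be the Arens-Eells space of $(\hat X, \sqrt d)$ and put $E = \ArEe_0(X)$, the space of molecules supported on $X$, regarded as a dense normed (incomplete) subspace of $\ArEe(\hat X)$ (whose completion is $\ArEe(\hat X)$); let $e = \chi_1 - \chi_0$, rescaled so that $\|e\|_{AE} = 2$. Then $E$ is infinite-dimensional with $w(E) = w(\ArEe(\hat X)) = w(\hat X) = \beta$. The core step is to show $\Iso(E|e) \cong G$. Every surjective linear isometry $T$ of $E$ extends uniquely to a linear isometry of the completion, which is surjective (its range is complete and contains the dense set $T(E) = E$), so by \THM{AE} it equals $\pm\ArEe(u)$ for some $u \in \Iso(\hat X, d)$; the normalisation $T(e) = e$, together with the fact that $u$ fixes $0$ and $1$ (it fixes $[0,1]$), rules out the minus sign, whence $T = \ArEe(u)|_E$. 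The decisive observation --- the genuinely new feature compared with \PRO{non} --- is that $\ArEe(u)$ maps $E$ onto $E$ \emph{iff} $u(X) = X$: indeed $\ArEe(u)(\chi_x - \chi_y) = \chi_{u(x)} - \chi_{u(y)}$ lies in $\ArEe_0(X)$ exactly when $u(x), u(y) \in X$, so that applying this also to $u^{-1}$ forces $u(X) = X$. Hence $\Iso(E|e)$ corresponds bijectively --- and, via \LEM{AE} and the density of $E$, topologically --- to $\{u \in \Iso(\hat X, d): u(X) = X\} = \Iso(X,d) \cong G$.

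It remains to produce the contractible open set $U$. Following the argument of \PRO{non}(a), I would take $U = B_E(0,2) \cup B_E(e,1)$, which is open in $E$; it is connected (the two balls meet, since $\|e\| = 2$) and in fact contractible via the straight-line homotopy $H(x,t) = (1-t)x + tp$ to a point $p \in B_E(0,2) \cap B_E(e,1)$, which stays inside each convex ball and hence inside $U$. To identify $\Iso(U,d)$ with $\Iso(E|e)$: given $g \in \Iso(U,d)$, Mankiewicz's theorem \THM{man} extends $g$ to an affine isometry $V$ of $E$ with $V(U) = U$; passing to closures and applying \LEM{R2} in suitable two-dimensional subspaces forces $V(0) = 0$ (so that $V$ is linear) and $V(e) = e$, whence $V \in \Iso(E|e)$ and $V|_U = g$. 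Conversely each $V \in \Iso(E|e)$ satisfies $V(U) = U$ and restricts to an isometry of $U$, and the two assignments are mutually inverse topological isomorphisms. Combining the three identifications gives $G \cong \Iso(U,d) \cong \Iso(E|e)$ with $E$ infinite-dimensional of weight $\beta$, $U$ contractible open, and $e \neq 0$, as required. I expect the main obstacle to be the ``$E$-preservation iff $X$-preservation'' step together with checking that $\Iso(E|e) \cong \Iso(X,d)$ is a homeomorphism onto its image; the contractibility of $U$, by contrast, is easier here than in \PRO{non}(a), since only contractibility (not homeomorphy with $\HHh_{\beta}$) is demanded.
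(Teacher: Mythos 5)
Your proposal is correct and takes essentially the same approach as the paper: the paper likewise starts from an incomplete metric space $(Y_0,\varrho_0)$ of weight $\beta$ with $\Iso(Y_0,\varrho_0)\cong G$, attaches $[0,1]$, takes for $E$ the span of the molecules $\chi_a-\chi_b$ over the dense incomplete part inside the Arens-Eells space of the completion, and identifies $G\cong\Iso(E|e)\cong\Iso(U,d)$ with $U$ a union of two intersecting open balls, via \THM{AE}, \LEM{AE}, \THM{man} and \LEM{R2}, concluding contractibility of $U$ exactly as you do. The only cosmetic difference is bookkeeping: the paper completes $Y_0$ first and restricts the isomorphism $\Iso(X,d)\cong\Iso(\ArEe(X)|e)$ already established in \PRO{non}(b) to the subgroup preserving $E$, whereas you complete at the level of $X$ and re-derive that step by extending isometries of $E$ to the completion.
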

\begin{proof}
Let $(Y_0,\varrho_0)$ be a metric space such that $w(Y_0) = \beta$ and $\Iso(Y_0,\varrho_0)$ is
isomorphic to $G$ (cf. Theorems~\ref{thm:G-complete} and \ref{thm:closed}). Denote by $(Y,\varrho)$
the completion of $(Y_0,\varrho_0)$. Now let $(X,d)$, $(\ArEe(X),\|\cdot\|_{AE})$ and $e$ be
as in the proof of point (b) of \PRO{non}. We know that the function $$\Iso(X,d) \ni u \mapsto
\ArEe(u) \in \Iso(\ArEe(X)|e)$$ is an isomorphism. Denote by $E$ the linear span of the set
$\{\chi_a - \chi_b\dd\ a, b \in Y_0 \cup [0,1]\} (\subset \ArEe(X))$ (recall that $X = Y \sqcup
[0,1]$). Observe that if $u \in \Iso(X,d)$ is such that $\ArEe(u)(E) = E$, then $u(Y_0 \cup [0,1]) =
Y_0 \cup [0,1]$ and consequently $u(Y_0) = Y_0$ (see the proof of point (b) of \PRO{non}). This
yields that also the function
$$\Iso(Y_0,\varrho_0) \ni u \mapsto \ArEe(u)\bigr|_E \in \Iso(E|e)$$
is an isomorphism. Now it suffices to put $U = B_E(0,\|e\|) \cup B_E(e,\frac12\|e\|)$ and repeat
the proof of point (a) of \PRO{non} (involving \LEM{R2} and \THM{man}) to get the whole assertion.
($U$ is contractible as the union of two intersecting convex sets.)
\end{proof}

\SECT{Isometry groups of completely metrizable metric spaces}

Taking into account \COR{beta}, the following question naturally arises: given an infinite cardinal
$\beta$, how to characterize topological groups isomorphic to $\Iso(\HHh,d)$ for some compatible
metric $d$ on a Hilbert space $\HHh$ of Hilbert space dimension $\beta$~? In this part we give
a partial answer to this question. In fact, we will deduce our main result in this topic from
the following general fact.

\begin{pro}{completion-H}
Let $(S,p)$ be a bounded complete metric space, $\beta$ an infinite cardinal not less than $w(S)$
and $\HHh$ a Hilbert space of Hilbert space dimension $\beta$. Let $S'$ be a dense subset of $S$ and
$G$ a closed subgroup of $\Iso(S',p)$. There exist a compatible complete metric $\lambda$ on $\HHh$,
a set $\HHh' \subset \HHh$ and an isomorphism $\Psi\dd \overline{G} \to \Iso(\HHh,\lambda)$ such
that $\Psi(G) = \{u \in \Iso(\HHh,\lambda)\dd\ u(\HHh') = \HHh'\}$, $(\HHh \setminus \HHh',\lambda)$
is isometric to $(S \setminus S',\sqrt{p})$ and the closure of $\HHh \setminus \HHh'$ is a $Z$-set
in $\HHh$. In particular, if $S'$ is completely metrizable, then $\HHh'$ is homeomorphic to $\HHh$.
\end{pro}

Recall that a closed set $K$ in a metric space $X$ is a \textit{$Z$-set} iff every map
of the Hilbert cube $Q$ into $X$ may uniformly be approximated by maps of $Q$ into $X \setminus K$.

\begin{proof}
By \PRO{closed2}, there is a bounded complete metric space $(Y,\varrho)$, a dense set $Y' \subset Y$
and an isomorphism $F_1\dd \overline{G} \to \Iso(Y,\varrho)$ such that $w(Y) = \beta$,
$(Y \setminus Y',\varrho)$ is isometric to $(S \setminus S',p)$ and $F_1(G) = \{u \in
\Iso(Y,\varrho)\dd\ u(Y') = Y'\}$. Now we shall mimic the proof of \PRO{non}.\par
Replacing, if applicable, $p$ and $\varrho$ by $t \cdot p$ and $t \cdot \varrho$ with small enough
$t > 0$ (and the final metric $\lambda$ obtained from this proof by $\frac{1}{\sqrt{t}} \lambda$),
we may assume that $\varrho < \frac12$. Now let $(X,d) \supset (Y,\varrho)$ be as in the proof
of \THM{Banach}. Further, let $(E,\|\cdot\|) = (\ArEe(X),\|\cdot\|_{AE})$ be the Arens-Eells space
of $(X,\sqrt{d})$ and $e = \chi_1 - \chi_0 \in E$. We denote by $\lambda$ the metric on $W =
\bar{B}_E(0,1) \cup \bar{B}_E(e,\frac12)$ induced by the norm of $E$. Arguments involved
in the proofs of \THM{Banach} and \PRO{non} show that
\begin{enumerate}[({I}1)]
\item the function $F_2\dd \Iso(X,d) \ni u \mapsto u\bigr|_Y \in \Iso(Y,\varrho)$ is an isomorphism
   and there is a dense set $X' \subset X$ such that $X \setminus X' = Y \setminus Y'$ and
   $F_2^{-1}(F_1(G))$ constists precisely of all $u \in \Iso(X,d)$ with $u(X') = X'$;
\item the function $F_3\dd \Iso(X,d) \ni u \mapsto \ArEe(u)\bigr|_W \in \Iso(W,\lambda)$ is
   an isomorphism and $u(0) = 0$ for any $u \in \Iso(X,d)$;
\item $W$ is homeomorphic to $\HHh$.
\end{enumerate}
Point (I3) asserts that we may identify $\HHh$ with $W$. Put $\Psi = F_3 \circ F_2^{-1} \circ F_1\dd
\overline{G} \to \Iso(W,\lambda)$ and $W' = W \setminus \{\chi_x - \chi_0\dd\ x \in X \setminus
X'\}$ (recall that $\|\chi_y - \chi_0\|_{AE} = \sqrt{d(y,0)} = 1$ for every $y \in Y \supset X
\setminus X'$) and note that $\Psi$ is an isomorphism. What is more, we claim that $\Psi(G)$
consists of all $v \in \Iso(W,\lambda)$ for which $v(W') = W'$. Indeed, it follows from (I2) that
each $v \in \Iso(W,\lambda)$ has the form $v = \ArEe(u)\bigr|_W$ for some $u \in \Iso(X,d)$. So,
taking into account (I1), we only need to check that $u(X') = X'$ iff $(\ArEe(u))(W') = W'$. But
this follows from the fact that $u(0) = 0$.\par
Further, observe that the function $(X \setminus X',\sqrt{d}) \ni x \mapsto \chi_x - \chi_0 \in
(W \setminus W',\lambda)$ is an isometry, which implies that the latter metric space is isometric
to $(S \setminus S',\sqrt{p})$. To show that the closure $D$ of $W \setminus W'$ is a $Z$-set
in $W$, note that $D \subset C := \{\chi_y - \chi_0\dd\ y \in Y\}$ and the maps $W \ni w \mapsto
(1 - \frac1n)w + \frac1n e \in W$ converge uniformly to the identity map with respect to $\lambda$
and their images are disjoint from $C$.\par
So, to complete the proof, we only need to check that if $S'$ is completely metrizable, then $W'$ is
homeomorphic to $\HHh$. But this is an immediate consequence of the fact that $W \setminus W'$ is
a set isometric to $(S \setminus S',\sqrt{p})$ and contained in a $Z$-set in $W$, and a known fact
on $\sigma$-$Z$-sets in Hilbert spaces: $S'$, being completely metrizable, is a $\GGg_{\delta}$-set
in $S$, hence $S \setminus S'$ is $\FFf_{\sigma}$ in $S$. Consequently, $W \setminus W'$ is
a countable union of sets complete in the metric $\lambda$, thus an $\FFf_{\sigma}$-set in $W$.
Since the closure of $W \setminus W'$ is a $Z$-set in $W$, thus $W \setminus W'$ is
a $\sigma$-$Z$-set, that is, it is a countable union of $Z$-sets. Now the assertion follows from
the well-known result that the complement of a $\sigma$-$Z$-set in a Hilbert space is homeomorphic
to the whole space, which simply follows from Toru\'{n}czyk's characterization of Hilbert manifolds
\cite{tr1,tr2} (for the separable case one may also consult Theorem~6.3 in \cite[Chapter~V]{b-p}).
\end{proof}

As a conclusion, we obtain

\begin{thm}{incomplete}
Let $\HHh$ be a Hilbert space of Hilbert space dimension $\beta \geqsl \aleph_0$ and
$$\ggG = \{\Iso(\HHh,\varrho)\dd\ \varrho \textup{ is a compatible metric on } \HHh\}.$$
Then, up to isomorphism, $\ggG$ consists precisely of all topological groups $G$ which are
isomorphic to closed subgroups of $\Iso(X,d)$ for some completely metrizable spaces $(X,d)$ with
$w(X) \leqsl \beta$.
\end{thm}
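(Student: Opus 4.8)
The plan is to prove the two inclusions separately; the forward one is immediate, while the reverse one is essentially a repackaging of \PRO{completion-H} (which generalises \COR{beta} to the incomplete case).

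First I would dispose of the inclusion ``$\ggG$ is contained in the stated class''. If $\varrho$ is a compatible metric on $\HHh$, then $(\HHh,\varrho)$ is a metric space with $w(\HHh,\varrho) = w(\HHh) = \beta$, and its underlying topological space, being homeomorphic to $\HHh$, is completely metrizable. Taking $(X,d) := (\HHh,\varrho)$ and observing that $\Iso(\HHh,\varrho)$ is trivially a closed subgroup of itself, we see that every member of $\ggG$ is (isomorphic to) a closed subgroup of $\Iso(X,d)$ with $(X,d)$ completely metrizable and $w(X) = \beta \leqsl \beta$. This requires no work.

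For the reverse inclusion, suppose $G$ is isomorphic to a closed subgroup of $\Iso(X,d)$, where $(X,d)$ is completely metrizable and $w(X) \leqsl \beta$. Since replacing $d$ by $d/(2+2d)$ alters neither the topology nor the isometry group (this transform is a strictly increasing bijection of $[0,\infty)$ onto $[0,\tfrac12)$ fixing $0$, hence determines $d$), I may assume $d$ is bounded. Let $(S,p)$ be the completion of $(X,d)$ and put $S' := X$. Then $S'$ is dense in $S$, the metric $p$ is complete and bounded, $w(S) = w(S') = w(X) \leqsl \beta$, and $\Iso(S',p) = \Iso(X,d)$, so $G$ is a closed subgroup of $\Iso(S',p)$; crucially, $S' = X$ is completely metrizable by hypothesis. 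Now I invoke \PRO{completion-H} with this $(S,p)$, this $S'$, this $G$ and the given $\HHh$: it produces a complete compatible metric $\lambda$ on $\HHh$, a subset $\HHh' \subset \HHh$, and an isomorphism $\Psi\dd \overline{G} \to \Iso(\HHh,\lambda)$ with $\Psi(G) = \{u \in \Iso(\HHh,\lambda)\dd\ u(\HHh') = \HHh'\}$, and, because $S'$ is completely metrizable, $\HHh'$ is homeomorphic to $\HHh$.

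It remains to read off the conclusion. The complement $\HHh \setminus \HHh'$ is contained in a $Z$-set, hence is nowhere dense, so $\HHh'$ is dense in $(\HHh,\lambda)$ and $(\HHh,\lambda)$ is the completion of $(\HHh',\lambda\bigr|_{\HHh'})$. By the natural identification of the isometry group of a space with those isometries of its completion that preserve the dense subset (the remark preceding \PRO{closed2}), the group $\Psi(G)$ is topologically isomorphic to $\Iso(\HHh',\lambda\bigr|_{\HHh'})$; thus $G \cong \Iso(\HHh',\lambda\bigr|_{\HHh'})$. Finally, fixing a homeomorphism $h\dd \HHh \to \HHh'$ and transporting the metric, I set $\varrho(x,y) := \lambda(h(x),h(y))$; since $\lambda\bigr|_{\HHh'}$ induces the subspace topology of $\HHh'$ and $\HHh' \cong \HHh$, this $\varrho$ is a compatible metric on $\HHh$ with $\Iso(\HHh,\varrho) \cong \Iso(\HHh',\lambda\bigr|_{\HHh'}) \cong G$, so $G$ belongs (up to isomorphism) to $\ggG$. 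The only step needing genuine care is the identification $\Psi(G) \cong \Iso(\HHh',\lambda\bigr|_{\HHh'})$, i.e. that restriction to $\HHh'$ and extension to the completion are mutually inverse topological isomorphisms; this rests on $(\HHh,\lambda)$ being the completion of $(\HHh',\lambda\bigr|_{\HHh'})$, which in turn uses the $Z$-set (hence nowhere-dense) property of the closure of $\HHh \setminus \HHh'$ supplied by \PRO{completion-H}. Everything else is bookkeeping, so I anticipate no serious obstacle.
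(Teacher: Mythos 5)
Your proposal is correct and follows essentially the same route as the paper: reduce to \PRO{completion-H} applied to the completion $(S,p)$ of a bounded remetrization of $(X,d)$ with $S' = X$, then identify the stabilizer of $\HHh'$ with $\Iso(\HHh',\lambda)$ and transport the metric through a homeomorphism $\HHh' \cong \HHh$. You merely spell out steps the paper leaves implicit (the trivial forward inclusion, density of $\HHh'$ via the $Z$-set property, and the restriction/extension identification), which is fine.
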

\begin{proof}
If $G$ is a closed subgroup of $\Iso(X,d)$ for a completely metrizable space $(X,d)$ with $w(X)
\leqsl \beta$, we may assume that $d$ is bounded (replacing, if necessary, $d$ by $\frac{d}{1+d}$).
Then \PRO{completion-H}, applied for $(S,p) =$ the completion of $(X,d)$ and $S' = X$, yields
a complete compatible metric $\lambda$ on $\HHh$ and a dense set $\HHh' \subset \HHh$ homeomorphic
to $\HHh$ such that $G$ is isomorphic to $G' := \{u \in \Iso(\HHh,\lambda)\dd\ u(\HHh') = \HHh'\}$.
But $G'$ is naturally isomorphic to $\Iso(\HHh',\lambda)$ and we are done.
\end{proof}

For an infinite cardinal number $\alpha$, let us denote by $\IGH(\alpha)$ the class of all
topological groups which are isomorphic to $\Iso(\HHh,\varrho)$ for some compatible metric $\varrho$
on a Hilbert space $\HHh$ of Hilbert space dimension $\alpha$ (`IGH' is the abbreviation
of `isometry group of a Hilbert space'). Additionally, let $\IGH$ stand for the union of all classes
$\IGH(\alpha)$'s. \THM{incomplete} implies that $\IGH$ is a \textit{variety}; that is, closed
subgroups as well as topological products of members of $\IGH$ belong to $\IGH$ as well.\par
We are mainly interested in the class $\IGH(\aleph_0)$. It is clear that all groups belonging
to this class are second countable. In the sequel we shall see that the axiom of second countability
is insufficient for a topological group to belong to $\IGH(\aleph_0)$ (see \PRO{real} below).\par
As a simple consequence of \THM{incomplete} we obtain

\begin{cor}{sigma-comp}
If $G$ is a second countable, $\sigma$-compact topological group, then $G \in \IGH(\aleph_0)$.
\end{cor}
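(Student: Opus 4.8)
The plan is to verify the hypothesis of \THM{incomplete}: I will exhibit a separable completely metrizable space $X$ together with a compatible metric $d$ such that $G$ is isomorphic to a \emph{closed} subgroup of $\Iso(X,d)$; membership in $\IGH(\aleph_0)$ then follows at once, since $w(X) \leqsl \aleph_0$. Being second countable, $G$ is metrizable, and its Ra\v{\i}kov completion $\overline{G}$ is a Polish group containing $G$ as a dense subgroup. If $G$ happens to be complete (equivalently $G = \overline{G}$, i.e. $G$ is Polish), then \THM{g-k} already presents $G$ as $\Iso(S,p)$ for a separable \emph{complete} metric space $(S,p)$, so $G$ is a closed subgroup of $\Iso(S,p)$ and \THM{incomplete} applies directly. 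Thus the interesting case is $G \subsetneq \overline{G}$.

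In that case I would fix a left invariant compatible metric $\rho$ on $\overline{G}$ (available since $\overline{G}$ is metrizable) and take as carrier the \emph{complement}
$$X := \overline{G} \setminus G,$$
equipped with the restriction $d := \rho\bigr|_{X \times X}$. The point of passing to the complement is that, since $G$ is $\sigma$-compact, it is an $\FFf_{\sigma}$-subset of $\overline{G}$ (compact sets are closed in the Hausdorff group $\overline{G}$), so $X$ is a $\GGg_{\delta}$-subset of the Polish space $\overline{G}$ and hence itself separable and completely metrizable. Because $G$ is a subgroup, each left translation $L_g\dd w \mapsto gw$ maps $\overline{G} \setminus G$ onto itself, so $G$ acts on $X$ by $\rho$-isometries; left invariance of $\rho$ makes $g \mapsto L_g\bigr|_X$ a homomorphism, and it is a topological embedding because right multiplication is a homeomorphism of $\overline{G}$ (so $g_{\sigma} x_0 \to u(x_0)$ forces $g_{\sigma} \to u(x_0) x_0^{-1}$ for any fixed $x_0 \in X$).

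The crux is to check that the image $\{L_g\bigr|_X\dd\ g \in G\}$ is \emph{closed} in $\Iso(X,d)$ --- this is exactly where the naive left-translation model on a completion fails, for there $G$ sits densely rather than closedly. Suppose a net $L_{g_{\sigma}}\bigr|_X$ converges in $\Iso(X,d)$ to some $u$. Fixing $x_0 \in X$, the relation $g_{\sigma} x_0 \to u(x_0)$ together with continuity of right multiplication yields $g_{\sigma} \to w := u(x_0) x_0^{-1} \in \overline{G}$, whence $u = L_w\bigr|_X$ by joint continuity of multiplication. Since $u$ is a \emph{surjective} isometry of $X$, one gets $wX = X$, i.e. $w(\overline{G} \setminus G) = \overline{G} \setminus G$, equivalently $wG = G$; as $G$ is a subgroup this means $w \in G$, and therefore $u \in \{L_g\bigr|_X\}$. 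Hence $G$ is isomorphic to a closed subgroup of $\Iso(X,d)$ with $X$ separable and completely metrizable, and \THM{incomplete} delivers $G \in \IGH(\aleph_0)$. The only delicate ingredient is the equivalence just exploited --- that preserving the $\GGg_{\delta}$ set $X$ is the same as preserving the group $G$ --- which simultaneously forces the image to be closed and pins the limiting translation inside $G$; $\sigma$-compactness is needed solely to guarantee that $X = \overline{G} \setminus G$ is completely metrizable.
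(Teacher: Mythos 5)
Your proof is correct, and at the top level it follows the same strategy as the paper's: dispose of the Polish case by citing an earlier theorem, and otherwise realize $G$, acting by left translations, as a closed subgroup of the isometry group of the complement of $G$ in a completion --- a complement which is Polish precisely because $\sigma$-compactness makes $G$ an $\FFf_{\sigma}$-set --- and then invoke \THM{incomplete}. The executions, however, differ in a genuinely instructive way. The paper completes the \emph{metric space} $(G,p)$: it takes the completion $(Y,\varrho)$ of a left invariant metric, which in general is only a metric space (not a group), and obtains closedness through the chain $\Iso(Y \setminus G,\varrho) \cong \{u \in \Iso(Y,\varrho)\dd\ u(G) = G\} \cong \Iso(G,p)$ together with the standard fact that left translations form a closed subgroup of $\Iso(G,p)$; the first isomorphism in that chain requires $Y \setminus G$ to be \emph{dense} in $Y$, which is why the paper's case split is on whether $G$ has empty interior in $Y$ (nonempty interior forcing $G$ to be Polish). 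You instead work inside the Ra\v{\i}kov completion $\overline{G}$, which is a Polish \emph{group}, and prove closedness directly: a pointwise limit $u$ of translations $L_{g_{\sigma}}\bigr|_X$ is itself $L_w\bigr|_X$ with $w = \lim_{\sigma} g_{\sigma} \in \overline{G}$, and bijectivity of $u$ on $X = \overline{G} \setminus G$ gives $w(\overline{G} \setminus G) = \overline{G} \setminus G$, hence $wG = G$, hence $w \in G$. Your argument needs only that $X$ be nonempty (no density of the complement), so the cleaner dichotomy ``$G$ Ra\v{\i}kov-complete or not'' suffices; what the paper's route buys is that it stays entirely within metric completions and reuses facts already present in the paper (extension of isometries to completions, closedness of translations in $\Iso(G,p)$), never invoking the group-completion machinery.
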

\begin{proof}
Let $p$ be a compatible left invariant metric on $G$ and let $(Y,\varrho) \supset (G,p)$ denote
the completion of $(G,p)$. If the interior of $G$ in $Y$ is nonempty, then $G$ is locally completely
metrizable and thus $G$ is Polish. In that case the assertion follows from \THM{iso}. On the other
hand, if $G$ is a boundary set in $Y$, then $Y \setminus G$ is dense in $Y$ and therefore
$\Iso(Y \setminus G,\varrho)$ is isomorphic to $\{u \in \Iso(Y,\varrho)\dd\ u(Y \setminus G) = Y
\setminus G\}$ and the latter group coincides with $\{u \in \Iso(Y,\varrho)\dd\ u(G) = G\}$ which
is isomorphic to $\Iso(G,\varrho) = \Iso(G,p)$. Since $G$ is $\sigma$-compact, $Y \setminus G$ is
completely metrizable. Finally, since $p$ is left invariant, all left translations of $G$ form
a closed subgroup of $\Iso(G,p)$ isomorphic to $G$. So, to sum up, $G$ is isomorphic to a closed
subgroup of the isometry group of the Polish space $(Y \setminus G,\varrho)$. Now it suffices
to apply \THM{incomplete}.
\end{proof}

To formulate our next result, we remind the reader that a separable metrizable space $X$ is said
to be \textit{coanalytic} iff $X$ is homeomorphic to a space of the form $Y \setminus Z$ where $Y$
is a Polish space and $Z \subset Y$ is \textit{analytic}, that is, $Z$ is the continuous image
of a Polish metric space. We also recall that continuous images of Borel subsets of Polish spaces
are analytic.

\begin{pro}{coanal}
Each member of $\IGH(\aleph_0)$ is coanalytic as a topological space.
\end{pro}
\begin{proof}
Let $(X,\varrho)$ be a Polish metric space and $G = \Iso(X,\varrho)$. Denote by $(Y,d)$
the completion of $(X,\varrho)$. Since $X$ is completely metrizable, it is a $\GGg_{\delta}$-set
in $Y$. Observe that $G$ is naturally isomorphic to the subgroup $\{u \in \Iso(Y,d)\dd\ u(X) = X\}$
of $\Iso(Y,d)$. Since $\Iso(Y,d)$ is a Polish group, it suffices to show that $A := \{u \in
\Iso(Y,d)\dd\ u(X) \neq X\}$ is the continuous image of a Borel subset of a Polish metric space.
To this end, notice that the set $W := \{(u,x) \in \Iso(Y,d) \times X\dd\ u(x) \in Y \setminus X\}$
is Borel in the Polish space $\Iso(Y,d) \times X$ and $\pi(W) = A$ where $\pi\dd \Iso(Y,d) \times X
\to \Iso(Y,d)$ is the natural projection.
\end{proof}

\begin{pro}{real}
There exists a topological subgroup of the additive group of reals which does not belong
to $\IGH(\aleph_0)$.
\end{pro}
\begin{proof}
We consider $\RRR$ as a vector space over the field $\QQQ$ of rationals. There exists a vector
subspace $G$ of $\RRR$ such that $\QQQ \cap G = \{0\}$ and $G + \QQQ = \RRR$. We claim that
$G \notin \IGH(\aleph_0)$. To convince of that, it is enough to prove that $G$ is not coanalytic
(thanks to \PRO{coanal}). Since analytic spaces are absolutely measurable (see e.g. Theorem~A.13
in \cite[Appendix]{tak}), it suffices to show that $G$ is not Lebesgue measurable. But this follows
from the following two observations:
\begin{itemize}
\item $\RRR = \bigcup_{q \in \QQQ} (q + G)$ and hence the outer measure of $G$ is positive;
\item $G - G (= G)$ has empty interior and thus its inner measure is $0$, which simply follows from
   Theorem~A in \S61 of \cite[Chapter~XII]{hal} (page~266).
\end{itemize}
\end{proof}

Propositions~\ref{pro:coanal} and \ref{pro:real} make the issue of characterizing members of $\IGH$
complicated. The following problems seem to be most interesting:

\begin{prb}{1}
If $G \in \IGH$, is it true that $G \in \IGH(w(G))$~?
\end{prb}

\begin{prb}{2}
Does the class $\IGH$ contain all $\GGg_{\delta}$-complete topological groups?
\end{prb}

\begin{prb}{3}
Characterize members of $\IGH(\aleph_0)$.
\end{prb}

\SECT{Compact and locally compact Polish groups}

This section is devoted to the proofs of points (b) and (c) of \THM{iso}. Our main tool will be
the following result, very recently shown by us \cite{pn0}.

\begin{thm}{prop}
Let $G$ be a locally compact Polish group, $X$ be a locally compact Polish space and let $G \times X
\ni (g,x) \mapsto g.x \in X$ be a continuous proper action of $G$ on $X$. Assume there is a point
$\omega \in X$ such that the set $G.\omega = \{g.\omega\dd\ g \in G\}$ is non-open and $G$ acts
freely at $\omega$ (that is, $g.\omega = \omega$ implies $g =$ the neutral element of $G$). Then
there exists a proper compatible metric $d$ on $X$ such that $\Iso(X,d)$ consists precisely of all
maps of the form $x \mapsto g.x\ (g \in G)$. In particular, the topological groups $\Iso(X,d)$ and
$G$ are isomorphic.
\end{thm}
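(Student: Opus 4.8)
The plan is to manufacture the metric in two stages: first a $G$-invariant proper metric making every left translation $L_g\colon x\mapsto g.x$ an isometry, and then an equivariant \emph{rigidification} that destroys all other isometries. For the first stage, since the action is proper and $G$ is locally compact, the theorem of Abels, Manoussos and Noskov \cite{HAbels,AManoussos,GNoskov2011} furnishes a proper compatible $G$-invariant metric $\rho$ on $X$. Relative to $\rho$ each $L_g$ is an isometry; freeness at $\omega$ makes $g\mapsto L_g$ injective, and properness of the action makes it a topological embedding of $G$ onto a closed subgroup $\tilde G\le\Iso(X,\rho)$. As $(X,\rho)$ is proper, $H:=\Iso(X,\rho)$ is a locally compact Polish group acting properly on $X$ (cf. \cite{AManoussos,PStrantzalos2003}); in particular the stabilizer $K:=\{\psi\in H\dd\psi(\omega)=\omega\}$ is compact, and $\tilde G\cap K=\{\id\}$ by freeness. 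The problem becomes purely that of cutting $H$ down to $\tilde G$.

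The reduction I would use is as follows. Suppose the final metric $d$ is $G$-invariant, proper and compatible and has two features: (a) every $d$-isometry carries the orbit $G.\omega$ onto itself, and (b) the only $d$-isometry fixing $\omega$ is $\id$. Then for any $\phi\in\Iso(X,d)$ feature (a) gives $\phi(\omega)=g.\omega$ for a $g\in G$ that is \emph{unique} by freeness, so $L_g^{-1}\circ\phi$ fixes $\omega$ and, by (b), equals $\id$; hence $\phi=L_g$, which is exactly the assertion. Thus it suffices to build a single $G$-invariant proper compatible metric that simultaneously detects the orbit and is rigid at $\omega$.

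To control the \emph{entire} isometry group, not merely the elements of $H$, I would design $d$ so that $\rho$ can be recovered from $d$ by an isometry-invariant recipe, forcing $\Iso(X,d)\subseteq H$; the hunt for extra isometries then takes place inside the locally compact group $H$, where killing them amounts to trivialising the compact stabilizer $K$. Rigidity at $\omega$ would be obtained by superimposing on $\rho$ a bounded $G$-invariant perturbation encoding enough ``marker'' data to distinguish the points of a slice at $\omega$. Here the hypothesis that $G.\omega$ is non-open is essential: it guarantees points arbitrarily close to $\omega$ off the orbit, on which the markers can be anchored, while freeness (a trivial stabilizer, hence a genuine local product $G\times S$ via the slice theorem for proper actions) lets those markers be spread $G$-equivariantly. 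The markers are chosen in the spirit of \PRO{id}: a countable family of $G$-invariant Lipschitz functions whose joint level-pattern is preserved by no nontrivial element of the compact group $K$, which yields (b); detection of the orbit in (a) is arranged by normalising the perturbation so that $G.\omega$ is the locus on which an auxiliary isometry-invariant function attains a distinguished value.

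The main obstacle is the simultaneous satisfaction of all constraints by one metric: the perturbation must be $G$-invariant (to preserve every $L_g$ and to keep $d$ proper and compatible), yet asymmetric enough to annihilate the compact stabilizer $K$, and it must be built so that $\rho$ — and hence membership in $H$ — stays isometry-invariantly readable from $d$. Balancing equivariance against rigidity, especially when $G$ is a non-Lie locally compact group where a slice is only compact-by-Lie, is the technical heart of the argument, and it is precisely there that the non-openness and freeness hypotheses are consumed.
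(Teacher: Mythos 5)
There is nothing in this paper to compare your proposal against: \THM{prop} is not proved here at all, but imported as a black box from \cite{pn0} (``Isometry groups of proper metric spaces'') and used only to derive points (b) and (c) of \THM{iso}. So your attempt must stand on its own, and it does not. The reduction you set up is sound: if $d$ is $G$-invariant, proper and compatible, every $d$-isometry maps $G.\omega$ onto itself, and the only $d$-isometry fixing $\omega$ is the identity, then indeed $\Iso(X,d)$ is exactly $\{x \mapsto g.x\dd\ g \in G\}$. The first stage (a $G$-invariant proper compatible $\varrho$ via \cite{amn}, giving a closed embedding of $G$ into $\Iso(X,\varrho)$) is also fine. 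But everything after that---the actual construction of $d$---is a list of desiderata (``markers'', an ``isometry-invariant recipe'' recovering $\varrho$ from $d$, a ``normalisation'' singling out $G.\omega$) with no construction behind any of them; you concede yourself that balancing equivariance against rigidity ``is the technical heart of the argument''. That heart \emph{is} the theorem, so what you have is a plan, not a proof.

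Worse, the one concrete device you name---a countable family of $G$-\emph{invariant} Lipschitz functions whose joint level-pattern no nontrivial isometry fixing $\omega$ preserves---fails in general. A $G$-invariant function is constant on orbits, hence is automatically preserved by any bijection that maps each $G$-orbit onto itself, and such maps can be nontrivial isometry candidates. Take $G = \ZZZ$ acting on $X = \ZZZ \times S^1$ by $n.(m,z) = (n+m,z)$: the action is proper and free, and for $\omega = (0,1)$ the orbit $\ZZZ \times \{1\}$ is non-open, so the hypotheses of \THM{prop} hold. The reflection $\psi(m,z) = (-m,z)$ fixes $\omega$, maps every orbit $\ZZZ \times \{z\}$ onto itself, and therefore preserves \emph{every} $G$-invariant function (each such function depends only on $z$), yet $\psi$ is not a translation. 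Thus no amount of $G$-invariant marker data superimposed on a $\psi$-invariant $\varrho$ (e.g.\ a product metric) can exclude $\psi$ from $\Iso(X,d)$; one must instead encode \emph{equivariant but non-invariant} information---for instance chirally ``twisted'' distances between distinct orbits---while keeping $d$ invariant under all translations. Your text gestures at this (``spread $G$-equivariantly'') but conflates it with invariance of the individual functions, and in the equivariant reading the level-pattern is only preserved up to a permutation of the family, which is precisely the point where the rigidity argument has to be built and where nothing is supplied. This missing construction, not the surrounding reductions, is the content of the theorem (a flavour of the difficulty, in the purely set-theoretic setting, is what \PRO{id} handles in this paper).
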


We recall that (under the above notation) the action is \textit{proper} if for every compact set
$K \subset X$ the set $\{g \in G\dd\ g.K \cap K \neq \varempty\}$ is compact as well (where $g.K =
\{g.x\dd\ x \in K\}$).\par
Our next tool is the following classical result due to Keller \cite{kel} (see also
\cite[Theorem~III.3.1]{b-p}).

\begin{thm}{Q}
Every infinite-dimensional compact convex subset of a Fr\'{e}chet space is homeomorphic
to the Hilbert cube.
\end{thm}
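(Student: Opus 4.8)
The plan is to deduce Keller's theorem from Toru\'{n}czyk's topological characterization of the Hilbert cube, after a standard affine reduction. Throughout write $K$ for the given infinite-dimensional compact convex subset of a Fr\'{e}chet space $E$.

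First I would reduce to the case $K \subseteq \ell_2$. Since $E$ is metrizable and $K$ is compact, $K$ is a compact metric space, hence separable; as $E^*$ separates points, one can choose a countable family of continuous affine functionals $\{f_n\}_{n\geqsl1}$ on $K$ which separates the points of $K$ and, after rescaling, satisfies $0 \leqsl f_n \leqsl 1$ and $\sum_n \sup_K |f_n|^2 < \infty$. Then $x \mapsto (f_n(x))_n$ is a well defined continuous injection of the compactum $K$ into $\ell_2$, hence an affine homeomorphism onto a compact convex set; being an affine injection it preserves the dimension of the affine hull, so the image is again infinite-dimensional. Thus I may and do assume that $K$ is an infinite-dimensional compact convex subset of $\ell_2$.

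Next, $K$ is a compact absolute retract, since every convex subset of a locally convex (in particular a Fr\'{e}chet) space is an AR by Dugundji's extension theorem. By Toru\'{n}czyk's characterization of the Hilbert cube \cite{tr1,tr2}, a compact AR is homeomorphic to $Q$ exactly when it has the disjoint-cells property: for each $m$, any two maps $I^m \to K$ can be approximated arbitrarily closely by maps with disjoint images. So the theorem reduces to establishing this approximation property for $K$, and this is the one step where infinite-dimensionality must enter. To verify it I would exploit that an infinite-dimensional compact convex set supplies plenty of small transverse directions. Concretely, I would first locate inside $K$ a set affinely homeomorphic to a standard Hilbert cube $\{\sum_k s_k u_k \dd\ |s_k| \leqsl a_k\}$ with $\{u_k\}$ orthonormal and $\sum_k a_k^2 < \infty$: using that the affine hull of $K$ is infinite-dimensional, choose inductively orthonormal directions $u_k$ in which $K$ has positive width about a fixed base point, scaling the widths to a summable sequence $a_k \searrow 0$ so that the resulting box lies in $K$ by convexity and is compact. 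Given maps $f, g \dd I^m \to K$ and $\epsi > 0$, both images are compact, hence essentially concentrated in finitely many coordinates; adding to $g$ a small perturbation valued in the high-index factors of this internal cube (available independently precisely because the box sits inside $K$) puts $g$ in general position with respect to $f$, making the images disjoint after an arbitrarily small change. Toru\'{n}czyk's criterion then yields $K \cong Q$.

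The main obstacle is exactly the construction of this internal standard Hilbert cube together with the attendant general-position perturbation: a general compact convex body can be arbitrarily thin in most directions, so securing an honest product-box inside $K$ (rather than merely an infinite-dimensional simplex, whose homeomorphy with $Q$ is itself Keller-type) demands a careful inductive choice of directions and widths, with the convexity bookkeeping that keeps the box in $K$. An entirely elementary alternative, bypassing Toru\'{n}czyk's machinery, is the original sandwich argument of Keller as presented in \cite[Chapter~III]{b-p}, where $K$ is realized between two explicit standard cubes and the homeomorphism is built coordinatewise via near-homeomorphisms; I would fall back on that route should the general-position step prove delicate.
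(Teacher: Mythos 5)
First, a point of order: the paper does not prove this statement at all --- \THM{Q} is imported as a classical theorem of Keller \cite{kel}, with \cite[Theorem~III.3.1]{b-p} cited for a proof. So your proposal can only be judged as a standalone argument. Its first two steps are fine: the affine embedding of $K$ into $\ell_2$ via a countable separating family of continuous affine functionals is standard and correct, and $K$ is a compact AR by Dugundji's extension theorem. Reducing to Toru\'{n}czyk's disjoint-cells characterization of the Hilbert cube is also a legitimate modern route in principle (though note that \cite{tr1,tr2} characterize Hilbert \emph{space} topology; the disjoint-cells characterization of $Q$ is in Toru\'{n}czyk's 1980 paper on CE-images of the Hilbert cube, which is not in this paper's bibliography).

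The genuine gap is in the one step you yourself flag as the place ``where infinite-dimensionality must enter.'' Your mechanism for the disjoint-cells property --- ``adding to $g$ a small perturbation valued in the high-index factors of this internal cube (available independently precisely because the box sits inside $K$)'' --- is incorrect: $K$ is not invariant under translation by vectors of an inscribed box. From $B \subset K$ it does not follow that $z + v \in K$ for $z \in K$ and $v$ a box direction; in fact no nonzero translation maps $K$ into itself (otherwise $K + nv \subset K$ for all $n$, contradicting boundedness), so the perturbed map need not take values in $K$ at all. The only moves guaranteed to stay in $K$ are convex combinations $(1-t)g + t\beta$ with $\beta$ mapping into $K$, and then smallness forces $t$ small while disjointness must compete with the spread of the images: since $K$ is a Peano continuum, $f$ and $g$ may be \emph{surjective} onto $K$, so in every direction their images have the full width of $K$, and a push toward constant targets yields disjoint images only when $t > 1/2$ --- not a small perturbation. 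The scheme can be salvaged by letting the target depend on the point (choose $\gamma(y)$ in the inscribed cube so that, for each $y$, it avoids the at most $m$-dimensional compactum $\{\tfrac{1-t}{t}(f(x)-g(y)) + c \dd\ x \in I^m\}$), but that requires a genuine selection/general-position argument with dimension-theoretic input, and it still rests on the inscribed affine Hilbert cube lemma, whose inductive construction (with the quantitative independence of directions needed to make the parametrization injective) is itself nontrivial and absent. As written, the proposal is a plan whose hard kernel is missing; your own fallback --- Keller's sandwich argument as presented in \cite[Chapter~III]{b-p} --- is precisely what the paper invokes by citation.
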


We recall that a Fr\'{e}chet space is a completely metrizable locally convex topological vector
space.\par
We call a function $u\dd (X,d) \to \RRR$ (where $(X,d)$ is a metric space) \textit{nonexpansive} iff
$|u(x) - u(y)| \leqsl d(x,y)$ for all $x, y \in X$. The function $u$ is a \textit{Kat\v{e}tov map}
iff $u$ is nonexpansive and additionally $d(x,y) \leqsl u(x) + u(y)$ for any $x, y \in X$.
Kat\v{e}tov maps correspond to one-point extensions of metric spaces.\par
We are now ready for proving point (b) of \THM{iso}.

\begin{proof}[Proof of point \textup{(}b\textup{)} of \THM{iso}]
Let $G$ be a compact Polish group. Take a left invariant metric $\varrho \leqsl 1$ on $G$ and equip
the space $X = G \times [0,1]$ with the metric $d$ where $d((x,s),(y,t)) =
\max(\varrho(x,y),|t - s|)$. For $g \in G$ denote by $\psi_g$ the function $X \ni (x,t) \mapsto
(g^{-1}x,t) \in X$. Notice that $\psi_g \in \Iso(G,X)$ for any $g \in G$. Let $\Delta$ be the space
of all nonexpansive maps of $(X,d)$ into $[0,1]$ endowed with the supremum metric. Observe that
$\Delta$ is a convex set in the Banach space of all real-valued maps on $X$. What is more, $\Delta$
is infinite-dimensional, since $X$ is infinite, and $\Delta$ is compact, by the Ascoli type theorem.
So, we infer from \THM{Q} that $\Delta$ is homeomorphic to $Q$. Further, $\Phi_g(u) := u \circ
\psi_g \in \Delta$ for any $g \in G$ and $u \in \Delta$ (because $\psi_g$ is isometric). It is also
easily seen that the function $G \times \Delta \ni (g,u) \mapsto \Phi_g(u) \in \Delta$ is
a (proper---since both $G$ and $\Delta$ are compact) continuous action of $G$ on $\Delta$. Finally,
the function $\omega\dd X \ni (x,t) \mapsto d((x,t),(e,1)) \in \RRR$ belongs to $\Delta$ (since $d
\leqsl 1$) where $e$ is the neutral element of $G$. Observe that the set $K := \{\omega \circ
\psi_g\dd\ g \in G\}$ has empty interior in $\Delta$, since $\frac1n + (1 - \frac1n)\omega \circ
\psi_g \in \Delta \setminus K$ for any $n \geqsl 1$. Now an application of \THM{prop} finishes
the proof.
\end{proof}

Our last aim is to prove point (c) of \THM{iso}. To this end, we need more information on Hilbert
cube manifolds.\par
One of the deepest results in infinite-dimensional topology is Anderson's theorem on extending
homeomorphisms between $Z$-sets \cite{and}. Below we formulate it only in the Hilbert cube settings,
it holds however in a much more general context. (For the discussion on this topic consult
\cite[Chapter~V]{b-p}; see also \cite{a-m} and \cite{ch1}).

\begin{thm}{Z}
Every homeomorphism between two $Z$-sets in the Hilbert cube $Q$ is extendable to a homeomorphism
of $Q$ onto itself.
\end{thm}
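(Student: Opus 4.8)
The plan is to prove the statement by the standard route of infinite\hyp{}dimensional topology, organizing it as the combination of a \emph{controlled unknotting lemma} with an \emph{infinite swindle}. Throughout I would use the basic structural facts that $Q$ is homeomorphic to $Q \times Q$ and to $Q \times [0,1]$, that under the latter identification the end $Q \times \{0\}$ is a $Z$-set, and the defining approximation property of $Z$-sets recalled above: a closed set $K \subset Q$ is a $Z$-set precisely when the identity of $Q$ can be uniformly approximated by maps whose images miss $K$. I would also fix once and for all a complete metric on $\Homeo(Q)$ (the sup metric together with the sup metric on inverses), so that a uniformly controlled infinite composition of homeomorphisms converges to a homeomorphism.

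First I would establish the controlled unknotting lemma: if $A$ is a $Z$-set in $Q$ and $f\dd A \to Q$ is a $Z$-embedding that is $\epsi$-close to the inclusion $A \hookrightarrow Q$, then $f$ extends to a homeomorphism of $Q$ that is $\eta(\epsi)$-close to the identity, with $\eta(\epsi) \to 0$ as $\epsi \to 0$. This is the engine of the whole argument. Its proof is itself a back\hyp{}and\hyp{}forth construction: using a product decomposition $Q \cong Q \times Q$ and the instantaneous push\hyp{}off property of $Z$-sets, one alternately corrects on $A$ and on $f(A)$ by homeomorphisms supported in shrinking neighbourhoods, arranging the diameters of the supports to decrease geometrically so that the resulting infinite composition is Cauchy in $\Homeo(Q)$ and its limit realizes $f$.

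Second, to pass from the small case to an arbitrary homeomorphism $h\dd A \to B$ between $Z$-sets, I would connect the inclusion of $A$ to $h$ through a chain of $Z$-embeddings and then subdivide. Concretely, using the extra factor in $Q \cong Q \times [0,1]$ (or in $Q \cong Q \times Q$) to gain room, and exploiting the homogeneity of $Q$, one builds a finite chain of $Z$-embeddings $f_0 = \mathrm{incl}_A, f_1, \ldots, f_N$ with $f_N(A) = B$ realizing $h$, in which consecutive embeddings differ by less than $\epsi$; each step $f_{k-1} \rightsquigarrow f_k$ is then extended to an ambient homeomorphism by the unknotting lemma, and the composition of these extensions extends $h$. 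Alternatively one may first absorb $A$ and $B$ into disjoint faces by ambient homeomorphisms and then interpolate in the factor $[0,1]$.

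The main obstacle is exactly this reduction step. The unknotting lemma and the convergence of the swindle are, modulo care, routine once the product machinery is in place, whereas producing a finite chain of small\hyp{}step $Z$-embeddings joining $\mathrm{incl}_A$ to an \emph{arbitrary} $h$ requires the deeper general\hyp{}position and homogeneity results behind the theory. Since this is precisely the content of \cite{and}, with detailed expositions in \cite[Chapter~V]{b-p}, \cite{a-m} and \cite{ch1}, in the present paper I would simply invoke those references; the sketch above indicates how a self\hyp{}contained argument would be organized.
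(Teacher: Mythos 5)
Your proposal is correct and ends up taking exactly the same route as the paper: the paper gives no proof of this theorem at all, presenting it as Anderson's known extension theorem and simply citing \cite{and} together with the expositions in \cite[Chapter~V]{b-p}, \cite{a-m} and \cite{ch1}. Your sketch of the unknotting-plus-swindle machinery is a reasonable outline of the classical argument, but since you too conclude by invoking those same references, the two treatments coincide.
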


The result stated below is a kind of folklore in Hilbert cube manifolds theory. We present its short
proof, because we could not find it in the literature.

\begin{thm}{Q*}
The spaces $Q \times [0,\infty)$ and $Q \setminus \{\textup{point}\}$ are homeomorphic.
\end{thm}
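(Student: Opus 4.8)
The plan is to realize $Q\setminus\{\textup{point}\}$ as the complement of the vertex in a cone over $Q$, and then to recognize that cone as a copy of $Q$ by means of Keller's theorem (\THM{Q}). First I would record the elementary fact that every singleton $\{p\}\subset Q$ is a $Z$-set. Writing $Q=[0,1]^{\omega}$ with a metric inducing the product topology (say $\sum_n 2^{-n}|x_n-y_n|$) and given $\epsi>0$, for large $N$ the self-map of $Q$ that fixes all coordinates except the $N$-th, which it sends to whichever of $0,1$ lies at distance $\geqsl\tfrac12$ from $p_N$, is $\epsi$-close to the identity and omits $p$; hence $\{p\}$ is a $Z$-set. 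Consequently, by \THM{Z}, any two points of $Q$ are exchanged by a self-homeomorphism of $Q$, which restricts to a homeomorphism of their complements. Thus $Q\setminus\{p\}$ does not depend, up to homeomorphism, on the choice of $p$, and it suffices to exhibit one convenient point for which the asserted homeomorphism holds.

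Then I would construct the cone concretely. Realize $Q$ as an infinite-dimensional compact convex set $K$ in a Fr\'echet space $F$ (for instance $K=\prod_n[0,2^{-n}]\subset\ell_2$), and inside $F\times\RRR$ form $C=\conv\bigl(\{(0,0)\}\cup(K\times\{1\})\bigr)$. The map $K\times[0,1]\ni(x,t)\mapsto(tx,t)\in C$ is a continuous surjection from a compact space; it is injective off $K\times\{0\}$ and collapses $K\times\{0\}$ to $(0,0)$, so it induces a continuous bijection of the topological cone $CK=(K\times[0,1])/(K\times\{0\})$ onto $C$, which is a homeomorphism since $CK$ is compact and $C$ is Hausdorff. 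As $C$ is compact, convex and infinite-dimensional, \THM{Q} gives that $C$, hence $CK$ (a copy of the cone over $Q$), is homeomorphic to $Q$. Fix such a homeomorphism $h\dd CK\to Q$ and set $p_0=h(\textup{vertex})$.

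Next I would compute the complement of the vertex straight from the cone description. The set $K\times(0,1]$ is open in $K\times[0,1]$ (its complement $K\times\{0\}$ is closed) and is saturated for the quotient map, so the quotient restricts to a homeomorphism of $K\times(0,1]$ onto $CK\setminus\{\textup{vertex}\}$. Therefore $CK\setminus\{\textup{vertex}\}$ is homeomorphic to $Q\times(0,1]$, and reparametrizing the interval by $s\mapsto \tfrac1s-1$ (with inverse $u\mapsto\tfrac1{u+1}$) yields $Q\times(0,1]\cong Q\times[0,\infty)$. Transporting through $h$ gives $Q\setminus\{p_0\}\cong Q\times[0,\infty)$, and the first paragraph lets me replace $p_0$ by an arbitrary point, proving $Q\setminus\{\textup{point}\}\cong Q\times[0,\infty)$.

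I do not expect a genuinely hard step here; the whole argument is a bookkeeping assembly of \THM{Q} and \THM{Z}. The only places demanding care are the two identifications in the middle paragraphs: that the geometric cone $C$ really is the topological cone $CK$ (a compact-to-Hausdorff bijection is a homeomorphism), and that $(0,1]$ is open in $[0,1]$ so that the quotient map restricts to a homeomorphism on $Q\times(0,1]$. The single conceptual input is the verification that a point of $Q$ is a $Z$-set, which is exactly what licenses the use of \THM{Z} and makes the choice of the removed point irrelevant.
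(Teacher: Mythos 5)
Your proof is correct, but it follows a genuinely different route from the paper's. The paper's argument is manifold-theoretic: since $Q \setminus \{\textup{point}\}$ is a Hilbert cube manifold, Schori's stability theorem \cite{sch} gives $(Q \setminus \{\textup{point}\}) \times Q \cong Q \setminus \{\textup{point}\}$, and then \THM{Z} is applied in its strong, unknotting form --- the $Z$-set $Q \times \{1\}$ in $Q \times [0,1]$ is matched with the $Z$-set $\{\textup{point}\} \times Q$ in $Q \times Q$, the homeomorphism between them is extended to a homeomorphism of the ambient copies of $Q$, and passing to complements yields $Q \times [0,1) \cong (Q \setminus \{\textup{point}\}) \times Q \cong Q \setminus \{\textup{point}\}$. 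You avoid Schori's theorem altogether: you realize $Q \setminus \{\textup{point}\}$ as the cone over $Q$ minus its vertex, identify the geometric cone with $Q$ via Keller's theorem (\THM{Q}), and read off the complement of the vertex as $Q \times (0,1] \cong Q \times [0,\infty)$ directly from the quotient description; \THM{Z} enters only in its mildest consequence, the homogeneity of $Q$ (which is in fact already due to Keller \cite{kel}, so even that use could be replaced by a citation). What each approach buys: yours is more self-contained and elementary, resting only on the two theorems the paper has already stated and on explicit, checkable identifications (compact-to-Hausdorff bijections, saturated open sets), at the cost of some bookkeeping with the cone; the paper's is shorter and sits naturally inside the Hilbert-cube-manifold machinery it is already invoking elsewhere (Schori, Chapman), but it leans on a deeper stability theorem and on the unknotting strength of \THM{Z}. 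The only points in your write-up that deserve a word of completion are routine: the $Z$-set verification for a singleton approximates the identity, and one should add the one-line composition remark that this suffices to approximate arbitrary maps $f\dd Q \to Q$ by $g_N \circ f$; and the surjectivity of $(x,t) \mapsto (tx,t)$ onto $C$ uses that the convex hull of $\{(0,0)\} \cup (K \times \{1\})$ is exactly the union of the segments joining $(0,0)$ to $K \times \{1\}$, which holds because $K \times \{1\}$ is convex.
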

\begin{proof}
Since $Q \setminus \{\textup{point}\}$ is a Hilbert cube manifold, it follows from Schori's theorem
\cite{sch} (see also \cite{ch2}; compare with \cite[Theorem~IX.4.1]{b-p}) that $(Q \setminus
\{\textup{point}\}) \times Q$ is homeomorphic to $Q \setminus \{\textup{point}\}$. Now the assertion
follows from \THM{Z} since $(Q \times [0,1]) \setminus (Q \times [0,1)) = Q \times \{1\}$ is
a $Z$-set in $Q \times [0,1]$ homeomorphic to the $Z$-set (in $Q \times Q$) $(Q \times Q) \setminus
[(Q \setminus \{\textup{point}\}) \times Q]$.
\end{proof}

\begin{lem}{Q*}
Let $(X,d)$ be a nonempty separable metric space and let $E(X)$ be the set of all Kat\v{e}tov maps
on $(X,d)$ equipped with the pointwise convergence topology.
\begin{enumerate}[\upshape(i)]
\item For any $a \in X$ and $r > 0$ the set $\{f \in E(X)\dd\ f(a) \leqsl r\}$ is compact
   (in $E(X)$).
\item $E(X) \times Q$ is homeomorphic to $Q \setminus \{\textup{point}\}$.
\end{enumerate}
\end{lem}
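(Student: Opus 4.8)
The plan is to dispatch (i) by a direct compactness argument, and (ii) by writing $E(X)\times Q$ as an increasing union of Hilbert cubes glued along $Z$-sets, matching the analogous decomposition of $Q\times[0,\infty)$ and then invoking \THM{Q*}.

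For (i), fix $a\in X$, $r>0$ and set $K=\{f\in E(X)\dd f(a)\leqsl r\}$. Every Kat\v{e}tov map is nonnegative (take $x=y$ in $d(x,y)\leqsl f(x)+f(y)$) and $1$-Lipschitz, so for $f\in K$ and $x\in X$ one has $0\leqsl f(x)\leqsl f(a)+d(a,x)\leqsl r+d(a,x)$. Hence $K$ sits inside the product $P=\prod_{x\in X}[0,r+d(a,x)]$, which is compact by Tychonoff and whose topology is exactly the pointwise convergence topology. It then suffices to observe that $K$ is closed in $P$, since each defining condition $|f(x)-f(y)|\leqsl d(x,y)$, $d(x,y)\leqsl f(x)+f(y)$ (for $x,y\in X$) and $f(a)\leqsl r$ is preserved under pointwise limits. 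Thus $K$ is compact.

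For (ii), I would first fix a countable dense $D\subset X$; as $E(X)$ is equicontinuous ($1$-Lipschitz maps), the restriction $f\mapsto f|_D$ is a topological embedding of $E(X)$ into the separable Fr\'echet space $\RRR^D$, and I use it to view $E(X)$ as a subset of a Fr\'echet space. Note $E(X)$ is convex: a convex combination of $1$-Lipschitz maps is $1$-Lipschitz, and the inequality $d(x,y)\leqsl f(x)+f(y)$ is preserved by convex combinations. The evaluation $e_a\dd f\mapsto f(a)$ is linear on $E(X)$, so for $n\in\NNN$ the slices $S_n:=\{f\in E(X)\dd f(a)=n\}$ and shells $A_n:=\{f\in E(X)\dd n\leqsl f(a)\leqsl n+1\}$ are convex, and by (i) they are compact (they lie in $\{f(a)\leqsl n+1\}$). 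Each is nonempty: the apex $g_0:=d(\cdot,a)\in E(X)$ has $e_a(g_0)=0$, the maps $d(\cdot,x_0)+C$ show $e_a$ is unbounded, and $e_a$ attains every intermediate value by convexity of $E(X)$. Moreover $E(X)=\bigcup_{n\in\NNN}A_n$ with $A_n\cap A_{n+1}=S_{n+1}$. Since each $A_n\times Q$ and each $S_n\times Q$ is an infinite-dimensional compact convex subset of a Fr\'echet space, \THM{Q} gives $A_n\times Q\cong Q\cong S_n\times Q$.

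The key observation is that $S_n\times Q$ and $S_{n+1}\times Q$ are two disjoint $Z$-sets in $A_n\times Q$. Indeed, choosing $f_-\in S_n$, $f_+\in S_{n+1}$, for $t\in(0,1]$ the map $(f,q)\mapsto((1-t)f+tf_+,q)$ carries $A_n\times Q$ into itself (convexity) and, because $e_a((1-t)f+tf_+)=(1-t)f(a)+t(n+1)>n$, its image misses $S_n\times Q$; letting $t\to0$ exhibits $S_n\times Q$ as a $Z$-set, and using $f_-$ instead pushes off $S_{n+1}\times Q$. A finite union of $Z$-sets is a $Z$-set, so these disjoint faces correspond to the disjoint $Z$-set faces $Q\times\{n\},Q\times\{n+1\}$ of $Q\times[n,n+1]\cong Q$. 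Then I would construct, by induction on $n$, homeomorphisms $\phi_n\dd A_n\times Q\to Q\times[n,n+1]$ with $\phi_n(S_n\times Q)=Q\times\{n\}$, $\phi_n(S_{n+1}\times Q)=Q\times\{n+1\}$, and $\phi_n|_{S_n\times Q}=\phi_{n-1}|_{S_n\times Q}$ for $n\geqsl1$. At each stage one takes an arbitrary homeomorphism $A_n\times Q\to Q\times[n,n+1]$ (both are $\cong Q$ by \THM{Q}), corrects it by a self-homeomorphism of $Q\times[n,n+1]$ sending the two disjoint $Z$-set images onto $Q\times\{n\}$ and $Q\times\{n+1\}$, and finally absorbs the prescribed boundary homeomorphism on $S_n\times Q$ by extending a self-homeomorphism of the $Z$-set $Q\times\{n\}\cup Q\times\{n+1\}$ (identity on the top) to all of $Q\times[n,n+1]$; every such extension is furnished by Anderson's theorem \THM{Z}. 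The union $\Phi:=\bigcup_n\phi_n$ is a well-defined bijection $E(X)\times Q\to Q\times[0,\infty)$, and since the closed covers $\{A_n\times Q\}$ and $\{Q\times[n,n+1]\}$ are locally finite, both $\Phi$ and $\Phi^{-1}$ are continuous, so $\Phi$ is a homeomorphism; combined with \THM{Q*} this gives $E(X)\times Q\cong Q\times[0,\infty)\cong Q\setminus\{\textup{point}\}$. The main obstacle is precisely this inductive matching: keeping the homeomorphisms compatible on the slices $S_n\times Q$ while routing faces to faces, which is exactly where the $Z$-set computation and \THM{Z} are indispensable.
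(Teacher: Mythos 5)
Your proposal is correct and follows essentially the same route as the paper: compactness of $\{f\dd f(a)\leqsl r\}$ via pointwise closedness in a product of intervals, then the slab decomposition of $E(X)\times Q$ by the value $f(a)$, with Keller's theorem (\THM{Q}) identifying each slab and slice (after multiplying by $Q$) with the Hilbert cube, the affine push establishing the $Z$-set property, Anderson's theorem (\THM{Z}) gluing the slab homeomorphisms, and \THM{Q*} finishing. The only cosmetic differences are that the paper pushes off both boundary slices at once (toward $\omega+n-\frac12$), thereby avoiding your appeal to the standard fact that a finite union of $Z$-sets is a $Z$-set, and it organizes the gluing by fixing homeomorphisms $h_n$ on all slices in advance so that a single application of \THM{Z} per slab suffices.
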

\begin{proof}
Point (i) follows from the Ascoli type theorem, since $E(X)$ consists of nonexpansive maps and for
any $f \in E(X)$ and $x \in X$, $f(x) \in [0,d(x,a) + f(a)]$.\par
We turn to (ii). First of all, $E(X)$ is metrizable, because of the separability of $X$ and
the nonexpansivity of members of $E(X)$. Further, thanks to \THM{Q*}, it suffices to show that $E(X)
\times Q$ is homeomorphic to $Q \times [0,\infty)$. Fix $a \in X$ and let $\omega \in E(X)$ be given
by $\omega(x) = d(a,x)$. For each $n \geqsl 1$ let $K_n = \{f \in E(X)\dd\ f(a) \in [n-1,n]\}$ and
$Z_{n-1} = \{f \in E(X)\dd\ f(a) = n-1\}$. We infer from (i) that $K_n$ and $Z_{n-1}$ are compact.
It is also easily seen that both they are convex nonempty sets ($\omega + n-1 \in Z_{n-1} \subset
K_n$). Since $K_n \times Q$ and $Z_{n-1} \times Q$ are affinely homeomorphic to convex subsets
of Fr\'{e}chet spaces, \THM{Q} yields that both these sets are homeomorphic to $Q$. Let $h_{n-1}\dd
Z_{n-1} \times Q \to Q \times \{n-1\}$ be any homeomorphism. We claim that $Z_{n-1} \cup Z_n$ is
a $Z$-set in $K_n$. This easily follows from the fact that the maps $K_n \ni f \mapsto (1-\frac1k)f
+ \frac1k (\omega + n-\frac12) \in K_n$ send $K_n$ into $K_n \setminus (Z_{n-1} \cup Z_n)$ and
converge uniformly (as $k\to\infty$) to the identity map of $K_n$. Since $Q \times \{n-1,n\}$ is
a $Z$-set in $Q \times [n-1,n]$, \THM{Z} provides us the existence of a homeomorphism $H_n\dd K_n
\times Q \to Q \times [n-1,n]$ which extends both $h_{n-1}$ and $h_n$. We claim that the union $H\dd
E(X) \times Q \to Q \times [0,\infty)$ of all $H_n$'s ($n \geqsl 1$) is the homeomorphism we search.
It is clear that $H$ is a well defined bijection. Finally, notice that the interiors (in $E(X)$)
of the sets $\bigcup_{j=1}^n K_j\ (n \geqsl 1)$ cover $X$ and hence $H$ is indeed a homeomorphism.
\end{proof}

\begin{proof}[Proof of point \textup{(}c\textup{)} of \THM{iso}]
Let $G$ be a locally compact Polish group. By a theorem of Struble \cite{str} (see also \cite{amn}),
there exists a proper left invariant compatible metric $d$ on $G$. Let $E(G)$ be the space of all
Kat\v{e}tov maps on $(G,d)$ endowed with the pointwise convergence topology. By \LEM{Q*}, $L := E(G)
\times Q$ is homeomorphic to $Q \setminus \{\textup{point}\}$. So, it suffices to show that there is
a proper compatible metric $\varrho$ on $L$ such that $\Iso(L,\varrho)$ is isomorphic to $G$. For
any $g \in G$ and $(f,q) \in L$ let $g.(f,q) = (f_g,q) \in L$ where $f_g(x) = f(g^{-1}x)$ (since $d$
is left invariant, $f_g \in E(G)$ for each $f \in E(G)$). As in the proof of point (b)
of the theorem, we see that the function $G \times L \ni (g,x) \mapsto g.x \in L$ is a continuous
action of $G$ on $L$. It is also clear that each $G$-orbit (i.e. each of the sets $G.x$ with $x \in
L$) has empty interior. Similarly as in point (b) we show that there is $\omega \in L$ such that $G$
acts freely at $\omega$ (for example, $\omega = (u,q)$ with arbitrary $q \in Q$ and $u(x) = d(x,e)$
where $e$ is the neutral element of $G$). So, by virtue of \THM{prop}, it remains to check that
the action is proper. To this end, take any compact set $W$ in $L$. Then there is $r > 0$ such that
$W \subset \{f \in E(G)\dd\ f(e) \leqsl r\} \times Q$. Note that the set $\{g \in G\dd\ g.W \cap W
\neq \varempty\}$ is closed and contained in $D \times Q$ where
$$D = \{g \in G|\quad \exists f \in E(G)\dd\ f(e) \leqsl r\ \wedge\ f(g^{-1}) \leqsl r\}$$
and therefore it is enough to show that $D$ has compact closure in $G$. But if $g \in D$, and $f \in
E(G)$ is such that $f(e) \leqsl r$ and $f(g^{-1}) \leqsl r$, then $d(g,e) = d(e,g^{-1}) \leqsl f(e)
+ f(g^{-1}) \leqsl 2r$. This yields that $D \subset \bar{B}_G(e,2r)$ and the note that $d$ is proper
finishes the proof.
\end{proof}

\begin{rem}{Q*}
Van Dantzig and van der Waerden \cite{d-w} proved that the isometry group of a connected locally
compact metric space $(X,d)$ (possibly with non-proper or incomplete metric) is locally compact and
acts properly on $X$. It follows from our result \cite{pn0} that then there exists a proper
compatible metric $\varrho$ on $X$ such that $\Iso(X,d) = \Iso(X,\varrho)$. In particular,
\begin{multline*}
\{\Iso(Q \setminus \{\textup{point}\},d)\dd\ d \textup{ is a compatible metric}\} =\\=
\{\Iso(Q \setminus \{\textup{point}\},\varrho)\dd\ \varrho \textup{ is a proper compatible metric}\}
\end{multline*}
and hence if we omit the word \textit{proper} in point (c) of \THM{iso}, we will obtain
an equivalent statement.
\end{rem}

As we mentioned in the introductory part, each [locally] compact finite\hyp{}dimensional Polish
group is isomorphic to the isometry group of a [proper locally] compact finite-dimensional metric
space. Taking this, and \COR{0-dim}, into account, the following question may be interesting.

\begin{prb}{fd}
Is every finite-dimensional metrizable (resp. finite-dimensional Polish) group isomorphic
to the isometry group of a finite-dimensional (resp. finite-dimensional separable complete) metric
space?
\end{prb}

\end{document}